\newtheorem{theorem}{Theorem}
\newtheorem{proposition}[theorem]{Proposition}
\newtheorem{lemma}[theorem]{Lemma}
\newtheorem{corollary}[theorem]{Corollary}
\theoremstyle{definition}
\newtheorem{remark}{Remark}
\newcommand{\cref}[1]{Corollary~\ref{c.#1}}
\numberwithin{equation}{section}
\numberwithin{theorem}{section}
\newcommand{\R}{\mathbb{R}}
\newcommand{\N}{\mathbb{N}}
\newcommand{\ol}{\overline}
\newcommand{\eps}{\varepsilon}
\newcommand{\bT}{\mathbb{T}}
\newcommand{\bC}{\mathbb{C}}
\newcommand{\Z}{\mathbb{Z}}
\newcommand{\wtcL}{\widetilde{\mathcal{L}_\delta}}
\newcommand{\wtcA}{\widetilde{\mathcal{A}}}
\newcommand{\bTtrace}{\mathcal{T}}
\newcommand{\bEext}{\mathcal{E}}
\newcommand{\mi}{\mathrm{i}}
\title{Wave packets propagation in the subwavelength regime near the Dirac point}
\author{Habib Ammari}
\address[H. Ammari]{Department of Mathematics, ETH Zürich, Zürich, Switzerland}
\email{habib.ammari@math.ethz.ch}
\author{Xin Fu}
\address[X. Fu]{Yau Mathematical Sciences Center, Tsinghua University, Beijing 100084, P.R. China}
\email{fux20@mails.tsinghua.edu.cn}
\author{Wenjia Jing}
\address[W. Jing]{Yau Mathematical Sciences Center, Tsinghua University, Beijing 100084 and Beijing Institute of Mathematical Sciences and Applications, Beijing 101408, P.R. China}
\email{wjjing@tsinghua.edu.cn}
\date{\today}
\begin{document}
    
	\begin{abstract}
		In [Ammari et al., SIAM J Math Anal., 52 (2020),
  pp. 5441--5466], the first author with collaborators proved the existence of Dirac dispersion cones at subwavelength scales in bubbly honeycomb phononic crystals. In this paper, we study the time-evolution of wave packets, which are spectrally concentrated near such conical points. We prove that the wave packets dynamics is governed by a time-dependent effective Dirac system, which still depends, but in a simple way, on the subwavelength scale. 

\bigskip
		
		\noindent{\bf Key words}: Dirac degeneracy, bubbly honeycomb phononic crystals, time-dependent Dirac equation, evolution of wave packets.

		\bigskip
		
		\noindent{\bf Mathematics subject classification (MSC 2020)}: 
35B27, 35J05, 35C20.

\end{abstract}

  \maketitle

\section{Introduction}

Since the successful fabrication of graphene, a monolayer of carbon atoms arranged in a honeycomb lattice \cite{RevModPhys.81.109}, conical dispersions, originally proposed for relativistic particles based on the Dirac equation, have attracted much interest. Conical dispersions lead to unusual transport phenomena. They are not limited to electronic structures. They have been observed in artificial structures (also called metamaterials) for classical waves. 

Metamaterials with Dirac singularities have been extensively studied both experimentally and numerically; see, \emph{e.g.},  \cite{PhysRevLett.108.174301,scientificreport}. Subwavelength resonators are the building blocks of metamaterials. They are typically high-contrast material inclusions.
In acoustics, due to its low density and low bulk modulus, 
a gas bubble in a liquid is known to have a resonance frequency corresponding to wavelengths which are several orders of magnitude larger than the bubble \cite{MR3906861,minnaert}. This opens up the possibility of creating small-scaled acoustic metamaterials, whereby the operating frequency corresponds to wavelengths much larger than the device size. 

The simplicity of the gas bubble makes bubbly media an ideal model for subwavelength metamaterials. After suitable simplification we consider the operator $\mathcal{L}_\delta u = -\sigma_\delta(x)\nabla \cdot\left(\sigma_\delta^{-1}(x)\nabla u\right)$ where $\sigma_\delta(x)$ has value $\delta$ in the bubbles and has value $1$ in the liquid. The property of honeycomb lattice of bubbly acoustic structures where $\sigma_\delta$ satisfies honeycomb lattice symmetry is largely determined by the Floquet-Bloch dispersion relation of $\mathcal{L}_\delta$. Proofs of the existence of Dirac cones, that is conical singularities occurring at the intersections of neighboring dispersion surfaces, and mathematical analyses of their properties are provided in \cite{ammari_honeycomb-lattice_2020,ammari_high-frequency_2020}. More precisely, in this high-contrast setting, the Dirac cone structure can be seen around high-symmetry quasi-momenta $\alpha^*$ given by \eqref{eq:cone} where conical intersection occurs for the first two dispersion surfaces. In terms of the eigenvalues of $\mathcal{L}_\delta$ with $\alpha$-quasi-periodic conditions, those sub-wavelength bands behave like
\begin{equation}
\label{eq:Dirac_omega}
\omega^2_\pm (\alpha) = \omega^2(\alpha^*) \pm \delta \mathrm{c} |\alpha - \alpha^*| (1+\mathcal{O}(|\alpha-\alpha^*|))
\end{equation}
around $\alpha^*$, where $\mathrm{c}$ is a constant independent of the contrast material parameter $\delta$; see \cite{ammari_honeycomb-lattice_2020}. Moreover, those surfaces are low-lying (with amplitude of order $\mathcal{O}(\sqrt{\delta})$) and correspond to propagation modes of large wave-length (or, in other words, the bubbles are subwavelength resonators for those modes). The eigenvalue $\omega^2(\alpha^*)$ at the Dirac point $\alpha^*$ has a degeneracy of order two, and one can find two independent eigen-modes (Dirac modes) satisfying proper symmetry relations. The expansion of certain eigen-modes of the first two bands for quasi-momenta $\alpha$ near $\alpha^*$ in terms of the Dirac modes is also studied in detail in \cite{ammari_high-frequency_2020}. In Section \ref{sec:DiracStructure} below we present more  details of those results. 

In this paper, we study the propagation of wave packets formed by a superposition of the degenerate eigenmodes at the Dirac point $\alpha^*$,  and prove in Theorem \ref{thm:main} that the wave packets evolve as, approximately and valid for a rather long time, a superposition of the Dirac modes with time-dependent profiles governed by Dirac equations. The approximation hence serves as an effective and much simplified model of the wave propagation. Due to the high-contrast nature of the bubbly acoustic media, however, the effective Dirac equation still depends on the subwavelength scale (or, the high-contrast scale) $\delta$. 

The paper is organized as follows. In Section \ref{sec:formulation} we describe the  bubbly honeycomb phononic crystal and state our main results in this paper. 
Section \ref{sec:prelim} is devoted to some preliminary results on the Floquet-Bloch theory and representation formulas for the Bloch eigenmodes. 
In Section \ref{sec:DiracStructure} we study the subwavelength band structure of the honeycomb crystal and recall the results proved in \cite{ammari_honeycomb-lattice_2020,ammari_high-frequency_2020} on the existence of Dirac degeneracies between the two first band functions and the behavior of the corresponding Bloch modes near the Dirac points. In Section \ref{sec:proofs} we outline the proofs of our main results on the dynamics of wave packets formed by those degenerate eigenmodes. In Section \ref{sec:i3} we finish the proof of the main results in this paper. The appendix  is devoted to derive some new estimates for wave equations in the high-contrast regime used in Section \ref{sec:i3}. 

\section{Problem formulation and main results} \label{sec:formulation}

\subsection{Triangular lattice,  symmetric inclusions and honeycomb structure}

We first describe the geometric setup of the bubbly honeycomb structure. Consider the equilateral triangular lattice in $\mathbb{R}^2$ defined by 
\begin{equation*}
	\Lambda = \mathbb{Z}l_1 \oplus \mathbb{Z}l_2 = \{ m_1 l_1 + m_2 l_2: (m_1, m_2) \in \mathbb{Z}^2 \},
\end{equation*} 
which is generated by the lattice vectors
\begin{equation*}
l_1 = L \left(
	\frac{\sqrt{3}}{2},
	\frac{1}{2}
	\right), \quad \quad l_2 = L\left(
	\frac{\sqrt{3}}{2},
	-\frac{1}{2}
	\right).
\end{equation*}
The lattice constant $L$ is chosen later. This is a two dimensional hexagonal Bravais lattice. The dual lattice of $\Lambda$ is denoted by $\Lambda^*$, and is generated by the lattice vectors
\begin{equation*}
	\alpha_1 = \frac{2\pi}{L} \left( \frac{\sqrt{3}}{3}, 1\right) ,\quad \quad 	\alpha_2 = \frac{2\pi}{L} \left( \frac{\sqrt{3}}{3}, -1\right).
\end{equation*}
They satisfy $\alpha_i \cdot l_j = 2\pi \delta_{ij}$, for $i, j = 1, 2$, where $\delta_{ij}$ denotes the Kronecker symbol. Let $Y$ and $Y^*$ be the fundamental domains of the lattices $\Lambda$ and $\Lambda^*$ which are defined, respectively, as
\begin{equation*}
\begin{aligned}
	&Y= \{ s l_1 + tl_2: 0 \leq s, t\leq 1\},\\
	&Y^* = \{ s\alpha_1 + t\alpha_2 :0\leq s, t\leq 1 \}.
	\end{aligned}
\end{equation*}
The lattice constant $L$ is chosen so that $|Y^*|=1$. We also denote the central Wigner-Seitz cell of the lattices $\Lambda$ by $Y_{\rm B}$, which consists of points in $\R^2$ that are closer to $(0,0) \in \Lambda$ than any other points of the lattice $\Lambda$. Similarly, the central Wigner-Seitz cell of the dual lattice $\Lambda^*$ is denoted by $Y^*_{\rm B}$ or simply by $\mathcal{B}$, and it is also known as the first Brillouin zone. See Figure \ref{fig:honeycomb1} for illustrations. In this article we also let $\bT$ denote the torus $\R^2/\Lambda$ and denote by $\bT^*$ the dual torus $\mathbb{R}^2 / \Lambda^*$.

The unit cell $Y$ of $\Lambda$ is divided into two equilateral triangular domains $Y_1$ and $Y_2$:
\begin{equation*}
    Y_1 = \{s l_1 + tl_2 \,:\, 0\le s,t, \, \text{and} \, t+s\le 1\}, \quad Y_2 = \{s l_1 + t l_2 \,:\, 0\le s,t \le 1, \,\text{and}\, 1\le t+s\}.
\end{equation*}
Let the centers of $Y$, $Y_1$ and $Y_2$ be denoted by $x_0, x_1$ and $x_2$, respectively. They are given by
\begin{equation*}
	x_0 = \frac{l_1  + l_2 }{2} , \quad x_1 = \frac{l_1  + l_2 }{3}, \quad x_2 = \frac{2( l_1  + l_2 )}{3}.
\end{equation*}

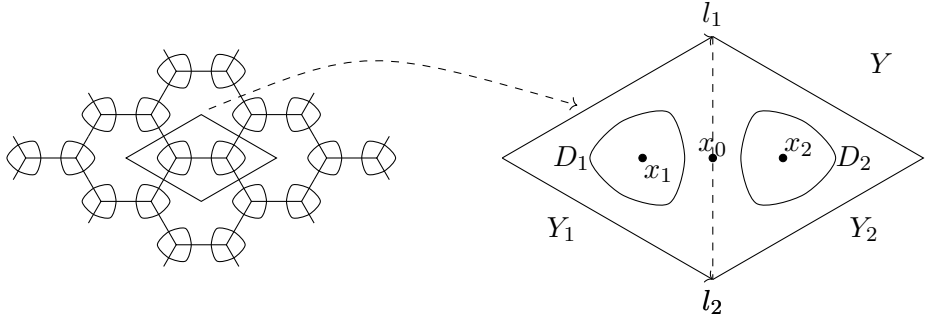
\begin{figure}[!h]
	\centering
	\begin{tikzpicture}
	\begin{scope}[xshift=-5cm,scale=1]
	\coordinate (a) at (1,{1/sqrt(3)});		
	\coordinate (b) at (1,{-1/sqrt(3)});	
	\pgfmathsetmacro{\rb}{0.25pt}
	\pgfmathsetmacro{\rs}{0.2pt}
	
	\draw (0,0) -- (1,{1/sqrt(3)}) -- (2,0) -- (1,{-1/sqrt(3)}) -- cycle; 
	\begin{scope}[xshift = 1.33333cm]
	\draw plot [smooth cycle] coordinates {(0:\rb) (60:\rs) (120:\rb) (180:\rs) (240:\rb) (300:\rs) };
	\end{scope}
	\begin{scope}[xshift = 0.666667cm, rotate=60]
	\draw plot [smooth cycle] coordinates {(0:\rb) (60:\rs) (120:\rb) (180:\rs) (240:\rb) (300:\rs) };
	\end{scope}
	
	\draw[opacity=0.2] ({2/3},0) -- ({4/3},0)
		($0.5*(1,{1/sqrt(3)})$) -- ({2/3},0)
		($0.5*(1,{-1/sqrt(3)})$) -- ({2/3},0)
		($(1,{1/sqrt(3)})+0.5*(1,{-1/sqrt(3)})$) -- ({4/3},0)
		($0.5*(1,{1/sqrt(3)})+(1,{-1/sqrt(3)})$) -- ({4/3},0);

\begin{scope}[shift = (a)]
	\begin{scope}[xshift = 1.33333cm]
	\draw plot [smooth cycle] coordinates {(0:\rb) (60:\rs) (120:\rb) (180:\rs) (240:\rb) (300:\rs) };
	\end{scope}
	\begin{scope}[xshift = 0.666667cm, rotate=60]
	\draw plot [smooth cycle] coordinates {(0:\rb) (60:\rs) (120:\rb) (180:\rs) (240:\rb) (300:\rs) };
	\end{scope}	
	\draw[opacity=0.2] ({2/3},0) -- ({4/3},0)
		($0.5*(1,{1/sqrt(3)})$) -- ({2/3},0)
		($0.5*(1,{-1/sqrt(3)})$) -- ({2/3},0)
		($(1,{1/sqrt(3)})+0.5*(1,{-1/sqrt(3)})$) -- ({4/3},0)
		($0.5*(1,{1/sqrt(3)})+(1,{-1/sqrt(3)})$) -- ({4/3},0);
\end{scope}
\begin{scope}[shift = (b)]
	\begin{scope}[xshift = 1.33333cm]
	\draw plot [smooth cycle] coordinates {(0:\rb) (60:\rs) (120:\rb) (180:\rs) (240:\rb) (300:\rs) };
	\end{scope}
	\begin{scope}[xshift = 0.666667cm, rotate=60]
	\draw plot [smooth cycle] coordinates {(0:\rb) (60:\rs) (120:\rb) (180:\rs) (240:\rb) (300:\rs) };
	\end{scope}
	\draw[opacity=0.2] ({2/3},0) -- ({4/3},0)
		($0.5*(1,{1/sqrt(3)})$) -- ({2/3},0)
		($0.5*(1,{-1/sqrt(3)})$) -- ({2/3},0)
		($(1,{1/sqrt(3)})+0.5*(1,{-1/sqrt(3)})$) -- ({4/3},0)
		($0.5*(1,{1/sqrt(3)})+(1,{-1/sqrt(3)})$) -- ({4/3},0);
\end{scope}
\begin{scope}[shift = ($-1*(a)$)]
	\begin{scope}[xshift = 1.33333cm]
	\draw plot [smooth cycle] coordinates {(0:\rb) (60:\rs) (120:\rb) (180:\rs) (240:\rb) (300:\rs) };
	\end{scope}
	\begin{scope}[xshift = 0.666667cm, rotate=60]
	\draw plot [smooth cycle] coordinates {(0:\rb) (60:\rs) (120:\rb) (180:\rs) (240:\rb) (300:\rs) };
	\end{scope}
	\draw[opacity=0.2] ({2/3},0) -- ({4/3},0)
	($0.5*(1,{1/sqrt(3)})$) -- ({2/3},0)
	($0.5*(1,{-1/sqrt(3)})$) -- ({2/3},0)
	($(1,{1/sqrt(3)})+0.5*(1,{-1/sqrt(3)})$) -- ({4/3},0)
	($0.5*(1,{1/sqrt(3)})+(1,{-1/sqrt(3)})$) -- ({4/3},0);
	\end{scope}
	\begin{scope}[shift = ($-1*(b)$)]
	\begin{scope}[xshift = 1.33333cm]
	\draw plot [smooth cycle] coordinates {(0:\rb) (60:\rs) (120:\rb) (180:\rs) (240:\rb) (300:\rs) };
	\end{scope}
	\begin{scope}[xshift = 0.666667cm, rotate=60]
	\draw plot [smooth cycle] coordinates {(0:\rb) (60:\rs) (120:\rb) (180:\rs) (240:\rb) (300:\rs) };
	\end{scope}
	\draw[opacity=0.2] ({2/3},0) -- ({4/3},0)
	($0.5*(1,{1/sqrt(3)})$) -- ({2/3},0)
	($0.5*(1,{-1/sqrt(3)})$) -- ({2/3},0)
	($(1,{1/sqrt(3)})+0.5*(1,{-1/sqrt(3)})$) -- ({4/3},0)
	($0.5*(1,{1/sqrt(3)})+(1,{-1/sqrt(3)})$) -- ({4/3},0);
	\end{scope}
\begin{scope}[shift = ($(a)+(b)$)]
	\begin{scope}[xshift = 1.33333cm]
	\draw plot [smooth cycle] coordinates {(0:\rb) (60:\rs) (120:\rb) (180:\rs) (240:\rb) (300:\rs) };
	\end{scope}
	\begin{scope}[xshift = 0.666667cm, rotate=60]
	\draw plot [smooth cycle] coordinates {(0:\rb) (60:\rs) (120:\rb) (180:\rs) (240:\rb) (300:\rs) };
	\end{scope}
	\draw[opacity=0.2] ({2/3},0) -- ({4/3},0)
	($0.5*(1,{1/sqrt(3)})$) -- ({2/3},0)
	($0.5*(1,{-1/sqrt(3)})$) -- ({2/3},0)
	($(1,{1/sqrt(3)})+0.5*(1,{-1/sqrt(3)})$) -- ({4/3},0)
	($0.5*(1,{1/sqrt(3)})+(1,{-1/sqrt(3)})$) -- ({4/3},0);
	\end{scope}
	\begin{scope}[shift = ($-1*(a)-(b)$)]
	\begin{scope}[xshift = 1.33333cm]
	\draw plot [smooth cycle] coordinates {(0:\rb) (60:\rs) (120:\rb) (180:\rs) (240:\rb) (300:\rs) };
	\end{scope}
	\begin{scope}[xshift = 0.666667cm, rotate=60]
	\draw plot [smooth cycle] coordinates {(0:\rb) (60:\rs) (120:\rb) (180:\rs) (240:\rb) (300:\rs) };
	\end{scope}
	\draw[opacity=0.2] ({2/3},0) -- ({4/3},0)
	($0.5*(1,{1/sqrt(3)})$) -- ({2/3},0)
	($0.5*(1,{-1/sqrt(3)})$) -- ({2/3},0)
	($(1,{1/sqrt(3)})+0.5*(1,{-1/sqrt(3)})$) -- ({4/3},0)
	($0.5*(1,{1/sqrt(3)})+(1,{-1/sqrt(3)})$) -- ({4/3},0);
	\end{scope}
\begin{scope}[shift = ($(a)-(b)$)]
	\begin{scope}[xshift = 1.33333cm]
	\draw plot [smooth cycle] coordinates {(0:\rb) (60:\rs) (120:\rb) (180:\rs) (240:\rb) (300:\rs) };
	\end{scope}
	\begin{scope}[xshift = 0.666667cm, rotate=60]
	\draw plot [smooth cycle] coordinates {(0:\rb) (60:\rs) (120:\rb) (180:\rs) (240:\rb) (300:\rs) };
	\end{scope}
	\draw[opacity=0.2] ({2/3},0) -- ({4/3},0)
	($0.5*(1,{1/sqrt(3)})$) -- ({2/3},0)
	($0.5*(1,{-1/sqrt(3)})$) -- ({2/3},0)
	($(1,{1/sqrt(3)})+0.5*(1,{-1/sqrt(3)})$) -- ({4/3},0)
	($0.5*(1,{1/sqrt(3)})+(1,{-1/sqrt(3)})$) -- ({4/3},0);
	\end{scope}
	\begin{scope}[shift = ($-1*(a)+(b)$)]
	\begin{scope}[xshift = 1.33333cm]
	\draw plot [smooth cycle] coordinates {(0:\rb) (60:\rs) (120:\rb) (180:\rs) (240:\rb) (300:\rs) };
	\end{scope}
	\begin{scope}[xshift = 0.666667cm, rotate=60]
	\draw plot [smooth cycle] coordinates {(0:\rb) (60:\rs) (120:\rb) (180:\rs) (240:\rb) (300:\rs) };
	\end{scope}
	\draw[opacity=0.2] ({2/3},0) -- ({4/3},0)
	($0.5*(1,{1/sqrt(3)})$) -- ({2/3},0)
	($0.5*(1,{-1/sqrt(3)})$) -- ({2/3},0)
	($(1,{1/sqrt(3)})+0.5*(1,{-1/sqrt(3)})$) -- ({4/3},0)
	($0.5*(1,{1/sqrt(3)})+(1,{-1/sqrt(3)})$) -- ({4/3},0);
\end{scope}
\end{scope}

\draw[dashed,opacity=0.5,->] (-3.9,0.65) .. controls(-1.8,1.5) .. (1,0.7);
	\begin{scope}[scale=2.8]	
	\coordinate (a) at (1,{1/sqrt(3)});		
	\coordinate (b) at (1,{-1/sqrt(3)});	
	\coordinate (Y) at (1.8,0.45);
	\coordinate (c) at (2,0);
	\coordinate (x1) at ({2/3},0);
	\coordinate (x0) at (1,0);
	\coordinate (x2) at ({4/3},0);

	\pgfmathsetmacro{\rb}{0.25pt}
	\pgfmathsetmacro{\rs}{0.2pt}
	
	\begin{scope}[xshift = 1.33333cm]
	\draw plot [smooth cycle] coordinates {(0:\rb) (60:\rs) (120:\rb) (180:\rs) (240:\rb) (300:\rs) };
	\draw (0:\rb) node[xshift=7pt] {$D_2$};
	\end{scope}
	\begin{scope}[xshift = 0.666667cm, rotate=60]
	\draw plot [smooth cycle] coordinates {(0:\rb) (60:\rs) (120:\rb) (180:\rs) (240:\rb) (300:\rs) };
	\end{scope}
	\draw ({0.6666667-\rb},0) node[xshift=-7pt] {$D_1$};
	
	\draw (Y) node{$Y$};
	\draw[->] (0,0) -- (a) node[above]{$l_1$};
	\draw[->] (0,0) -- (b) node[pos=0.4,below left]{$Y_1$} node[below]{$l_2$};
	\draw[dashed] (a) -- (b) node[below]{$l_2$};
	\draw (a) -- (c) -- (b) node[pos=0.4,below right]{$Y_2$};
	\draw[fill] (x1) circle(0.5pt) node[xshift=6pt,yshift=-6pt]{$x_1$}; 
	\draw[fill] (x0) circle(0.5pt) node[yshift=4pt]{$x_0$}; 
	\draw[fill] (x2) circle(0.5pt) node[xshift=6pt,yshift=4pt]{$x_2$}; 
	\end{scope}
\end{tikzpicture}
\vspace{10pt}
\caption{Honeycomb crystal.}\label{fig:honeycomb1}
	\end{figure}

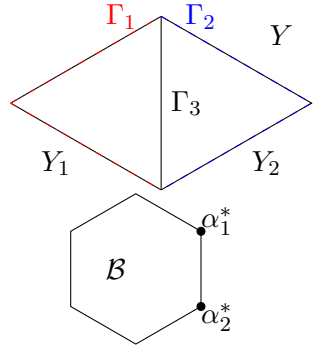
\begin{figure}[!h]
	\centering
	\begin{tikzpicture}[scale=2]
	\coordinate (a) at (1,{1/sqrt(3)});		
	\coordinate (b) at (1,{-1/sqrt(3)});	
	\coordinate (Y) at (1.8,0.45);
	\coordinate (c) at (2,0);
	\coordinate (x1) at ({2/3},0);
	\coordinate (x0) at (1,0);
	\coordinate (x2) at ({4/3},0);
	
	\draw (Y) node{$Y$};
	\draw[opacity=0.2] (0,0) -- (a);
	\draw[opacity=0.2] (0,0) -- (b);
	\draw[opacity=0.2] (a) -- (c) -- (b);
	
	\draw[red,dash dot] (0,0) -- (a);
	\draw[red,dash dot] (0,0) -- (b);
	\draw (a) -- (b) node[right,pos=0.5]{$\Gamma_3$};
	\draw[blue,dashed] (a) -- (c) -- (b);
	\draw (0.3,-0.4) node{$Y_1$};
	\draw (1.7,-0.4) node{$Y_2$};
	\draw[blue] (a) node[xshift=15pt]{$\Gamma_2$};
	\draw[red] (a) node[xshift=-15pt]{$\Gamma_1$};
	\end{tikzpicture}
 
\begin{tikzpicture}[scale=1.5]
	\coordinate (a) at ({1/sqrt(3)},1);
	\coordinate (b) at ({1/sqrt(3)},-1);
	\coordinate (c) at ({2/sqrt(3)},0);
	\coordinate (K1) at ({1/sqrt(3)},{1/3});
	\coordinate (K2) at ({1/sqrt(3)},{-1/3});
	\coordinate (K3) at (0,{-2/3});
	\coordinate (K4) at ({-1/sqrt(3)},{-1/3});
	\coordinate (K5) at ({-1/sqrt(3)},{1/3});
	\coordinate (K6) at (0,{2/3});

	\draw[fill] (K1) circle(1pt) node[xshift=6pt,yshift=4pt]{$\alpha_1^*$};
	\draw[fill] (K2) circle(1pt) node[xshift=6pt,yshift=-4pt]{$\alpha_2^*$};
	\draw[fill] (0,0)   node[left]{$\mathcal{B}$};

	\draw (K1) -- (K2) -- (K3) -- (K4) -- (K5)  -- (K6) -- cycle;
\end{tikzpicture}
\vspace{10pt}
\caption{Fundamental domain $Y$ and corresponding Brillouin zone.}\label{fig:honeycomb2}
\end{figure}

The bubbly honeycomb crystal consists of high-contrast acoustic inclusions (bubbles) embedded in an otherwise homogeneous background matrix (water). Moreover, the set occupied by the inclusions is $\Lambda$-periodic and satisfies certain symmetries that give the honeycomb structure. Let $R: \R^2\to \R^2$ be the clockwise rotation by $\frac{2\pi}{3}$ around the origin, and define
\begin{equation}
\label{eq:R12def}
	R_1 x = R x +l_1 , \quad R_2 x = R x +2 l_1, \quad R_0 x = 2x_0 - x.
\end{equation}
Let $e_j$, $j=1,2$, be the unit vectors of the Euclidean basis of $\R^2$. One easily checks that $l_1-Le_2=l_2$, $Rl_1=-Le_2$ and $Rl_2=-l_1$. Therefore, $R_0$ is the clockwise rotation around $x_0$ by $\pi$, and $R_j$ is the
clockwise rotation  around $x_j$ by $\frac{2\pi}{3}$, for $j=1,2$. Let $R_3$ be the reflection across the line $x_0 + \mathbb{R} e_2$, \emph{i.e.}, reflection across $\Gamma_3 = \partial Y_1\cap \partial Y_2$; see Figure \ref{fig:honeycomb2}.

The acoustic inclusions set $D$ inside the unit cell $Y$ is then modeled as the union  $D_j$, $j = 1,2 $, each an open and simply connected set with Lipschitz boundary centered at $x_j$. Moreover, $D_1$ and $D_2$ satisfy the following symmetric relations: 
\begin{equation}
\label{eq:hcsym}
	R_1 D_1 =D_1, \quad R_2 D_2 =D_2, \quad R_0 D_1 =
 R_3 D_1 = D_2.
\end{equation}
That is, the set $D=D_1\cup D_2$ is centrally symmetric with respect to the center $x_0$ and mirror symmetric with respect to $\Gamma_3$, and each inclusion $D_j$ is $2\pi/3$-rotation symmetric with respect to its own center. The background set in the unit cell is then $\Omega = Y \setminus \overline{D}$. By $\Lambda$-periodic extension of $D$, the inclusions array is defined by
\begin{equation*}
	D^{\Lambda} = \bigcup_{\mathbf{n} \in \Lambda} (D+\mathbf{n} ).
\end{equation*}
Similarly, the whole background material occupies the region
\begin{equation*}
	\Omega^{\Lambda} = \mathbb{R}^2 \setminus \overline{ D^{\Lambda} }.
\end{equation*}
Due to the symmetric relations imposed above, the centers of all inclusions $\{D_j + \mathbf{n} \,:\, \mathbf{n}\in \Lambda\}$ form the set 
\begin{equation*}
	\mathbb{H} = (x_1 + \Lambda) \cup ( x_2 + \Lambda),
\end{equation*}
which has a honeycomb structure; see Figure \ref{fig:honeycomb1}.

In view of the honeycomb structure, the two points
\begin{equation*}
	\alpha_1^* = \frac{2\alpha_1 + \alpha_2}{3} , \quad \quad 	\alpha_2^* = \frac{\alpha_1 +2 \alpha_2}{3}
\end{equation*}
in the Brillouin zone $Y^*$ possess high symmetry, see Figure \ref{fig:honeycomb2}. Moreover, it is proved in \cite{ammari_honeycomb-lattice_2020} that those points are the Dirac points where the first two low-lying dispersion relation curves caused by the resonant bubbles meet conically. 

\subsection*{Notations} 

In the rest of the paper, we let $\bm{\alpha}$ denote the 2-tuple $(\alpha_1,\alpha_2)$ and let $\bm{l}$ denote the 2-tuple $(l_1,l_2)$. Given $\mathbf{m} = (m_1,m_2) \in \Z^2$, $\mathbf{m}\bm{\alpha} \in \Lambda^*$ and $\mathbf{m}\bm{l} \in \Lambda$ denote the following products:
\begin{equation*}
\mathbf{m}\bm{\alpha} := m_1\alpha_1 + m_2\alpha_2, \quad \mathbf{m}\bm{l} := m_1 l_1 + m_2 l_2.
\end{equation*}
In this paper, all functions are assumed to take values in $\bC$ or $\bC^2$ and hence we no longer write the target spaces in the notations of functional classes.

\subsection{Bubbly honeycomb crystal and the associated differential operator}

We assume that the whole plane $\mathbb{R}^2$ is realized as an acoustic composite, equipped with the symmetric inclusions $D^{\Lambda}$ and honeycomb structure $\mathbb{H}$ described in the last section. Let the inclusions $D^{\Lambda}$ are occupied by air, and the background $\Omega^{\Lambda}$ be occupied by liquids. Basically speaking, the plane $\mathbb{R}^2$ becomes a composite by air bubbles and liquids.

For acoustic materials,  \textit{compressibility} and  \textit{density} are two essential parameters to describe the acoustic properties. 
Let the compressibility and the density of air bubbles are denoted by $\kappa_b$ and $\rho_b$, respectively. The corresponding parameters of the surrounding liquid are denoted by $\kappa_0$ and $\rho_0$, respectively. Let
\begin{equation*}
	v_0 := \sqrt{\frac{\kappa_0}{\rho_0}}, \quad v_b := \sqrt{\frac{\kappa_b}{\rho_b}}, \quad \delta := \frac{\rho_b}{\rho}.
\end{equation*}
Then $v_0$ and $v_b$ are the sound speeds in the background material and in the inclusions, respectively; $\delta$ is the density ratio of the inclusion compared with the background. For air bubbles in fluid, for example, it is natural to assume $v_0,v_b = \mathcal{O}(1)$ and $\delta \ll 1$. 
For simplicity and to fix ideas, after proper rescaling we assume $\kappa_0 =\rho_0 =1$, and $\kappa_b =\rho_b = \delta$. Then the differential operator that governs the acoustic behaviors of the material is simplified to
\begin{equation}
\label{eq:Ldeldef}
	\mathcal{L}_{\delta} = - \sigma_{\delta}\nabla \cdot ( \sigma_{\delta}^{-1} \nabla),
\end{equation}
where the material parameter $\sigma_{\delta}$ is defined by
\begin{equation*}
	\sigma_{\delta} (x)= \left\{
	\begin{aligned}
	& 1 , && x \in \Omega^{\Lambda}, \\
	& \delta, && x \in D^{\Lambda}. 
\end{aligned}\right.
\end{equation*}

We remark that making the bulk modulus contrast and the density contrast equal (the sound speeds $v_b$ and $v_0$ in and outside the bubbles then coincide), as done above, simplifies the expression of the operator $\mathcal{L}_\delta$. Nevertheless, the method used in Section \ref{sec:DiracStructure} for the subwavelength band structures works for the general setting where $v_0 \ne v_b$. In addition, our method in Sections \ref{sec:proofs} and \ref{sec:i3} to derive the approximation of wave dynamics work for more general setting, \emph{i.e.}, for the operator
\begin{equation*}
	\mathcal{L}_\delta = -\kappa(x)\nabla\cdot\left(\frac{1}{\rho(x)}\nabla \cdot\right),
\end{equation*}
where $\kappa(x) = \kappa_0 \mathbf{1}_{\Omega^\Lambda} + \kappa_b \mathbf{1}_{D^\Lambda}$, and $\rho(x)=\rho_0\mathbf{1}_{\Omega^\Lambda} + \rho_b \mathbf{1}_{D^\Lambda}$.

\subsection{Rescaled bubbly honeycomb crystal and the main problem} 

To model wave propagation in bubbly acoustic media formed by tiny air bubbles in water, we consider the rescaled bubbly honeycomb crystal by replacing the lattice constant $L$ with $\varepsilon L$, where $\varepsilon>0$ is the microscopic parameter which is assumed to be very small.

Thus the rescaled differential operator, denoted by $\mathcal{L}_{\varepsilon,\delta}$, is given by
\begin{equation*}
	\mathcal{L}_{\varepsilon,\delta} =  - \sigma_{\delta}(\cdot/\varepsilon) \nabla \cdot \left( \sigma_{\delta}(\cdot/\varepsilon)^{-1}   \nabla \right) .
\end{equation*}
We study the following wave equation that models the propagation of wave packets spectrally concentrated near the Dirac point:  
\begin{equation}\label{problem 1}
	\left\{
	\begin{aligned}
		& \left( \partial_t^2 + \mathcal{L}_{\varepsilon,\delta} \right) w_{\varepsilon,\delta} = 0  \quad \mathrm{in} \ \mathbb{R}^2\times (0,\infty), \\
		& w_{\varepsilon,\delta} (x,0) =  F_1(x) S_1\left(\frac{x}{\varepsilon}\right) +F_2(x) S_2\left(\frac{x}{\varepsilon}\right) , \\
		&\partial_t w_{\varepsilon,\delta} (x,0) =\mathrm{i} \frac{\omega_{\delta}^*}{\varepsilon} \left( F_1(x) S_1\left(\frac{x}{\varepsilon}\right) + F_2(x) S_2\left(\frac{x}{\varepsilon}\right) \right).
	\end{aligned}\right.
\end{equation}
Here, $\omega_{\delta}^*$ is the Bloch frequency at a Dirac point $\alpha^*$, see \eqref{eq:omegastar}; note that $\omega_\delta^* = \mathcal{O}(\sqrt{\delta})$. The functions $S_1$ and $ S_2$ are given by single layer potentials of some densities (see \eqref{eq:Sjdef}) and they are the quasi-stationary approximations of the Bloch 
eigenmodes at the Dirac point $\alpha^*$ associated to the frequency $\omega_\delta^*$. The true Bloch eigenmodes at $\alpha^*$ are denoted by $S_{1,\delta}$ and $S_{2,\delta}$; see Section \ref{sec:DiracStructure}. The initial data are hence wave packets spectrally concentrated near the Dirac point, with $F_1, F_2\in \mathscr{S}(\mathbb{R}^2)$ modeling the envelopes of the wave packets. Here and below, $\mathscr{S}(\R^2)$ denotes the Schwartz class of smooth functions decaying rapidly at infinity.  

\begin{remark}
	In general, the initial wave packets spectrally centered at the Dirac point $\alpha^*$ should have the form:
	\begin{equation}\label{full initial condition}
		\left\{
		\begin{aligned}
			& w_{\varepsilon,\delta} (x,0) =  F_1(x) S_1\left(\frac{x}{\varepsilon}\right) +F_2(x) S_2\left(\frac{x}{\varepsilon}\right) , \\
			&\partial_t w_{\varepsilon,\delta} (x,0) =G_1(x) S_1\left(\frac{x}{\varepsilon}\right) +G_2(x) S_2\left(\frac{x}{\varepsilon}\right) ,
		\end{aligned}\right.
	\end{equation}
where $F_j, G_j$, $j=1,2$, are all in the Schwartz space $\mathscr{S}(\mathbb{R}^2)$. Thanks to the linearity of the wave operator, the general data can always be decomposed into two parts evolving independently, one as in \eqref{problem 1} and the other of a similar form with $\mi$ replaced by $-\mi$ in the expression of $\partial_t w_{\eps,\delta}(\cdot,0)$. We focus on \eqref{problem 1} as the other part can be treated in exactly the same way.
\end{remark}


The main objective of this paper is to derive a simplified model for $w_{\eps,\delta}$ that is effective in the large scale (for $\eps$ small) and valid for a rather long time (for $t$ up to scale of $\eps^{-s_1}$ with some $s_1 > 0$). Since the initial data have oscillations at the microscopic scale $\eps$, the problem falls in the regime of high frequency wave propagations. Such problems in periodic structure was formulated and studied in the seminal work \cite{bensoussan_asymptotic_1978}. We mention several closely related previous works that study the effective dynamics of wave packets propagations, in the context of Schr\"odinger equations and wave equations, where the initial wave packets are spectrally concentrated at a quasimomentum $\widetilde \alpha$. More precisely, the initial data is the product of the (rescaled) eigenmode $S(\cdot/\eps)$ associated to the eigenvalue $\lambda(\widetilde\alpha)$ with a slowly varying smooth profile function. It turns out that the effective dynamics depends crucially on the behavior of the dispersion surface $\alpha \mapsto \lambda(\alpha)$ at the point $\widetilde \alpha$. For example, one can distinguish the following settings:

\begin{itemize}
	\item[(a)] When $\widetilde\alpha$ is a regular non-stationary point of $\lambda$, i.e., $\lambda(\widetilde\alpha)$ is a simple Bloch eigenvalue and $\mathbf{v} := \nabla_\alpha \lambda\rvert_{\alpha=\widetilde\alpha} \neq 0$, the problem is referred as the \emph{Ballistic propagation problem}. The wave packets evolves as a modulation of the oscillatory eigenmode with profile function governed by a first order transport equation with conduction velocity proportional to $\mathbf{v}$. See, e.g., \cite[section 4.1]{allaire_homogenization_2005} for the Schr\"odinger equation.
	\item[(b)] Suppose that there is a simple Bloch eigenvalue at $\widetilde\alpha$ and has zero gradient but sign-definite Hessian $\nabla^2_\alpha \lambda$ at $\widetilde \alpha$.  $\lambda(\widetilde\alpha)$ is then located at the edge of a band. One can further distinguish two cases:
	\begin{itemize}
		\item[(b.1)] The edge is at the bottom of all bands. Either one has $\lambda(\widetilde\alpha)=0$ and the Bloch eigenmode at this point is constant so we are back in the low frequency wave propagation regime, or $\lambda(\widetilde\alpha)>0$ and the ground state is positive everywhere and we can factorize the (rescaled) ground state out. 
  In both cases the effective dynamics is given by the  \emph{classical homogenization} theory; see, \emph{e.g.}, \cite{bensoussan_asymptotic_1978, conca_homogenization_1997} and \cite[Theorem 1.1]{allaire_homogenization_2005}. 
	\item[(b.2)] The edge is at a higher energy band. In the set-up of Schr\"odinger equations, this is the main  setting studied in \cite{allaire_homogenization_2005} and the authors there obtained an \emph{Effective mass theory} for the wave propagation: the packets evolve with time-dependent profiles governed by a ``homogenized" equation of the same type where the effective diffusion matrix is proportional to the Hessian matrix of $\lambda$ at $\widetilde\alpha$.  
	\end{itemize}
	\item[(c)] Suppose $\lambda(\widetilde\alpha)$ is a simple Bloch eigenvalue and $\widetilde\alpha$ is a third order stationary point, \emph{i.e.}, $\nabla^4_\alpha \lambda$ does not vanish at $\widetilde\alpha$ but $\nabla^k_\alpha \lambda$ vanishes at $\widetilde\alpha$ for $k=1,2,3$.
The problem in the set-up of Schr\"odinger equation with proper rescaling in time was studied in  \cite{allaire_homogenization_2004, allaire_homogenization_2005}, and the authors there named it \emph{Fourth-order homogenization}. The effective profile of the wave packets satisfies an evolution equation involving a fourth order operator $\nabla\cdot \nabla \cdot (A_{\mathrm{hom}} \nabla \nabla)$, where the homogenized coefficient $A_{\mathrm{hom}}$ is proportional to $\nabla^4_{\alpha} \lambda\rvert_{\alpha=\widetilde\alpha}$.
\end{itemize}  
In this paper, and in the previous works \cite{fefferman_wave_2014} (for the Schr\"odinger equation with honeycomb lattice potential) and \cite{MR4264639} (for wave operator $\partial_t^2 u -\nabla\cdot(W(x)\nabla u)$ with honeycomb lattice material weight $W$), the initial data are spectrally localized at a Dirac point $\widetilde\alpha = \alpha_*$ where two dispersion surfaces intersect conically. Let $\lambda_\pm$ denote the two branches, then $\lambda_+(\alpha_*)=\lambda_-(\alpha_*)$ has multiplicity $2$ and $\lambda_\pm$ have conical singularity at $\alpha_*$. It was proved in \cite{fefferman_wave_2014,MR4264639} that wave packets formed by modulated superposition of the Dirac modes  still evolve as wave packets of such type and the time-dependent profiles vary slowly in time and are governed by a Dirac system. Finally, we also mention that the case of smooth touching of two or more dispersion relations at a degenerate point $\widetilde \alpha$ was also studied in \cite{allaire_homogenization_2005}. The superposition of wave packets formed by the multiple eigenmodes then evolve with profiles governed by system of effective mass equations possibly coupled only through zero-order terms. Needless to say, the phenomena are very different from the case of Dirac cone intersections.   

The problem set-up and the main result (Theorem \ref{thm:main} below) of this paper are most similar to those of \cite{fefferman_wave_2014,MR4264639}. However, there are several key differences. First, there is another small parameter, the high contrast scale $\delta$, in the problem. It results two low-lying dispersion surfaces for  $\mathcal{L}_\delta$ and for sufficiently small $\delta$ there are Dirac cones at the high-symmetry points $\alpha_*$. The microscopic units of the honeycomb structure hence have \emph{subwavelength} scale compared to propagation modes of those branches. Such a structure allows waves emitted in the near field to be transmitted while preserving their phase \cite{ammari_honeycomb-lattice_2020}. 
Secondly, the operator $\mathcal{L}_\delta$ is not of divergence form, and it becomes self-adjoint only in weighted spaces (by considering the weighted measure $\sigma_\delta^{-1}dx$). Thirdly, $\mathcal{L}_\delta$ has piecewise continuous (constant) coefficients and we are out of the smooth settings of \cite{fefferman_wave_2014,MR4264639}. It is fair to say that our proof strategy in Sections \ref{sec:proofs} and \ref{sec:i3} follows that of \cite{fefferman_wave_2014}, which is based on the Floquet-Bloch state representation of the solution to wave equation; see also \cite{MR4264639}. However, the $\delta$-dependence of the band structure of $\mathcal{L}_\delta$ and the discontinuity of the coefficients in $\mathcal{L}_\delta$ cause significant difficulties that we overcome by a careful formulation of Floquet-Bloch theory in Section \ref{sec:FBstates} for this setting and by some useful study of $\mathcal{L}_\delta$-wave equations in the Appendix.  


\subsection{Main results}

We seek the solution $w_{\varepsilon,\delta} $ of \eqref{problem 1} in the form
\begin{equation}
\label{ansatz}
	w_{\varepsilon,\delta}(x,t) =  e^{\mathrm{i} \frac{\omega_{\delta}^*}{\varepsilon} t} \left( V_1(x, t)S_1\left(\frac{x}{\varepsilon} \right) + V_2(x, t)S_2\left(\frac{x}{\varepsilon} \right)  \right) + r_{\eps,\delta}(x,t).
\end{equation}
The first part on the right-hand side is an approximation of the solution $w_{\eps,\delta}$ and it is still a combination of the wave packets with involving envelopes $(V_1,V_2)$ to be determined. The term $r_{\eps,\delta}$ accounts for the approximation error. Our main goal is to find the dynamics of $V_1,V_2$, so that $r_{\eps,\delta}$ is under control. Our main theorem is the following.
\begin{theorem}\label{thm:main}
	Assume that $w_{\varepsilon,\delta}$ is the solution of problem \eqref{problem 1}, and $(V_1,V_2)$ is the solution of the following homogenized problem:
	\begin{equation}\label{eq:Dirac}
		2\mi \omega^*_\delta\partial_t \begin{pmatrix}
			V_1\\ V_2
		\end{pmatrix} = \begin{pmatrix}
			 &a_{\delta}   (\partial_1 + \mathrm{i} \partial_2)  \\
			- \overline{a_{\delta}}(\partial_1 -\mathrm{i}\partial_2)& 
		\end{pmatrix}  \begin{pmatrix}
			V_1\\ V_2
		\end{pmatrix}, 
	\end{equation}
with initial data $(V_1,V_2)(x,0) = (F_1,F_2)(x)$, where $a_{\delta}$ satisfies \eqref{eq:adelta}. Assume that $\bm{F} = (F_1,F_2)$ are Schwartz functions, and let $\delta = \varepsilon^{\kappa}$, where $\kappa \in (0,2)$. Then for any $\nu \in (0,1)$ and positive integer $N\ge 2$ we can find a constant $C_{\bm{F}}>0$ depending only on $\bm{F}$, $N$ and other data of the honeycomb structure, such that the error term in the ansatz \eqref{ansatz} satisfies
\begin{equation*}
\begin{aligned}
	\|r_{\eps,\delta}(\cdot,t)\|_{L^2(\R^2)} &\le C_{\bm{F}}\Big(\eps^{\frac{\kappa}{2}} + \eps^{1-\frac{\kappa}{2}} + t\eps^{\min\{N+\nu-1-\kappa, \nu-1+\frac{\kappa}{2}, 2\nu-1-\frac{\kappa}{2}\}} \\
&\quad + t^2\varepsilon^{\min\{ \nu-2+\frac{3\kappa}{2}, 2\nu-2+\frac{\kappa}{2}, (1-\nu)N-2\}}\Big).
\end{aligned}
\end{equation*}
In particular, for $\kappa\in (\frac{2}{3},2)$, by choosing $\nu \in (\frac{1}{2},1)$ sufficiently close to $1$ and then a positive integer $N$ sufficiently large, we can find positive numbers $s_1$ and $s_2$ such that 
\begin{equation*}
\max_{0\le t \le \eps^{-s_1}} \| r_{\varepsilon,\delta}(x,t) \|_{L^2(\mathbb{R}^2_x)} \le C_{\bm{F}} \eps^{s_2}.
\end{equation*}
\end{theorem}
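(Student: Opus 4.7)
The proof strategy is to substitute the ansatz \eqref{ansatz} into the wave equation and estimate the resulting residual via the Floquet--Bloch representation of the $\mathcal{L}_{\eps,\delta}$-propagator, following the scheme of \cite{fefferman_wave_2014} but adapted to the high-contrast, piecewise-constant-coefficient, $\delta$-dependent setting. First, I would apply the $\eps$-rescaled Floquet--Bloch transform (Section \ref{sec:prelim}) to the exact solution. Because $F_1, F_2 \in \mathscr{S}(\R^2)$, the Bloch-fiber initial data is concentrated near $\alpha = \alpha^*$ with decay faster than any polynomial in $|\alpha - \alpha^*|/\eps$. This lets me restrict the quasimomentum to a shrinking neighborhood $|\alpha - \alpha^*| \lesssim \eps^{\nu-1}$, with tails on the complement contributing at most $\eps^{(1-\nu)N}$ for any $N \ge 2$. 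On this neighborhood, the two low-lying dispersion branches admit the conical expansion \eqref{eq:Dirac_omega}, and the associated Bloch modes decompose in the Dirac basis $\{S_{1,\delta}, S_{2,\delta}\}$ with remainder of order $|\alpha - \alpha^*|$, both recalled in Section \ref{sec:DiracStructure}.

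Next, I would substitute the ansatz into $\partial_t^2 + \mathcal{L}_{\eps,\delta}$ and expand in powers of $\eps$. The $\eps^{-2}$-terms cancel because $S_1, S_2$ are quasi-static Dirac eigenmodes; matching the $\eps^{-1}$-terms projected onto the two-dimensional Dirac subspace produces exactly the effective system \eqref{eq:Dirac}, with the coefficient $a_\delta$ extracted from the linear part of the dispersion. The residual source $\mathcal{R}_{\eps,\delta}$ then contains \textbf{(a)} the Taylor remainder of order $|\alpha-\alpha^*|^{N+1}$ in the dispersion and of order $|\alpha-\alpha^*|^2$ in the Bloch modes, \textbf{(b)} the contribution from bands other than the first two, which is initially suppressed by Schwartz decay and remains so through the spectral gap, and \textbf{(c)} the discrepancy between the quasi-static $S_j$ and the true eigenmode $S_{j,\delta}$ at $\alpha = \alpha^*$, which enters via the initial conditions.

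By Duhamel's formula, I would write $r_{\eps,\delta}$ as the $\mathcal{L}_{\eps,\delta}$-propagation of the initial mismatch plus a time integral of the source $\mathcal{R}_{\eps,\delta}$. The initial mismatch (c) contributes the $\eps^{\kappa/2} + \eps^{1-\kappa/2}$ term: the first exponent comes from $\|S_j - S_{j,\delta}\|$ in the $\sigma_\delta^{-1}$-weighted norm, and the second from the $\eps\,(\omega_\delta^*)^{-1}$ factor that arises when matching $\partial_t w_{\eps,\delta}(\cdot,0)$ against the ansatz, both recalling $\delta = \eps^{\kappa}$ and $\omega_\delta^* = \mathcal{O}(\sqrt\delta)$. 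For the time-integrated source, I would bound each of the three contributions from (a)--(b) uniformly in $\alpha$: the $N$-th order dispersion Taylor residual gives the $\eps^{N+\nu-1-\kappa}$ exponent; the quadratic curvature correction of $\omega_\pm^2$, after division by $\omega_\delta^*$, gives $\eps^{\nu-1+\kappa/2}$; and the linear Bloch-mode expansion error gives $\eps^{2\nu-1-\kappa/2}$. Iterating the ansatz a second time so that spatial derivatives of $(V_1,V_2)$ appearing in the source can themselves be controlled through \eqref{eq:Dirac} produces the $t^2$ terms with the analogous combined exponents.

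The main obstacle is controlling all of the above uniformly in $\alpha$ through the Floquet--Bloch integral while tracking the two small parameters $\eps$ and $\delta$ simultaneously: the $\sigma_\delta$-weighted inner product, the $\mathcal{O}(\sqrt\delta)$ subwavelength gap, and the piecewise-constant coefficients together preclude the use of standard smooth-coefficient energy identities. This is precisely what the new high-contrast wave-equation estimates developed in the appendix are designed to address, and I would invoke them to obtain $L^2$-to-$L^2$ control on the $\mathcal{L}_{\eps,\delta}$-propagator with constants that do not blow up as $\delta \to 0$. Finally, for $\kappa \in (2/3, 2)$, choosing $\nu$ close to $1$ and $N$ sufficiently large makes every exponent in the two minima strictly positive, so that $t \le \eps^{-s_1}$ still yields $\|r_{\eps,\delta}(\cdot,t)\|_{L^2} \le C_{\bm{F}}\eps^{s_2}$ for some $s_1, s_2 > 0$, which is the second conclusion of the theorem.
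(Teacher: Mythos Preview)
Your broad strategy---Floquet--Bloch representation, Duhamel, decomposition by band index and by proximity of the quasimomentum to $\alpha^*$, with the Dirac system arising from the cancellation on the two-dimensional subspace---is indeed what the paper does. But several of your detailed mechanisms are misattributed, and one structural step is missing.

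First, you write that the $\eps^{-2}$ terms cancel ``because $S_1,S_2$ are quasi-static Dirac eigenmodes''. They are not: only $S_{1,\delta},S_{2,\delta}$ satisfy $\mathcal{L}_\delta S_{j,\delta}=(\omega_\delta^*)^2 S_{j,\delta}$, while $\mathcal{L}_\delta S_j$ is a distribution supported on $\partial D$. The paper deals with this by an explicit three-term split $r_{\eps,\delta}=I_1+I_2+I_3$: $I_1$ replaces $S_j$ by $S_{j,\delta}$ in the ansatz, $I_2$ replaces $S_j$ by $S_{j,\delta}$ in the initial data (both controlled by \eqref{eq:SjH1error}), and only then does the residual $I_3=\eta_{\eps,\delta}$ see the genuine eigenmodes. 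You need this step.

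Second, your bookkeeping of the exponents is largely wrong. The term $t\,\eps^{N+\nu-1-\kappa}$ does not come from an ``$N$-th order dispersion Taylor residual'' but from the $\mathbf{m}\neq 0$ lattice modes in the Poisson summation formula (Lemma \ref{f.w.lemma}); the constant $\eps^{1-\kappa/2}=\eps\delta^{-1/2}$ comes from the higher bands $j\ge 3$ after one integration by parts in time, not from matching $\partial_t w_{\eps,\delta}(\cdot,0)$; and the $t^2$ terms arise from a second integration by parts in time (for $\eta^{(2)}$) and from a crude $|\sin(\omega T)/\omega|\le T$ bound applied twice (for $\eta^{(3)}_{\rm II}$), not from ``iterating the ansatz''.

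Third, your description of the appendix estimates is too vague. The concrete obstacle is that the natural norm for $\mathcal{L}_\delta$ is $L^2_\delta$, but the theorem is stated in $L^2$; moreover $\omega_{1,\delta}(\alpha)\to 0$ as $\alpha\to 0$, so the propagator $\sin(\sqrt{\mathcal{L}_\delta}T)/\sqrt{\mathcal{L}_\delta}$ has no uniform $L^2_\delta$ bound. The paper's remedy (Theorem \ref{thm:L2wave}, Proposition \ref{prop: exterior nonzero}) is to split $\R^2$ into the inclusion set $D^\Lambda$ and the perforated exterior $\Omega^\Lambda$: on $D^\Lambda$ the $L^2_\delta$ estimate already gives $L^2$ with a $\sqrt\delta$ gain, while on $\Omega^\Lambda$ one transfers the problem to the Dirichlet Laplacian $-\Delta_{\rm ex}$, which has a uniform spectral gap, via a harmonic extension of the trace on $\partial D^\Lambda$. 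This exterior-domain trick is what produces the stated $L^2$ bounds and is the main new analytic ingredient.
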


The theorem above basically says that, for a time interval that is quite long, the dynamics of initial wave packets spectrally centered at the Dirac point can be well approximated by the main term of \eqref{ansatz}, that is wave packets of the same type with time-dependent profiles evolving by the Dirac system \eqref{eq:Dirac}.

\begin{remark}
Some remarks about the Dirac system \eqref{eq:Dirac} and about the approximation \eqref{ansatz} are in order. We can rewrite the Dirac equation \eqref{eq:Dirac} as 
\begin{equation*}
	\mi \partial_t \begin{pmatrix} V_1\\V_2 \end{pmatrix}
	= \left[\Theta_\delta \sigma_1 D_1 - \Theta_\delta \sigma_2 D_2\right] \begin{pmatrix} V_1 \\ V_2 \end{pmatrix},
\end{equation*}
where the matrices in the front of $D_1 := \frac{1}{\mi} \partial_{x_1}$ and $D_2 := \frac{1}{\mi}  \partial_{x_2}$ are Hermitian matrices obtained by multiplication with the Pauli matrices $\sigma_1,\sigma_2$. More precisely, with $\eta_{\#,\delta} = \frac{\mi a_\delta}{2\omega^*_\delta}$, the matrices involved are
\begin{equation*}
	\Theta_\delta = \begin{pmatrix}
		\eta_{\#,\delta} & \\ & \overline{\eta_{\#,\delta}}
	\end{pmatrix}, \quad
\sigma_1 = \begin{pmatrix}
		 & 1\\ 1 
	\end{pmatrix}, \quad
\sigma_2 = \begin{pmatrix}
		 & -\mi\\ \mi 
	\end{pmatrix}.
\end{equation*}
Note that the modified matrices satisfy
\begin{equation*}
	(\Theta_\delta \sigma_j)^2 = |\eta_{\#,\delta}|^2 I_2, \, \text{for $j=1,2$,}\; \text{and} \; (\Theta_\delta \sigma_1) (\Theta_\delta \sigma_2) + (\Theta_\delta \sigma_2)(\Theta_\delta \sigma_1) = 0.  
\end{equation*}
As a result, both $V_1$ and $V_2$ satisfy the wave equation
\begin{equation}
\label{eq:waveV}
\partial^2_t V_j = \left|\eta_{\#,\delta}\right|^2 \Delta V_j, \quad j=1,2.
\end{equation}
Since $\omega_{\delta}^* = \sqrt{\frac{  c_1^{\alpha^*} }{|D_1|}} \delta^{\frac{1}{2}} + \mathcal{O}(\delta)$ and $a_{\delta} =  \frac{\mathrm{i}c}{|D_1|} \delta+ \mathcal{O}(\delta^{\frac{3}{2}})$, the wave speed $|\eta_{\#,\delta}|$ above is of order $\sqrt{\delta}$.

We observe that for any multi-index $\mathbf{m}=(m_0,m_1,m_2)\in \N^3$, the pair $(\partial^{\mathbf{m}}_{t,x_1,x_2} V_1, \partial^{\mathbf{m}}_{t,x_1,x_2} V_2)$ satisfy the same Dirac equation as in \eqref{eq:Dirac}.

\medskip

Note we can further write $V_j(x,t)$ as $\widetilde V_j(x,|\eta_{\#,\delta}|t)$, $j=1,2$, so that  $(\widetilde{V}_1,\widetilde{V}_2)$ solve a Dirac system with coefficients of unit size. Since $|\eta_{\#,\delta}| = \mathcal{O}(\sqrt{\delta})$ and $\omega_\delta^*/\eps = \mathcal{O}(\sqrt{\delta}/{\eps})$, we see that the approximation \eqref{ansatz} remains a superposition of the (quasi-stationary) Dirac modes $S_1$ and $S_2$ with profiles slowly varying in space and time and with phase rapidly varying in time.  
\end{remark}

\section{Preliminaries} \label{sec:prelim}

\subsection{Floquet-Bloch theory for the operator $\mathcal{L}_{\delta}$} 
\label{sec:FB_L}

We briefly review the Floquet-Bloch theory for the operator $\mathcal{L}_{\delta}$ acting on $L^2(\R^2)$. Due to the function $\sigma_{\delta}$ outside the divergence in \eqref{eq:Ldeldef}, it is natural to consider the following weighted $L^2$ space and the associated Sobolev spaces.

Let $L_{\delta}^2(\mathbb{R}^2) $ be the Hilbert space $L^2(\mathbb{R}^2) $ with weighted inner product
\begin{equation*}
	\left\langle u, v \right\rangle_{L^2_{\delta}} = \int_{\mathbb{R}^2}   \sigma^{-1}_{\delta} u(x)\overline{v(x)} \,dx , \quad \quad \mathrm{for} \ \mathrm{any} \ u,v \in L^2(\mathbb{R}^2) ,
\end{equation*}
and denote $\|\cdot \|_{L^2_{\delta}(\mathbb{R}^2)}$ the associated norm. 
For any $\alpha\in Y^*$, let $L^2_{\delta,\Lambda^\alpha}$ be the Hilbert space of $\alpha$-quasiperiodic functions:
\begin{equation}
\label{eq:L2delalp}
L^2_{\delta,\Lambda^\alpha} = \{f \in L^2_{\rm loc}(\R^2) \;:\; f(\cdot +v)=e^{\mi \alpha\cdot v}f(\cdot) \, \text{for all $v\in \Lambda$}\},
\end{equation}
and let $L^2_{\delta,\Lambda^\alpha}$ be equipped with the inner product 
\begin{equation}
\label{eq:ipYdel}
	\left\langle f, g \right\rangle_{L^2_{\delta,\Lambda^\alpha}} = \int_Y \sigma^{-1}_{\delta} f(x)\overline{g(x)} \,dx.
\end{equation} 
We can also view any $f\in L^2_{\delta,\Lambda^\alpha}$ as defined on the torus $\bT = \R^2/\Lambda$ since $f$ is determined by its value on the unit cell $Y$. Hence, we also denote the inner product above simply by $\langle\cdot,\cdot\rangle_{L^2_\delta(Y)}$ and the corresponding norm by $\|\cdot\|_{L^2_\delta(Y)}$. For $\alpha = 0$, we simply write $L^2_{\delta,\Lambda}$ for $L^2_{\delta,\Lambda^0}$ and it is the space of locally square-integrable $\Lambda$-periodic functions. 

In the same manner, we also introduce the weighted Sobolev space $H^1_\delta(\R^2)$ which is the usual space $H^1(\R^2)$ equipped with the weighted inner product
\begin{equation}
\label{def: H^1 R2}	
\langle u, v \rangle_{H^1_{\delta}} = \int_{\mathbb{R}^2}  \sigma_{\delta}^{-1} \big( u(x)\overline{v(x)} +  \nabla u(x) \cdot \overline{\nabla v(x)} \big) \,dx , \quad \quad \mathrm{for} \ \mathrm{all} \ u,v \in H^1(\mathbb{R}^2).
\end{equation}
Let $\|\cdot \|_{H^1_{\delta}(\mathbb{R}^2)}$ denote the associated norm. For each $\alpha \in Y^*$, let $H^1_{\delta,\Lambda^\alpha} = H^1_{\delta,\Lambda^\alpha}(\R^2/\Lambda)$ denote the Hilbert space
\begin{equation}
\label{eq:H1delalp}
H^1_{\delta,\Lambda^\alpha} = \{f\in H^1_{\rm loc} \,:\, f(\cdot +v) = e^{\mi \alpha\cdot v}f(\cdot) \, \, \text{ for all $v\in \Lambda$}\}
\end{equation}
equipped with the weighted inner product
\begin{equation}
\langle u,v\rangle_{H^1_{\delta,\Lambda^\alpha}} = \int_Y \frac{1}{\sigma_\delta(x)} \left( f(x)\ol{g(x)} + \nabla f(x)\cdot \overline{\nabla g(x)}\right)\,dx.
\end{equation}
We also denote this inner product simply by $\langle\cdot,\cdot\rangle_{H^1_\delta(Y)}$, and by $\|\cdot\|_{H^1_{\delta}(Y)}$ be the associated norm. $H_{\delta}^{-1} (\mathbb{R}^2)$ denotes the dual space of $H_{\delta}^1(\mathbb{R}^2)$ with the dual-norm.

We now introduce the Floquet-Bloch transform. First for Schwartz space $\mathscr{S}(\mathbb{R}^2)$, the transform is given by
\begin{equation}
\label{eq:FBT-S}
	\mathcal{U} f (x,\alpha) =\sum_{l \in \Lambda} e^{\mi\alpha \cdot l} f(x-l), \quad \mathrm{for} \ x \in \R^2, \ \alpha \in \R^2.
\end{equation}
Clearly, for each $\alpha \in \R^2$, $\mathcal{U}f(\cdot,\alpha)$ satisfies
\begin{equation*}
\mathcal{U}f(\cdot+l,\alpha) = e^{\mi\alpha\cdot l} \mathcal{U}f(\cdot,\alpha), \quad \mathcal{U}f(x,\cdot+ \gamma) = \mathcal{U}f(x,\cdot), \quad \forall\, l\in \Lambda, \forall\, \gamma \in \Lambda^*. 
\end{equation*}
The first property above is referred to as \emph{$\alpha$-quasiperiodic}. The second property shows $\mathcal{U}f(x;\cdot)$ is $\Lambda^*$-periodic and hence we only need to consider $\alpha \in Y^*$. Hence, the Floquet transform maps $\mathscr{S}(\R^2)$ to the space
\begin{equation*}
\begin{aligned}
\mathcal{W}(\R^2 \times \R^2) &:= \bigg\{ \phi \in C^\infty(\R^2_x\times \R^2_\alpha;\bC) \,:\, \forall\,x\in \R^2, \,\alpha \in \R^2, \,j=1,2, \, \\
&\qquad\qquad \phi(x+l_j,\alpha) = e^{\mathrm{i}\alpha\cdot l_j}\phi(x,\alpha) \text{ and }\, \phi(x,\alpha+\alpha_j) = \phi(x,\alpha) \bigg\}.
\end{aligned}
\end{equation*}
In fact, the mapping $\mathcal{U}: \mathscr{S}(\R^2)\to \mathcal{W}(\R^2\times \R^2)$ is an isomorphism, and the inverse operator is
\begin{equation}\label{inversion}
	\mathcal{U}^{-1} \phi (x) = \int_{Y^*} \phi (x,\alpha)\,d\alpha, \qquad \quad \mathrm{for}\ \phi \in 	\mathcal{W}(\R^2\times \R^2).
\end{equation}
Indeed, it is easy to check that $\mathcal{U}^{-1}\mathcal{U} = \mathrm{id}$ holds on $\mathscr{S}(\R^2)$, so injectivity holds. To check surjectivity, it suffices to show that $\mathcal{U}^{-1}$ maps $\mathcal{W}$ to $\mathscr{S}(\R^2)$ because given this the identity $\mathcal{U}\mathcal{U}^{-1} = \mathrm{id}$ holds on $\mathcal{W}$. Fix an arbitrary $\phi \in \mathcal{W}(\R^2_x\times \R^2_\alpha)$, one show that for any fixed multi-index $\gamma, \beta \in \N^2$, $x^\gamma \partial^\beta \mathcal{U}^{-1}\phi$ is uniformly bounded over $x \in \R^2$. We refer to \cite{MR1232660,AmmFep} for the details.

On the other hand, one easily checks that the following Plancherel-type identity holds:
\begin{equation}\label{plancherel}
 \| f \|^2_{L^2_{\delta}(\mathbb{R}^2)}
= \int_{Y^*}  \| \mathcal{U} f (\cdot,\alpha) \|^2_{L^2_{\delta}(Y)}  \,d\alpha .
\end{equation}
The Plancherel identity extends $\mathcal{U}$ as an isometry from $L_{\delta}^2(\mathbb{R}^2) $ to the space $L^2(\bT^*;L^2_\delta(Y))$, the space of $\Lambda^*$-periodic and $L^2_\delta(Y)$-valued functions. In fact, it is more revealing to view the range of $L_{\delta}^2(\R^2)$ under $\mathcal{U}$ as $\int_{Y^*}^\oplus L^2_\delta(\alpha)\,d\alpha$ --- the direct integral of the Hilbert spaces $L^2_\delta(\alpha)$, where each space can be thought as $L^2_{\delta,\Lambda^\alpha}$ (restricted to the unit cell $Y$). See the next subsection.
\medskip

Now consider the operator $\mathcal{L}_\delta$ defined by \eqref{eq:Ldeldef}. More precisely, It is defined by
\begin{equation}
\label{eq:cLL2}
\begin{aligned}
&\mathcal{L}_\delta u = -\sigma_\delta \nabla \cdot \left(\sigma_\delta^{-1}\nabla u\right),\\
	&\mathrm{Dom}\,( 	\mathcal{L}_{\delta} )  = \left\{  u \in L^2_\delta(\mathbb{R}^2) :  - \sigma_{\delta} \nabla \cdot \left( \sigma_{\delta}^{-1} \nabla u \right) \in  L_{\delta}^2(\mathbb{R}^2)   \right\} \subset L^2_\delta(\R^2).
 \end{aligned}
\end{equation}
In view of the weight in the inner product of $L^2_\delta$, we see that $\mathcal{L}_\delta$ is a non-negative self-adjoint operator. Note that $\mathcal{L}_\delta$ acts on functions defined in non-compact spatial space. 
Via the Floquet-Bloch theory, it is known that the operator $\mathcal{L}_{\delta}$ is unitarily equivalent to the direct integral operator $\int^{\oplus}_{Y^*} \mathcal{L}_{\delta}(\alpha)\,d\alpha$ via conjugation by $\mathcal{U}$; that is,
\begin{equation}\label{fl-b}
		\mathcal{L}_{\delta} = \mathcal{U}^{-1} \left( \int^{\oplus}_{Y^*} \mathcal{L}_{\delta}(\alpha)\,d\alpha \right)\mathcal{U}, \quad \mathrm{meaning}\ \  \left(\mathcal{U} \mathcal{L}_{\delta} u\right) (\cdot,\alpha)= \mathcal{L}_{\delta}(\alpha ) \left(\mathcal{U}u(\cdot,\alpha)\right)\ \forall\, \alpha \in Y^*.
\end{equation}
Here for each $\alpha \in Y^*$, $\mathcal{L}_\delta(\alpha)$ denotes the operator $-\sigma_\delta \nabla \cdot(\sigma^{-1}_\delta \nabla)$ with domain
\begin{equation}
\label{eq:cLL2alpha}
	\mathrm{Dom}\,( 	\mathcal{L}_{\delta}(\alpha) )  = \left\{  u \in H^1_{\delta,\Lambda^\alpha} \;:\;  - \sigma_{\delta} \nabla \cdot \left( \sigma_{\delta}^{-1} \nabla \right) \in  L_{\delta}^2(\alpha)   \right\}.
\end{equation}
We emphasize that although $\mathcal{L}_\delta(\alpha)$ have the same expression \eqref{eq:Ldeldef} for all $\alpha \in Y^*$, the domain of those operators vary with respect to $\alpha$. 

We summarize the above discussions into the following commutative diagram:
	\begin{displaymath}
	\xymatrix{
		{L_{\delta}^2(\mathbb{R}^2)} \ar[rr]^{\mathcal{L}_{\delta}} \ar[d]^{\mathcal{U}}
		&& {L_{\delta}^2(\mathbb{R}^2)} \ar[d]^{\mathcal{U}} \\ 
		{\int_{Y^*}^{\oplus} L_{\delta}^2(\alpha)\,d\alpha} \ar[rr]^{\int_{Y^*}^{\oplus} \mathcal{L}_{\delta}(\alpha)\,d\alpha}
		&& {\int_{Y^*}^{\oplus} L_{\delta}^2(\alpha)\,d\alpha} 
	}
\end{displaymath}

\subsection{Bloch eigenmodes and representation formulas.} 
\label{sec:FBstates}

For each $\alpha \in Y^*$, the operator $\mathcal{L}_{\delta}(\alpha)$ is self-adjoint and non-negative, and it has compact resolvents. In fact, by standard spectral theory, its spectrum consists of a sequence of non-negative eigenvalues:
\begin{equation*}
	0 \leq \omega_{1,\delta}^2(\alpha) \leq \omega_{2,\delta}^2(\alpha) \leq \cdots \leq \omega_{j,\delta}^2(\alpha) \leq \cdots,
\end{equation*}
listed with multiplicity, and tending to positive infinity. The frequency $\omega_{j,\delta}$ and the corresponding \textit{Bloch eigenmode} $\Phi_{j,\delta}$ are determined by the following eigen-value problem: find a $L^2_\delta(Y)$-normalized $\alpha$-quasiperiodic function  $\Phi_{j,\delta}(\cdot,\alpha)  \in H^1_{\delta,\Lambda^\alpha}$ such that the pair $(\omega^2_{j,\delta}(\alpha),\Phi_{j,\delta})$ solves 
\begin{equation}
\label{def eigenmode}
	\begin{aligned}
	&&\mathcal{L}_{\delta}(\alpha ) \Phi_{j,\delta}(\cdot,\alpha) = \omega_{j,\delta}^2(\alpha) \Phi_{j,\delta}(\cdot,\alpha), \quad &\mathrm{for\ any\ }j \geq 1,\\
		&&\left\langle \Phi_{j,\delta}(\cdot,\alpha) , \Phi_{k,\delta}(\cdot,\alpha) \right\rangle_{L^2_{\delta}(Y)} = \delta_{jk}, \quad &\mathrm{for\ any\ }j,k  \geq 1. 
  \end{aligned}
\end{equation}
The second line requires that $\{\Phi_{j,\delta}(\cdot,\alpha)\}_j$ is orthonormal in $L^2_\delta(Y)$ (or equivalently, in $L^2_{\delta,\Lambda^\alpha}(\R^2)$). Each $\omega_j(\alpha)$ is called a \textit{Bloch eigenfrequency}, and each pair $(\omega_{j,\delta}^2(\alpha),\Phi_{j,\delta}(\cdot,\alpha))$ is called a \emph{Floquet-Bloch state}. For each $j\in \N^*$, we call the mapping
\begin{equation*}
\begin{aligned}
\omega_{j,\delta} \quad : \quad Y^* &\mapsto [0,\infty)\\
	\alpha &\mapsto \omega_{j,\delta}(\alpha)
 \end{aligned}
\end{equation*}
the $j$-th \textit{Bloch (frequency) band} associated to the operator $\mathcal{L}_{\delta}$. It is known to be a Lipschitz continuous function. The \textit{band structure} of $\mathcal{L}_{\delta}$ refers to the collection of Bloch bands and their associated Bloch eigenmodes.

By \eqref{fl-b}, the spectrum of $\mathcal{L}_{\delta}$ can be obtained from the spectra of the fiber operators:
\begin{equation*}
	\mathrm{Spec}\,(\mathcal{L}_{\delta}) = \bigcup_{j=1}^{\infty}  \big[ \min_{\alpha \in Y^*} \omega_{j,\delta}^2(\alpha) ,  \max_{\alpha \in Y^*} \omega_{j,\delta}^2(\alpha) \big] .
\end{equation*}

The Floquet-Bloch states $\{\omega_{j,\delta}^2(\alpha),\Phi_{j,\delta}(\cdot,\alpha)\}$ give convenient representation for the direct integral of Hilbert spaces $\int_{Y^*}^\oplus L^2_\delta(\alpha)\,d\alpha$, and hence for  $L^2_\delta$ functions via Floquet-Bloch transformation. Indeed, we can choose a family $\{(\omega_{j,\delta}^2(\alpha),\Phi_{j,\delta}(\cdot,\alpha)\}_{\alpha,j}$ so that for each fixed $j$, $\alpha\mapsto \omega_{j,\delta}(\alpha)$ and $\alpha \mapsto \Phi_{j,\delta}(\cdot,\alpha)$ are measurable and, for each fixed $\alpha \in Y^*$, the states $\{\Phi_{j,\delta}(\cdot,\alpha)\}_{j=1}^\infty$ form an orthonormal basis for $L^2_\delta(\alpha)$; see e.g.\,\cite{MR516045}. We say this choice specifies a measurable field of Hilbert spaces $\{L^2_\delta(\alpha)\}_{\alpha\in Y^*}$ over $Y^*$. A section $f : \alpha \mapsto \prod_{\alpha\in Y^*} L^2_\delta(\alpha)$ (that is, a mapping satisfying $f(\cdot,\alpha)\in L^2_\delta(\alpha)$ for all $\alpha$) then called a measurable vector field if $\alpha \mapsto \langle f(\cdot,\alpha),\Phi_j(\cdot,\alpha)\rangle_{L^2_\delta(\alpha)}$ is measurable for all $j$. The direct integral $\int_{Y^*}^\oplus L^2_\delta(\alpha)\,d\alpha$ is then the space of measurable vector fields such that $\|f(\cdot,\alpha)\|_{L^2_\delta(\alpha)}$ is square integrable over $\alpha \in Y^*$ and can be made a separable Hilbert space itself. We refer to \cite[section 7.4]{MR3444405} for the precise definitions and for more details.

With the above set-up, we see that the Floquet-Bloch transform $\mathcal{U}$ maps $L^2_\delta(\R^2)$ isometrically to the direct integral $\int_{Y^*}^\oplus L^2_\delta(\alpha)\,d\alpha$, and the following representations hold.
\begin{theorem}
	For any $v \in \mathscr{S}(\R^2)$, 
the identity
	\begin{equation}
 \label{eq:L2FBrep}
		v(x) = \sum_{j=1}^{\infty} \int_{Y^*}\langle v(\cdot) , \Phi_{j,\delta}(\cdot,\alpha) \rangle_{L^2_{\delta}(\mathbb{R}^2)} \Phi_{j,\delta}(x,\alpha) \,d\alpha
	\end{equation}
 holds in $L^2(\R^2)$ and we have the following Plancherel-type inequality
\begin{equation}
\label{eq:plancherel}
	\|v \|_{L^2_{\delta}(\mathbb{R}^2)}^2 = \sum_{j=1}^{\infty} \int_{Y^*} \left| \langle v(\cdot) , \Phi_{j,\delta}(\cdot,\alpha) \rangle_{L^2_{\delta}(\mathbb{R}^2)}  \right|^2 \,d\alpha .
\end{equation}
\end{theorem}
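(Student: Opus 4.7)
The plan is to combine the Plancherel identity \eqref{plancherel} for the Floquet-Bloch transform $\mathcal{U}$ with the Parseval identity in each fiber $L^2_\delta(\alpha)$, using that $\{\Phi_{j,\delta}(\cdot,\alpha)\}_{j\ge 1}$ is an orthonormal basis of $L^2_\delta(\alpha)$.

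The key preliminary identity that ties the two pictures together is
\[
\langle v,\Phi_{j,\delta}(\cdot,\alpha)\rangle_{L^2_\delta(\mathbb{R}^2)} \;=\; \langle \mathcal{U}v(\cdot,\alpha),\Phi_{j,\delta}(\cdot,\alpha)\rangle_{L^2_\delta(Y)},
\]
valid for every $\alpha\in Y^*$ and $j\ge 1$. To prove it, I would start from the right-hand side, unfold the definition \eqref{eq:FBT-S} of $\mathcal{U}v$, use the $\Lambda$-periodicity of $\sigma_\delta$ together with the $\alpha$-quasi-periodicity of $\Phi_{j,\delta}(\cdot,\alpha)$ (which converts the phase $e^{\mathrm{i}\alpha\cdot l}$ into a translation $\Phi_{j,\delta}(x-l,\alpha)$), and then reassemble the sum $\sum_{l\in\Lambda}\int_Y$ into the single integral $\int_{\mathbb{R}^2}$ using that $\{Y-l\}_{l\in\Lambda}$ tiles the plane. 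Absolute convergence (hence Fubini) follows from the Schwartz decay of $v$ together with the boundedness of $\Phi_{j,\delta}(\cdot,\alpha)$ on $\mathbb{R}^2$ (by elliptic regularity $\Phi_{j,\delta}(\cdot,\alpha)$ is continuous on $Y$, and quasi-periodicity propagates the bound).

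With this identity in hand, for each fixed $\alpha\in Y^*$, Parseval's identity in $L^2_\delta(\alpha)$ yields
\[
\mathcal{U}v(x,\alpha)=\sum_{j=1}^\infty \langle v,\Phi_{j,\delta}(\cdot,\alpha)\rangle_{L^2_\delta(\mathbb{R}^2)}\,\Phi_{j,\delta}(x,\alpha)
\]
(convergent in $L^2_\delta(\alpha)$) and
\[
\|\mathcal{U}v(\cdot,\alpha)\|^2_{L^2_\delta(Y)} \;=\; \sum_{j=1}^\infty \bigl|\langle v,\Phi_{j,\delta}(\cdot,\alpha)\rangle_{L^2_\delta(\mathbb{R}^2)}\bigr|^2.
\]
Integrating the second identity over $\alpha\in Y^*$ and invoking \eqref{plancherel} immediately produces the Plancherel-type identity \eqref{eq:plancherel}. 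For the representation \eqref{eq:L2FBrep}, I would define the partial sums
\[
v_J(x) := \sum_{j=1}^J \int_{Y^*} \langle v,\Phi_{j,\delta}(\cdot,\alpha)\rangle_{L^2_\delta(\mathbb{R}^2)}\,\Phi_{j,\delta}(x,\alpha)\,d\alpha,
\]
and observe, using the linearity of $\mathcal{U}$ and the inversion formula \eqref{inversion}, that $\mathcal{U}v_J(\cdot,\alpha)$ coincides with the $J$-th partial sum of the fiberwise eigenfunction expansion of $\mathcal{U}v(\cdot,\alpha)$. The isometry \eqref{plancherel} then gives
\[
\|v - v_J\|_{L^2_\delta(\mathbb{R}^2)}^2 \;=\; \int_{Y^*} \bigl\|\mathcal{U}v(\cdot,\alpha) - \mathcal{U}v_J(\cdot,\alpha)\bigr\|^2_{L^2_\delta(Y)}\,d\alpha,
\]
which tends to $0$ as $J\to\infty$ by dominated convergence: the integrand goes to $0$ pointwise in $\alpha$ by Parseval in $L^2_\delta(\alpha)$, and it is dominated by $\|\mathcal{U}v(\cdot,\alpha)\|^2_{L^2_\delta(Y)}$, which is $\alpha$-integrable by \eqref{plancherel}.

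The main subtlety, rather than being conceptual, is bookkeeping: one must verify that the expansion in \eqref{eq:L2FBrep} holds in the $L^2(\mathbb{R}^2)$ sense and not only fiberwise, i.e., one must rigorously interchange the sum over $j$ and the integral over $\alpha$. The partial-sum argument above, together with the dominated convergence controlled by \eqref{plancherel}, handles exactly this point; note that convergence in $L^2_\delta(\mathbb{R}^2)$ implies convergence in $L^2(\mathbb{R}^2)$ because, for fixed $\delta>0$, the weight $\sigma_\delta^{-1}$ is bounded from above and below on $\mathbb{R}^2$, so the two norms are equivalent.
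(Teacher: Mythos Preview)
Your proposal is correct and follows essentially the same route as the paper: both hinge on the identity $\langle v,\Phi_{j,\delta}(\cdot,\alpha)\rangle_{L^2_\delta(\mathbb{R}^2)}=\langle \mathcal{U}v(\cdot,\alpha),\Phi_{j,\delta}(\cdot,\alpha)\rangle_{L^2_\delta(Y)}$ (proved by unfolding $\mathcal{U}$ and using periodicity of $\sigma_\delta$ together with $\alpha$-quasiperiodicity of $\Phi_{j,\delta}$), after which fiberwise Parseval combined with \eqref{plancherel} yields both conclusions. Your write-up is in fact more detailed than the paper's, which only verifies the key identity and then asserts the rest; one small remark is that your appeal to elliptic regularity for continuity of $\Phi_{j,\delta}$ is unnecessary (and delicate, since $\sigma_\delta$ is discontinuous) --- the $L^2_\delta(Y)$-normalization together with Cauchy--Schwarz cell by cell already justifies the Fubini step.
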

Here, for $v\in \mathscr{S}(\R^2)$, the pairing $\langle v, \Phi_{j,\delta}(\cdot,\alpha)\rangle_{L^2_\delta(\R^2)}$  in $L^2_\delta(\R^2)$, but it makes sense as the integral over $\R^2$ of the product $v\overline{\Phi_{j,\delta}}$ with respect to the weighted measure $\sigma_\delta^{-1}dx$ and it is consistent with the $L^2_\delta$ inner product. In fact, we can define it more precisely as
\begin{equation}
\label{eq:vPhiL2}
\langle v, \Phi_{j,\delta}(\cdot,\alpha)\rangle_{L^2_\delta(\R^2)} := \langle \mathcal{U} v,\Phi_{j,\delta}\rangle_{L^2_\delta(\alpha),L^2_\delta(\alpha)}.
\end{equation}
Here, the right hand side above is the inner product in $L^2_\delta(\alpha)$. Indeed, for $v\in \mathscr{S}(\R^2)$, using the periodicity of $\sigma_\delta$ and the $\alpha$-quasiperiodicity of $\Phi_{j,\delta}(\cdot,\alpha)$, we have
\begin{equation*}
\begin{aligned}
\langle \mathcal{U} v, \Phi_{j,\delta}\rangle_{L^2_\delta(\alpha),L^2_\delta(\alpha)} &= \int_{Y} \sum_{\ell \in \Lambda} v(x-\ell)e^{\mi \alpha\cdot \ell}\overline{\Phi_{j,\delta}(x,\alpha)}\sigma_\delta^{-1}(x)\,dx\\
 &= \sum_{\ell \in \Lambda} \int_{Y}  v(x-\ell)\overline{\Phi_{j,\delta}(x-\ell,\alpha)}\sigma_\delta^{-1}(x-\ell)\,dx\\
 &= \int_{\R^2} v(x)\overline{\Phi_{j,\delta}(x,\alpha)}\sigma^{-1}_\delta(x)\,dx.
\end{aligned}
\end{equation*}
By the density of $\mathscr{S}$ in $L^2_\delta(\R^2)$, the Plancherel identity above (via Floquet-Bloch states) holds for general $u\in L^2_\delta(\R^2)$ and the representation \eqref{eq:L2FBrep} holds in proper senses.

Similarly, we also have:
\begin{corollary}
	For any $u \in H^1_{\delta}(\mathbb{R}^2)$, one has
	\begin{equation}\label{eq:H1FB}
		\|u \|^2_{H^1_{\delta}(\mathbb{R}^2)} = \sum_{j=1}^{\infty} \int_{Y^*} \Big(1 + \omega_{j,\delta}^2(\alpha) \Big)\left| \langle u(\cdot) , \Phi_{j,\delta}(\cdot,\alpha) \rangle_{L^2_{\delta}(\mathbb{R}^2)}  \right|^2 \,d\alpha .
	\end{equation}
\end{corollary}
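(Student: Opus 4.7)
The plan is to recognize the right-hand side as the Floquet-Bloch diagonalization of the quadratic form associated to $I + \mathcal{L}_\delta$, and then reduce to the $L^2_\delta$ Plancherel identity \eqref{eq:plancherel}. First I would note the decomposition
\begin{equation*}
\|u\|^2_{H^1_\delta(\R^2)} = \|u\|^2_{L^2_\delta(\R^2)} + \int_{\R^2}\sigma_\delta^{-1}\,|\nabla u|^2\,dx,
\end{equation*}
and observe that the quadratic form $q(u,v) := \int_{\R^2}\sigma_\delta^{-1}\nabla u\cdot\overline{\nabla v}\,dx$ is precisely the Dirichlet form associated with the self-adjoint operator $\mathcal{L}_\delta$ on $L^2_\delta(\R^2)$: it is closed, symmetric, non-negative, and has form domain $H^1_\delta(\R^2)$. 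By general functional calculus, $\|u\|_{H^1_\delta}^2 = q(u,u) + \|u\|_{L^2_\delta}^2 = \langle(I+\mathcal{L}_\delta)^{1/2}u,(I+\mathcal{L}_\delta)^{1/2}u\rangle_{L^2_\delta(\R^2)}$ for every $u\in H^1_\delta(\R^2)$, and for $u\in \mathrm{Dom}(\mathcal{L}_\delta)$ this equals $\langle(I+\mathcal{L}_\delta)u,u\rangle_{L^2_\delta(\R^2)}$.

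Next I would apply the Floquet-Bloch unitary equivalence \eqref{fl-b}. Under conjugation by $\mathcal{U}$, the operator $I+\mathcal{L}_\delta$ becomes the direct integral $\int^{\oplus}_{Y^*}(I+\mathcal{L}_\delta(\alpha))\,d\alpha$, and each fiber $\mathcal{L}_\delta(\alpha)$ is diagonalized by the orthonormal basis $\{\Phi_{j,\delta}(\cdot,\alpha)\}_{j\ge 1}$ of $L^2_\delta(\alpha)$ with eigenvalues $\omega_{j,\delta}^2(\alpha)$. Expanding $\mathcal{U}u(\cdot,\alpha)$ in this basis, using the identification \eqref{eq:vPhiL2} between the fiber inner product and the $L^2_\delta(\R^2)$-pairing, and then integrating in $\alpha$ gives
\begin{equation*}
\langle(I+\mathcal{L}_\delta)u,u\rangle_{L^2_\delta(\R^2)} = \sum_{j=1}^{\infty}\int_{Y^*}(1+\omega_{j,\delta}^2(\alpha))\,\bigl|\langle u,\Phi_{j,\delta}(\cdot,\alpha)\rangle_{L^2_\delta(\R^2)}\bigr|^2\,d\alpha,
\end{equation*}
which is exactly \eqref{eq:H1FB}. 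This identity is first established for $u\in \mathscr{S}(\R^2)$, where the pairing with $\Phi_{j,\delta}$ is unambiguous and all rearrangements are legitimate, and then extended to $u\in H^1_\delta(\R^2)$ by density: both sides are continuous (in fact, equal as quadratic forms on the form domain).

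The only subtle point, rather than a serious obstacle, is that the formal identity $\int\sigma_\delta^{-1}|\nabla u|^2\,dx = \langle\mathcal{L}_\delta u,u\rangle_{L^2_\delta}$ involves integration by parts across the discontinuity surfaces $\partial D^\Lambda$ of $\sigma_\delta$. This is accommodated automatically by working with the quadratic form $q$ and its closure on $H^1_\delta(\R^2)$: the transmission conditions at the bubble boundaries are encoded in the definition of $\mathrm{Dom}(\mathcal{L}_\delta)$ in \eqref{eq:cLL2}, and no pointwise computation on $\partial D^\Lambda$ is required. With this form-theoretic viewpoint in hand, the argument is a direct translation of the $L^2_\delta$ Plancherel identity to the form level.
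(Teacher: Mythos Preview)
Your proof is correct and follows essentially the same approach as the paper: both arguments reduce to the fiberwise identity $\|\nabla \Phi_{j,\delta}(\cdot,\alpha)\|^2_{L^2_\delta(Y)} = \omega_{j,\delta}^2(\alpha)$ coming from the eigenvalue problem, verify the formula on $\mathscr{S}(\R^2)$, and extend by density. The only cosmetic difference is that you phrase this via the closed quadratic form and functional calculus for $(I+\mathcal{L}_\delta)^{1/2}$, whereas the paper computes the $H^1_\delta$ norm of $\mathcal{U}u(\cdot,\alpha)$ directly against the orthogonal basis $\{\Phi_{j,\delta}(\cdot,\alpha)\}_j$ of $H^1_{\delta,\Lambda^\alpha}$.
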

The proof of the corollary above is straightforward and is essentially due to the fact that for each fixed $\alpha\in Y^*$, the sequence $\{\Phi_{j,\delta}(\cdot,\alpha)\}_j$ form a Hilbert basis for $H^1_{\delta,\Lambda^\alpha}$ and the identity $\|\nabla \Phi_{j,\delta}\|^2_{L^2_\delta(Y)} = \omega^2_{j,\delta}(\alpha)$ as seen from the eigen-value problem. One then checks the identity for $u\in \mathscr{S}$ and concludes by density of $\mathscr{S}$ in $H^1_\delta(\R^2)$. In fact, just as our discussion for $L^2_\delta(\R^2)$ before, the Floquet-Bloch transform $\mathcal{U}$ is an isomorphism from $H^1_\delta(\R^2)$ to the direct integral $\int_{Y^*}^\oplus H^1_\delta(\alpha)\,d\alpha$ where $H^1_\delta(\alpha)$ is the space $H^1_{\delta,\Lambda^\alpha}(\R^2)$ restricted to the unit cell $Y$. Indeed, the Floquet-Bloch states chosen before specify $\{H^1_{\delta}(\alpha)\}_{\alpha \in Y^*}$ as a measurable field of Hilbert spaces and then we define the direct integral space as before. Moreover, if the latter Hilbert space is equipped with the norm
\begin{equation*}
\|f\| = \Big(\int_{Y^*} \|f(\cdot,\alpha)\|_{H^1_\delta(\alpha)}^2 \,d\alpha\Big)^{\frac12},
\end{equation*}
then $\mathcal{U}$ is also isometric. Compute the $H^1_\delta(\alpha)$ norm of $\mathcal{U}f(\cdot,\alpha)$ according to its coefficients with respect to the basis $\{\Phi_{j,\delta}(\cdot,\alpha)\}_j$, we arrive at the Plancherel type identity \eqref{eq:H1FB}.

It is also noteworthy that the direct integral $\int_{Y^*}^\oplus L^2_\delta(\alpha)\,d\alpha$ is somewhat trivial in the sense that $L^2_\delta(\alpha)$ are the same for all $\alpha \in Y^*$; indeed, $L^2_\delta(\alpha)$ is just $L^2_\delta(Y)$ (all $L^2(Y)$ functions can be extended $\alpha$-quasi-periodically from $Y$ to $\R^2$). This is not the case for $\int_{Y^*}^\oplus H^1_\delta(\alpha)\,d\alpha$ and $\{H^1_\delta(\alpha)\}_{\alpha \in Y^*}$ because not all $H^1(Y)$ functions can be extended $\alpha$-quasi-periodically. This point was stressed in \cite{MR3501794}.

In this paper, we sometime need to generalize the definition of Floquet-Bloch transform and the Plancherel type identity above to the class of $H^{-1}_\delta(\R^2)$. Let $\mathscr{H}$ denote the space $\int_{Y^*}^\oplus H^1_\delta(\alpha)\,d\alpha$, and let $\mathscr{H}'$ denote its dual space $\int_{Y^*}^\oplus H^{-1}_\delta(\alpha)\,d\alpha$ where, for each $\alpha \in Y^*$, $H^{-1}_\delta(\alpha)$ is the dual of $H^1_\delta(\alpha)$. By the eigenvalue problem, for each $\alpha \in Y^*$, the sequence $\{\Phi_{j,\delta}(\cdot,\alpha)\}_j$ is still an orthogonal basis for $H^{-1}_\delta(\alpha)$. Moreover, for $f\in \mathscr{H}'$, $f(\cdot,\alpha)$ is characterized by the representation
\begin{equation*}
f(\cdot,\alpha) = \sum_{j=1}^\infty \frac{\langle f(\cdot,\alpha),\Phi_{j,\delta}(\cdot,\alpha)\rangle_{H^{-1}_\delta(\alpha),H^1_\delta(\alpha)}}{\sqrt{1+\omega^2_{j,\delta}(\alpha)}} \Phi_{j,\delta}(\cdot,\alpha).
\end{equation*}
Following \cite{AmmFep}, for $u\in H^{-1}_\delta(\R^2)$,  we define
\begin{equation*}
\langle \mathcal{U}u, f\rangle_{\mathscr{H}',\mathscr{H}} := \langle u, \mathcal{U}^{-1} f\rangle_{H^{-1}_\delta(\R^2),H^1_\delta(\R^2)}, \qquad \forall \, f\in \mathscr{H}.
\end{equation*}
It is straightforward to check that for $u\in \mathscr{S}(\R^2)$, the above agrees with the original definition \eqref{eq:FBT-S}. Clearly, $\mathcal{U}$ is continuous on $H^{-1}_\delta(\R^2)$. Hence, we say the above defines the Floquet-Bloch transform for $H^{-1}_\delta(\R^2)$. Then we have the following result:
\begin{corollary}
For $f \in H^{-1}_{\delta}(\mathbb{R}^2)$, we have
\begin{equation*}
	f = \sum_{j=1}^{\infty} \int_{Y^*} \langle f,\Phi_{j,\delta}(\cdot,\alpha)\rangle_{L^2_\delta(\R^2)} \Phi_{j,\delta}(\cdot,\alpha) \,d\alpha
\end{equation*}
and
\begin{equation}\label{eq:H-1exp}
	\|  f \|_{H_{\delta}^{-1} (\mathbb{R}^2) }^2 = \sum_{j=1}^{\infty} \int_{Y^*} \frac{|\langle f,\Phi_{j,\delta}(\cdot,\alpha)\rangle_{L^2_\delta(\R^)}|^2}{1+ \omega_{j,\delta}^2(\alpha)} \,d\alpha<\infty.
\end{equation}
Here, as an analog to \eqref{eq:vPhiL2}, $\langle f,\Phi_{j,\delta}(\cdot,\alpha)\rangle_{L^2_\delta(\R^2)}$ is understood as the $H^{-1}_\delta(\alpha)$-$H^1_\delta(\alpha)$ pairing between $\mathcal{U}f(\cdot,\alpha)$ and $\Phi_{j,\delta}(\cdot,\alpha)$.
\end{corollary}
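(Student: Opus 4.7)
The plan is to deduce the $H^{-1}_\delta(\R^2)$ Plancherel identity by duality from its $H^1_\delta(\R^2)$ counterpart \eqref{eq:H1FB}, exploiting that the Floquet--Bloch transform $\mathcal{U}:H^1_\delta(\R^2)\to \mathscr{H}$ is already an isometric isomorphism. The strategy has a fiberwise component (analyzing $\|\cdot\|_{H^{-1}_\delta(\alpha)}$ via the orthogonal Bloch basis) and a global component (passing from the $H^1_\delta(\R^2)$--$\mathscr{H}$ isometry to an $H^{-1}_\delta(\R^2)$--$\mathscr{H}'$ isometry through the adjoint prescription introduced just before the corollary).

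First I would verify that for each fixed $\alpha \in Y^*$, the family $\{\Phi_{j,\delta}(\cdot,\alpha)\}_{j\ge 1}$ is an orthogonal basis of $H^1_\delta(\alpha)$ with $\|\Phi_{j,\delta}(\cdot,\alpha)\|_{H^1_\delta(\alpha)}^2 = 1+\omega_{j,\delta}^2(\alpha)$. The $L^2_\delta(Y)$-orthonormality is given by \eqref{def eigenmode}. Testing the eigenvalue identity against $\Phi_{k,\delta}(\cdot,\alpha)$, integrating by parts on the unit cell $Y$ (the boundary terms on opposite faces of $Y$ cancel by $\alpha$-quasiperiodicity), and using the weighted inner product \eqref{def: H^1 R2} yields $\langle\nabla\Phi_{j,\delta},\nabla\Phi_{k,\delta}\rangle_{L^2_\delta(Y)} = \omega_{j,\delta}^2(\alpha)\,\delta_{jk}$. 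Consequently, every $g \in H^1_\delta(\alpha)$ expands as $g=\sum_j c_j\Phi_{j,\delta}(\cdot,\alpha)$ with $c_j=\langle g,\Phi_{j,\delta}(\cdot,\alpha)\rangle_{L^2_\delta(Y)}$ and $\|g\|_{H^1_\delta(\alpha)}^2=\sum_j(1+\omega_{j,\delta}^2(\alpha))|c_j|^2$.

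Next, for $F\in H^{-1}_\delta(\alpha)$ set $d_j(\alpha):=\langle F,\Phi_{j,\delta}(\cdot,\alpha)\rangle_{H^{-1}_\delta(\alpha),H^1_\delta(\alpha)}$. For any $g=\sum_j c_j \Phi_{j,\delta}$ in $H^1_\delta(\alpha)$, applying Cauchy--Schwarz with the weights from the previous step gives
\begin{equation*}
|\langle F,g\rangle|=\Bigl|\sum_j d_j(\alpha)\,\overline{c_j}\Bigr|\le\Bigl(\sum_j \tfrac{|d_j(\alpha)|^2}{1+\omega_{j,\delta}^2(\alpha)}\Bigr)^{1/2}\|g\|_{H^1_\delta(\alpha)},
\end{equation*}
with equality attainable by choosing $c_j$ proportional to $\overline{d_j(\alpha)}/(1+\omega_{j,\delta}^2(\alpha))$, so $\|F\|_{H^{-1}_\delta(\alpha)}^2=\sum_j|d_j(\alpha)|^2/(1+\omega_{j,\delta}^2(\alpha))$. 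Globally, $\mathcal{U}$ extended to $H^{-1}_\delta(\R^2)$ via the adjoint formula is an isometry onto $\mathscr{H}'$: for $u\in H^{-1}_\delta(\R^2)$,
\begin{equation*}
\|\mathcal{U}u\|_{\mathscr{H}'} = \sup_{\|f\|_\mathscr{H}=1}|\langle \mathcal{U}u,f\rangle_{\mathscr{H}',\mathscr{H}}| = \sup_{\|f\|_\mathscr{H}=1}|\langle u,\mathcal{U}^{-1}f\rangle_{H^{-1}_\delta,H^1_\delta}| = \|u\|_{H^{-1}_\delta(\R^2)},
\end{equation*}
since $\mathcal{U}$ is a bijective isometry at the $H^1_\delta$ level. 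Combining this with the fiberwise formula and Fubini yields \eqref{eq:H-1exp}, and the series representation of $f$ follows by reading off the expansion coefficients in each fiber with respect to the basis $\{\Phi_{j,\delta}(\cdot,\alpha)\}_j$ and integrating over $Y^*$.

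The main technical point to navigate is the identification step: justifying that for $f\in H^{-1}_\delta(\R^2)$ the abstract fiber pairing $\langle \mathcal{U}f(\cdot,\alpha),\Phi_{j,\delta}(\cdot,\alpha)\rangle_{H^{-1}_\delta(\alpha),H^1_\delta(\alpha)}$ agrees with the notation $\langle f,\Phi_{j,\delta}(\cdot,\alpha)\rangle_{L^2_\delta(\R^2)}$ introduced in \eqref{eq:vPhiL2}. I would handle this by first verifying the identity on the dense subspace $\mathscr{S}(\R^2)$, where it reduces to the explicit computation displayed just after \eqref{eq:vPhiL2}, and then extending by continuity, using that both sides depend continuously on $f$ in the $H^{-1}_\delta(\R^2)$ topology (the left side by continuity of $\mathcal{U}$ on $H^{-1}_\delta(\R^2)$, the right side by its very definition through $\mathcal{U}^{-1}$ on the test function $\Phi_{j,\delta}(\cdot,\alpha)$).
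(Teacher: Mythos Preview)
Your proposal is correct and follows precisely the approach the paper sets up but leaves unwritten: the paper states the corollary without proof, having already laid out the fiberwise orthogonal basis structure of $H^{-1}_\delta(\alpha)$ and the adjoint definition of $\mathcal{U}$ on $H^{-1}_\delta(\R^2)$, so your duality argument (fiberwise norm computation plus the $H^1_\delta$--$\mathscr{H}$ isometry transferred to the dual) is exactly the intended fill-in.
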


\subsection{Some estimates for the Dirac equation}
\label{sec: energy homogenized}

In this section, we collect some estimates on solutions of the Dirac equation \eqref{eq:Dirac}, which are the profiles of the wave packets in the approximation \eqref{ansatz}. Taking Fourier transform on \eqref{eq:Dirac} for the spatial variable, we obtain the equation
\begin{equation}\label{eq:DiracFourierV}
	\mi\partial_t \begin{pmatrix}
		\widehat{V_1}\\ \widehat{V_2}
	\end{pmatrix} = \Omega(\xi) \begin{pmatrix}
		\widehat{V_1}\\ \widehat{V_2}
	\end{pmatrix}  , \quad \begin{pmatrix}
		\widehat{V_1}\\ \widehat{V_2}
	\end{pmatrix}   (\xi,0) = \begin{pmatrix}
		\widehat{F_1}\\ \widehat{F_2}
	\end{pmatrix}(\xi),
\end{equation}
where $\Omega = \Omega_\delta(\xi)$ is the Hermitian matrix
\begin{equation}\label{hermitian}
	\Omega(\xi) =  \begin{pmatrix}
		& \mi \frac{a_{\delta}}{2\omega_{\delta}^*} (\xi_1 +\mathrm{ i} \xi_2)  \\
		\overline{\mi\frac{a_{\delta}}{2\omega_{\delta}^*}} (\xi_1 - \mathrm{i}\xi_2)& 
	\end{pmatrix}.
\end{equation}
We also know that $|a_\delta/2\omega^*_\delta| = \mathcal{O}(\delta^{1/2})$. As a consequence of the fact that $\Omega^*(\xi) = \Omega(\xi)$, the following proposition about $\bm{V} = (V_1,V_2)$ holds.

\begin{proposition}
\label{prop:Dirac}
	Let $\bm{V}$ solve the Dirac equation \eqref{eq:Dirac} with initial data $\bm{V}(0) = \bm{F}$. Assume that $m \in \N_{>0}$ and $\bm{V}(0) \in H^{m}(\R^2)$. Then,
	\begin{itemize}
		\item[(i)] The Fourier transform (in $x$) of $\bm{V}$ is given by the formula
		\begin{equation*}
			\widehat{\bm{V}}(\xi,T) = e^{-\mi \Omega_\delta(\xi)T} \widehat{\bm{F}}(\xi), \quad \forall T \in \R;
		\end{equation*}
		\item[(ii)] For all $\xi\in \R^2$ and $T\in \R$, $|\widehat{\bm{V}}(\xi,T)| = |\widehat{\bm{F}}(\xi)|$;
		\item[(iii)] For any index $\mathbf{n} = (n_1,n_2) \in \N^2$ with $|\mathbf{n}| := n_1+n_2 \le m$, and for any $T\ge 0$,
		\begin{equation}
			\label{eq:DVL2}
		\|\mathscr{F}_{x\mapsto \xi}\left(\partial^{\mathbf{n}}_x \bm{V}\right)(\xi,T)\|_{L^2(\R^2_\xi)} \sim \|\partial^{\mathbf{n}}_x \bm{V}(x,T)\|_{L^2(\R^2)} = \|\partial^{\mathbf{n}} \bm{F}(x)\|_{L^2(\R^2)}.
		\end{equation}
\end{itemize}
\end{proposition}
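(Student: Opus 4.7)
The plan is to convert the Dirac system into an ODE in Fourier space and exploit the Hermitian nature of the symbol matrix $\Omega_\delta(\xi)$; the three claims will then follow in sequence, with each later claim using the earlier one.

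For part (i), I would apply the spatial Fourier transform componentwise to the system \eqref{eq:Dirac}. Since the Dirac operator on the right-hand side has constant coefficients (of course $\delta$-dependent but independent of $x$) and involves only the first-order derivatives $\partial_1,\partial_2$, the transform turns the PDE into the parametrized ODE \eqref{eq:DiracFourierV} with symbol \eqref{hermitian}. For each fixed $\xi \in \R^2$ this is a linear two-by-two ODE, so by the standard matrix-exponential formula its solution with initial data $\widehat{\bm{F}}(\xi)$ is precisely $\widehat{\bm{V}}(\xi,T) = e^{-\mi \Omega_\delta(\xi)T}\widehat{\bm{F}}(\xi)$. A small bookkeeping remark: since $\widehat{\bm{F}} \in L^2$ and $e^{-\mi \Omega_\delta(\xi)T}$ is uniformly bounded in $\xi$ (see the next step), this formula makes sense and defines the unique solution in $C(\R; L^2(\R^2))$.

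For part (ii), I would use that $\Omega_\delta(\xi)^* = \Omega_\delta(\xi)$ (visible directly from \eqref{hermitian}). This makes the propagator $U(\xi,T) := e^{-\mi \Omega_\delta(\xi)T}$ a unitary matrix on $\bC^2$ for every $\xi$ and every $T \in \R$. Unitarity gives $|U(\xi,T)\widehat{\bm{F}}(\xi)|_{\bC^2} = |\widehat{\bm{F}}(\xi)|_{\bC^2}$ pointwise in $\xi$, which is exactly the claim.

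For part (iii), I would combine part (ii) with the Plancherel identity and the Fourier symbol of differentiation. For $\mathbf{n}\in \N^2$ with $|\mathbf{n}|\le m$ one has $\mathscr{F}_{x\mapsto \xi}(\partial_x^{\mathbf{n}}\bm{V})(\xi,T) = (\mi\xi)^{\mathbf{n}}\,\widehat{\bm{V}}(\xi,T)$, so by (ii) its $\bC^2$-modulus equals $|\xi^{\mathbf{n}}|\,|\widehat{\bm{F}}(\xi)|$, which is $T$-independent and finite in $L^2(\R^2_\xi)$ under the hypothesis $\bm{F}\in H^m$. Integrating over $\xi$ and applying Plancherel (whose constants absorbed into the $\sim$) yields the chain of (in)equalities in \eqref{eq:DVL2}.

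There is no real obstacle here: the proof is essentially a textbook application of the fact that Hermitian first-order symbols generate unitary groups. The only mild care is to record that the representation in (i) is valid pointwise in $\xi$ and also globally as an $L^2$-valued identity, and that the derivative norms in (iii) are preserved exactly (not merely equivalent) because $|(\mi\xi)^{\mathbf{n}}| = |\xi^{\mathbf{n}}|$ commutes with the unitary action of $U(\xi,T)$ on each fiber $\bC^2$.
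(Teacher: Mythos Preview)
Your proposal is correct and matches the paper's approach: the paper states that the result follows by solving the Fourier-transformed ODE \eqref{eq:DiracFourierV} and using that the Fourier transform is an $L^2$ isometry, and then omits the details. Your write-up simply fills in those details in the expected way.
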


The above results are easily proved by solving the equation \eqref{eq:DiracFourierV} and using the fact that the Fourier transform is an isometry from $L^2(\R^2_x)$ to $L^2(\R^2_\xi)$ (after factoring a constant), and hence the proof is omitted. It is also true that the solutions to \eqref{eq:Dirac} are smooth in space and time, in fact, $V_j(x,t) \in C^\infty(\R_t,\mathscr{S}(\R^2_x))$ given $F_j \in \mathscr{S}(\R^2)$, for $j=1,2$. 
As a consequence, we have the following useful estimates.
\begin{proposition}
\label{prop:DiracFourierEstimate}
	Assume that $F_1,F_2 \in \mathscr{S}(\mathbb{R}^2)$. Let $(V_1,V_2)$ be the solution to the Dirac equation \eqref{eq:Dirac}. Then we have the following results:
	\begin{itemize}
\item[(i)] There is a constant $C>0$ such that for any $k\in \N$, $s\in \R$ and 2-index $\mathbf{n}\in \N^2$, we have
\begin{equation}
\label{eq:DiracF-1}
\|\partial^{\mathbf{n}}_x\partial^k_t \bm{V}(x,t)\|_{H^s(\R^2_x)} \le C\delta^{\frac{k}{2}} \|\bm{F}\|_{H^{k+|\mathbf{n}|+s}(\R^2_x)}, \qquad \forall \,t\in \R;
\end{equation}
\item[(ii)] For any even integer $N \in \N_{>0}$, there is a constant $C(N)>0$ such that for any $k\in \N$ and 2-index $\mathbf{n}\in \N^2$, we have
\begin{equation}
\label{eq:DiracF-2}
\left| (\mathscr{F}_{x\mapsto \xi} \partial^{\mathbf{n}}_x \partial^k_t \bm{V})(\xi,t)\right| \le \frac{C\delta^{\frac k2}}{(1+|\xi|)^N}\|\bm{F}\|_{W^{N + k + |\mathbf{n}|,1}(\R^2_x)}, \quad \forall (\xi,t) \in \R^2_\xi\times \R_t;
\end{equation}
	\item[(iii)] For any even integer $N \in \N_{>0}$, there is a constant $C(N)>0$ such that for any $k\in \N$ and 2-index $\mathbf{n}\in \N^2$, we have
	\begin{equation}\label{eq:decayDirac}
		\left|\partial_x^{\mathbf{n}}\partial_t^k \bm{V}(x,t) \right| \leq \frac{C\delta^{\frac k2}}{(1+|x|)^N}\|(I-\Delta_\xi)^{N/2}\langle\xi\rangle^{k+|\mathbf{n}|} \widehat{\bm{F}}(\xi)\|_{L^1(\R^2_\xi)}, \quad \forall (x,t) \in \R^2_x\times \R_t.
	\end{equation}
\end{itemize}
\end{proposition}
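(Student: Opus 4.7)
The plan is to exploit the Fourier representation $\widehat{\bm V}(\xi,t) = e^{-\mi\Omega_\delta(\xi)t}\widehat{\bm F}(\xi)$ from Proposition~\ref{prop:Dirac}(i), together with the algebraic identity $\Omega_\delta(\xi)^2 = |\eta_{\#,\delta}|^2|\xi|^2 I_2$ (with $|\eta_{\#,\delta}| = \mathcal{O}(\delta^{1/2})$) and the unitarity of $e^{-\mi\Omega_\delta(\xi)t}$. Differentiating the Fourier formula, and using that $\Omega_\delta$ commutes with $e^{-\mi\Omega_\delta t}$ so that each time derivative on $\widehat{\bm V}$ is multiplication by $-\mi\Omega_\delta(\xi)$, one has
\[
\mathscr{F}_{x\mapsto\xi}(\partial^{\mathbf n}_x\partial^k_t\bm V)(\xi,t) = (\mi\xi)^{\mathbf n}(-\mi\Omega_\delta(\xi))^k e^{-\mi\Omega_\delta(\xi)t}\widehat{\bm F}(\xi),
\]
and the operator norm of this expression is bounded pointwise by $C\delta^{k/2}|\xi|^{k+|\mathbf n|}|\widehat{\bm F}(\xi)|$.

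For (i), I would square this pointwise bound, weight by $(1+|\xi|^2)^s$, integrate, and use $|\xi|^{2(k+|\mathbf n|)}(1+|\xi|^2)^s \le (1+|\xi|^2)^{s+k+|\mathbf n|}$ together with Plancherel's theorem to obtain the $H^s$-estimate. For (ii), I would use the classical Fourier bound $|\xi^\beta\widehat{\bm F}(\xi)| = |\mathscr{F}(\partial^\beta\bm F)(\xi)|\le \|\partial^\beta\bm F\|_{L^1}$ (valid since $\bm F\in\mathscr{S}$); summing over multi-indices $|\beta|\le N+k+|\mathbf n|$ yields $(1+|\xi|)^{N+k+|\mathbf n|}|\widehat{\bm F}(\xi)|\le C\|\bm F\|_{W^{N+k+|\mathbf n|,1}}$, which combined with the pointwise estimate above gives (ii).

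For (iii), I would invert the Fourier transform and use the identity $(I-\Delta_\xi)^{N/2}e^{\mi x\cdot\xi} = (1+|x|^2)^{N/2}e^{\mi x\cdot\xi}$, valid for even $N$, to extract the decay in $x$; integrating by parts in $\xi$ gives
\[
(1+|x|^2)^{N/2}\partial_x^{\mathbf n}\partial_t^k\bm V(x,t) = (2\pi)^{-2}\int_{\R^2} e^{\mi x\cdot\xi}\,(I-\Delta_\xi)^{N/2}\bigl[(\mi\xi)^{\mathbf n}(-\mi\Omega_\delta(\xi))^k e^{-\mi\Omega_\delta t}\widehat{\bm F}(\xi)\bigr]\,d\xi,
\]
and taking absolute values produces an $L^1$ bound on the integrand. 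The main obstacle, which I expect to be the most delicate step, is controlling the $\xi$-derivatives of $e^{-\mi\Omega_\delta(\xi)t}$: using either the closed-form $e^{-\mi\Omega_\delta t} = \cos(|\eta_{\#,\delta}||\xi|t)I_2 - \mi|\eta_{\#,\delta}|^{-1}|\xi|^{-1}\sin(|\eta_{\#,\delta}||\xi|t)\Omega_\delta(\xi)$, or Duhamel iteration with the constant matrix $\partial_\xi\Omega_\delta$ of size $\mathcal{O}(\delta^{1/2})$, one checks that all $\xi$-derivatives of $e^{-\mi\Omega_\delta t}$ have bounded operator norm, with any $t$-dependent factors absorbed into the constant $C$ (the paper works on finite time scales for which such absorption is harmless). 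Since $(-\mi\Omega_\delta(\xi))^k(\mi\xi)^{\mathbf n}$ is a matrix-valued polynomial in $\xi$ of degree $k+|\mathbf n|$ with coefficients of size $\mathcal{O}(\delta^{k/2})$, a Leibniz expansion of $(I-\Delta_\xi)^{N/2}$ produces a majorant pointwise bounded by $C\delta^{k/2}\,|(I-\Delta_\xi)^{N/2}[\langle\xi\rangle^{k+|\mathbf n|}\widehat{\bm F}(\xi)]|$, from which the stated $L^1$ bound follows.
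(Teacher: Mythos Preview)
Your arguments for (i) and (ii) are correct and essentially identical to the paper's: both rely on the unitarity of $e^{-\mi\Omega_\delta(\xi)t}$ (equivalently, Proposition~\ref{prop:Dirac}(ii)) together with the pointwise bound $\|\Omega_\delta(\xi)^k\|\le C\delta^{k/2}|\xi|^k$. The paper phrases (i) via the explicit formulas $\partial_t^{2m}V_j=|\eta_{\#,\delta}|^{2m}\Delta^m V_j$ and then invokes Proposition~\ref{prop:Dirac}(iii), but this is the same computation written in physical space.

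For (iii) the overall strategy also matches: the paper too writes $(1+|x|^2)^{N/2}\partial_t^kV_j=\mathscr{F}_{\xi\to x}^{-1}\bigl[(I-\Delta_\xi)^{N/2}\partial_t^k\widehat{V_j}\bigr]$ and bounds by the $L^1_\xi$ norm. The difference is that the paper first expands $\widehat{V_j}$ into the scalar form \eqref{eq:VhatjSC} (with $\cos(|\eta_{\#,\delta}||\xi|t)$ and $\sin(|\eta_{\#,\delta}||\xi|t)\,\mathrm{sgn}(\cdots)$ multiplying $\widehat F_1,\widehat F_2$), takes $\partial_t^k$ to produce the factor $(|\eta_{\#,\delta}||\xi|)^k$, and then asserts the final inequality without further detail. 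You work instead with the matrix exponential and explicitly flag the $\xi$-derivatives of the propagator as the delicate step; this is the same obstruction, just seen in matrix rather than scalar form.

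One caution on your last sentence: the claim that Leibniz yields a \emph{pointwise} majorant $C\delta^{k/2}\bigl|(I-\Delta_\xi)^{N/2}[\langle\xi\rangle^{k+|\mathbf n|}\widehat{\bm F}]\bigr|$ is too strong. When $\partial_\xi$-derivatives land on $\cos(|\eta_{\#,\delta}||\xi|t)$ or $\sin(|\eta_{\#,\delta}||\xi|t)$ they produce factors of $|\eta_{\#,\delta}|t\sim\delta^{1/2}t$, not bounded constants; so the resulting bound in $L^1_\xi$ carries a constant depending on $\delta^{1/2}t$, not a pointwise domination by the single term you wrote. The paper's displayed inequality at this step is equally terse and does not spell out how the oscillatory derivatives are handled either. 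In the applications within the paper this is harmless (the relevant time scales keep $\delta^{1/2}t$ bounded), which is exactly the point you make parenthetically; just be aware that the ``$\forall\,t\in\R_t$'' in the statement is stronger than what either argument literally delivers.
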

\begin{proof}
	We first prove the results assuming $\mathbf{n}=(0,0)$. Concerning the time derivatives, for $k\in \N$, we have
\begin{equation*}
	\partial_t^{k} V_1(x,t) = \begin{cases}
		|\eta_{\#,\delta}|^m \Delta^m V_1, \quad&k=2m,\\
|\eta_{\#,\delta}|^m \eta_{\#,\delta} (-\partial_1-\mi \partial_2)\Delta^m V_2, \quad&k=2m+1.
	\end{cases}
\end{equation*}
Similarly, 
\begin{equation*}
	\partial_t^{k} V_2(x,t) = \begin{cases}
		|\eta_{\#,\delta}|^m \Delta^m V_2, \quad&k=2m,\\
|\eta_{\#,\delta}|^m \overline{\eta_{\#,\delta}} (-\partial_1+\mi \partial_2)\Delta^m V_1, \quad&k=2m+1.
	\end{cases}
\end{equation*}
We also note that $(\partial^k_tV_1, \partial^k_t V_2)$ solves the Dirac equation \eqref{eq:Dirac} with proper initial data (simply replacing $V_1,V_2$ on the
right-hand side above by $F_1,F_2$). By item (iii) of Proposition \ref{prop:Dirac} and the fact that $|\eta_{\#,\delta}| \sim \delta^{\frac12}$, we have, for any $s \in \R$ and $j=1,2$,
\begin{equation*}
\|\partial_t^k V_j(x,t)\|_{H^s(\R^2_x)} \le 
\begin{cases}
		C\delta^m \|F_j\|_{H^{k+s}(\R^2_x)}, \quad &k=2m,\\
C\delta^{m+\frac12} \|F_{j'}\|_{H^{k+s}(\R^2_x)}, \quad &k=2m+1,
	\end{cases}	
\end{equation*}	
where $j'=1$ if $j=2$ and $j'=2$ if $j=1$. This proves \eqref{eq:DiracF-1}.

\smallskip

For \eqref{eq:DiracF-2}, which estimates the decay in $|\xi|$ of Fourier transforms of partial derivatives of $\mathcal{V}$, we use integration by parts in space. Taking, for instance, $k=1$ yields 
\begin{equation*}
\begin{aligned}
\left|(1+|\xi|^2)^{\frac N2}\widehat{\partial_t V_1}(\xi,t)\right| &= \left|(1+|\xi|^2)^{\frac N2} \widehat{\partial_t V_1}(\xi,0)\right| \\
&= |\eta_{\#,\delta}| \left|\mathscr{F}_{x\to \xi}(I-\Delta_x)^{\frac N2}(\partial_1+\mi \partial_2)F_2(\xi)\right| \le C_N \delta^{\frac12} \|F_2\|_{W^{N+1,1}(\R^2_x)}.
\end{aligned}
\end{equation*}
In fact, this result for $k=1$ is already in \cite{fefferman_wave_2014}; the case for general $k$ is exactly the same.

\smallskip

For \eqref{eq:decayDirac}, which estimates the decay in $|x|$ of the partial derivatives of $\bm{V}$, we note that
\begin{equation}
\label{eq:DiracF-3-1}
(1+|x|^2)^{\frac N2} \partial^k_t V_j(t,x) = \mathscr{F}_{\xi\to x}(I-\Delta_\xi)^{\frac N2} \widehat{\partial^k_t V_j(\cdot,t)}(\xi), \quad j=1,2.
\end{equation}
From the wave equation \eqref{eq:waveV}, we get
\begin{equation*}
\begin{aligned}
\widehat{V}_j(\xi,t) &= \cos(|\eta_{\#,\delta}||\xi|t) \widehat{F}_j(\xi)+ \frac{\sin(|\eta_{\#,\delta}||\xi|t)}{|\eta_{\#,\delta}||\xi|} \partial_t \widehat{V}_j(\xi,0), \quad j=1,2.
\end{aligned}
\end{equation*}
Using the form of the Dirac equation \eqref{eq:Dirac}, we simplify the above to
\begin{equation}
\label{eq:VhatjSC}
\begin{aligned}
	\widehat{V_1}(\xi,t) &= \cos(|\eta_{\#,\delta}||\xi|t)\widehat{F_1}(\xi) + \sin(|\eta_{\#,\delta}||\xi|t) \mathrm{sgn}\left(\eta_{\#,\delta}(-\mi \xi_1 + \xi_2)\right) \widehat{F_2}(\xi),\\
	\widehat{V_2}(\xi,t) &= \cos(|\eta_{\#,\delta}||\xi|t)\widehat{F_2}(\xi) + \sin(|\eta_{\#,\delta}||\xi|t) \mathrm{sgn}\left(\overline{\eta_{\#,\delta}}(-\mi \xi_1 - \xi_2)\right) \widehat{F_1}(\xi).
\end{aligned}
\end{equation}
Here, $\mathrm{sgn}(z) := z/|z|$ is the unit vector associated to a complex number $z \ne 0$, and $\mathrm{sgn}(0) := 0$.
Take time derivatives in the above formulas and use them in \eqref{eq:DiracF-3-1}. Then, by the basic $L^\infty$ estimate for $\mathscr{F}_{\xi\to x}$, we get
\begin{equation*}
\begin{aligned}
\left|(1+|x|^2)^{\frac N2} \partial^k_t V_j(t,x)\right| &\le C\|(I-\Delta_\xi)^{\frac N2} \partial^k_t \widehat{V_1}(\xi,t)\|_{L^1(\R^2_\xi)}\\
&\le C\delta^{\frac k2} \sum_{\ell=1}^2 \|(I-\Delta_\xi)^{N/2} \langle\xi\rangle^k \widehat{F_\ell}(\xi)\|_{L^1(\R^2_\xi)}.
\end{aligned}
\end{equation*}
This verifies \eqref{eq:decayDirac}.

\smallskip

Finally, for general $2$-index $\mathbf{n}\in \N^2$, since $(\partial^{\mathbf{n}}_x V_1, \partial^{\mathbf{n}}_x V_2)$ also satisfies the Dirac system \eqref{eq:Dirac}, with initial data $(\partial^{\mathbf{n}}_x F_1, \partial^{\mathbf{n}}_x F_2)$, the desired result follows directly from the special cases above.
\end{proof}

\section{Subwavelength band structure near the Dirac points} 
\label{sec:DiracStructure}

In this section, we study the band structure of the periodic differential operator $\mathcal{L}_{\delta}$. The most important feature is, due to the high-contrast of the acoustic parameters of the bubbles compared with the background, the low lying bands correspond to subwavelength regime.

It was shown in \cite{ammari_subwavelength_2017, ammari_honeycomb-lattice_2020} that if there are $N$ bubbles in the unit cell $Y$, the first $N$ bands are low in amplitude and are isolated as $\delta \rightarrow 0$ in the following sense: 
there are constants $C_1,C_2>0$ such that for sufficiently small  $\delta>0$, 
\begin{equation}
\label{eq:bandall}
	\begin{aligned}
		\bigcup_{j=1}^N  \big[ \min_{\alpha \in Y^*} \omega_{j,\delta}(\alpha) ,  \max_{\alpha \in Y^*} \omega_{j,\delta}(\alpha) \big] &\subset [0,C_1\sqrt{\delta}],  \\
		\bigcup_{j=N+1}^{\infty}  \big[ \min_{\alpha \in Y^*} \omega_{j,\delta}(\alpha) ,  \max_{\alpha \in Y^*} \omega_{j,\delta}(\alpha) \big] &\subset [C_2,\infty).
	\end{aligned}
\end{equation}
Based on this fact, we refer the first two bands to the \textit{subwavelength bands} (in our case, $N=2$). Formula	\eqref{eq:bandall} is established in \cite{ammari_subwavelength_2017} for $N=1$, and in \cite{ammari_honeycomb-lattice_2020} for $N=2$. Their method based on capacitance matrix relies on layer potential theory and Gohberg-Sigal theory, and can be generalized to the $N$ bubbles case. 

\subsection{Quasiperiodic  layer potentials and capacity matrix}
\label{sec:layerpotential}

We first review some layer potential theory for the Helmholtz equation, which is of fundamental importance in the approach of \cite{ammari_honeycomb-lattice_2020,ammari_high-frequency_2020}. For each $\alpha \in \R^2$ and wave number $k\in \R$, let $G^{\alpha, k}$ be the $\alpha$-quasiperiodic Green's function to the Helmholtz equation defined by
\begin{equation*}
	\Delta G^{\alpha, k}(x)+k^2 G^{\alpha, k}(x)=\sum_{l \in \Lambda} \delta(x-l) e^{\mathrm{i} \alpha \cdot l} ,
\end{equation*}
where $\delta(\cdot)$ is the standard Dirac distribution.
Then it can be shown that $G^{\alpha, k}$ is given by \cite{ammari2018mathematical, ammari2009layer}
\begin{equation*}
	G^{\alpha, k}(x)=\frac{1}{|Y|} \sum_{q \in \Lambda^*} \frac{e^{\mathrm{i}(\alpha+q) \cdot x}}{k^2-|\alpha+q|^2},  
\end{equation*}
for $k^2\neq |\alpha+q|^2, \, \forall \, q \in \Lambda^*$.

For a given bounded domain $D$ in $Y$, with Lipschitz boundary $\partial D$, the single layer potential $\mathcal{S}_D^{\alpha, k}: L^2(\partial D) \rightarrow H_{\mathrm{loc}}^1(\mathbb{R}^2)$ is defined by
\begin{equation}
\label{eq:Salphak}
	\mathcal{S}_D^{\alpha, k}\varphi(x):=\int_{\partial D} G^{\alpha, k}(x-y) \varphi(y) \,d \sigma(y), \quad x \in \mathbb{R}^2 .
\end{equation}
Here, we denote by $H_{\mathrm{loc}}^1(\mathbb{R}^2)$ the space of functions that are square integrable on every compact subset of $\mathbb{R}^2$ and have a weak first derivative that is also square integrable. The following jump relations are well-known \cite{ammari2018mathematical, ammari2009layer}:
\begin{equation}
\label{eq:Salphatrace}
	\left.\frac{\partial}{\partial \nu} \mathcal{S}_D^{\alpha, k}\varphi\right|_{ \partial D\pm}=\left( \pm \frac{1}{2} I+(\mathcal{K}_D^{-\alpha, k})^*\right) \varphi ,
\end{equation}
where the Neumann-Poincar\'{e} operator $(\mathcal{K}_D^{-\alpha, k})^*: L^2(\partial D) \rightarrow L^2(\partial D)$ is defined as
\begin{equation*}
	(\mathcal{K}_D^{-\alpha, k})^*\varphi(x)=\mathrm{p.v.} \int_{\partial D} \frac{\partial}{\partial {\nu}_x} G^{\alpha, k}(x-y) \varphi(y)\,d \sigma(y), \quad x \in \partial D.
\end{equation*}

The Green's function can be asymptotically expanded for small $k$ as follows \cite{ammari2018mathematical}:
\begin{equation*}
	G^{\alpha, k}=G^{\alpha, 0}+k^2 G_1^\alpha+\mathcal{O}(k^4), \quad \quad G_1^\alpha(x):=-\sum_{q \in \Lambda^*} \frac{e^{\mathrm{i}(\alpha+q) \cdot x}}{|\alpha+q|^2},
\end{equation*}
where the error term is uniform in $\alpha$ in a neighbourhood of $\alpha^*$ and $x \in Y$. This leads to the following expansion of the single layer potential $\mathcal{S}_D^{\alpha, k}: L^2(\partial D) \rightarrow H^1(\partial D)$
\begin{equation}\label{eq:Sk0error}
		\mathcal{S}_D^{\alpha, k}=  \mathcal{S}_D^{\alpha, 0}+k^2 \mathcal{S}_{D, 1}^\alpha+\mathcal{O}_{L^2(\partial D) \rightarrow H^1(\partial D)}(k^4), \quad \mathcal{S}_{D, 1}^\alpha\phi(x):=\int_{\partial D} G_1^\alpha(x-y) \phi(y)\,d \sigma(y),
\end{equation}
where $\mathcal{O}(k^4)$ denotes an operator $L^2(\partial D) \rightarrow H^1(\partial D)$ with operator norm of order $k^4$, uniformly for $\alpha$ in a neighbourhood of $\alpha^*$. Similarly, for the Neumann-Poincar\'{e} operator, we have
\begin{equation}\label{eq:Kk0error}
	(\mathcal{K}_D^{-\alpha, k})^*=  (\mathcal{K}_D^{-\alpha, 0})^*+k^2 \mathcal{K}_{D, 1}^\alpha+\mathcal{O}(k^4), \quad \mathcal{K}_{D, 1}^\alpha\phi(x):= \int_{\partial D} \frac{\partial}{\partial \nu_x} G_1^\alpha (x-y) \phi(y) \,d \sigma(y),
\end{equation}
where $\mathcal{O}(k^4)$ denotes an operator $L^2(\partial D) \rightarrow L^2(\partial D)$ with operator norm of order $k^4$, uniformly for $\alpha$ in a neighbourhood of $\alpha^*$.

It is known that $\mathcal{S}_D^{\alpha, 0}: L^2(\partial D) \rightarrow H^1(\partial D)$ is invertible when $\alpha \neq 0$ \cite{ammari2018mathematical, ammari2009layer}. Let $\psi_j^\alpha \in L^2(\partial D)$ be given by
\begin{equation}
\label{eq:Salpha0}
\mathcal{S}_D^{\alpha, 0}\left[\psi_j^\alpha\right]=\mathbf{1}_{\partial D_j} \quad \text { on } \partial D, \quad j=1,2,
\end{equation}
where $\mathbf{1}$ denotes the indicator function. It is known that, for every $\alpha \in Y^*$, 
\begin{equation}
\label{eq:kerK0}
	\mathrm{ker}\, \left( -\frac{1}{2}I+  ( \mathcal{K}_D^{-\alpha, 0} )^* \right) = \mathrm{Span}\,\left\{ \psi_1^{\alpha},  \psi_2^{\alpha}    \right\}.
\end{equation}
Define
	\begin{equation}\label{eq:Sjdef}
		S^\alpha_j(x) = \mathcal{S}^{\alpha,0}_D \psi_j^{\alpha}(x),  \quad x\in \R^2, \quad \mathrm{for}\  j = 1,2.
	\end{equation}
Then $S^\alpha_j$ (extended by constants in $D$) are $\alpha$-quasiperiodic solutions to the Laplace equation in the perforated domain $\R^2\setminus (D+\Lambda)$: 
\begin{equation}
	\label{eq:S0equation}
-\Delta S^\alpha_j = 0 \; \text{in $\R^2\setminus (\ol{D}+\Lambda)$}, \quad S^\alpha_j = \delta_{ij} \; \text{in $D_i$}, \quad i,j \in \{1,2\}.
\end{equation}
Note that $S^\alpha_1$ and $S^\alpha_2$ are clearly independent, and their norms in $L^2(Y)$ and $H^1(Y)$ are all comparable to $1$.

Define the capacitance matrix $C^\alpha = \left(C_{i j}(\alpha)\right)$ by
\begin{equation}
\label{eq:capacitance}
	C_{i j}(\alpha):=-\int_{\partial D_i} \psi_j^\alpha \,d \sigma, \quad i, j=1,2 .
\end{equation}
In fact, the expression above agrees with the more standard definition below featuring the name `capacitance':
\begin{equation*}
	C_{ij}(\alpha) := \int_{Y\setminus \ol D} \overline{\nabla S_i^\alpha(x)} \cdot \nabla S_j^\alpha(x)\,dx = \int_{Y} \overline{\nabla S_i^\alpha(x)} \cdot \nabla S_j^\alpha(x)\,dx.
\end{equation*}
Here the last equality holds because $S_j^\alpha$ and $S_i^\alpha$ are extended by constants in $D$. 

Due to the honeycomb symmetry \eqref{eq:hcsym}, it was shown in \cite{ammari_honeycomb-lattice_2020} that the capacitance matrix $C^\alpha$ is Hermitian and that its entries
\begin{equation}
\label{eq:Capp}
	c_1^\alpha:=C_{11}(\alpha) =C_{22}(\alpha), \quad c_2^\alpha:=C_{12}(\alpha)=\overline{C_{21}(\alpha)},
\end{equation}
as functions of $\alpha$, are differentiable and satisfy
\begin{equation}
\label{eq:gradC}
	\left.\nabla_\alpha c_1^\alpha\right|_{\alpha=\alpha^*}=\begin{pmatrix}
		0 \\
		0
	\end{pmatrix},\left.\quad \nabla_\alpha c_2^\alpha\right|_{\alpha=\alpha^*}=c\begin{pmatrix}
		1 \\
		-\mathrm{i}
	\end{pmatrix},
\end{equation}
where
\begin{equation}\label{eq:cdef}
	c:=\left.\frac{\partial c_2^\alpha}{\partial \alpha_1}\right|_{\alpha=\alpha^*}. 
\end{equation}
Note that, in the equation above, $\alpha_1$ is the first component of the vector variable $\alpha$. Moreover, it was proved in \cite[Lemma 3.5]{ammari_honeycomb-lattice_2020} that $c\ne 0$. We give another formula for $\nabla_\alpha c^\alpha_2\rvert_{\alpha=\alpha^*}$ in Theorem \ref{thm:gradalphaC12} below.


\subsection{Operators related to symmetries of the honeycomb structure}


We introduce some operators that distinguish the honeycomb symmetry and plays an important role in the analysis of partial differential operators with such symmetry. Recall the rotation and reflection operators $R,R_1,R_2,$ and $R_0$ for vectors in $\R^2$ defined in \eqref{eq:R12def}. 

Let $\mathcal{R}$ be a `rotation' operator of functions defined by
\begin{equation}\label{eq:Rfdef}
	\mathcal{R}f (x) =\left\{ \begin{aligned}
		& e^{-\mathrm{i}\alpha^* \cdot l_1} f(R_1 x) , &&x \in Y_1, \\
		& e^{-2 \mathrm{i}\alpha^* \cdot l_1} f(R_2 x) , && x \in Y_2,
	\end{aligned}\right. \quad\quad \mathrm{for\ any\ function\ } f : Y\to \bC,
\end{equation}
where $Y_1$ is the left half part of $Y$ and $Y_2$ is the right part; see Figure \ref{fig:honeycomb1}. Via extension cell by cell we also view $\mathcal{R}$ as an operator acting on $f: \R^2\to Y$.
One can check that $\mathcal{R}$ is well-defined and maps $L^2_\delta(\alpha^*)$ to $L^2_\delta(\alpha^*)$ isometrically. Indeed, it suffices to check $\mathcal{R}f$ at $l_2 + tl_1$, at $l_1 + tl_2$ and at $x_0+se_2$ for $t\in [0,1]$ ans $s\in [-\frac12,\frac12]$. A direct computation shows
\begin{equation*}
	\mathcal{R}f(tl_1 + l_2) = e^{\mi \alpha^*\cdot l_2} \mathcal{R}f(tl_1), \quad 
	\mathcal{R}f(tl_2 + l_1) = e^{\mi \alpha^*\cdot l_1} \mathcal{R}f(tl_2), \quad \forall t\in (0,1).
\end{equation*}
Similarly, the values of $\mathcal{R}f$ on $x_0+se_2$ agree when computed from $Y_1$ and $Y_2$; that is,
\begin{equation*}
	e^{-\mi \alpha^*\cdot l_1} f(R_1(x_0+se_2)) = e^{-\mi \alpha^* \cdot l_1} f(R_2(x_0+se_2)), \quad \forall s\in (-1/2,1/2).
\end{equation*}

We observe that $\mathcal{R}^3 = I$, so the spectrum of $\mathcal{R}$ consists of 
\begin{equation*}
	\{z\in \mathbb{C}: z^3 = 1\} = \{1, \tau , \tau^2 = \overline{\tau}\},
\end{equation*}
where $\tau : = e^{\mathrm{i}\frac{2\pi}{3}}$. Moreover, we can check that $\mathcal{R}$ is a normal operator. Hence, by the spectral theorem,  $L_{\delta}^2(\alpha^*)$ admits the following orthogonal decomposition:
\begin{equation}\label{eq:L2delta_orthogonal}
	L_{\delta}^2(\alpha^*) = L_{\delta}^2(\alpha^*,1) \oplus L_{\delta}^2(\alpha^*,\tau) \oplus L_{\delta}^2(\alpha^*,\overline{\tau})  ,
\end{equation}
where, for each $\nu \in \{ 1, \tau,\overline{\tau} \}$, $L_{\delta}^2(\alpha^*, \nu)$ is the corresponding eigenspace of $\mathcal{R}$, that is, 
$$
L_\delta^2(\alpha^*,\nu) := \{ g \in L_{\delta}^2(\alpha^*): \mathcal{R} g = \nu g\}.
$$
For $\delta=1$, when the inner product reduces to the standard one, we simply write them as $L^2(\alpha^*,\nu)$ for $\nu=1,\tau,\overline{\tau}$. Suppose $g\in L^2_\delta(\alpha^*,\nu)$, with $\nu \in \{\tau,\overline{\tau}\}$, then $\mathcal{R}(\mathcal{R}g) = \mathcal{R}(\nu g) = \nu \mathcal{R}g$. Hence, we see that
\begin{equation}
	\label{eq:Rtao} 
	\mathcal{R} \quad :\quad  L^2_\delta(\alpha^*,\tau)  \to L^2_\delta(\alpha^*,\tau)
\end{equation}
is an isometric isomorphism.

Let $\mathcal{P}$ be the `parity-inversion' operator with respect to the center $x_0$ of the unit cell $Y$ and let $\mathcal{C}$ be the complex-conjugation operator, both acting on functions in $Y$. The operators $\mathcal{P}$ and $\mathcal{C}$ are defined by
\begin{equation*}
\mathcal{P}f(x) = f(2x_0-x), \quad \mathcal{C}f(x) = \overline{f(x)}.
\end{equation*}
It is clear that $\mathcal{P}$ and $\mathcal{C}$ are isometric isomorphisms from $L^2_\delta(\alpha)$ to $L^2_\delta(-\alpha)$ for all $\alpha\in Y^*$, and that $[\mathcal{P},\mathcal{C}] = 0$. Moreover, $\mathcal{P}^2=\mathcal{C}^2=I$. The composition of the two operators is then
\begin{equation*}
	\mathcal{P}\mathcal{C} :L^2_\delta(\alpha) \rightarrow L^2_\delta(\alpha), \quad \quad \mathcal{P} \mathcal{C}f (x) = \overline{ f (2x_0 -x)},
\end{equation*}
and it satisfies $(\mathcal{P}\mathcal{C})^2 =I$. It is also routine to check that $[\mathcal{R},\mathcal{P}\mathcal{C}] = 0$, $[\mathcal{L}_\delta,\mathcal{PC}] = 0$, and
\begin{equation}
	\label{eq:PCtao} 
	\mathcal{P} \mathcal{C} \quad :\quad  L^2_\delta(\alpha^*,\tau)  \to L^2_\delta(\alpha^*,\overline{\tau})
\end{equation}
is an isometric isomorphism.

\subsection{Subwavelength band structure and the Bloch eigenmodes at the Dirac points} 

For the honeycomb structure, it was known that the first two subwavelength bands form a conical dispersion relation near the Dirac point $\alpha^* = \alpha_1^*$ (in fact, this happens also at the other Dirac point $\alpha^*_2$ in $Y^*$). Such a conical dispersion is referred to as a \textit{Dirac cone}. The following theorem was essentially proved in \cite{ammari_honeycomb-lattice_2020, ammari_high-frequency_2020}.

\begin{theorem}\label{thm:DiracModes}
	For $\delta$ sufficiently small, the two subwavelength bands of $\mathcal{L}_\delta$ have the following properties:
	\begin{itemize}
		\item[{\upshape (i)}] The first two bands form a Dirac cone in the following sense:
	\begin{equation}
 \label{eq:cone}
 \left\{
		\begin{aligned}
			& 	\omega_{1,\delta}(\alpha) =\omega_{\delta}^* - \lambda_{\delta} |\alpha - \alpha^*|\big[ 1 + \mathcal{O} (|\alpha - \alpha^*|)\big],  \\
			& 	\omega_{2,\delta}(\alpha) =\omega_{\delta}^* + \lambda_{\delta} |\alpha - \alpha^*|\big[ 1 + \mathcal{O} (|\alpha - \alpha^*|)\big], 
		\end{aligned}\right. \quad\quad  \mathrm{as}\ \alpha \rightarrow \alpha^*,
	\end{equation}
	where the error term is uniform for $\delta$ in a neighborhood of $0$. The constants $\omega_{\delta}^*$ and $\lambda_{\delta} $ are independent of $\alpha$ and are given by
	\begin{equation}\label{eq:omegastar}
			\omega_{\delta}^* = \sqrt{\frac{  c_1^{\alpha^*} }{|D_1|}} \delta^{\frac{1}{2}} + \mathcal{O}(\delta), \quad \quad \lambda_{\delta}  = \lambda_0|c| \delta^{\frac{1}{2}}+ \mathcal{O}(\delta),\quad \quad  \lambda_0 =  \frac{1}{2}   \sqrt{\frac{1}{|D_1| c_1^{\alpha^*}}} ,
	\end{equation}
	as $\delta \rightarrow 0$. Here $|D_1|$ denotes the volume of the bubble $D_1$, $c_1^{\alpha^*}$ and $c$ are constants respectively defined in \eqref{eq:Capp} and \eqref{eq:cdef};

\item[{\upshape (ii)}] For the resonant frequency $\omega^*_\delta$ at the Dirac point $\alpha^*$, the operator $\mathcal{L}_{\delta}(\alpha^*) - (\omega^*_{\delta})^2 I$ has a two-dimensional kernel. We can find a basis $\{S_{1,\delta},S_{2,\delta}\}$ for it. Moreover, $S_{1,\delta}$ and $S_{2,\delta}$ are related through the following symmetric transforms:
		\begin{equation}
		\label{eq:symSd12}
			\mathcal{R}S_{1,\delta} = \tau S_{1,\delta}  ,\quad \quad  \mathcal{R}S_{2,\delta} = \overline{\tau} S_{2,\delta}  , \quad \quad \mathcal{PC}  S_{1,\delta} =  S_{2,\delta},
		\end{equation} 
and are well approximated in $H^1(Y)$ by $S_1 = S_1^{\alpha^*}$ and $S_2 = S_2^{\alpha^*}$ defined in \eqref{eq:Sjdef} by
		\begin{equation}\label{eq:SjH1error}
			S_{j,\delta} - S_j  = \mathcal{O}_{H^1(Y)}(\delta), \quad \quad  \mathrm{for}\ j=1,2.
		\end{equation}
\end{itemize}
\end{theorem}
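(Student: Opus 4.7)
The plan is to reduce the $\alpha$-quasiperiodic eigenvalue problem for $\mathcal{L}_\delta(\alpha)$ on $L^2_\delta(Y)$ to a boundary integral equation on $\partial D$ via quasiperiodic single layer potentials, and then run a small-$\delta$, small-$\omega$ perturbation analysis whose leading order is controlled by the capacitance matrix $C^\alpha$ from \eqref{eq:capacitance}. This is the approach of \cite{ammari_subwavelength_2017,ammari_honeycomb-lattice_2020,ammari_high-frequency_2020}, whose main steps I would essentially recall.

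For (i), I would represent a candidate Bloch mode $u$ as $u = \mathcal{S}_D^{\alpha,\omega}[\psi^b]$ inside $D$ and $u = \mathcal{S}_D^{\alpha,\omega}[\psi^0]$ in $\Omega$, then encode $u|_+ = u|_-$ together with the continuity of $\sigma_\delta^{-1}\partial_\nu u$ across $\partial D$ as a $2\times 2$ operator system $\mathcal{A}(\omega,\delta,\alpha)(\psi^b,\psi^0)^\top = 0$ on $L^2(\partial D)^2$, using the jump formula \eqref{eq:Salphatrace}. Expanding $\mathcal{A}$ via \eqref{eq:Sk0error}--\eqref{eq:Kk0error}, one sees that at $(\omega,\delta)=(0,0)$ the kernel has dimension two and is spanned by the pairs $(\psi_j^\alpha,\psi_j^\alpha)$, $j=1,2$, thanks to \eqref{eq:kerK0}. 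A Gohberg-Sigal argument applied to this characteristic value problem produces two branches of characteristic values satisfying $(\omega_{j,\delta}^\pm(\alpha))^2|D_1| = \delta(c_1^\alpha \pm |c_2^\alpha|) + \mathcal{O}(\delta^{3/2})$. The honeycomb rotation symmetry $\mathcal{R}$ (which commutes with $\mathcal{L}_\delta(\alpha^*)$ on $L^2_\delta(\alpha^*)$) forces $c_2^{\alpha^*}=0$ via the vanishing sum $1+\tau+\tau^2=0$, so the two resonances collide at $\omega^*_\delta = \sqrt{c_1^{\alpha^*}/|D_1|}\,\delta^{1/2}+\mathcal{O}(\delta)$. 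The linearization \eqref{eq:gradC} then gives $c_2^\alpha \approx c\bigl((\alpha_1-\alpha^*_1)-\mi(\alpha_2-\alpha^*_2)\bigr)$, hence $|c_2^\alpha|=|c||\alpha-\alpha^*|+\mathcal{O}(|\alpha-\alpha^*|^2)$, and a Taylor expansion of the square root produces the cone formula \eqref{eq:cone} with $\lambda_\delta$ as in \eqref{eq:omegastar}.

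For (ii), since $C^{\alpha^*} = c_1^{\alpha^*}I_2$, at leading order the two-dimensional kernel of $\mathcal{L}_\delta(\alpha^*)-(\omega^*_\delta)^2 I$ is well-approximated in $H^1(Y)$ by $\mathrm{Span}\{S_1,S_2\}$ with $S_j = \mathcal{S}_D^{\alpha^*,0}\psi_j^{\alpha^*}$ from \eqref{eq:Sjdef}; the $\mathcal{O}(\delta)$ error in \eqref{eq:SjH1error} comes from the higher-order $\delta$-corrections in the densities solving $\mathcal{A}(\omega^*_\delta,\delta,\alpha^*)\,\cdot = 0$ combined with the mapping properties of the single layer potential from $L^2(\partial D)$ to $H^1(Y)$. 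To single out a specific basis I would decompose the kernel under $\mathcal{R}$: since $\mathcal{R}$ commutes with $\mathcal{L}_\delta(\alpha^*)$ and $\mathcal{R}^3=I$, the kernel splits into $\tau$- and $\overline{\tau}$-eigenspaces (each one-dimensional; the $1$-eigenspace must be trivial, because by \eqref{eq:PCtao} the involution $\mathcal{PC}$ swaps the $\tau$- and $\overline{\tau}$-eigenspaces, forcing them to have equal dimension, and a 2-dimensional $1$-eigenspace would contradict the leading-order structure of the resonance problem). Choose $S_{1,\delta}$ spanning the $\tau$-eigenspace, then define $S_{2,\delta} := \mathcal{PC}S_{1,\delta}$; since $\mathcal{PC}$ is an involution mapping the $\tau$-eigenspace to the $\overline{\tau}$-eigenspace and commutes with $\mathcal{L}_\delta(\alpha^*)$, this yields the symmetry relations \eqref{eq:symSd12}.

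The main obstacle is the delicate Gohberg-Sigal analysis giving the precise $\delta$-expansion of $\omega_{j,\delta}^\pm$ and, in particular, the proof that the conical slope constant $c$ in \eqref{eq:cdef} is nonzero; this is the content of Lemma 3.5 of \cite{ammari_honeycomb-lattice_2020} and relies on a further integration-by-parts computation of $\partial_{\alpha_1}c_2^\alpha|_{\alpha=\alpha^*}$ in terms of the densities $\psi_j^{\alpha^*}$. The symmetry portion of the argument, by contrast, is a clean group-theoretic decomposition once one checks that $\mathcal{R}$, $\mathcal{PC}$, and $\mathcal{L}_\delta(\alpha^*)$ pairwise commute on $L^2_\delta(\alpha^*)$.
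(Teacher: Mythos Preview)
Your plan follows essentially the same route as the paper's proof. There are two places in part (ii), however, where your sketch has gaps that the paper fills with concrete computations.

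First, you take $\dim N_\delta = 2$ for granted from the leading-order capacitance analysis, but the Gohberg--Sigal expansion at $\alpha^*$ only yields $(\omega^\pm_\delta)^2|D_1| = \delta c_1^{\alpha^*} + \mathcal{O}(\delta^{3/2})$, which does not by itself rule out a higher-order splitting of the two eigenvalues. The paper's Step~2 closes this by symmetry: if $\dim N_\delta = 1$ with generator $u$, then $\mathcal{R}u = \nu u$ and $\mathcal{PC}u = \eta u$ for some $\nu \in \{1,\tau,\overline\tau\}$, $\eta \in \{\pm 1\}$, and the commutation $[\mathcal{R},\mathcal{PC}]=0$ forces $\nu = \overline\nu$, i.e.\ $\nu = 1$. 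One then uses $u = aS_1 + bS_2 + \mathcal{O}_{H^1}(\delta)$ together with $S_j \equiv \delta_{ij}$ on $D_i$: evaluating $\mathcal{R}u = u$ on $D_1$ gives $e^{-\mi\alpha^*\cdot l_1}a = a + \mathcal{O}(\delta)$, hence $a = \mathcal{O}(\delta)$, and similarly $b = \mathcal{O}(\delta)$, contradicting normalization. The same computation shows $N_\delta \cap L^2_\delta(\alpha^*,1) = \{0\}$, which is exactly what your decomposition needs. Your phrase ``would contradict the leading-order structure'' gestures at this, but the actual mechanism is this evaluation on the inclusions---equivalently, the unstated fact that $\mathcal{R}S_1 = \tau S_1$ and $\mathcal{R}S_2 = \overline\tau S_2$.

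Second, your justification of \eqref{eq:SjH1error} is incomplete. Once $S_{1,\delta}$ is chosen in the $\tau$-eigenspace, the layer-potential step only gives $S_{1,\delta} = aS_1 + bS_2 + \mathcal{O}_{H^1}(\delta)$ with $|a|^2+|b|^2 = 1$; you still need $b$ small and $a$ close to $1$. The paper does this by pairing against $S_2$ in $L^2_\delta(Y)$: since $S_{1,\delta}, S_1 \in L^2_\delta(\alpha^*,\tau)$ while $S_2 \in L^2_\delta(\alpha^*,\overline\tau)$, the orthogonal decomposition \eqref{eq:L2delta_orthogonal} gives $b\|S_2\|_{L^2_\delta}^2 = \mathcal{O}(\delta^{1/2})$, hence $b = \mathcal{O}(\delta^{3/2})$, and then $a = 1+\mathcal{O}(\delta)$ follows from normalization (after fixing the phase).
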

The results above show that the first two bands are in the subwavelength regime and they intersect conically at $\alpha^*$ where the two eigenvalues coincide with the same value $\omega^*_{\delta}$. Moreover, the eigenspace is of two dimension and two basis functions with related symmetries can be found. The results were proved essentially in Theorem 3.2, Theorem 4.1 and Lemma 4.3 of \cite{ammari_honeycomb-lattice_2020}. For the sake of completeness, we will reproduce a proof here.

\begin{remark}\label{rem:SHdel}
By construction, $\nabla S_j = 0$ in $D$ for $j=1,2$ because $S_j$ are constants in each of the inclusions in $D$. Hence,
\begin{equation*}
\|\nabla S_{j,\delta}\|_{L^2(D)} = \|\nabla S_{j,\delta} - \nabla S_j\|_{L^2(D)} \le C\delta.
\end{equation*}
It then follows that $\|\nabla S_{j,\delta}\|_{L^2_\delta(D)} \le C\sqrt{\delta}$ and $\|\nabla S_{j,\delta}\|_{L^2_\delta(Y)} \le C$ for $j=1,2$. Such an estimate of $L^2_\delta$ norms will be useful in our analysis of the wave packets propagation.
\end{remark}

In the following, we present a proof of Theorem \ref{thm:DiracModes} with some details referred to \cite[Theorems 3.2, Proposition 3.4, Theorem 4.1 and Lemma 4.3]{ammari_honeycomb-lattice_2020}. The proof explores the layer potential theory for the eigenvalue problem associated to $\mathcal{L}_{\delta}(\alpha^*)$ and the symmetries of 
the honeycomb structure.

\begin{proof}[Proof of Theorem \ref{thm:DiracModes}]
\emph{Step 1. Layer potential representation of the eigenvalue problem.} Using the quasiperiodic single layer potential operator $\mathcal{S}^{\alpha,k}_D$ defined in \eqref{eq:Salphak}, we can solve the eigenvalue problem
\begin{equation}
	\label{eq:egproblem}
\mathcal{L}_\delta(\alpha) \Phi = \omega^2 \Phi, \quad \Phi\in H^1_{\delta,\Lambda^\alpha}(\R^2/\Lambda), 
\end{equation}
by seeking a non-zero functions $\phi\in L^2(\partial D)$ such that (where the wave number $k=\omega$ since the sound speed $v$ and $v_b$ are set to be one)
\begin{equation}
\label{eq:egprobM}
	\left(-\frac12 I + (\mathcal{K}_D^{-\alpha,k})^*\right) \phi = \delta\left(\frac12 I + (\mathcal{K}_D^{-\alpha,k})^*\right) \phi.
\end{equation}
The integral equation above is equivalent to the eigenvalue problem because, setting
\begin{equation*}
	\Phi(x) = 
		\mathcal{S}_D^{\alpha,k} \phi(x), \quad x\in Y,
\end{equation*}
we see that $\Phi$ automatically satisfy the $\alpha$-quasiperiodic Helmholtz equation inside and outside $D$; we can then use the trace formula \eqref{eq:Salphatrace} to get the system above, and vice versa. Note that, because we have assumed that the sound speeds $v$ and $v_b$ (outside and inside the bubbles, respectively) are both equal to $1$, the layer potential representation is simpler than the setting of \cite{ammari_honeycomb-lattice_2020} with two wave speeds.

For Theorem \ref{thm:DiracModes}, we are interested in eigenvalue problems in the subwavelength regime, \emph{i.e.}, for $k = \omega = o(1)$. Then using the controlled approximation of $\mathcal{S}_D^{\alpha,k}, (\mathcal{K}_D^{-\alpha,k})^*$ by $\mathcal{S}_D^{\alpha,0}$ and $(\mathcal{K}_D^{-\alpha,0})^*$ in \eqref{eq:Sk0error} and \eqref{eq:Kk0error}, we see that such eigenmodes must be constructed with
\begin{equation*}
 	\left(-\frac12 I + (\mathcal{K}_D^{-\alpha,0})^*\right)\phi = \mathcal{O}(\omega^2). 
 \end{equation*}
Given $\alpha \in Y^*$, in order to find $\omega$ and $\phi$ with $\|\phi\|_{L^2(\partial D)} =1 $ solving the equation above, we let $\{\psi_1^\alpha, \psi_2^\alpha\}$ be defined as in \eqref{eq:Salpha0}. Then we can seek for $a,b$ (depending on $\delta$) such that
\begin{equation*}
	\phi = a\psi^\alpha_1 + b\psi^\alpha_2 + h, \quad h \in \left(\mathrm{ker}(-\frac12 I +  (\mathcal{K}_D^{-\alpha,0})^*)\right)^{\perp}.
\end{equation*}
We then get $h = \mathcal{O}(\omega^2)$ in $L^2(\partial D)$ and, plugging the above to \eqref{eq:egprobM} and using \eqref{eq:Kk0error}, we also deduce that
\begin{equation*}
\left(\omega^2 \mathcal{K}_{D,1}^{\alpha} - \delta I\right) [a\psi_1^\alpha + b\psi_2^\alpha] + (-\frac12 I + (\mathcal{K}_D^{-\alpha,0})^*)h = \mathcal{O}(\omega^4 + \delta\omega^2).
\end{equation*}
Integrate the above equation over $\partial D_1$ and $\partial D_2$, respectively. Observe that the integral of the second term on the left hand side above always vanishes. Using the fact (see \cite[Lemma 2.1]{ammari_honeycomb-lattice_2020}) that
\begin{equation*}
	\int_{\partial D_j} \mathcal{K}_{D,1}^{\alpha,0} [\psi] \,d\sigma = - \int_{D_j} \mathcal{S}_D^{\alpha,0}[\psi]\,d\sigma, \quad \forall \psi \in L^2(\partial D), \, j=1,2,
\end{equation*}
we deduce
\begin{equation*}
	\begin{aligned}
		-\omega^2 |D_1| a + \delta(aC^\alpha_{11} + bC^\alpha_{12}) &= \mathcal{O}(\omega^4+\delta\omega^2), \\
		-\omega^2 |D_2| b + \delta(aC^\alpha_{21} + bC^\alpha_{22}) &= \mathcal{O}(\omega^4+\delta\omega^2).
	\end{aligned}
\end{equation*}
Using the fact that $|D_1|=|D_2|$ and the structure of the capacitance matrix $C^\alpha$ in \eqref{eq:Capp}, we rewrite the above as
\begin{equation}
\label{eq:omega_ab}
	\begin{pmatrix} \delta c_1^\alpha - \omega^2|D_1| & \delta c^\alpha_2 \\ \delta \overline{c_2^\alpha} & \delta c_1^\alpha - \omega^2|D_1| \end{pmatrix} \begin{pmatrix} a \\ b \end{pmatrix} = \mathcal{O}(\omega^4 + \delta\omega^2).
\end{equation}
For the system to have unit size solutions, we deduce from the above equation that $(\omega^2/\delta)|D_1|$ approximates the eigenvalue of the capacitance matrix $C^\alpha$, and hence, $\omega = \mathcal{O}(\sqrt{\delta})$. It is also clear that there are at most two subwavelength bands. Moreover, at the Dirac point $\alpha^*$, it is known (see \cite[Lemma 3.3]{ammari_honeycomb-lattice_2020}) that $C^{\alpha^*} = c_1^{\alpha^*} I$, so if $\omega^*_\delta$ is the resonant frequency at this point, we must have $\omega^*_\delta = \mathcal{O}(\sqrt{\delta})$ and then, further, 
\begin{equation}
\label{eq:omegastarsquare}
	\delta c_1^{\alpha^*} - (\omega^*_\delta)^2|D_1| = \mathcal{O}(\delta^2).
\end{equation}
The expression of $\omega^*_\delta$ in \eqref{eq:omegastar} also follows immediately.

\smallskip

To establish the conical intersection behavior at $\alpha^*$ described in \eqref{eq:cone} is more involved, we refer to \cite[Theorem 4.1]{ammari_honeycomb-lattice_2020} and the proof there. We will turn to the precise asymptotics \eqref{eq:cone} later in Theorem \ref{thm:ModesExpansion}.

\smallskip

From our analysis above, we also obtain that for any $\alpha \in Y^*$, if $\omega^\alpha \in \{\omega_1^\alpha,\omega_2^\alpha\}$ is a subwavelength eigenvalue determined above, then the solution of the eigenvalue problem \eqref{eq:egproblem} can be approximated by a certain combination of $S_1^\alpha$ and $S_2^\alpha$ defined in \eqref{eq:Sjdef}. More precisely, for the subwavelength bands $\omega^\alpha$ and an $L^2(Y)$ normalized eigenvector $\Phi$, we must have
\begin{equation}
	\label{eq:Phialpha}
	\Phi(x,\alpha) = a S_1^\alpha(x) + b S_2^\alpha(x) + \mathcal{O}_{H^1(Y)}((\omega^\alpha)^2).
\end{equation}
We may seek for solutions that are `normalized' in the sense that $|a|^2+|b|^2=1$.

\emph{Step 2. We show that the eigenspace associated to the frequency $\omega^*_\delta$ at the Dirac point $\alpha^*$ is two dimensional}. We denote  by $N_{\delta}$ the kernel of  $\mathcal{L}_{\delta}(\alpha^*) - (\omega^*_{\delta} )^2  I$. We already know that for sufficiently small $\delta$ the dimension of $N_{\delta}$ is at most $2$. To prove by contradiction that the dimension is equal to two, we suppose that $\omega^*_{\delta} $ is a simple eigenvalue of $\mathcal{L}_{\delta}(\alpha^*) - (\omega^*_{\delta})^2  I$ and that the eigenspace is spanned by $u$. In view of the honeycomb symmetry of the coefficients in $\mathcal{L}_{\delta}$, one can check that 
\begin{equation}
\label{eq:RPCL}
	\mathcal{L}_{\delta}(\alpha^* )  \mathcal{R} = \mathcal{R} \mathcal{L}_{\delta}(\alpha^* ), \quad \quad \mathcal{L}_{\delta}(\alpha^* )  \mathcal{P}\mathcal{C} = \mathcal{P}\mathcal{C} \mathcal{L}_{\delta}(\alpha^* ).
\end{equation}
That is, both $\mathcal{R}$ and $\mathcal{PC}$ commute with $\mathcal{L}_\delta$. Hence, $\mathcal{R}u$ and $\mathcal{P} \mathcal{C} u$ are both in $N_\delta$, and one can find $\nu \in \{1 ,\tau , \overline{\tau}\}$ and $ \eta \in \{1,-1\}$ such that
\begin{equation*}
	\mathcal{R} u = \nu u, \quad \quad 	\mathcal{P} \mathcal{C} u = \eta u.
\end{equation*}
The fact that $[\mathcal{P}\mathcal{C},\mathcal{R}]  = 0$ and \eqref{eq:PCtao} then show that
\begin{equation*}
	\overline{\nu }(\eta u) =\overline{ \nu} 	\mathcal{P} \mathcal{C} u  = 	\mathcal{P} \mathcal{C} ( \nu u ) = \mathcal{P}\mathcal{C} \mathcal{R} u = \mathcal{R}\mathcal{P} \mathcal{C}u = \mathcal{R} (\eta u) = \nu( \eta u),
\end{equation*}
and therefore, $\nu =1$. 

To complete the proof by contradiction we show that $\mathcal{R} u = u$ is impossible for non-zero $u \in N_\delta$. Using the result from Step 1, we know that 
\begin{equation}\label{eq:diracmodeSL}
			u = a S_1 + b S_2 +  \mathcal{O}_{H^1(Y)} (\delta),
\end{equation}
and can assume that $|a|^2+|b|^2=1$, where $S_j = S_j^{\alpha^*}$'s are defined in \eqref{eq:Sjdef}. Since $S_1=1$ and $S_2=0$ in $D_1$, by the definition of $\mathcal{R}$, we have 
\begin{equation*}
u = a +\mathcal{O}(\delta) \quad \text{and} \quad \mathcal{R}u = e^{-\mi\alpha^*\cdot l_1} a + \mathcal{O}(\delta), \quad \text{in} \, D_1.
\end{equation*}
To have $u=\mathcal{R}u$ we must have $a  = \mathcal{O}(\delta)$. Similarly, $b =\mathcal{O}(\delta)$. As a result, we have $u=\mathcal{O}_{H^1(Y)}(\delta)$, which is a contradiction to our choice of $u$. 
	
\medskip

\emph{Step 3. We describe the required eigenmodes $S_{1,\delta}$ and $S_{2,\delta}$ at the Dirac point $\alpha^*$.} We can easily check that $\mathcal{L}_{\delta}(\alpha^* )  \mathcal{R} = \mathcal{R} \mathcal{L}_{\delta}(\alpha^* ) $, which implies that $N_{\delta} $ is an invariant subspace of $\mathcal{R}$. From the orthogonal  decomposition \eqref{eq:L2delta_orthogonal}, we obtain that for sufficiently small $\delta>0$ 
the following holds:
	\begin{equation}\label{ortho decom 2}
		N_{\delta} = \{N_{\delta} \cap L^2(\alpha^*,1) \} \oplus \{N_{\delta}\cap L^2(\alpha^*,\tau) \} \oplus \{N_{\delta}\cap L^2(\alpha^*,\overline{\tau}) \}.
	\end{equation}
For a proof we refer to \cite[Corollary 5.2]{MR4347316}.

In previous steps we have shown that $N_{\delta}\cap L^2(\alpha^*,1) $ is trivial and $\mathrm{dim}\,N_{\delta}=2$. It is not hard to verify that $\mathcal{PC}$ maps $L^2(\alpha^*,\tau)$ to $L^2(\alpha^*,\overline{\tau})$; see \eqref{eq:PCtao}. Hence, both $N_\delta \cap L^2(\alpha^*,\tau)$ and $N_\delta \cap L^2(\alpha^*,\overline{\tau})$ has dimension $1$, and if we choose $S_{1,\delta}$ from the first subspace with unit norm in $L^2(Y)$  and set $S_{2,\delta} = \mathcal{PC}S_{1,\delta}$, then automatically we have
	\begin{equation*}
		N_{\delta}\cap L^2(\alpha^*,\tau)  = \mathrm{Span}\,\{S_{1,\delta}\}, \quad \quad N_{\delta}\cap L^2(\alpha^*,\overline{\tau})  = \mathrm{Span}\,\{S_{2,\delta}\},
	\end{equation*}
and the required symmetries \eqref{eq:symSd12} hold immediately. We refer to \cite[Lemma 4.3]{ammari_honeycomb-lattice_2020} for an alternative proof of a choice $\{S_{1,\delta},S_{2,\delta}\}$ satisfying \eqref{eq:symSd12}.

It suffices to check that we can make sure that $\{S_{1,\delta},S_{2,\delta}\}$ also satisfy \eqref{eq:SjH1error}. By Step 1 (in particular, by \eqref{eq:Phialpha}) we can find $a,b$ such that 
\begin{equation}
\label{eq:S1delta}
	S_{1,\delta} = aS_1 + bS_2 + \mathcal{O}_{H^1(Y)}(\delta),
\end{equation}
where $S_1 = S_1^{\alpha^*}$ and $S_2 = S^{\alpha^*}_2$ are given by \eqref{eq:Sjdef}. We have the freedom to assume that $S_{1,\delta}$ is `normalized' in the sense that $|a|^2+|b^2|=1$.

Next, we derive some symmetry properties for $S_1,S_2$. This is most easily seen from the equation \eqref{eq:S0equation} that they satisfy. Explore the problem \eqref{eq:S0equation} under the honeycomb symmetry, we deduce that
\begin{equation}
\label{eq:S12PC}
	\mathcal{PC}S_1 = S_2, \quad \mathcal{PC}S_2 = S_1, 
\end{equation}
and
\begin{equation}
\label{eq:S12R}
	\mathcal{R}S_1 = \tau S_1, \quad \mathcal{R} S_2= \ol\tau S_2.
\end{equation}
Indeed, since $[\mathcal{PC},-\Delta] = [\mathcal{R},-\Delta]= 0$, one only needs to inspect how the boundary conditions on $\partial D_1$ and $\partial D_2$ change with respect to $\mathcal{PC}$ and $\mathcal{R}$ to verify the above results.

It follows that
\begin{equation*}
	S_{2,\delta} = \mathcal{PC} S_{1,\delta} = b S_1 + aS_2 + \mathcal{O}_{H^1(Y)}(\delta).
\end{equation*}
On the other hand, $S_1 \in L^2_\delta(\alpha^*,\tau)$ and $S_2\in L^2_\delta(\alpha^*,\ol \tau)$. Take $L^2_\delta(Y)$ inner product against $S_2$ on \eqref{eq:S1delta}. Note that $S_{1,\delta}$ and $S_1$ belong to $L^2_\delta(\alpha^*,\tau)$. In view of the orthogonality in \eqref{eq:L2delta_orthogonal}, we get
\begin{equation*}
	b \|S_2\|_{L^2_\delta(Y)}^2 = \mathcal{O}(\sqrt{\delta}).
\end{equation*}
This shows that $b=\mathcal{O}(\delta^{3/2})$. Plugging this estimate into \eqref{eq:S1delta}, since $S_{1,\delta}$ is `normalized', we get $a^2-1$ is small. We have the freedom to choose $a$ close to $1$, and further get $a-1=\mathcal{O}(\delta)$. We hence conclude that
\begin{equation*}
	S_{1,\delta} = S_1 + \mathcal{O}_{H^1(Y)}(\delta), \quad S_{2,\delta} = S_2 + \mathcal{O}_{H^1(Y)}(\delta).
\end{equation*}
This proves \eqref{eq:SjH1error} and completes the proof.
\end{proof}

\begin{remark} The modes $S_{1,\delta}, S_{2,\delta}$ found above satisfy the following orthogonal properties (they are clearly satisfied by the approximate modes $S_1,S_2$):
    \begin{equation}
        \label{eq: S_j,delta orthogonal}
        \int_{Y\setminus D} S_{1,\delta}(x) \ol{S_{2,\delta}(x)}\,dx= \int_D S_{1,\delta}(x) \ol{S_{2,\delta}(x)}\,dx =0.
    \end{equation}
To show this, consider $\mathcal{R}$ as an operator on the two-dimensional Hilbert space $N_{\delta}$ with inner product $\langle \cdot,\cdot\rangle_{L^2_{\delta'}(Y)}$ for any fixed $\delta' > 0$, it is easy to check that $\mathcal{R}\mathcal{R}^*=I$ and $\mathcal{R}^3 = I$, so $\mathcal{R}$ is unitary and has three eigenvalues $1,\tau,\tau^2 =\overline{\tau}$. By spectral theorem for normal (unitary) operators, one has the orthogonal decomposition 
$$N_{\delta} = \{N_{\delta} \cap L_{\delta'}^2(\alpha^*,1) \} \oplus \{N_{\delta}\cap L_{\delta'}^2(\alpha^*,\tau) \} \oplus \{N_{\delta}\cap L_{\delta'}^2(\alpha^*,\overline{\tau}) \}$$
under norm $\|\cdot\|_{L^2_{\delta'}(Y)}$. Then \eqref{eq: S_j,delta orthogonal} immediately follows from the above by the fact that we can choose $\delta'$ arbitrarily.
\end{remark}



\subsection{Expansion of Bloch eigenmodes near the Dirac point}

In this subsection, we explore the behavior of eigenmodes for resonant frequencies $\omega^\alpha$ where $\alpha$ is near the Dirac point $\alpha^*$. This can be thought as the augmentation of the conical behavior of $\omega^\alpha$ near $\omega^*$.

In \cite[Section 3.2]{ammari_high-frequency_2020}, the authors there essentially proved the following result.
\begin{theorem}\label{thm:ModesExpansion}
	Let $S_{1,\delta}$ and $S_{2,\delta}$ be the Bloch eigenmodes associated to the frequency $\omega^*_\delta$ at the Dirac point $\alpha^*$. For nearby points in $Y^*$, \emph{i.e.}, for $\alpha = \alpha^*+\beta$ for $|\beta|$ sufficiently small, we can find Bloch eigenmodes $u^\alpha_{j,\delta}(x)$, associated to the subwavelength bands $(\omega_{j,\delta}^{\alpha^*+\beta})^2$, for $j=1,2$, \emph{i.e.},
	\begin{equation*}
		\mathcal{L}_{\delta}(\alpha) u^{\alpha}_{j,\delta} = \omega^2_{j,\delta}(\alpha )  u^{\alpha}_{j,\delta} ,\quad \quad j=1,2,
	\end{equation*}
	whose $L^2(Y)$ norms are comparable to $1$ and, more importantly, they satisfy the following expansion:
	\begin{equation}\label{eq:ModesExpan}
		\begin{aligned}
			& u^\alpha_{1,\delta}(x) =\frac{A(\beta)}{\sqrt{2}} e^{ \mathrm{i}x\cdot \beta} S_{1,\delta}(x) - \frac{1}{\sqrt{2}} e^{\mathrm{i}x\cdot \beta}  S_{2,\delta}(x) +\mathcal{O}_{H^1(Y)}(\delta+|\beta|) , \\
			&u^\alpha_{2,\delta}(x) =\frac{A(\beta)}{\sqrt{2}} e^{\mathrm{i}x\cdot \beta} S_{1,\delta}(x) + \frac{1}{\sqrt{2}}  e^{\mathrm{i}x\cdot \beta}  S_{2,\delta}(x) +\mathcal{O}_{H^1(Y)}(\delta+|\beta|),
		\end{aligned}
	\end{equation}
	where
	\begin{equation}
 \label{eq:Abetadef}
		A(\beta) = \frac{c}{|c|} \frac{\beta_1 -\mathrm{i} \beta_2 }{|\beta|},
	\end{equation}
	and $c$ is defined in \eqref{eq:cdef}.
\end{theorem}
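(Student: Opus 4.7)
The plan is to carry out the same layer-potential analysis used in Step~1 of the proof of \tref{DiracModes}, now at the quasimomentum $\alpha = \alpha^* + \beta$ for small $\beta$. Seeking an eigenmode of $\mathcal{L}_\delta(\alpha)$ in the subwavelength regime in the form $\Phi^\alpha = \mathcal{S}_D^{\alpha,k}\phi$ with $\phi = a\psi_1^\alpha + b\psi_2^\alpha + h$ and $h$ orthogonal to $\ker(-\tfrac12 I + (\mathcal{K}_D^{-\alpha,0})^*)$ leads once again to the $2\times 2$ system \eqref{eq:omega_ab}, and the associated Bloch eigenmode satisfies $\Phi^\alpha = a S_1^\alpha + b S_2^\alpha + \mathcal{O}_{H^1(Y)}((\omega^\alpha)^2)$ as in \eqref{eq:Phialpha}.

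I then use \eqref{eq:gradC} to expand $c_1^\alpha = c_1^{\alpha^*} + \mathcal{O}(|\beta|^2)$ and $c_2^\alpha = c(\beta_1 - \mi\beta_2) + \mathcal{O}(|\beta|^2)$. The Hermitian matrix appearing in \eqref{eq:omega_ab} has eigenvalues $\delta c_1^\alpha - \omega^2 |D_1| \pm \delta |c_2^\alpha|$; setting them to zero yields the conical dispersion \eqref{eq:cone} with the constants in \eqref{eq:omegastar}. The corresponding normalized eigenvectors are $\tfrac{1}{\sqrt{2}}\bigl(c_2^\alpha/|c_2^\alpha|,\pm 1\bigr)^{\top}$, and the expansion of $c_2^\alpha$ yields $c_2^\alpha/|c_2^\alpha| = A(\beta) + \mathcal{O}(|\beta|)$ with $A(\beta)$ as in \eqref{eq:Abetadef}. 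Inserting these into $\Phi^\alpha \approx aS_1^\alpha + bS_2^\alpha$ already produces the expected structure, but expressed in the $\alpha$-dependent basis $\{S_1^\alpha,S_2^\alpha\}$.

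The final step is to convert this basis to the fixed Dirac modes $\{S_{1,\delta},S_{2,\delta}\}$ multiplied by the plane wave $e^{\mi x\cdot\beta}$. The key observation is that if $f$ is $\alpha$-quasiperiodic with $\alpha=\alpha^*+\beta$, then $e^{-\mi x\cdot\beta}f$ is $\alpha^*$-quasiperiodic, because
\begin{equation*}
e^{-\mi(x+\ell)\cdot\beta}f(x+\ell) = e^{\mi(\alpha-\beta)\cdot\ell}\bigl(e^{-\mi x\cdot\beta}f\bigr)(x) = e^{\mi\alpha^*\cdot\ell}\bigl(e^{-\mi x\cdot\beta}f\bigr)(x).
\end{equation*}
Since $\alpha\mapsto \mathcal{S}_D^{\alpha,0}$ (hence $\alpha\mapsto \psi_j^\alpha$ via \eqref{eq:Salpha0}) depends analytically on $\alpha$ in a neighborhood of $\alpha^*$, the map $\alpha\mapsto e^{-\mi x\cdot\beta}S_j^\alpha \in H^1(Y)$ is analytic as well, so $e^{-\mi x\cdot\beta}S_j^\alpha = S_j^{\alpha^*} + \mathcal{O}_{H^1(Y)}(|\beta|)$. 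Combining with \eqref{eq:SjH1error} gives $S_j^\alpha = e^{\mi x\cdot\beta}\bigl(S_{j,\delta} + \mathcal{O}_{H^1(Y)}(\delta + |\beta|)\bigr)$, which inserted into the two eigenvector expressions produces \eqref{eq:ModesExpan}.

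The main obstacle is the bookkeeping of errors: near the Dirac point the two subwavelength bands are separated only by $\mathcal{O}(\sqrt{\delta}\,|\beta|)$, so naive perturbation theory for the eigenvectors degrades as $\beta\to 0$. The saving grace is that the $2\times 2$ matrix in \eqref{eq:omega_ab} can be diagonalized \emph{exactly} in terms of $c_1^\alpha$ and $c_2^\alpha$ without invoking any spectral gap; the resulting eigenvectors $\tfrac{1}{\sqrt{2}}(A(\beta) + \mathcal{O}(|\beta|),\pm 1)^{\top}$ then have clean errors that, multiplied by the bounded basis functions $S_{j,\delta}$, fit into the single $\mathcal{O}_{H^1(Y)}(\delta+|\beta|)$ remainder in \eqref{eq:ModesExpan}.
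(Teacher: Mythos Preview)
Your proposal is correct and follows essentially the same route as the paper: both reduce to the $2\times 2$ system \eqref{eq:omega_ab}, expand the capacitance entries via \eqref{eq:gradC}, read off the eigenvectors $\tfrac{1}{\sqrt{2}}(A(\beta),\pm 1)^\top$, and then replace $S_j^\alpha$ by $e^{\mi x\cdot\beta}S_{j,\delta}$. The only difference is in this last replacement: the paper invokes an explicit layer-potential identity from \cite{ammari_high-frequency_2020} (namely \eqref{eq:SalphaExp}, which writes $S_j^\alpha = e^{\mi\beta\cdot x}\mathcal{S}_D^{\alpha^*,0}[\xi_j^\beta] + \mathcal{O}(|\beta|^2)$ with $\xi_j^\beta = \psi_j^{\alpha^*} + \mathcal{O}(|\beta|)$), whereas you argue by analyticity of $\beta\mapsto e^{-\mi x\cdot\beta}S_j^{\alpha^*+\beta}$ in the fixed $\alpha^*$-quasiperiodic space; both yield $S_j^\alpha = e^{\mi x\cdot\beta}S_j + \mathcal{O}_{H^1(Y)}(|\beta|)$.
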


\begin{proof}[Proof of Theorem \ref{thm:ModesExpansion}] In item one of Theorem \ref{thm:DiracModes} we have seen that near the Dirac point $\alpha^*$, there are two subwavelength bands (resonant frequencies) $\omega_-^\alpha = \omega_{1,\delta}(\alpha)$ and $\omega_+^\alpha = \omega_{2,\delta}(\alpha)$, and they intersect at the Dirac point $\alpha^*$ (and also at $\alpha^*_2$).
The goal here is to show that we can choose eigenpairs $(u_{1,\delta}^\alpha,(\omega_-^\alpha)^2)$ and $(u_{2,\delta}^\alpha,(\omega_+^\alpha)^2)$ to the eigenvalue problem \eqref{eq:egproblem} such that \eqref{eq:ModesExpan} holds.

We now follow Step 1 in the proof of Theorem \ref{thm:DiracModes} to find the eigenfunctions of \eqref{eq:egproblem} associated to the eigenvalues $(\omega_\pm^\alpha)^2$. According to the result there, let $S^\alpha_1$ and $S^\alpha_2$ be defined by \eqref{eq:Sjdef}, then any eigenmode $u^\alpha_+$ (respectively, $u^\alpha_-$) corresponding to $(\omega_+^\alpha)^2$ (respectively, $(\omega_-^\alpha)^2$) must be of the form (see \eqref{eq:Phialpha})
\begin{equation*}
	u^\alpha = a S^\alpha_1 + b S^\alpha_2 + \mathcal{O}_{H^1(Y)}(\delta).
\end{equation*}
Here, we used the fact that $(\omega_\pm^\alpha)^2 =\mathcal{O}(\delta)$. Moreover, the vector $(a,b)$ satisfies the $2$ by $2$ system \eqref{eq:omega_ab}, which now becomes
\begin{equation*}
	\begin{pmatrix} \delta c_1^\alpha - (\omega_\pm^\alpha)^2|D_1| & \delta c^\alpha_2 \\ \overline{c_2^\alpha} & \delta c_1^\alpha - (\omega_\pm^\alpha)^2|D_1| \end{pmatrix} \begin{pmatrix} a_\pm \\ b_\pm \end{pmatrix} = \mathcal{O}(\delta^2).
\end{equation*}
We expand the entries in this system, from the differentiability of the capacitance matrix $C^\alpha$ (in particular by \eqref{eq:gradC}) and the fact that $C^{\alpha^*}$ is $c_1^{\alpha^*}I$,
\begin{equation*}
	c_1^\alpha = c_1^{\alpha^*} + \mathcal{O}(|\beta|^2),\quad  c_2^\alpha = c(\beta_1-\mi \beta_2) + \mathcal{O}(|\beta|^2), \quad \text{as }\, |\beta| \to 0.
\end{equation*}
Here, $\beta = (\beta_1,\beta_2)$ in standard coordinate system. The entry on the diagonal of the 2 by 2 system can be written as
\begin{equation*}
  	\delta c_1^{\alpha^*} - (\omega^*_\delta)^2|D_1| + \left((\omega^*_\delta)^2 - (\omega^\alpha_\pm)^2\right)|D_1| + \mathcal{O}(\delta|\beta|^2).
\end{equation*}  
Using also the estimate \eqref{eq:omegastarsquare}, we see that the system for $(a_\pm,b_\pm)$ has the form
\begin{equation*}
	\begin{pmatrix} -|D_1|((\omega^\alpha_\pm)^2-(\omega^*_\delta)^2) + \mathcal{O}(\delta|\beta|^2) & \delta c(\beta_1-\mi \beta_2) + \mathcal{O}(\delta|\beta|^2)\\ \delta \ol{c}(\beta_1+\mi \beta_2) + \mathcal{O}(\delta|\beta|^2) & -|D_1|((\omega^\alpha_\pm)^2-(\omega^*_\delta)^2) + \mathcal{O}(\delta|\beta|^2) \end{pmatrix} \begin{pmatrix} a_\pm \\ b_\pm \end{pmatrix} = \mathcal{O}(\delta^2).
\end{equation*}
We see that the eigenvalues $(\omega^\alpha_\pm)^2$ should satisfy
\begin{equation*}
	(\omega^\alpha_\pm)^2 = (\omega^*_\delta)^2 \pm \delta |c||\beta||D_1|^{-1} + \mathcal{O}(\delta^2 + \delta|\beta|^2).
\end{equation*}
From this, we can identify the main term in the asymptotic behavior \eqref{eq:cone}. Moreover, we can select the normalized solution to the system with $\omega^\alpha_+$ and, respectively, with $\omega^\alpha_-$, as follows:
\begin{equation*}
	\begin{pmatrix} \displaystyle\frac{c(\beta_1-\mi \beta_2)}{\sqrt{2}|c||\beta|} \\ \\ \frac{1}{\sqrt{2}} \end{pmatrix} + \mathcal{O}(\delta+|\beta|^2) \quad
	\text{and, respectively, } \quad
	\begin{pmatrix} \displaystyle\frac{c(\beta_1-\mi \beta_2)}{\sqrt{2}|c||\beta|} \\ \\  -\frac{1}{\sqrt{2}} \end{pmatrix} + \mathcal{O}(\delta+|\beta|^2).
\end{equation*}
This shows that we can find eigenmodes for $\alpha = \alpha^* + \beta$ and $|\beta|$ small:
\begin{equation}
\label{eq:ModesExp2}
	\begin{aligned}
		u^\alpha_{1,\delta} &= \frac{A(\beta)}{\sqrt{2}} S^\alpha_1 - \frac{1}{\sqrt{2}} S^\alpha_2 + \mathcal{O}_{H^1(Y)}(\delta+|\beta|^2),\\
		u^\alpha_{2,\delta} &=\frac{A(\beta)}{\sqrt{2}} S^\alpha_1 + \frac{1}{\sqrt{2}} S^\alpha_2 + \mathcal{O}_{H^1(Y)}(\delta+|\beta|^2).
	\end{aligned}
\end{equation}
where $A(\beta)$ is defined in \eqref{eq:Abetadef} and $S^\alpha_j = \mathcal{S}_D^{\alpha,0}[\psi^\alpha_j]$, $j=1,2$, are defined by \eqref{eq:Sjdef}. 

To achieve \eqref{eq:ModesExpan} it remains to replace $S^\alpha_j$ by $e^{\mi x\cdot\beta} S_{j,\delta}$, and control the error. In \cite{ammari_high-frequency_2020}, it was proved (see Lemma 3.1 there) that in a neighborhood of $\alpha^*$, \emph{i.e.}, for $\alpha = \alpha^* + \beta$ with $|\beta|$ small, and for $j=1,2$,
\begin{equation}
\label{eq:SalphaExp}
\begin{aligned}
S^\alpha_j &= \mathcal{S}^{\alpha,0}_D[\psi_j^\alpha] = e^{\mi \beta \cdot x} \mathcal{S}_D^{\alpha^*,0}[\xi^\beta_j] + \mathcal{O}_{H^1(Y)}(|\beta|^2),\\
	\xi_j^\beta &= (\mathbb{S}_D^{\alpha^*,0})^{-1}[e^{-\mi \beta\cdot y} \mathbf{1}_{\partial D_j}(y)].
	\end{aligned}
\end{equation}
In the second line above, by an abuse of notation, $\mathbb{S}_D^{\alpha^*,0}$ refers to the trace on $\partial D$ of the single layer operator $\mathcal{S}_D^{\alpha^*,0}$. In \cite[Equation (3.4), Remark 1]{ammari_high-frequency_2020}, $\xi_j^\beta$ is shown to be $\psi_j^{\alpha^*}+\mathcal{O}_{L^2(\partial D)}(|\beta|)$. Hence, we get
\begin{equation*}
	S_j^\alpha = e^{\mi\beta\cdot x} S_j + \mathcal{O}_{H^1(Y)}(|\beta|), \qquad j=1,2.
\end{equation*}
In view of \eqref{eq:SjH1error}, we could replace $S_j$ by $S_{j,\delta}$ above with the error term replaced by $\mathcal{O}_{H^1(Y)}(\delta+|\beta|)$. Plugging this estimate into \eqref{eq:ModesExp2}, we finish the proof.
\end{proof} 

\begin{remark}
	For the analysis of wavepackets propagation in this paper, in particular in order to make use of the Floquet-Bloch theory associated to $\mathcal{L}_\delta$ reviewed in the previous section, we need to use eigenmodes normalized in the weighted space $L^2_\delta(Y)$. Note that for $u^\alpha_{j,\delta}$'s, their $L^2_\delta(Y)$ norm is of order $\delta^{-\frac12}$ due to the weights. By setting
		\begin{equation}
		\label{eq:Phi12choice}
		\Phi_{j,\delta}(x,\alpha^*) = \frac{S_{j,\delta}}{\|S_{j,\delta} \|_{L^2_{\delta}(Y)}}, \quad \quad  \Phi_{j,\delta}(x,\alpha)  = \frac{u^{\alpha}_{j,\delta}}{\|u^{\alpha}_{j,\delta} \|_{L^2_{\delta}(Y)}},
	\end{equation}
for $j=1,2$, we can use them as the orthonormal basis of the eigenspace associated the problem \eqref{eq:egproblem} with eigenvalue $(\omega^\alpha_j)^2$. Then we can rewrite \eqref{eq:ModesExpan} in the following form:
	\begin{equation}\label{eq:ModesExpan1}
	\begin{aligned}
		& \Phi_{1,\delta}(x,\alpha) =\frac{A(\beta)}{\sqrt{2}} e^{\mathrm{i}x\cdot \beta}  \Phi_{1,\delta}(x,\alpha^*)(x) - \frac{1}{\sqrt{2}}e^{\mathrm{i}x\cdot \beta}  \Phi_{2,\delta}(x,\alpha^*) +\mathcal{O}_{H_{\delta}^1(Y)}(\delta+|\beta|) , \\
		&\Phi_{2,\delta}(x,\alpha) =\frac{A(\beta)}{\sqrt{2}} e^{\mathrm{i}x\cdot \beta}  \Phi_{1,\delta}(x,\alpha^*)(x) + \frac{1}{\sqrt{2}}e^{\mathrm{i}x\cdot \beta}  \Phi_{2,\delta}(x,\alpha^*) +\mathcal{O}_{H_{\delta}^1(Y)}(\delta+|\beta|).
	\end{aligned}
\end{equation} 
The Bloch modes $\Phi_{j}(x,\alpha)$ can also be described as follows. Let $p_j(x,\alpha) = e^{-\mi \alpha \cdot x} \Phi_j(x,\alpha)$. Then $p_j(\cdot,\alpha)$ is $\Lambda$-periodic in $x$, and solves
\begin{equation}
\begin{aligned}
&\widetilde{\mathcal{L}_\delta}(\alpha)[p_j] = \omega^2_j(\alpha) p_j,\\
&p_j(\cdot,\alpha) \in L^2_{\delta,\Lambda}(\R^2).
\end{aligned}
\end{equation}
Here, $\wtcL(\alpha) = e^{-\mi \alpha\cdot x}\mathcal{L}_\delta e^{\mi \alpha \cdot x}$, \emph{i.e.},
\begin{equation*}
	\wtcL(\alpha)[\phi] = -\sigma_\delta (\nabla+\mi \alpha)\cdot\left(\sigma_\delta^{-1} (\nabla+\mi \alpha) \phi\right).
\end{equation*}
Although the operator now varies with $\alpha$, all $p_{j}$'s live in the same space $H^1_{\delta,\Lambda}$. In terms of the $\Lambda$-periodic functions $p_{1,\delta}$ and $p_{2,\delta}$, the expansion of the eigenmodes near the Dirac point $\alpha^*$ can be written as
\begin{equation}\label{eq:p12expansion}
	\begin{aligned}
		& p_{1,\delta}(x,\alpha) =\frac{A(\beta)}{\sqrt{2}} p_{1,\delta}(x,\alpha^*)(x) - \frac{1}{\sqrt{2}}p_{2,\delta}(x,\alpha^*) +\mathcal{O}_{H_{\delta}^1(Y)}(\delta+|\beta|) , \\
		&p_{2,\delta}(x,\alpha) =\frac{A(\beta)}{\sqrt{2}} p_{1,\delta}(x,\alpha^*)(x) + \frac{1}{\sqrt{2}} p_{2,\delta}(x,\alpha^*) +\mathcal{O}_{H_{\delta}^1(Y)}(\delta+|\beta|).
	\end{aligned}
\end{equation}
\end{remark}

To understand the dependence of $p_j(\cdot,\alpha)$ on  $\alpha$, we expand the operator $\wtcL(\alpha)$ for $\alpha$ near any point $\alpha_0$, \emph{i.e.}, $\wtcL(\alpha_0+\kappa)$ for $\kappa$ with small amplitude, and find
\begin{equation}
\label{periodic L}
\wtcL(\alpha_0+\kappa) = \wtcL(\alpha_0) - \mi\kappa \cdot \wtcA(\alpha_0) + |\kappa|^2,
\end{equation}
where for each $\alpha$, 
\begin{equation*}
\wtcA(\alpha)\phi = (\nabla + \mi\alpha)\phi + \sigma_\delta (\nabla + \mi\alpha)\left(\frac{1}{\sigma_\delta}\phi\right).
\end{equation*}
Here, $\wtcA(\alpha)$ is understood as an operator on $H^1_{\delta,\Lambda}$ with value in $H^{-1}_{\delta,\Lambda}$, and $\wtcA(\alpha)\phi$ acts on $\psi$ by
\begin{equation}
\label{eq:Atildepair}
\langle \wtcA(\alpha)\phi, \psi\rangle_{H^{-1}_{\delta,\Lambda},H^1_{\delta,\Lambda}} = \int_{Y} \frac{1}{\sigma_\delta} \left((\nabla + \mi\alpha)\phi \overline{\psi} - \phi\overline{(\nabla+\mi\alpha)\psi}\right)\,dx.
\end{equation}
The following estimate is an immediate consequence of the definition above:
\begin{equation}
\label{eq:cAHm1}
\left|\langle \wtcA(\alpha) \phi,\psi\rangle_{H^{-1}_{\delta,\Lambda},H^1_{\delta,\Lambda}} \right| \le C\left(\|\phi\|_{L^2_\delta(Y)}\|\psi\|_{H^1_\delta(Y)} + \|\phi\|_{H^1_\delta(Y)}\|\psi\|_{L^2_\delta(Y)}\right), \qquad \forall\,\alpha \in Y^*.
\end{equation}
Note also that by the definition of $H^{-1}_\delta$-$H^1_\delta$ pairings, for any $j,\ell \in \{1,2\}$,
\begin{equation}
\label{eq:pjkpairing}
	\langle \wtcA(\alpha^*)p_{j,\delta},p_{\ell,\delta}\rangle = \langle (\nabla + \mi \alpha^*)p_{j,\delta}, p_{\ell,\delta}\rangle_{L^2_\delta(Y)} - \langle p_{j,\delta}, (\nabla + \mi \alpha^*)p_{\ell,\delta}\rangle_{L^2_\delta(Y)}.
\end{equation}

The next theorem turns out very important for the derivation of the effective dynamics of the wave packets propagation problem \eqref{problem 1}. The quantity $a_\delta$ there determines the coefficient in the effective Dirac system \eqref{eq:Dirac}. It shows that the Dirac cone relation at the Dirac point $\alpha^*$ is  captured by the operator $\wtcA(\alpha^*)$. 

\begin{theorem}\label{thm:derivative A} For $j=1,2,$ let $p_{j,\delta} = \Phi_{j,\delta}(\cdot,\alpha^*)e^{-\mi \alpha^*\cdot x}$. Then, the following holds:
	\begin{equation}
	\label{eq:keypair}
		\begin{aligned}
			& 	\big\langle   \wtcA(\alpha^*) p_{1,\delta}, p_{1,\delta} \big\rangle =	\big\langle   \wtcA(\alpha^*) p_{2,\delta}, p_{2,\delta} \big\rangle  =0 ,\\
			& 	\big\langle   \wtcA(\alpha^*) p_{2,\delta}, p_{1,\delta} \big\rangle  = - \overline{\big\langle   \wtcA(\alpha^*) p_{1,\delta}, p_{2,\delta} \big\rangle}=  a_{\delta} \begin{pmatrix}
				1 \\ 
				\mathrm{i}
			\end{pmatrix}.
		\end{aligned}
	\end{equation}
Here, all pairings are understood as $\langle\cdot,\cdot\rangle_{H^{-1}_\delta(Y),H^1_\delta(Y)}$. The constant $a_\delta$ is given by
\begin{equation}\label{eq:adelta}
	a_{\delta} =  \frac{\mathrm{i}c}{|D_1|} \delta+ \mathcal{O}(\delta^{\frac{3}{2}}) , 
\end{equation}
and $c$ defined by \eqref{eq:cdef} is nonzero.
\end{theorem}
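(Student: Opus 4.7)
The plan for \eqref{eq:keypair} has three ingredients: self-adjointness of $\wtcL(\alpha)$ gives the anti-symmetry relation, the rotation symmetry $\mathcal{R}$ kills the diagonal entries, and a Feynman--Hellmann-type matching against the conical dispersion \eqref{eq:cone} and the mode expansion \eqref{eq:p12expansion} fixes the value of $a_\delta$. A useful preliminary observation is that, writing $\Phi_{j,\delta}=e^{\mi\alpha^*\cdot x}p_{j,\delta}$ and using $(\nabla+\mi\alpha^*)p_{j,\delta}=e^{-\mi\alpha^*\cdot x}\nabla\Phi_{j,\delta}$, the pairing \eqref{eq:Atildepair} rewrites as
\begin{equation*}
\langle \wtcA(\alpha^*) p_{j,\delta}, p_{\ell,\delta}\rangle = \int_Y \sigma_\delta^{-1}\big[\nabla \Phi_{j,\delta}\,\overline{\Phi_{\ell,\delta}} - \Phi_{j,\delta}\,\overline{\nabla \Phi_{\ell,\delta}}\big]\,dx,
\end{equation*}
so that all subsequent manipulations can be performed on the $\alpha^*$-quasiperiodic eigenmodes. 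The anti-symmetry in \eqref{eq:keypair} is then immediate: since $\wtcL(\alpha)$ is self-adjoint for every $\alpha$, the expansion \eqref{periodic L} forces $-\mi\kappa\cdot\wtcA(\alpha^*)$ to be symmetric for every $\kappa\in\R^2$, which, component by component, is exactly the identity $\langle\wtcA p_{2,\delta},p_{1,\delta}\rangle = -\overline{\langle\wtcA p_{1,\delta},p_{2,\delta}\rangle}$.

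For the diagonal entries I would invoke $\mathcal{R}\Phi_{j,\delta}=\nu_j\Phi_{j,\delta}$ with $\nu_1=\tau$, $\nu_2=\bar\tau$ from \eqref{eq:symSd12}. Changing variables $x\mapsto R_1^{-1}x$ on $Y_1$ and $x\mapsto R_2^{-1}x$ on $Y_2$ in the integral above: the weight $\sigma_\delta$ is invariant because $R_jD_j=D_j$; the values transform as $\Phi_{j,\delta}(R_j^{-1}x)=\nu_j^{-1}e^{-\mi j\alpha^*\cdot l_1}\Phi_{j,\delta}(x)$; and the chain rule together with $R^{-1}=R^T$ gives $(\nabla\Phi_{j,\delta})(R_j^{-1}x)=\nu_j^{-1}e^{-\mi j\alpha^*\cdot l_1}R^T\nabla\Phi_{j,\delta}(x)$. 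All the phases $e^{\pm\mi j\alpha^*\cdot l_1}$ cancel between the factors carrying $\Phi_j$ and $\overline{\Phi_\ell}$, yielding
\begin{equation*}
\langle\wtcA(\alpha^*)p_{j,\delta}, p_{\ell,\delta}\rangle \;=\; \overline{\nu_j}\,\nu_\ell\,R^T\, \langle\wtcA(\alpha^*)p_{j,\delta}, p_{\ell,\delta}\rangle.
\end{equation*}
For $j=\ell$ the scalar prefactor is $|\nu_j|^2=1$, so the pairing is a vector in $\mathbb{C}^2$ fixed by $R^T$; but $R^T$ is a rotation by $2\pi/3$ with spectrum $\{\tau,\bar\tau\}$ and no unit eigenvalue, so the pairing must vanish.

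For the off-diagonal entries I plan a Feynman--Hellmann-type argument on the lower branch. Applying $\wtcL(\alpha^*+\beta)-(\omega^*_\delta)^2$ to $p_{1,\delta}(\cdot,\alpha^*+\beta)$, pairing with $p_{1,\delta}(\cdot,\alpha^*)$ in $L^2_\delta(Y)$, using self-adjointness of $\wtcL(\alpha^*)$ and the expansion \eqref{periodic L} yields
\begin{equation*}
\big(\omega_{1,\delta}^2(\alpha^*+\beta)-(\omega^*_\delta)^2\big)\big\langle p_{1,\delta}(\cdot,\alpha^*+\beta), p_{1,\delta}\big\rangle_{L^2_\delta(Y)} = -\mi\beta\cdot\big\langle \wtcA(\alpha^*)p_{1,\delta}(\cdot,\alpha^*+\beta), p_{1,\delta}\big\rangle + O(|\beta|^2).
\end{equation*}
Substituting \eqref{eq:p12expansion} for $p_{1,\delta}(\cdot,\alpha^*+\beta)$, using the diagonal vanishing established in the previous step to cancel the $\langle\wtcA p_{1,\delta},p_{1,\delta}\rangle$ contribution, and inserting the Dirac-cone formula $\omega^2_{1,\delta}(\alpha^*+\beta)-(\omega^*_\delta)^2=-\delta|c||\beta|/|D_1|+O(\delta^{3/2}+|\beta|^2)$ from Theorem \ref{thm:DiracModes} together with the explicit $A(\beta)=c(\beta_1-\mi\beta_2)/(|c||\beta|)$, the identity reduces to a linear equation in $(\beta_1,\beta_2)$. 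Matching coefficients determines both components of $\langle\wtcA(\alpha^*)p_{2,\delta},p_{1,\delta}\rangle$ and produces $a_\delta=\mi c\delta/|D_1|+O(\delta^{3/2})$; the fact that $c\ne 0$ was recorded in \eqref{eq:gradC}.

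The main obstacle will be the error analysis in this last step. The perturbed modes $p_{1,\delta}(\cdot,\alpha^*+\beta)$ are only known modulo $O_{H^1_\delta(Y)}(\delta+|\beta|)$, and \eqref{eq:cAHm1} tells us that the pairing against $\wtcA$ is of size $\|\cdot\|_{L^2_\delta}\|\cdot\|_{H^1_\delta}$. The strategy is to choose $|\beta|$ comparable to $\sqrt{\delta}$, so that $|\beta|^2$ and $\delta|\beta|$ are both of the desired order $\delta^{3/2}$. The diagonal vanishing is crucial here: without it, the $O(\delta+|\beta|)$ remainder in the expansion of $p_{1,\delta}(\cdot,\alpha^*+\beta)$, once hit by $\wtcA$ and paired against $p_{1,\delta}(\cdot,\alpha^*)$, could multiply $\langle\wtcA p_{1,\delta},p_{1,\delta}\rangle$ and spoil the leading order; vanishing of this diagonal term reduces the remainder to an $O(\delta^{3/2})$ contribution and thereby isolates $a_\delta$ to the required accuracy.
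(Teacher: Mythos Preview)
Your treatment of the anti-symmetry and of the diagonal vanishing is correct and matches the paper's: the identity $\langle\wtcA p_j,p_\ell\rangle=\bar\nu_j\nu_\ell\,R^T\langle\wtcA p_j,p_\ell\rangle$ is exactly the mechanism the paper uses (there it is written as $R\mathbf a_\delta=\tau\mathbf a_\delta$ for the off-diagonal; the same computation with $j=\ell$ gives the diagonal vanishing).

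The Feynman--Hellmann step, however, does not close. Track the remainder $e(p):=p_{1,\delta}(\cdot,\alpha^*+\beta)-\big(\tfrac{A(\beta)}{\sqrt2}p_{1,\delta}-\tfrac1{\sqrt2}p_{2,\delta}\big)=\mathcal O_{H^1_\delta}(\delta+|\beta|)$ from \eqref{eq:p12expansion}. After you use the diagonal vanishing, the exact identity reads
\[
\tfrac{i}{\sqrt2}\,\beta\cdot\mathbf a_\delta
=\big(\omega_{1,\delta}^2(\alpha^*+\beta)-(\omega^*_\delta)^2-|\beta|^2\big)\langle p_{1,\delta}^\beta,p_{1,\delta}\rangle
+\,i\beta\cdot\langle\wtcA(\alpha^*)e(p),p_{1,\delta}\rangle.
\]
By \eqref{eq:cAHm1} with $\|p_{1,\delta}\|_{H^1_\delta}=\mathcal O(1)$ one only gets $\langle\wtcA e(p),p_{1,\delta}\rangle=\mathcal O(\delta+|\beta|)$. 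Dividing by $|\beta|$, the additive error in $\hat\beta\cdot\mathbf a_\delta$ is $\mathcal O(\delta+|\beta|)$ for every choice of $\beta$, hence at best $\mathcal O(\delta)$. But $\mathbf a_\delta$ itself is $\mathcal O(\delta)$, so you cannot isolate even the leading coefficient, let alone the $\mathcal O(\delta^{3/2})$ remainder claimed in \eqref{eq:adelta}. (Your arithmetic also slips: with $|\beta|\sim\sqrt\delta$ you get $|\beta|^2=\delta$, not $\delta^{3/2}$.) The obstruction is intrinsic to the input: the $\mathcal O(\delta)$ in \eqref{eq:p12expansion} is the price of passing through the quasi-static modes $S_j$, and it survives the pairing with $\wtcA$.

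The paper sidesteps the perturbed-mode expansion entirely. It writes $\|S_{1,\delta}\|^2_{L^2_\delta}\,\mathbf a_\delta=\langle\nabla S_{2,\delta},S_{1,\delta}\rangle_{L^2_\delta}-\langle S_{2,\delta},\nabla S_{1,\delta}\rangle_{L^2_\delta}$, replaces $S_{j,\delta}$ by the $\delta$-independent layer potentials $S_j$ of \eqref{eq:Sjdef} using \eqref{eq:SjH1error} and Remark~\ref{rem:SHdel} (crucially, $\nabla S_j=0$ on $D$), and is left with the $\delta$-independent vector $\mathbf b=\int_{Y\setminus\overline D}\big(\nabla S_2\,\overline{S_1}-S_2\,\overline{\nabla S_1}\big)$. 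A separate computation (Theorem~\ref{thm:gradalphaC12}) identifies $\mathbf b$ with $\mi c\,(1,\mi)^\top$ via $\nabla_\alpha C_{12}|_{\alpha^*}$. Since $\|S_{1,\delta}\|^2_{L^2_\delta}=\delta^{-1}|D_1|+\mathcal O(1)$, dividing gives $\mathbf a_\delta=\tfrac{\mi c\delta}{|D_1|}(1,\mi)^\top+\mathcal O(\delta^{3/2})$. The point is that the quantity one approximates is $\mathcal O(1)$ with error $\mathcal O(\sqrt\delta)$, and the $\delta$ then enters only through the normalization factor; your scheme instead tries to read off an $\mathcal O(\delta)$ quantity directly from an identity whose remainder is also $\mathcal O(\delta)$.
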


Note that by the relation between $\Phi_{j,\delta}$ and $p_{j,\delta}$ and by the formula \eqref{eq:pjkpairing}, the results above says
\begin{equation*}
\begin{aligned}
    \langle \Phi_{1,\delta}(\cdot,\alpha^*),\nabla\Phi_{1,\delta}(\cdot,\alpha^*)\rangle_{L^2_\delta(Y)} &= \langle \Phi_{2,\delta}(\cdot,\alpha^*),\nabla\Phi_{2,\delta}(\cdot,\alpha^*)\rangle_{L^2_\delta(Y)} = \begin{pmatrix}0\\ 0\end{pmatrix}, \\
    -2\langle \Phi_{2,\delta}(\cdot,\alpha),\nabla \Phi_{1,\delta}(\cdot,\alpha)\rangle_{L^2_\delta(Y)} &= a_{\delta} \begin{pmatrix}
				1 \\ 
				\mathrm{i}
			\end{pmatrix}.
    \end{aligned}
\end{equation*}
The fact that $a_\delta \ne 0$ can be interpreted as a non-degeneracy Dirac cone condition, consistent with such non-degeneracy condition in \cite{MR2947949}.

\begin{remark} We emphasize that $\wtcA(\alpha)$ is, in some sense, the derivative of $\wtcL(\alpha)$ with respect to $\alpha$. Indeed, this is most clearly seen from the expansion formula \eqref{periodic L}. The advantage of introducing $\wtcL(\alpha)$ there, other than treating directly the operator $\mathcal{L}_\delta$ in the $\alpha$-varying space $H^1_{\delta,\Lambda^\alpha}$, is to make this point clear. Due to the discontinuity of $\sigma_\delta$, we have to understand $\wtcA(\alpha)$ as an operator from $H^1_\delta(\alpha)$ to $H^{-1}_\delta(\alpha)$, for each $\alpha \in Y^*$.

We also record here the following useful formula: for sufficiently regular $f$ and for $g \in H^1_\delta(\R^2)$, 
\begin{equation}
\label{action fg}
\begin{aligned}
    \mathcal{L}_\delta(fg) - f\mathcal{L}_\delta(g) = -\nabla f\cdot \nabla g -\sigma_\delta \nabla \cdot(\sigma_\delta^{-1} g\nabla f). 
    \end{aligned}
\end{equation}
The above identity holds in $H^{-1}_\delta(\R^2)$, and the right-hand side acts on $\psi \in H^1_\delta(\R^2)$ by
\begin{equation*}
\langle -\nabla f\cdot \nabla g -\sigma_\delta \nabla \cdot(\sigma_\delta^{-1} g\nabla f),\psi\rangle = \int_{\R^2} \frac{1}{\sigma_\delta} \left[g\nabla f\cdot \nabla \overline{\psi} - (\nabla f\cdot \nabla g)\overline{\psi}\right]\,dx.
\end{equation*}
\end{remark}

\begin{proof}[Proof of Theorem \ref{thm:derivative A}]
	Recall the expression \eqref{eq:pjkpairing} for the $H^{-1}_\delta(Y)$-$H^1_\delta(Y)$ pairing of $p_{j,\delta}$'s. The first line of \eqref{eq:keypair}, as well as the first equality in the second line, follow immediately.

	It remains to compute the quantity $\mathbf{a}_\delta := \langle \wtcA(\alpha^*)p_{2,\delta},p_{1,\delta}\rangle$, which is a vector in $\bC^2$. By the relation between $p_{j,\delta}$ and $\Phi_{j,\delta}$, we have
	\begin{equation*}
		\mathbf{a}_\delta = \langle \nabla \Phi_{2,\delta}(\cdot,\alpha^*), \Phi_{1,\delta}(\cdot,\alpha^*)\rangle_{L^2_\delta(Y)} - \langle \Phi_{2,\delta}(\cdot,\alpha^*), \nabla \Phi_{1,\delta}(\cdot,\alpha^*)\rangle_{L^2_\delta(Y)}.
	\end{equation*}
Below we work with $S_{1,\delta}$ and $S_{2,\delta}$ rather than $\Phi_{j,\delta}(\cdot,\alpha^*)$. After factorizing by a constant, we obtain that
	\begin{equation*}
		\|S_{1,\delta}\|^2_{L^2_\delta(Y)} \mathbf{a}_{\delta} = \langle \nabla S_{2,\delta},S_{1,\delta}\rangle_{L^2_\delta(Y)} - \langle S_{2,\delta},\nabla S_{1,\delta}\rangle_{L^2_\delta(Y)}.
	\end{equation*}
Note that
\begin{equation*}
	\|S_1\|_{L^2_\delta(Y)}^2 = \|S_2\|_{L^2_\delta(Y)}^2 = \delta^{-1}|D_1| + \mathcal{O}(1).
\end{equation*}
By \eqref{eq:SjH1error} the above holds for $S_{j,\delta}$, $j=1,2$, as well. For simplicity the subscript $L^2_\delta(Y)$ is henceforth omitted from the bracket of the pairings above. By \eqref{eq:SjH1error} again and Remark \ref{rem:SHdel}, we get
\begin{equation*}
	\|S_{1,\delta}\|^2_{L^2_\delta(Y)} \mathbf{a}_\delta = \mathbf{b} + \mathcal{O}(\sqrt{\delta}), \quad \text{where} \quad \mathbf{b} := \langle \nabla S_2,S_1\rangle - \langle S_2,\nabla S_1\rangle.
\end{equation*}
We will show in Theorem \ref{thm:gradalphaC12} that $\mathbf{b} = \mi c(1,\mi)^\top$, where the superscript $\top$ denotes the transpose. Hence, we get
\begin{equation*}
	\mathbf{a}_\delta = \frac{\mi c\delta}{|D_1|} \begin{pmatrix}
	1\\ \mi 
	\end{pmatrix} + \mathcal{O}(\delta^{\frac32}).
\end{equation*}

Although the above characterization of $\mathbf{a}_\delta$ is sufficient for our analysis of the wavepacket problem later, we do prove that the second statement \eqref{eq:keypair} holds. For doing so, it suffices to show that $R\mathbf{a}_\delta = \tau \mathbf{a}_\delta$, where $\tau = e^{\mi \frac{2\pi}{3}}$, because $(1,\mi)^\top$ is a basis for the eigenspace of $R$ associated to the eigenvalue $\tau$.

Recall that $\mathcal{R}S_{1,\delta} = \tau S_{1,\delta}$ and $\mathcal{R}S_{2,\delta} = \ol\tau S_{2,\delta}$. By a direct computation, $\mathcal{R}\nabla = R^*\nabla \mathcal{R}$, where $R^*$ is the adjoint of the rotation matrix $R$. Hence,
	\begin{equation*}
	\begin{aligned}
		\langle \nabla S_{2,\delta},S_{1,\delta}\rangle &= \langle \tau \nabla \mathcal{R}S_{2,\delta}, S_{1,\delta}\rangle =  \langle  \nabla \mathcal{R}S_{2,\delta}, \ol\tau S_{1,\delta}\rangle =  \langle  \nabla \mathcal{R}S_{2,\delta}, \ol\tau^2\mathcal{R} S_{1,\delta}\rangle = \tau^2 \langle  R\mathcal{R}\nabla S_{2,\delta}, \mathcal{R} S_{1,\delta}\rangle\\
		&= \ol\tau \left( R\int_{Y_1} \frac{1}{\sigma_\delta(y)} \nabla S_{2,\delta}(R_1 y)\overline{S_{1,\delta}(R_1 y)}\,dy  + R\int_{Y_2} \frac{1}{\sigma_\delta(y)} \nabla S_{2,\delta}(R_2 y)\overline{S_{1,\delta}(R_2 y)}\,dy\right)\\
		&=\ol\tau R \langle \nabla S_{2,\delta},S_{1,\delta}\rangle.
		\end{aligned}
	\end{equation*}
That is, $\langle \nabla S_{2,\delta},S_{1,\delta}\rangle$ is an eigenvector of $R$ associated to eigenvalue $\tau$. On the other hand, we have $S_{2,\delta} = \mathcal{PC}S_{1,\delta}$ and $\mathcal{PC}\nabla = -\nabla\mathcal{PC}$. Hence, we get
	\begin{equation*}
	\begin{aligned}
		\langle \nabla S_{2,\delta}, S_{1,\delta}\rangle &= \langle \nabla \mathcal{PC} S_{1,\delta}, \mathcal{PC} S_{2,\delta}\rangle = \langle \mathcal{PC}\nabla S_{1,\delta},\mathcal{PC} S_{2,\delta}\rangle \\
		&= -\int_Y \frac{1}{\sigma_\delta}\overline{\nabla S_{1,\delta}(2x_0 -y)} S_{2,\delta}(2x_0-y)\,dy\\
		&= - \langle S_{2,\delta},\nabla S_{1,\delta}\rangle.
		\end{aligned}
	\end{equation*}
	Hence $\|S_{1,\delta}\|^2_{L^2_\delta(Y)}\mathbf{a}_\delta = 2\langle \nabla S_{2,\delta},S_{1,\delta}\rangle$.
It follows that $R\mathbf{a}_\delta = \tau \mathbf{a}_\delta$, and hence the second statement in \eqref{eq:keypair} with $a_\delta$ being given by \eqref{eq:adelta} holds.
\end{proof}

\begin{lemma}\label{thm:gradalphaS} For each $\alpha \in Y^*$, let $S_1^\alpha$ and $S_2^\alpha$ be defined by \eqref{eq:Sjdef}. Then, for $j=1,2$,
\begin{equation}
	\label{eq:gradalphaS}
\nabla_\alpha S_j^\alpha \rvert_{\alpha = \alpha^*} = \mi \left(xS_j - \mathbf{W}_j\right),
\end{equation}
where $\mathbf{W}_j = \mathcal{S}_D^{\alpha^*,0}[(\mathbb{S}_D^{\alpha^*,0})^{-1} y\mathbf{1}_{\partial D_j}(y)]$, or, equivalently, $\mathbf{W}_j$ is a $\bC^2$-valued function determined by
\begin{equation*}
\left\{
	\begin{aligned}
		&-\Delta \mathbf{W}_j = 0 \quad \text{in }\; Y\setminus (\partial D),\\
&\mathbf{W}_j \text{  is $\alpha^*$-quasiperiodic},\\
&\mathbf{W}_j = y \delta_{ij} \quad \text{on $D_i$, $i=1,2$.}
	\end{aligned}
	\right.
\end{equation*}
\end{lemma}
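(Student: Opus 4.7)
The plan is to read off $\nabla_\alpha S_j^\alpha\big|_{\alpha=\alpha^*}$ directly from the small-$\beta$ expansion \eqref{eq:SalphaExp} already established in the proof of Theorem~\ref{thm:ModesExpansion}: for $\alpha = \alpha^*+\beta$ with $|\beta|$ small,
\begin{equation*}
S_j^\alpha(x) = e^{\mi \beta\cdot x}\, \mathcal{S}_D^{\alpha^*,0}[\xi_j^\beta](x) + \mathcal{O}_{H^1(Y)}(|\beta|^2), \qquad \xi_j^\beta = (\mathbb{S}_D^{\alpha^*,0})^{-1}\bigl[e^{-\mi \beta\cdot y}\mathbf{1}_{\partial D_j}(y)\bigr].
\end{equation*}
Since $\alpha \mapsto \mathcal{S}_D^{\alpha,0}$ and $\alpha \mapsto (\mathbb{S}_D^{\alpha,0})^{-1}$ depend analytically on $\alpha$ in a neighborhood of $\alpha^*$ by standard perturbation theory for families of bounded invertible operators, the map $\alpha \mapsto S_j^\alpha$ is smooth into $H^1(Y)$, and the $\mathcal{O}(|\beta|^2)$ remainder contributes nothing to its first-order derivative at $\beta = 0$.

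Next I would differentiate the leading term at $\beta = 0$. Differentiating $e^{\mi\beta\cdot x}$ produces a factor $\mi x$, while differentiating $e^{-\mi \beta\cdot y}$ inside the single layer potential produces a factor $-\mi y$. Using $\xi_j^0 = \psi_j^{\alpha^*}$ so that $\mathcal{S}_D^{\alpha^*,0}[\xi_j^0] = S_j$, I would conclude
\begin{equation*}
\nabla_\alpha S_j^\alpha\big|_{\alpha=\alpha^*} = \mi x\, S_j \;-\; \mi\, \mathcal{S}_D^{\alpha^*,0}\!\bigl[(\mathbb{S}_D^{\alpha^*,0})^{-1}\bigl(y\mathbf{1}_{\partial D_j}(y)\bigr)\bigr] \;=\; \mi(xS_j - \mathbf{W}_j),
\end{equation*}
which is the claimed identity.

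It remains to verify that $\mathbf{W}_j$ is characterized by the stated boundary value problem. By construction $\mathbf{W}_j$ is an $\alpha^*$-quasiperiodic single layer potential, hence automatically $\alpha^*$-quasiperiodic and harmonic on $Y\setminus \partial D$; its trace on $\partial D$ equals $y\mathbf{1}_{\partial D_j}(y)$ by the defining identity $\mathbb{S}_D^{\alpha^*,0}\bigl[(\mathbb{S}_D^{\alpha^*,0})^{-1}(y\mathbf{1}_{\partial D_j})\bigr] = y\mathbf{1}_{\partial D_j}$. Restricted to each inclusion $D_i$, $\mathbf{W}_j$ is then harmonic with boundary value $y\delta_{ij}$; since the components of the vector field $y \mapsto y$ are themselves harmonic, uniqueness for the Dirichlet problem on $D_i$ forces $\mathbf{W}_j = y\delta_{ij}$ throughout $D_i$. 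The only genuinely technical step in this plan is the rigorous justification of $\beta$-differentiability used at the outset; this reduces to the analyticity of the operator family $\alpha \mapsto \mathbb{S}_D^{\alpha,0}$, which is standard from layer potential theory and is exactly the input used in \cite{ammari_high-frequency_2020} to obtain \eqref{eq:SalphaExp}.
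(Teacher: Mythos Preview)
Your proof is correct and follows essentially the same approach as the paper: both start from the expansion \eqref{eq:SalphaExp}, expand $\xi_j^\beta$ to first order in $\beta$, and read off the derivative from the resulting $\mathcal{O}(|\beta|^2)$ expansion of $S_j^\alpha$. You additionally spell out the PDE characterization of $\mathbf{W}_j$ (which the paper states but does not prove) and discuss the smoothness justification more explicitly, but the core argument is the same.
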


\begin{proof} To compute $\nabla_\alpha S^\alpha_j$ at $\alpha=\alpha^*$, we use the formula \eqref{eq:SalphaExp} obtained earlier. From the expressions there we have, for $\alpha = \alpha^*+\beta$, and $|\beta|\to 0$,
\begin{equation*}
\begin{aligned}
	\xi^\beta_j &= \psi_j^{\alpha^*} + (\mathbb{S}_D^{\alpha^*,0})^{-1}[(e^{-\mi \beta\cdot y}-1) \mathbf{1}_{\partial D_j}(y)]\\
	&= \psi_j^{\alpha^*} -\mi \beta \cdot (\mathbb{S}_D^{\alpha^*,0})^{-1}[y \mathbf{1}_{\partial D_j}(y)] + \mathcal{O}_{L^2(\partial D)}(|\beta|^2).
\end{aligned}
\end{equation*}
Plugging the above expansion to the expression of $S^\alpha_j$ into \eqref{eq:SalphaExp}, we get
\begin{equation*}
\begin{aligned}
	S^\alpha_j &= e^{\mi \beta\cdot x}S_j(x) - e^{\mi \beta\cdot x} \mi \beta\cdot \mathcal{S}_D^{\alpha^*,0}(\mathbb{S}_D^{\alpha^*,0})^{-1}[y \mathbf{1}_{\partial D_j}(y)]) + \mathcal{O}_{H^1(Y)}(|\beta|^2)\\
	&= S_j + \mi \beta \cdot xS_j - \mi \beta\cdot \mathbf{W}_j +  \mathcal{O}_{H^1(Y)}(|\beta|^2).
	\end{aligned}
\end{equation*}
Hence, we have proved \eqref{eq:gradalphaS}.
\end{proof}

\begin{theorem}\label{thm:gradalphaC12} Under the honeycomb symmetry \eqref{eq:hcsym} of $D$, for the off-diagonal entry of the capacitance matrix $(C_{ij}(\alpha))$, we have 
\begin{equation}
	\label{eq:gradalphaC12}
\nabla_\alpha C_{12}(\alpha)\Big\rvert_{\alpha=\alpha^*} = \begin{pmatrix}
0 & -1\\ -1 & 0
\end{pmatrix}
\int_{Y\setminus \ol D} \nabla S_2(x)\ol{S_1(x)}-S_2(x)\ol{\nabla S_1(x)}\,dx. 
\end{equation}
Moreover, the vector above has the form $c(1,-\mi)^\top$, with $c$ given by
\begin{equation}
\label{eq:c_formula2}
	c = \frac{\mi\sqrt{3}L}{2} \int_{\partial Y \cap \partial Y_1} \ol{S_1(y)} \frac{\partial S_2}{\partial \nu}(y) -  \ol{\frac{\partial S_1}{\partial \nu}(y)} S_2(y)\,dy.
\end{equation}
\end{theorem}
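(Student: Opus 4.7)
The plan is to differentiate the volume formula $C_{12}(\alpha) = \int_Y \ol{\nabla S_1^\alpha}\cdot \nabla S_2^\alpha\,dx$ directly using Lemma~\ref{thm:gradalphaS}, reduce the resulting expression via Green's identities to a boundary integral on $\partial Y$, and then invoke honeycomb symmetry to finish.

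By Lemma~\ref{thm:gradalphaS}, $\partial_{\alpha_k} S_j^\alpha|_{\alpha^*} = \mi(x_k S_j - (\mathbf{W}_j)_k)$. Differentiating $C_{12}$ under the integral sign, the cross terms $x_k\nabla S_j$ cancel between the two factors, and since $\nabla S_j \equiv 0$ on $D$, we obtain $\partial_{\alpha_k}C_{12}|_{\alpha^*} = -\mi I_k + \mi J_k$, where
\begin{equation*}
I_k := \int_{Y\setminus \ol D}(\ol{S_1}\partial_k S_2 - S_2\ol{\partial_k S_1})\,dx, \quad J_k := \int_{Y\setminus \ol D}(\ol{\nabla(\mathbf{W}_1)_k}\cdot \nabla S_2 - \ol{\nabla S_1}\cdot \nabla(\mathbf{W}_2)_k)\,dx.
\end{equation*}
Both $S_j$ and $(\mathbf{W}_j)_k$ are harmonic in $Y\setminus \ol D$ and $\alpha^*$-quasi-periodic, so Green's first identity converts $J_k$ to a boundary integral whose $\partial Y$ part vanishes by cancellation across identified opposite sides (the integrand is $\Lambda$-periodic while $\nu$ flips), and whose $\partial D$ part, using $(\mathbf{W}_j)_k|_{\partial D_i} = y_k \delta_{ij}$, gives $J_k = -\int_{\partial D_1} y_k\partial_\nu S_2\,d\sigma + \int_{\partial D_2} y_k \ol{\partial_\nu S_1}\,d\sigma$. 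In parallel, I apply the divergence theorem to the vector field $V^{(k)} := y_k(\ol{S_1}\nabla S_2 - S_2\ol{\nabla S_1})$, noting that $\ol{S_1}\nabla S_2 - S_2\ol{\nabla S_1}$ is divergence-free by harmonicity of $S_j$, so $\nabla\cdot V^{(k)}$ is precisely the integrand of $I_k$; the $\partial D$ part of the resulting boundary integral matches $J_k$ exactly, and the $\partial Y$ part, after tracking the $\Lambda$-periodicity of the bracket and the flip of $\nu$, collapses to $-(l_2)_k \tilde B_1 - (l_1)_k \tilde B_2$, with $\tilde B_j := \int_{\Gamma_j}(\ol{S_1}\partial_\nu S_2 - S_2\ol{\partial_\nu S_1})\,d\sigma$ and $\Gamma_j := \{tl_j: t\in[0,1]\}$. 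Substituting, the $J_k$ terms cancel and we arrive at $\nabla_\alpha C_{12}|_{\alpha^*} = \mi(\tilde B_1 l_2 + \tilde B_2 l_1)$.

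Next, I invoke the honeycomb symmetry. Restricted to $Y_1$, the operator $\mathcal R$ of \eqref{eq:Rfdef} is the rotation $R_1$ around $x_1$ by $2\pi/3$, and $R_1$ maps $\Gamma_2$ onto $\Gamma_1$. Changing variables $y = R_1^{-1} y'$ in $\tilde B_2$ and applying $\mathcal R S_1 = \tau S_1$, $\mathcal R S_2 = \ol\tau S_2$ from \eqref{eq:S12R} (so $S_j(R_1^{-1}y')$ picks up the factor $\ol\tau\, e^{-\mi\alpha^*\cdot l_1}$ for $j=1$ and $\tau\, e^{-\mi\alpha^*\cdot l_1}$ for $j=2$), together with the transformation $R\nu_y = \nu_{y'}$ of the outward normal, produces an overall factor $\tau^2 = \ol\tau$ in the bilinear integrand. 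Hence $\tilde B_2 = \ol\tau \,\tilde B_1$. A direct computation with $l_1 = L(\sqrt 3/2, 1/2)$, $l_2 = L(\sqrt 3/2, -1/2)$, and $\ol\tau = -1/2 - \mi\sqrt 3/2$ then gives $l_2 + \ol\tau l_1 = \frac{L(\sqrt 3 - 3\mi)}{4}(1, -\mi)^\top$, so
\begin{equation*}
\nabla_\alpha C_{12}|_{\alpha^*} = c(1,-\mi)^\top, \quad c = \frac{\mi\sqrt 3\, L}{2}(\tilde B_1 + \tilde B_2) = \frac{\mi\sqrt 3\, L}{2}\int_{\partial Y\cap \partial Y_1}(\ol{S_1}\partial_\nu S_2 - \ol{\partial_\nu S_1}S_2)\,d\sigma,
\end{equation*}
using $\partial Y\cap \partial Y_1 = \Gamma_1\cup \Gamma_2$ and $1+\ol\tau = e^{-\mi\pi/3}$ in the coefficient identity. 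The matrix form of the first identity then follows by substituting $\tilde B_2 = \ol\tau \tilde B_1$ into the already-derived relation $I_k = J_k -(l_2)_k\tilde B_1 -(l_1)_k\tilde B_2$ to verify $(I_1, I_2)^\top = (\mi c, -c)^\top$, so that $\begin{pmatrix}0 & -1 \\ -1 & 0\end{pmatrix}(I_1,I_2)^\top = c(1,-\mi)^\top$ reproduces $\nabla_\alpha C_{12}|_{\alpha^*}$.

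The main obstacle is the symmetry identity $\tilde B_2 = \ol\tau \,\tilde B_1$. Because $\mathcal R$ is defined piecewise on $Y_1, Y_2$ with a quasi-periodic phase $e^{-\mi\alpha^*\cdot l_1}$, and because the change of variable $y = R_1^{-1}y'$ mixes rotation, translation, and orientation reversal on the boundary segments, reconciling the transformation of the normal derivative, the outward orientation of $\nu$, and the compounded phase factors so that the bilinear integrand transforms with precisely the factor $\tau^2 = \ol\tau$ requires careful bookkeeping.
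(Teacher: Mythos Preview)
Your route to the boundary formula \eqref{eq:c_formula2} is correct and is a genuine simplification compared to the paper. The paper first derives the intermediate formula $c = \mi\int_{\partial Y} y_1\bigl(S_2\,\ol{\partial_\nu S_1} - \ol{S_1}\,\partial_\nu S_2\bigr)$, then in a separate step collapses the four edges of $\partial Y$ to $\gamma_1\cup\gamma_2$; you instead go straight to $\nabla_\alpha C_{12}\rvert_{\alpha^*} = \mi(\tilde B_1 l_2 + \tilde B_2 l_1)$ and use the single rotation identity $\tilde B_2=\ol\tau\,\tilde B_1$ to extract both the $(1,-\mi)^\top$ direction and the scalar $c$ at once. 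Your divergence-theorem trick, matching the $\partial D$ part of $I_k$ exactly against $J_k$ so that the $\mathbf W_j$-terms disappear from the final answer, is neat and bypasses the paper's Step~1 manipulation.

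However, your last sentence --- deducing the matrix identity \eqref{eq:gradalphaC12} by ``substituting $\tilde B_2=\ol\tau\tilde B_1$ into $I_k = J_k -(l_2)_k\tilde B_1 -(l_1)_k\tilde B_2$ to verify $(I_1,I_2)^\top=(\mi c,-c)^\top$'' --- is a genuine gap. That relation still contains the unknown $J_k$, and nothing you have written forces $J_k=0$; the substitution alone only gives $I_k = J_k + (\text{known})$. To pin down $(I_1,I_2)$ you need an \emph{additional} symmetry input on the $\partial D$ integrals. The paper supplies this in its Step~2: it uses the $R_0$-parity relating $\partial D_2$ to $\partial D_1$ (together with $C_{12}(\alpha^*)=0$) and then the $R_1$-rotation on $\partial D_1$ to show $\int_{\partial D_1} y_k\,\partial_\nu S_2\,d\sigma = 0$ for $k=1,2$, whence $J_k=0$. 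Equivalently, one can argue that $\mathbf b=(I_1,I_2)$ is an eigenvector of the rotation $R$ (this is the $R\mathbf a_\delta=\tau\mathbf a_\delta$ computation from Theorem~\ref{thm:derivative A}), which fixes its direction independently of~$J_k$. Either ingredient is missing from your argument, and without it \eqref{eq:gradalphaC12} is not established.
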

\begin{proof} Note that in this proof, in standard coordinate system we write $x=(x_1,x_2)$, $y=(y_1,y_2)$ for points $x,y\in \R^2$. This should not be confused with the centers $x_1,x_2$ and $x_0$ of $D_1,D_2$ and $Y$.

\emph{Step 1}. By the definition of $C_{12}(\alpha)$, we have
\begin{equation*}
	\nabla_\alpha C_{12}(\alpha) = \int_{Y\setminus \ol D}
	\ol{\nabla_x \nabla_\alpha S^\alpha_1}\cdot \nabla_x S_2^\alpha \,dx + \int_{Y\setminus \ol D}
	\ol{\nabla_x S^\alpha_1}\cdot \nabla_x \nabla_\alpha S_2^\alpha \,dx .
\end{equation*}
It is known (by symmetry of $C^\alpha)$ that $\nabla_\alpha C_{12}(\alpha)\rvert_{\alpha=\alpha^*}$ can be written as $c(1,-\mi)^\top$. By evaluating the above at $\alpha = \alpha^*$, taking the first component, and using \eqref{eq:gradalphaS}, we get
\begin{equation*}
	c:= \frac{\partial c_1^\alpha}{\partial \alpha_1}\Big\rvert_{\alpha=\alpha^*} = -\mi\int_{Y\setminus \ol D}
	\ol{\nabla_x (x_1 S_1 - \mathbf{W}_1^{(1)})}\cdot \nabla_x S_2 \,dx + \mi\int_{Y\setminus \ol D}
	\ol{\nabla_x S_1}\cdot \nabla_x (x_1S_2-\mathbf{W}_2^{(1)}) \,dx .
\end{equation*}
Here, $\mathbf{W}_j^{(1)}$ is the first component of the vector valued function $\mathbf{W}_j$, $j=1,2$. By definition, $\mathbf{W}_j^{(1)}$ is $\alpha^*$-quasiperiodic, and moreover,
\begin{equation*}
	x_1 S_j - \mathbf{W}_j^{(1)} = 0 \quad \text{on $\partial D$}, \quad j=1,2.
\end{equation*}
Using integration by parts and the fact that $S_j$'s are harmonic on $Y\setminus \partial D$, and using the above observation also, we get
\begin{equation*}
	c = \mi\int_{\partial Y} (y_1S_2(y)-\mathbf{W}_2^{(1)})\ol{\frac{\partial S_1}{\partial \nu}(y)}\,dy - \mi\int_{\partial Y} (y_1\ol{S_1}(y)-\ol{\mathbf{W}_1^{(1)}}) \frac{\partial S_2}{\partial \nu}(y)\,dy.
\end{equation*}
Note that for any pair of $\alpha^*$-quasiperiodic $H^1(Y)$ functions $u,v$ we always have
\begin{equation*}
	\int_{\partial Y} u(y)\ol{\frac{\partial v}{\partial \nu}(y)}\,dy = 0.
\end{equation*}
We simplify the expression of $c$ further to obtain that
\begin{equation}
\label{eq:c_formula1}
	c = \mi\int_{\partial Y} y_1S_2(y)\ol{\frac{\partial S_1}{\partial \nu}(y)}\,dy - \mi\int_{\partial Y} y_1\ol{S_1(y)} \frac{\partial S_2}{\partial \nu}(y)\,dy .
\end{equation}

\medskip

\emph{Step 2}. Next, we verify the expression \eqref{eq:gradalphaC12}. Let
\begin{equation*}
	\mathbf{b} = \langle \nabla S_2,S_1\rangle - \langle S_2,\nabla S_1\rangle = \int_{Y\setminus \ol D} \nabla S_2(x)\ol{S_1(x)} - S_2(x) \ol{\nabla S_1(x)}\,dx ,
\end{equation*}
as defined in the proof of Theorem \ref{eq:keypair}. By the same argument there that leads to $R\mathbf{a}_\delta = \tau \mathbf{a}_\delta$, $\tau = e^{\mi 2\pi/3}$, we verify that $R \mathbf{b} =\tau \mathbf{b}$ as well. Hence, $\mathbf{b} = b(1,\mi)^\top$, where $b$ is the first component of $\mathbf{b}$, \emph{i.e.},
\begin{equation*}
	b = \int_{Y\setminus \ol D} \partial_{1} S_2(x) \overline{S_1(x)}\,dx - \int_{Y\setminus \ol D} S_2(x)\overline{\partial_1 S_1(x)}\,dx.
\end{equation*}
Writing $\partial_1 = \partial_{x_1} = (\nabla x_1)\cdot \nabla$, we compute
\begin{equation*}
\begin{aligned}
	b &= \int_{Y\setminus \ol D} \nabla x_1 \cdot \left((\nabla S_2(x))\ol{S_1(x)} - S_2(x)\ol{\nabla S_1(x)}\right)\,dx\\
	&= \int_{Y\setminus \ol D} \nabla \cdot\left[x_1 \left(\nabla S_2\ol{S_1} - S_2\ol{\nabla S_1}\right)\right]\,dx\\
	&= \int_{\partial (Y\setminus \ol D)} y_1\left(\frac{\partial S_2}{\partial \nu}(y)\ol{S_1(y)} - S_2(y)\ol{\frac{\partial S_1}{\partial \nu}(y)}\right)\,dy. 
	\end{aligned}
\end{equation*}
Here, for the second line we have used integration by parts and the fact that $\Delta S_j = 0$, $j=1,2$, in $Y\setminus \ol D$. The boundary $\partial(Y\setminus \ol D)$ is a disjoint union of $\partial Y$, $\partial D_1$ and $\partial D_2$. Since $S_j=\delta_{ij}$ on $\partial D_i$, for $i,j \in \{1,2\}$, the contributions in the last line from $\partial D_1$ and $\partial D_2$ amount to
\begin{equation*}
	\int_{\partial D_2} y_1 \nu(y)\cdot \ol{\nabla S_1(y)}\,dy-\int_{\partial D_1} y_1 \nu(y)\cdot \nabla S_2(y)\,dy ,
\end{equation*}
where $\nu(y)$ refers to the unit outer normal vector along $\partial D$. By the honeycomb symmetry relations between $D_1$ and $D_2$, and the relations between $S_2(x) = \mathcal{PC}S_1(x) = \ol{S_1(2x_0-x)}$, under the change of variable $y\mapsto R_0y = 2x_0-y$, we have $D_2 = R_0(D_1)$, and for all $y\in \partial D_1$,
\begin{equation*}
y+R_0y = 2x_0, \; \nu(y) = -\nu(R_0y), \; (\nabla S_2)(y) = -\ol{\nabla S_1(R_0 y)}.
\end{equation*}
Hence, we get
\begin{equation*}
\begin{aligned}
\int_{\partial D_2} y_1\nu(y)\cdot \ol{\nabla S_1(y)}\,dy &= \int_{\partial D_1} (2x_0-y)_1 \nu(y)\cdot \nabla S_2(y)\,dy\\
&= -2(x_0)_1 C_{12}(\alpha^*) - \int_{\partial D_1} y_1 \nu(y)\cdot \nabla S_2(y)\,dy.
\end{aligned}
\end{equation*}
We know that $C_{12}(\alpha^*) = 0$, so the integrals on $\partial D_1$ and on $\partial D_2$ are opposite of each other, and we only need to compute one of them. Let us focus on the integral over $\partial D_1$ of $y_1 \nu(y)\cdot \nabla S_2(y)$. We use the honeycomb symmetry again, under the rotation $R_1$ (counter clockwise by $2\pi/3$ around the center of $D_1$), $R_1D_1 = R_1^2 D_1 = D_1$. Also, for all $x\in Y_1$, by definition \eqref{eq:Rfdef} and the relation \eqref{eq:S12R},
\begin{equation*}
	\mathcal{R}S_2(x) = \tau S_2(R_1x), \quad S_2(R_1x) = \tau S_2(x), \quad (\nabla S_2)(R_1x) = \tau R\nabla S_2(x).
\end{equation*}
Also, under the mapping $R_1$, for all $y\in \partial D_1$, we have $\nu(R_1 y) = R\nu(y)$. Hence, by a change of variable we get
\begin{equation*}
	\int_{\partial D_1} y_1\nu(y)\cdot \nabla S_2(y)\,dy = \int_{\partial D_1} \tau (R_1 y)_1 \nu(y) \cdot \nabla S_2(y)\,dy.
\end{equation*}
Similarly, we get
\begin{equation*}
	\int_{\partial D_1} y_1\nu(y)\cdot \nabla S_2(y)\,dy = \int_{\partial D_1} \tau^2 (R^2_1 y)_1 \nu(y) \cdot \nabla S_2(y)\,dy.
\end{equation*}
Computing the exact expressions of $(\tau R y)_1$ and $(\tau^2 R^2y)_1$ in terms of $y_1$ and $y_2$, and using the fact that $C_{12}(\alpha^*)=0$, we can get from the two equations above
\begin{equation*}
	\int_{\partial D_1} y_1 \frac{\partial S_2}{\partial \nu}(y)\,dy = \int_{\partial D_1} y_2 \frac{\partial S_2}{\partial \nu}(y)\,dy = 0.
\end{equation*}
We hence have proved
\begin{equation*}
	b = \int_{\partial Y} y_1\left(\frac{\partial S_2}{\partial \nu}(y)\ol{S_1(y)} - S_2(y)\ol{\frac{\partial S_1}{\partial \nu}(y)}\right)\,dy = \mi c.
\end{equation*}
Now that $\mathbf{b} = \mi c (1,\mi)^\top$, $\nabla_\alpha C_{12}(\alpha)|_{\alpha=\alpha^*} = c(1,-\mi)^\top$, we see that \eqref{eq:gradalphaC12} holds.

\medskip

\emph{Step 3}. Finally, we simplify the expression of $c$ in \eqref{eq:c_formula1}. Setting $\gamma_1 = \{tl_1\,:\, t\in [0,1]\}$, $\gamma_2 = \{tl_2\,:\, t\in [0,1]\}$, we can break $\partial Y$ into four pieces: $\gamma_1$, $\gamma_2$, $\gamma_1+l_2$ and $\gamma_2+l_1$. Since $S_1,S_2$ are $\alpha^*$-quasiperiodic, we get
\begin{equation*}
	\int_{\gamma_1+l_2} y_1S_2 \ol{\frac{\partial S_1}{\partial \nu}}\,dy = \int_{\gamma_1} (y_1+l_2^{(1)})S_2(y)e^{\mi \alpha^*\cdot l_2}\ol{\left(-\frac{\partial S_1}{\partial \nu}(y) e^{-\mi \alpha^*\cdot l_2}\right)}\,dy,
\end{equation*}
where $l_2^{(1)} = \sqrt{3}L/2$ is the first component of $l_2$, and $L$ is the lattice constant of $\Lambda$; the minus sign on the right is due to the fact that the outer normal for $\gamma_1$ is the opposite of that of $\gamma_1+l_2$. The integral on the other pieces of $\partial Y$ can be similarly analyzed. Note also $l_1^{(1)} = l_2^{(1)}$. We hence get
\begin{equation*}
	\int_{\partial Y} y_1S_2 \ol{\frac{\partial S_1}{\partial \nu}}\,dy = -\frac{\sqrt{3}L}{2} \int_{\partial Y\cap \partial Y_1} S_2 \ol{\frac{\partial S_1}{\partial \nu}}\,dy.
\end{equation*}
The remaining term in \eqref{eq:c_formula1} can be similarly computed, and we get the desired formula \eqref{eq:c_formula2}.
\end{proof}

\section{Outline of the proofs for the main theorems} \label{sec:proofs}

Let $w_{\varepsilon,\delta}$ be the solution of problem \eqref{problem 1}. We define $u_{\varepsilon,\delta} (x,t) = \eps w_{\eps,\delta}(\eps x,\eps t)$. 
Note that after this rescaling, the space and time in $u_{\eps,\delta}$ correspond to the fast ones; the extra $\eps$-rescaling in the amplitude makes the transformation unitary in $L^2(\R^2_x)$, 
$$
\|w_{\eps,\delta}(\cdot,t)\|_{L^2(\R^2)} = \|u_{\eps,\delta}(\cdot,t/\eps)\|_{L^2(\R^2)}.
$$
Then $u_{\varepsilon,\delta}$ satisfies 
\begin{equation}\label{problem 2}
	\left\{
	\begin{aligned}
	  & \left( \partial_t^2 + \mathcal{L}_{\delta} \right) u_{\varepsilon,\delta} = 0   \quad  &&(x,t)\in \mathbb{R}^2\times (0,\infty), \\
		& u_{\varepsilon,\delta}(x,0)=   \eps \left( F_1(\eps x) S_1(x)+ F_2(\eps x) S_2(x)\right) \quad &&x\in \R^2,\\
		&\partial_t u_{\varepsilon,\delta}(x,0) = \mathrm{i} \omega_{\delta}^* \eps \left( F_1(\eps x) S_1(x)+ F_2(\eps x) S_2(x)  \right) \qquad  &&x\in \R^2.
	\end{aligned}\right.
\end{equation}
Replacing $S_j$ by $S_{j,\delta}$ above, we also consider the solution $\widetilde{u}_{\varepsilon,\delta}$ to the following problem: 
\begin{equation}\label{problem 3}
	\left\{
	\begin{aligned}
		& \left( \partial_t^2 + \mathcal{L}_{\delta} \right) \widetilde{u}_{\varepsilon,\delta}= 0   \quad  &&(x,t)\in \mathbb{R}^2\times (0,\infty), \\
		& \widetilde{u}_{\varepsilon,\delta}(x,0)=   \eps \left( F_1(\eps x) S_{1,\delta}(x)+  F_2(\eps x) S_{2,\delta}(x)\right) \qquad &&x\in \R^2,\\
		&\partial_t \widetilde{u}_{\varepsilon,\delta}(x,0) = \mathrm{i} \omega_{\delta}^*\eps \left(  F_1(\eps x) S_{1,\delta}(x)+ F_2(\eps x) S_{2,\delta}(x)  \right) \qquad &&x\in \R^2.
	\end{aligned}\right.
\end{equation}
Define 
\begin{equation}
\label{ved}
	\eta_{\varepsilon,\delta}(x,t) = \widetilde{u}_{\varepsilon,\delta} (x,t) - e^{-\mathrm{ i} \omega_{\delta}^* t } \left(\eps V_1 (\eps x,\eps t) S_{1,\delta}(x)+  \eps V_2(\eps x, \eps t) S_{2,\delta}(x) \right).
\end{equation}
By a direct computation we see that, for the original approximation error $r_{\varepsilon,\delta}$ defined in \eqref{ansatz}, the following holds:
\begin{equation*}
		\| r_{\varepsilon,\delta}(\cdot,t) \|_{L^2(\mathbb{R}^2)}  =	\| \eps r_{\varepsilon,\delta}(\eps x,t) \|_{L^2(\mathbb{R}_x^2)}. 
\end{equation*}
Hence, we have
\begin{equation*}
\|r_{\eps,\delta}(\cdot,t)\|_{L^2(\mathbb{R}^2)} \le \sum_{j=1}^3 \|I_j(x,t)\|_{L^2_x(\mathbb{R}^2)}.
\end{equation*}
where the terms $I_j(x,t)$, $j=1,2,3$, are defined by
\begin{equation}
\label{eq:I123def}
	\begin{aligned}
		& I_1(x,t) = \eps V_1 (\eps x,t) (S_1- S_{1,\delta} )(x) + \eps V_2(\eps x, t) (S_2 - S_{2,\delta} )(x),\\ 
		& I_2(x,t) = u_{\varepsilon,\delta} (x, t/\eps)-\widetilde{u}_{\varepsilon,\delta} (x, t/\eps),\\ 
		&I_3(x,t) = \eta_{\varepsilon,\delta} (x, t/\eps).
	\end{aligned}
\end{equation}
Therefore, Theorem \ref{thm:main} is proved once we have finished the estimates of $I_1, I_2$ and $I_3$ in the subsequent sections of this paper:

In Section \ref{sec:I1} (Proposition \ref{prop:I1}), we prove that
\begin{equation*}
\|I_1(\cdot,t)\|_{L^2(\R^2)}\le C_{\bm{F}} \delta.
\end{equation*}

In Section \ref{subs:I2} (Proposition \ref{prop:I2}), we prove that
 \begin{equation*}
 \|I_2(\cdot,t)\|_{L^2_\delta(\R^2)} \leq C_{\bm{F}}(\delta^{\frac12}+t\eps\delta).
 \end{equation*}

 In Section \ref{subs:I3}, we divide $I_3$ into three parts: 
\begin{equation*}
	\|I_3(\cdot,t)\|_{L^2(\mathbb{R}^2)} \leq  \sum_{k=1}^3 \| \eta^{(k)}_{\varepsilon,\delta}(\cdot,t/\eps)\|_{L^2(\mathbb{R}^2)} .
\end{equation*}

In Propositions \ref{prop:eta1bound} and \ref{prop:eta2bound}, we show that 
\begin{equation*}
	\|\eta^{(1)}_{\varepsilon,\delta} (\cdot,t/\eps) \|_{L^2(\mathbb{R}^2)} + 	\|\eta^{(2)}_{\varepsilon,\delta} (\cdot,t/\eps) \|_{L_{\delta}^2(\mathbb{R}^2)} \leq C_{\bm F} (\eps +t \sqrt{\delta}+ t\eps + t^2\eps \sqrt{\delta} ).
\end{equation*}
In Proposition \ref{prop:eta3bound} we show that
\begin{equation*}
\begin{aligned}
	\|\eta^{(3)}(\cdot,t/\varepsilon)\|_{L^2(\R^2)} &\le C_{\bm{F}} \eps\delta^{-\frac12} + C_{\bm{F}} t(\eps^{N+\nu-1}\delta^{-1} + \varepsilon^{\nu-1} \delta^{1/2}+ \varepsilon^{2\nu-1} \delta^{-1/2} )\\
    &\quad\, +C_{\bm{F}} t^2 (
    \varepsilon^{\nu-2} \delta^{3/2} + \varepsilon^{2\nu-2} \delta^{1/2} + \eps^{(1-\nu)N-2}). 
 \end{aligned}
\end{equation*}

Combine all the estimates above and choose $\delta = \eps^\kappa$, $\kappa \in (0,2)$, we get
\begin{equation*}
\begin{aligned}
\|r_{\eps,\delta}(\cdot,t)\|_{L^2(\R^2)} &\le C_{\bm{F}}\Big(\eps^{\frac{\kappa}{2}} + \eps^{1-\frac{\kappa}{2}} + t\eps^{\min\{N+\nu-1-\kappa, \nu-1+\frac{\kappa}{2}, 2\nu-1-\frac{\kappa}{2}\}} \\
&\quad + t^2\varepsilon^{\min\{ \nu-2+\frac{3\kappa}{2}, 2\nu-2+\frac{\kappa}{2}, (1-\nu)N-2\}}\Big).
\end{aligned}
\end{equation*}
We hence have established the main estimate in Theorem \ref{thm:main}.

\subsection{Estimate of $I_1$}
\label{sec:I1}

Recall the definition of $I_1$ in \eqref{eq:I123def}, which accounts for using $S_j$ rather than $S_{j,\delta}$, $j=1,2$, in the initial wave packets. Here, $S_{j,\delta}$ as defined in Theorem \ref{thm:DiracModes} are the Bloch modes associated to the Dirac point $\alpha^*$ of the band structure of $\mathcal{L}_\delta$, and $S_j$ as defined in \eqref{eq:Sjdef} are the zero-frequency limit of $S_{j,\delta}$; see also \eqref{eq:SjH1error}. Using $S_j$ rather than $S_{j,\delta}$ makes sense from a practical point of view, as that data are then independent of the contrast variable $\delta$ and are hence easier to prepare.

\begin{lemma}\label{lem:SpPL2bound}
	Let $\Gamma \in \mathscr{S}(\mathbb{R}^2)$ and let $\phi \in L_{\rm loc}^2(\mathbb{R}^2)$ be $\Lambda$-periodic. Then, for all positive number $s >1$, there is a constant $C=C(s) >0$ such that, for all $\varepsilon>0$, we have
	\begin{equation*}
	\|	\eps \Gamma(\eps x) \phi(x)\|_{L^2(\mathbb{R}^2)} \leq C\|\phi\|_{L^2(Y)} \sup_{x\in \mathbb{R}^2} \big| (1+|x|^2 )^{\frac{s}{2}} \Gamma(x) \big| .
	\end{equation*}
	Moreover, the above inequality still holds if we replace $L^2(\R^2)$ by $L^2_\delta(\R^2)$ on the left-hand side and replace $L^2(Y)$ by $L^2_\delta(Y)$ on the right-hand side.
\end{lemma}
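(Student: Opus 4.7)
The plan is to reduce to a Riemann-sum estimate by exploiting the $\Lambda$-periodicity of $\phi$ (and of $\sigma_\delta$ in the weighted case). Decompose $\R^2 = \bigsqcup_{n\in\Lambda}(Y+n)$ and write
\begin{equation*}
\|\eps\Gamma(\eps\cdot)\phi\|_{L^2(\R^2)}^2 = \eps^2 \sum_{n\in\Lambda}\int_{Y+n}|\Gamma(\eps x)|^2 |\phi(x)|^2 \,dx.
\end{equation*}
On each cell $Y+n$ I would bound $|\Gamma(\eps x)|^2$ pointwise by $M^2 \sup_{x\in Y+n}(1+|\eps x|^2)^{-s}$, where $M := \sup_{x\in\R^2}|(1+|x|^2)^{s/2}\Gamma(x)|$; the $\Lambda$-periodicity of $\phi$ then gives $\int_{Y+n}|\phi(x)|^2\,dx = \|\phi\|_{L^2(Y)}^2$, and one can pull these two pieces apart. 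This reduces everything to showing that
\begin{equation*}
K(\eps) := \eps^2 \sum_{n\in\Lambda}\sup_{x\in Y+n}\frac{1}{(1+|\eps x|^2)^s}
\end{equation*}
is bounded uniformly in $\eps>0$ by a constant depending only on $s$ and the geometry of $Y$.

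To estimate $K(\eps)$, let $D := \mathrm{diam}(Y)$. For $x=n+y$ with $y\in Y$, a triangle inequality shows that when $|\eps n|\ge 2\eps D$ one has $(1+|\eps x|^2)\ge \tfrac14(1+|\eps n|^2)$, so that the corresponding supremum is at most $4^s(1+|\eps n|^2)^{-s}$. The remaining $n$'s with $|n|<2D$ are finite in number (bounded independently of $\eps$) and contribute at most $C\eps^2$. Hence, up to a constant,
\begin{equation*}
K(\eps)\;\le\; C\eps^2 + C\,\eps^2\sum_{n\in\Lambda}\frac{1}{(1+|\eps n|^2)^s}.
\end{equation*}
The second term is a Riemann sum (with spacing of order $\eps$) for $\tfrac{1}{|Y|}\int_{\R^2}(1+|y|^2)^{-s}dy$, which is finite precisely because $s>1$. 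The standard comparison-to-integral argument for monotone-radial functions yields a bound of this sum by $C(1+\int_{\R^2}(1+|y|^2)^{-s}dy)$ that is uniform in $\eps$; this is the only step that really uses $s>1$ and is, I expect, the main technical point, since one has to be careful that the comparison constant does not blow up as $\eps\to 0$ or $\eps\to\infty$.

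Combining the pieces gives $\|\eps\Gamma(\eps\cdot)\phi\|_{L^2(\R^2)}\le C(s)\,M\,\|\phi\|_{L^2(Y)}$, which is the claimed bound. For the $L^2_\delta$ version, the weight $\sigma_\delta^{-1}$ is $\Lambda$-periodic, so the same decomposition gives $\int_{Y+n}\sigma_\delta^{-1}|\phi|^2 dx = \|\phi\|_{L^2_\delta(Y)}^2$ by translation, and the weighted norm on the left-hand side is controlled in exactly the same way; no further modification of the argument is needed.
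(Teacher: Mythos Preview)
Your proof is correct and follows essentially the same approach as the paper: decompose $\R^2$ into lattice translates of a fundamental domain, use periodicity of $\phi$ (and of $\sigma_\delta^{-1}$ in the weighted case) to factor out $\|\phi\|_{L^2(Y)}$, and then control the remaining lattice sum $\eps^2\sum_n (1+|\eps n|^2)^{-s}$ as a Riemann sum for $\int_{\R^2}(1+|y|^2)^{-s}\,dy$, finite for $s>1$. The only cosmetic difference is that the paper works with the Wigner--Seitz cell $Y_{\rm B}$ (which contains the origin) rather than the parallelogram $Y$, so the lower bound $\min_{x\in Y_{\rm B}}|x+\mathbf{m}\bm{l}|\ge c|\mathbf{m}|$ holds for all $\mathbf{m}$ without having to split off the finitely many small-$|n|$ terms as you do.
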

\begin{proof}
	Recall that $Y_{\rm B}$ denotes the Wigner-Seitz cell of $\Lambda$ containing $(0,0)$. Since $\phi$ is $\Lambda$-periodic, we have
	\begin{equation}\label{eq:GphiL2-1}
		\begin{aligned}
			\int_{\mathbb{R}^2} | \eps \Gamma(\eps x) \phi(x) |^2 \,dx &= \varepsilon^2 \sum_{\mathbf{m} \in \mathbb{Z}^2 } \int_{Y_{\rm B}} | \Gamma(\varepsilon x + \eps\mathbf{m}\bm{l}) |^2|\phi(x)|^2\,dx \\
			&\leq  \varepsilon^2 \|\phi\|^2_{L^2(Y)} \sum_{\mathbf{m} \in \mathbb{Z}^2 } \max_{x \in Y_{\rm B}}  | \Gamma(\varepsilon x + \eps\mathbf{m}\bm{l}) |^2.
		\end{aligned}
	\end{equation}
We can find a constant $c\in (0,1)$ independent of $\{\eps, s\}$ such that
\begin{equation*}
	\min_{x\in Y_{\rm B}} |x+\mathbf{m}\bm{l}| \ge c|\mathbf{m}|, \quad \forall \mathbf{m} \in \Z^2.
\end{equation*}
Since $\Gamma \in \mathscr{S}(\mathbb{R}^2)$, 
	for any $s>1$, we have
	\begin{equation*}
		\max_{x \in Y_{\rm B}}  | \Gamma(\varepsilon x + \eps\mathbf{m}\bm{l}) | \le \frac{1}{c^s(1+\eps^2|\mathbf{m}|^2)^{s/2}}\sup_{x\in \mathbb{R}^2} \big| (1+|x|^2 )^{\frac{s}{2}} \Gamma(x) \big|, \quad \forall \, \mathbf{m}\in \Z^2.
	\end{equation*} 
We then get
	\begin{equation*}
			\int_{\mathbb{R}^2} | \eps \Gamma(\eps x) \phi(x) |^2 \,dx  \leq  C(\varepsilon ,s)\|\phi\|^2_{L^2(Y)} 	\left(\sup_{x\in \mathbb{R}^2} \ (1+|x|^2 )^{\frac{s}{2}} \Gamma(x) \right)^2 , 
	\end{equation*}
where
\begin{equation*}
	C(\varepsilon ,s) = \frac{1}{c^{2s}}\sum_{\mathbf{m} \in \mathbb{Z}^2 }  \frac{ \varepsilon^2 }{(1+\varepsilon^2 |\mathbf{m}|^2 )^s}. 
\end{equation*}
The desired result for $L^2(\R^2)$ estimates then holds because $C(\eps,s)$ can be bounded independently of $\eps$ for $s>1$ (as $\eps\to 0$, it approximates the integral $\int_{\R^2} (1+|x|^2)^{-s}$). The same result for $L^2_\delta(\R^2)$ also holds if we simply change $dx$ to $\sigma^{-1}(x)dx$ in \eqref{eq:GphiL2-1}.
\end{proof}

\begin{proposition}\label{prop:I1}
	Under the assumption of Theorem \ref{thm:main}, there is a universal constant $C>0$, such that, for any $t>0$,
	\begin{equation*} 
		\|I_1(\cdot,t)\|_{L^2(\R^2)} \leq C\delta \sum_{j=1,2} \sup_{x\in \R^2} (1+|x|^2)|F_j(x)|. 
	\end{equation*}
\end{proposition}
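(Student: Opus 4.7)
The plan is to apply the triangle inequality to split $I_1$ into two pieces, one for each $j \in \{1,2\}$, and bound each piece separately by an adaptation of Lemma~\ref{lem:SpPL2bound}. Concretely, for each fixed $t$, I set $\Gamma_j(x) := V_j(x,t)$ and $\phi_j(x) := S_j(x) - S_{j,\delta}(x)$, so that
\begin{equation*}
\|I_1(\cdot,t)\|_{L^2(\R^2)} \le \sum_{j=1}^{2} \|\eps \Gamma_j(\eps \cdot)\,\phi_j(\cdot)\|_{L^2(\R^2)}.
\end{equation*}
The hypothesis of Lemma~\ref{lem:SpPL2bound} asks $\phi$ to be $\Lambda$-periodic, whereas $\phi_j$ is only $\alpha^*$-quasiperiodic. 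However, $|\phi_j|^2$ is genuinely $\Lambda$-periodic, so the cell-decomposition computation
\begin{equation*}
\int_{\R^2} |\eps\Gamma_j(\eps x)|^2|\phi_j(x)|^2\,dx = \eps^2 \sum_{\mathbf{m}\in \Z^2} \int_{Y_{\rm B}} |\Gamma_j(\eps x + \eps \mathbf{m}\bm{l})|^2 |\phi_j(x)|^2\,dx
\end{equation*}
still goes through verbatim, yielding the same conclusion $\|\eps \Gamma_j(\eps \cdot)\phi_j\|_{L^2(\R^2)} \le C\|\phi_j\|_{L^2(Y)} \sup_{x}(1+|x|^2)^{s/2}|\Gamma_j(x)|$ for any $s>1$.

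Next I need the two factors. First, by \eqref{eq:SjH1error},
\begin{equation*}
\|\phi_j\|_{L^2(Y)} \le \|S_j - S_{j,\delta}\|_{H^1(Y)} \le C\delta.
\end{equation*}
Second, I need a uniform-in-$t$ Schwartz-type bound on $\Gamma_j(\cdot) = V_j(\cdot,t)$ with the right-hand side expressible in terms of $\bm{F}$. This is precisely what Proposition~\ref{prop:DiracFourierEstimate}(iii) provides: applied with $k=0$, $\mathbf{n}=(0,0)$, and $N=2$, it gives, for all $t\in \R$,
\begin{equation*}
(1+|x|^2)\,|V_j(x,t)| \le C\,\|(I-\Delta_\xi)\widehat{\bm{F}}(\xi)\|_{L^1(\R^2_\xi)},
\end{equation*}
and the right-hand side is controlled by a finite number of Schwartz seminorms of $\bm{F}$ (in particular, by a multiple of $\sum_{k=1,2}\sup_x(1+|x|^2)|F_k(x)|$ once enough derivatives or moments of $F_k$ are accounted for in the constant).

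Combining the two bounds with $s=2$ (which is admissible since $s>1$) gives
\begin{equation*}
\|\eps \Gamma_j(\eps \cdot)\phi_j\|_{L^2(\R^2)} \le C\delta \sup_{x\in \R^2}(1+|x|^2)|V_j(x,t)| \le C_{\bm F}\,\delta,
\end{equation*}
uniformly in $t$, and summing over $j=1,2$ yields the stated inequality. The only mildly subtle point, and the place where one has to be careful, is the mismatch between the $\alpha^*$-quasiperiodicity of $\phi_j$ and the $\Lambda$-periodicity required by Lemma~\ref{lem:SpPL2bound}; once one notices that only $|\phi_j|^2$ enters the estimate, the rest of the proof is a straightforward assembly of \eqref{eq:SjH1error}, the cell-decomposition argument, and the uniform decay of $V_j(\cdot,t)$ furnished by Proposition~\ref{prop:DiracFourierEstimate}.
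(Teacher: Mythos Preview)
Your proof is correct and follows essentially the same route as the paper: apply Lemma~\ref{lem:SpPL2bound} with $s=2$, bound $\|S_j-S_{j,\delta}\|_{L^2(Y)}$ by $C\delta$ via \eqref{eq:SjH1error}, and control $\sup_x(1+|x|^2)|V_j(x,t)|$ uniformly in $t$ using Proposition~\ref{prop:DiracFourierEstimate}(iii) with $N=2$. Your explicit remark that Lemma~\ref{lem:SpPL2bound} only needs $|\phi_j|^2$ to be $\Lambda$-periodic (which holds for $\alpha^*$-quasiperiodic $\phi_j$) is a useful clarification that the paper leaves implicit.
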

\begin{proof}
	Choosing $s=2$ in Lemma \ref{lem:SpPL2bound}, we get
	\begin{equation*}
		\|I_1 (\cdot,t)\|_{L^2(\mathbb{R}^2)} \leq C\sum_{j=1,2}\|S_j - S_{j,\delta} \|_{L^2(Y)} \sup_{x\in \mathbb{R}^2} \big| (1+|x|^2 )V_j(x,t) \big|.
	\end{equation*}
The conclusion then follows from the estimates \eqref{eq:SjH1error} and \eqref{eq:decayDirac} by choosing $N=2$.
\end{proof}

\subsection{Estimate of $I_2$}
\label{subs:I2}
In this subsection, we control the $L^2_\delta(\R^2)$ norm of $I_2(\cdot,t)$. This term accounts for the difference of using $S_{j,\delta}$ versus $S_j$ in the Cauchy data of the wave equations \eqref{problem 2} and \eqref{problem 3}. 

Let $v_{\eps,\delta} = u_{\eps,\delta} - \widetilde u_{\eps,\delta}$. Then by definition \eqref{eq:I123def} we know that $I_2(x,t)=v_{\eps,\delta}(x,t/\eps)$. Clearly, $v_{\eps,\delta}$ solves the wave equation 
\begin{equation*}
\left\{\begin{aligned}
&\partial_t^2 v_{\eps,\delta} + \mathcal{L}_\delta v_{\eps,\delta} =0, \qquad &(x,t) \in \R^2\times (0,\infty),\\
&v_{\eps,\delta}(x,0) = \eps F_1(\eps x) (S_1-S_{1,\delta})(x) +  \eps F_2(\eps x) (S_2-S_{2,\delta})(x), \qquad &x\in \R^2,\\
&\partial_t v_{\eps,\delta}(x,0) = \mathrm{i}\omega^*_\delta \left( \eps F_1(\eps x) (S_1-S_{1,\delta})(x) + \eps F_2(\eps x) (S_2-S_{2,\delta})(x) \right), \qquad &x\in \R^2.
\end{aligned}
\right.
\end{equation*}
For notational simplicity, we write $v$ for $v_{\eps,\delta}$ in this section. 

Using the Floquet-Bloch states representation, we get the following formula for $v$:
\begin{equation}
\label{eq:v_FB}
	\begin{aligned}
		v(x,T) =\,&\sum_{j\geq 1} \int_{Y^*}\cos(\omega_{j,\delta}(\alpha)T) \langle v(x,0)  , \Phi_{j,\delta}(\cdot,\alpha) \rangle_{L^2_{\delta}(\mathbb{R}^2)} \Phi_{j,\delta}(x,\alpha)\,d\alpha \\
		& + \mathrm{ i} \omega_{\delta}^* \sum_{j\geq 1} \int_{Y^*}\frac{\sin(\omega_{j,\delta}(\alpha)T) }{\omega_{j,\delta}(\alpha)} \langle v(x,0)  , \Phi_{j,\delta}(\cdot,\alpha) \rangle_{L^2_{\delta}(\mathbb{R}^2)} \Phi_{j,\delta}(x,\alpha)\,d\alpha.
	\end{aligned}
\end{equation}
In view of \eqref{plancherel}, as long as the $L^2_\delta(\R^2)$ norm is concerned, the first item on the right-hand side above is controlled by the initial value $\|v(\cdot,0)\|_{L^2_\delta(\R^2)}$, which is of order $\mathcal{O}(\delta^{1/2})$. More precisely, by using Lemma \ref{lem:SpPL2bound} (with $s=2$),  we have
\begin{equation*}
\|v(\cdot,0)\|_{L^2_\delta(\mathbb{R}^2)} \le \delta^{-1/2}\|v(\cdot,0)\|_{L^2(\mathbb{R}^2)} \le C\delta^{\frac12} \|\widehat{\bm{F}}\|_{W^{2,1}}.
\end{equation*} 

To control the $L^2_\delta$ norm of the second term, we break the integral over $Y^*$ to regions near and away from the Dirac point $\alpha^*$. In view of the conical structure of the first two Bloch bands near the Dirac point $\alpha^*$, we can find a radius $r>0$ and a constant $C>0$, both independent of $\delta$, such that 
\begin{equation}\label{eq:Y*cut}
	\min_{|\alpha -\alpha^*|<r}\omega_{j,\delta}(\alpha) > \frac{1}{2} \omega_{\delta}^*  > C\delta^{\frac{1}{2}}, \quad j=1,2.
\end{equation}
Moreover, in view of \eqref{eq:bandall}, there is a constant $C>0$ such that
\begin{equation}\label{eq:Y*nocut}
	\min_{\alpha \in Y^*}\omega_{j,\delta}(\alpha) > C, \quad j\geq 3.
\end{equation}
Since $\omega^*_\delta \sim \sqrt{\delta}$, for any $T>0$, the $L^2_\delta(\R^2_x)$ norm of 
\begin{equation*}
    \mathrm{ i} \omega_{\delta}^* \left( \sum_{j=1,2} \int_{Y^*\cap \{|\alpha-\alpha^*|< r\}} + \sum_{j\geq 3} \int_{Y^*} \right)\frac{\sin(\omega_{j,\delta}(\alpha)T) }{\omega_{j,\delta}(\alpha)} \langle v(x,0)  , \Phi_{j,\delta}(\cdot,\alpha) \rangle_{L^2_{\delta}(\mathbb{R}^2)} \Phi_{j,\delta}(x,\alpha)\,d\alpha 
\end{equation*}
is bounded by $\|v(\cdot,0)\|_{L^2_\delta(\R^2)}$ as well. Hence, we  have
\begin{equation}\label{eq:I2bound}
	\begin{aligned}
			\|I_2(\cdot,t)&\|_{L^2_{\delta}(\mathbb{R}^2)} = \|v(\cdot,t/\eps)\|_{L^2_\delta(\R^2)} \leq C \|v(\cdot,0)\|_{L^2_{\delta}(\mathbb{R}^2)} \\
			&+ \left\| \mathrm{ i} \omega_{\delta}^* \sum_{j=1,2} \int_{Y^*\cap \{|\alpha-\alpha^*|\ge r\}}\frac{\sin(\omega_{j,\delta}(\alpha)t/\eps) }{\omega_{j,\delta}(\alpha)} \langle v(x,0)  , \Phi_{j,\delta}(\cdot,\alpha) \rangle_{L^2_{\delta}(\mathbb{R}^2)} \Phi_{j,\delta}(x,\alpha)\,d\alpha\right\|_{L^2_\delta(\mathbb{R}^2)}\\
			&\leq C\delta^{\frac{1}{2}} \|\widehat{\bm{F}}\|_{W^{2,1}} + C\delta^{\frac12} \frac{t}{\eps} \sum_{j=1,2} \max_{|\alpha-\alpha^*|\geq r} |\langle v (\cdot,0)  , \Phi_{j,\delta}(\cdot,\alpha) \rangle_{L^2_{\delta}(\mathbb{R}^2)} | .
	\end{aligned}
\end{equation}
To proceed,  we need the following lemma first used in \cite{fefferman_wave_2014}.
	\begin{lemma}\label{f.w.lemma}
	The following results hold:
	\begin{itemize}
		\item[(i)] 	Let $\Gamma \in \mathscr{S}(\mathbb{R}^2)$, let $\Psi $ be $\alpha^*$-quasiperiodic, and let $\Phi$ be $\alpha$-quasiperiodic. Then,  
		\begin{equation*}
			\begin{aligned}
				&\langle  \Gamma(\eps\,\cdot) \Psi(\cdot,\alpha^*), \Phi(\cdot,\alpha)\rangle_{L^2_{\delta}(\mathbb{R}^2)}   \\
				& = \varepsilon^{-2} \sum_{\mathbf{m} \in \mathbb{Z}^2}  \widehat{\Gamma} \left( \frac{\mathbf{m}\bm{\alpha} + \alpha - \alpha^*}{\varepsilon} \right) \int_Y \sigma_{\delta}^{-1}(x) e^{\mathrm{i} (\mathbf{m}\bm{\alpha}+ \alpha - \alpha^* ) \cdot x} \Psi(x) \overline{\Phi(x)} \,dx.
			\end{aligned}
		\end{equation*}
		\item[(ii)] For any $r>0$, there is a constant $C>0$, which depends only on $r$, such that
		\begin{equation}
			|\mathbf{m}\bm{\alpha} + \alpha - \alpha^*| \geq C(1 + |\mathbf{m}|) ,\quad \forall \mathbf{m} = (m_1,m_2) \in \mathbb{Z}^2,\  |\alpha - \alpha^*|>r.
		\end{equation}
		\item[(iii)] For any $0 < \nu <1$, there is a constant $c=c(\nu)$ such that for any $\varepsilon>0$ sufficiently small, 
		\begin{equation}
			\begin{aligned}
				&|\mathbf{m}\bm{\alpha} + \alpha - \alpha^*| \geq c|\mathbf{m}| ,\quad &\forall \mathbf{m}  \in \mathbb{Z}^2,\  |\alpha - \alpha^*|<\varepsilon^{\nu}, \\
				&|\mathbf{m}\bm{\alpha} + \alpha - \alpha^*| \geq c\varepsilon^{\nu} (1 + |\mathbf{m}|) ,\quad &\forall \mathbf{m}  \in \mathbb{Z}^2,\  |\alpha - \alpha^*|\geq \varepsilon^{\nu} .
			\end{aligned}
		\end{equation}
	\end{itemize}
\end{lemma}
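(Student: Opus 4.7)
The proof of Lemma \ref{f.w.lemma} splits into three largely independent pieces. My overall strategy is to use Poisson summation for (i) and elementary case-splitting arguments for (ii)--(iii).

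For part (i), the plan is to recognize the integral as a lattice sum amenable to Poisson summation with modulation. First I would decompose $\mathbb{R}^2 = \bigsqcup_{n \in \Lambda}(Y+n)$ and use the $\Lambda$-periodicity of $\sigma_\delta^{-1}$ together with the $\alpha^*$- and $\alpha$-quasiperiodicities of $\Psi, \Phi$, respectively, to rewrite
\begin{equation*}
\int_{\mathbb{R}^2} \sigma_\delta^{-1}(x)\,\Gamma(\eps x)\,\Psi(x,\alpha^*)\,\overline{\Phi(x,\alpha)}\,dx = \int_Y \sigma_\delta^{-1}(x)\,\Psi(x,\alpha^*)\,\overline{\Phi(x,\alpha)}\Bigl(\sum_{n\in\Lambda} e^{\mi(\alpha^*-\alpha)\cdot n}\,\Gamma(\eps(x+n))\Bigr)dx.
\end{equation*}
Then I would apply the Poisson summation formula on $\Lambda$ with the modulation $e^{\mi(\alpha^*-\alpha)\cdot n}$ to the inner sum, using the identification $\Lambda^*=\{\mathbf{m}\bm{\alpha}\}_{\mathbf{m}\in\Z^2}$ and the scaling identity $\widehat{\Gamma(\eps\,\cdot)}(\xi)=\eps^{-2}\widehat{\Gamma}(\xi/\eps)$. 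Collecting the resulting phase factor $e^{\mi(\mathbf{m}\bm{\alpha}+\alpha-\alpha^*)\cdot x}$ and bringing the $\Gamma$-term out of the integral yields the claimed formula.

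For part (ii), the key geometric observation is that $\alpha^*=\tfrac{2}{3}\alpha_1+\tfrac{1}{3}\alpha_2$, so $\alpha^*-\mathbf{m}\bm{\alpha}$ has $(\alpha_1,\alpha_2)$-coordinates $(\tfrac{2}{3}-m_1,\tfrac{1}{3}-m_2)$, which fall in $[0,1]^2$ only when $\mathbf{m}=\mathbf{0}$; thus $\alpha^*-\mathbf{m}\bm{\alpha}\notin Y^*$ for $\mathbf{m}\ne\mathbf{0}$. I would then split according to $|\mathbf{m}|$. The case $\mathbf{m}=\mathbf{0}$ is immediate from $|\alpha-\alpha^*|\ge r$. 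For $|\mathbf{m}|$ large, the linear independence of $\alpha_1,\alpha_2$ gives a $c_0>0$ with $|\mathbf{m}\bm{\alpha}|\ge c_0|\mathbf{m}|$, and the triangle inequality against $|\alpha-\alpha^*|\le\mathrm{diam}(Y^*)$ provides the lower bound $c_0|\mathbf{m}|/2$ once $|\mathbf{m}|$ exceeds a threshold depending only on $r$. For the intermediate finite range of $\mathbf{m}$, the map $\alpha\mapsto\mathbf{m}\bm{\alpha}+\alpha-\alpha^*$ is continuous and nowhere vanishing on the compact set $\{\alpha\in Y^*:|\alpha-\alpha^*|\ge r\}$ (by the geometric observation), so it has a uniform positive infimum; taking $C$ to be the minimum of all these finitely many bounds yields the claim.

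For part (iii), the two estimates are variants of the argument in (ii). When $|\alpha-\alpha^*|<\eps^\nu$ and $\mathbf{m}\ne\mathbf{0}$, one has $|\mathbf{m}\bm{\alpha}|\ge c_0|\mathbf{m}|\ge c_0$ while $|\alpha-\alpha^*|<\eps^\nu\le c_0/2$ for $\eps$ small, hence $|\mathbf{m}\bm{\alpha}+\alpha-\alpha^*|\ge c_0|\mathbf{m}|/2$. When $|\alpha-\alpha^*|\ge\eps^\nu$, the case $\mathbf{m}=\mathbf{0}$ gives $|\alpha-\alpha^*|\ge\eps^\nu=\eps^\nu\cdot 1$, while for $\mathbf{m}\ne\mathbf{0}$ one uses the same two-tier analysis as in (ii) (large $|\mathbf{m}|$ by triangle inequality, intermediate $|\mathbf{m}|$ by compactness), noting that the lower bounds obtained are in fact independent of $\eps$ and therefore dominate $c\eps^\nu(1+|\mathbf{m}|)$ for small $\eps$.

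The only place I expect nontrivial bookkeeping is the constant in (i), where the Fourier-transform convention and the factor $|Y|$ (with $|Y|\cdot|Y^*|=(2\pi)^2$) must be tracked through the Poisson summation; the structural content, however, is routine. Parts (ii) and (iii) are entirely elementary once the $Y^*$-coordinate observation about $\alpha^*-\mathbf{m}\bm{\alpha}$ is in hand.
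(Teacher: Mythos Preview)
Your proposal is correct and aligns with the paper's approach: the paper does not give a self-contained proof but refers to \cite[Propositions~7.1 and~7.2]{fefferman_wave_2014}, noting specifically that the Poisson-summation argument there (your strategy for~(i)) extends verbatim to any $\Lambda$-periodic weight~$\sigma_\delta$. Your case-splitting arguments for~(ii) and~(iii), built on the observation that $\alpha^*-\mathbf{m}\bm{\alpha}\notin Y^*$ for $\mathbf{m}\ne\mathbf{0}$, reproduce the elementary lattice estimates of that reference.
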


\begin{proof}
	We refer to \cite[Propositions 7.1 and 7.2]{fefferman_wave_2014} for the proof. Note that for (i), although the proof in \cite{fefferman_wave_2014} only considers a special weight $\sigma_{\delta}\equiv  1$ and uses Poisson summation formula, the method there is also valid for any $\Lambda$-periodic weight $\sigma_{\delta}$.
\end{proof}

\begin{proposition}\label{prop:I2}
	There is a constant $C>0$ such that for any $t>0$, 
	\begin{equation*}
		\|I_2(\cdot,t)\|_{L^2_\delta(\R^2)} \leq C\left(\delta^{\frac12}\|\widehat{\bm{F}}\|_{W^{2,1}} + t\eps \delta \|\bm{F}\|_{W^{3,1}}\right).
	\end{equation*}
\end{proposition}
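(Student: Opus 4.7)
The starting point is the inequality \eqref{eq:I2bound}, which has already decomposed $\|I_2(\cdot,t)\|_{L^2_\delta(\R^2)}$ into (a) a term bounded by $C\|v(\cdot,0)\|_{L^2_\delta(\R^2)}$, and (b) an ``off-peak'' term involving only $|\alpha-\alpha^*|\ge r$. I will estimate these two contributions separately, using the symmetric structure of $v(\cdot,0)$ with respect to $j=1,2$, so it suffices to treat a single summand $\varepsilon F_j(\varepsilon\,\cdot)(S_j - S_{j,\delta})$.

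For the initial datum, I would apply Lemma \ref{lem:SpPL2bound} (the $L^2_\delta$ version) with $\Gamma = F_j$, $\phi = S_j - S_{j,\delta}$, and $s=2$. By \eqref{eq:SjH1error} and the identity $\|\cdot\|_{L^2_\delta(Y)} \le \delta^{-1/2}\|\cdot\|_{L^2(Y)}$, we have $\|S_j - S_{j,\delta}\|_{L^2_\delta(Y)} \le C\delta^{1/2}$. On the Schwartz side, Fourier inversion and integration by parts give $\sup_x (1+|x|^2)|F_j(x)| \le C\|\widehat{F_j}\|_{W^{2,1}}$. Hence $\|v(\cdot,0)\|_{L^2_\delta(\R^2)} \le C\delta^{1/2}\|\widehat{\bm F}\|_{W^{2,1}}$, which yields the first term of the claim.

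The main work lies in bounding, uniformly in $|\alpha-\alpha^*|\ge r$, the inner product
\begin{equation*}
\mathcal{J}_j(\alpha) \,:=\, \big\langle \varepsilon F_j(\varepsilon\,\cdot)(S_j - S_{j,\delta}),\; \Phi_{j,\delta}(\cdot,\alpha) \big\rangle_{L^2_\delta(\R^2)}.
\end{equation*}
The functions $S_j = S_j^{\alpha^*}$ and $S_{j,\delta}$ are both $\alpha^*$-quasiperiodic, so Lemma \ref{f.w.lemma}(i) applies with $\Gamma = F_j$ and $\Psi = S_j - S_{j,\delta}$, producing a Poisson-type sum
\begin{equation*}
\mathcal{J}_j(\alpha) \,=\, \varepsilon^{-1}\sum_{\mathbf m\in\Z^2}\widehat{F_j}\!\left(\tfrac{\mathbf m\bm{\alpha}+\alpha-\alpha^*}{\varepsilon}\right) \int_Y \sigma_\delta^{-1}(x)\,e^{\mathrm{i}(\mathbf m\bm{\alpha}+\alpha-\alpha^*)\cdot x}(S_j-S_{j,\delta})(x)\,\overline{\Phi_{j,\delta}(x,\alpha)}\,dx.
\end{equation*}
The inner integral is controlled by Cauchy--Schwarz: since $\Phi_{j,\delta}(\cdot,\alpha)$ is $L^2_\delta(Y)$-normalized and $\|S_j - S_{j,\delta}\|_{L^2_\delta(Y)} \le C\delta^{1/2}$, each integral is bounded by $C\delta^{1/2}$ uniformly in $\mathbf m$ and $\alpha$. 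For the Fourier coefficients, Lemma \ref{f.w.lemma}(ii) gives $|\mathbf m\bm{\alpha}+\alpha-\alpha^*|\ge C(1+|\mathbf m|)$ for $|\alpha-\alpha^*|\ge r$, and Schwartz decay (via $|\eta|^N\widehat{F_j}(\eta) = \widehat{\partial^N F_j}(\eta)$) yields $|\widehat{F_j}(\eta)| \le C(1+|\eta|)^{-N}\|F_j\|_{W^{N,1}}$. Choosing $N=3$ to make $\sum_{\mathbf m}(1+|\mathbf m|)^{-3}$ finite, I obtain
\begin{equation*}
|\mathcal{J}_j(\alpha)| \,\le\, \varepsilon^{-1}\cdot C\varepsilon^3\|F_j\|_{W^{3,1}}\cdot C\delta^{1/2} \,=\, C\varepsilon^2\delta^{1/2}\|F_j\|_{W^{3,1}},
\end{equation*}
uniformly for $|\alpha-\alpha^*|\ge r$. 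Substituting this bound into the last line of \eqref{eq:I2bound} produces a contribution $\delta^{1/2}\cdot(t/\varepsilon)\cdot C\varepsilon^2\delta^{1/2}\|\bm F\|_{W^{3,1}} = C t\varepsilon\delta\|\bm F\|_{W^{3,1}}$, which is exactly the second term in the proposition. The only real bookkeeping obstacle is matching the powers of $\varepsilon$: one must track the extra $\varepsilon$ from the amplitude of $v(\cdot,0)$ against the $\varepsilon^{-2}$ in Lemma \ref{f.w.lemma}(i) and the $\varepsilon^{N}$ gain from Schwartz decay, and verify that $N=3$ is the minimal integer giving both summability over $\mathbf m$ and the correct overall power.
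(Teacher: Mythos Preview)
Your proof is correct and follows essentially the same route as the paper: both start from \eqref{eq:I2bound}, apply Lemma~\ref{f.w.lemma}(i) to expand the pairing, bound the cell integral via Cauchy--Schwarz using $\|S_j-S_{j,\delta}\|_{L^2_\delta(Y)}\le C\delta^{1/2}$, invoke Lemma~\ref{f.w.lemma}(ii) with $N=3$ for summability, and substitute back. The only cosmetic difference is that you re-derive the $\|v(\cdot,0)\|_{L^2_\delta}$ bound inside the proof, whereas the paper establishes it in the discussion preceding \eqref{eq:I2bound}.
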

\begin{proof} In view of \eqref{eq:I2bound}, we only need to estimate
\begin{equation*}
		\max_{|\alpha - \alpha^*| >r } \left|  \big\langle 
	\varepsilon \Gamma(\eps x ) \Psi(x) ,  \Phi_{j,\delta}(x,\alpha)
		\big\rangle_{L_{\delta}^2(\mathbb{R}^2_x)}   \right|
	\end{equation*}
	for $\Gamma=F_1, \Psi=S_1-S_{1,\delta}$, and for $\Gamma=F_2$, $\Psi=S_2-S_{2,\delta}$. Let us only estimate the first pair as the other one can be done in exactly the same way. 

	We apply Lemma \ref{f.w.lemma}(a) with our choice of $\Gamma$, $\Psi$ and $\Phi = \Phi_{j,\delta}(\cdot,\alpha)$, and get 
	\begin{equation*}
			\big|\langle  \eps \Gamma(\eps x) \Psi(\cdot), \Phi_{j,\delta}(\cdot,\alpha)\rangle_{L^2_{\delta}(\mathbb{R}^2)}  \big|  \leq  \varepsilon^{-1} \| \Psi \|_{L^2_{\delta}(Y)} \sum_{\mathbf{m} \in \mathbb{Z}^2}  \left| \widehat{\Gamma} \left( \frac{\mathbf{m}\bm{\alpha}+ \alpha - \alpha^*}{\varepsilon} \right) \right|.
	\end{equation*}
Using the fact that $\hat\Gamma \in \mathscr{S}(\R^2_\xi)$ and that $\|\Psi\|_{L^2(Y)} \le  C\delta$ by \eqref{eq:SjH1error}, we can control the above by
\begin{equation*}
	C\eps^{-1}\delta^{\frac12} \Big(\sup_{\xi \in \mathbb{R}^2} |\xi|^3 |\widehat{\Gamma}(\xi) |\Big) \sum_{\mathbf{m} \in \mathbb{Z}^2} \frac{\varepsilon^3}{|\mathbf{m}\bm{\alpha} + \alpha - \alpha^*|^3}.
\end{equation*}
Now for $|\alpha-\alpha^*|>r$, we can use Lemma \ref{f.w.lemma}(ii) to obtain that
		\begin{equation*}
		\max_{|\alpha - \alpha^*| >r } \left|  \big\langle 
		\varepsilon  F_1(\eps \,\cdot) (S_1 - S_{1,\delta})(\cdot) ,  \Phi_{j,\delta}(\cdot,\alpha)
		\big\rangle_{L_{\delta}^2(\mathbb{R}^2)}   \right| < C\varepsilon^2 \delta^{\frac12} \| F_1\|_{W^{3,1}} .
	\end{equation*}
Using the above estimate in \eqref{eq:I2bound}, we get the desired result.
\end{proof}

\section{Estimate of $I_3$} \label{sec:i3}
\label{subs:I3}

In this section, we control the $L^2(\R^2)$ norm of $I_3(\cdot,t)$. This term corresponds to the approximation error $\eta_{\eps,\delta}$ defined by \eqref{ved}. It is the approximation error of the evolution of the wave packets with base modes $S_{1,\delta}$ and $S_{2,\delta}$. 

Applying the operator 
$\partial_t^2 + \mathcal{L}_{\delta}$ on $\eta_{\eps,\delta}$,   we obtain in view of \eqref{problem 3}\eqref{ved} and \eqref{action fg} that
\begin{equation}\left\{
	\begin{aligned}
	  &(\partial_t^2 + \mathcal{L}_\delta)\eta_{\eps,\delta} = -e^{\mathrm{i}\omega^*_\delta t}\left(f_{\eps,\delta}^{(2)} + f_{\eps,\delta}^{(3)}\right), \qquad &&(x,t)\in \R^2\times (0,\infty),\\  &\eta_{\varepsilon,\delta} (x,0)= 0, \qquad &&x\in \R^2,\\
	  & \partial_t \eta_{\varepsilon,\delta}  (x,0) = -f_{\varepsilon,\delta}^{(1)}(x), \qquad &&x\in \R^2. 
 \end{aligned}\right.
\end{equation}
Here, the initial datum $f^{(1)}_{\eps,\delta}$ for $\partial_t \eta_{\eps,\delta}$ and the source terms $f^{(2)}_{\eps,\delta}, f^{(3)}_{\eps,\delta}$ are given by
\begin{equation}
\label{eq:f123}
	\begin{aligned}
  f^{(1)}_{\eps,\delta} =\,& \eps^2 (\partial_t V_1)(\eps x,0) S_{1,\delta}(x) + \eps^2 (\partial_t V_2)(\eps x,0) S_{2,\delta}(x),\\
  f^{(2)}_{\eps,\delta} =\,& \eps^3 (\partial^2_t V_1)(\eps x,\eps t) S_{1,\delta}(x) + \eps^3 (\partial^2_t  V_2)(\eps x,\eps t) S_{2,\delta}(x), \\
   f^{(3)}_{\eps,\delta} =\,&\eps^2 \sum_{j=1,2} \big\{ 2\mathrm{i} \omega^*_\delta (\partial_t V_j)(\eps x,\eps t) S_{j,\delta}(x) - (\nabla V_j)(\eps x,\eps t)\cdot \nabla S_{j,\delta}\\
   &\qquad\qquad \qquad\qquad- \sigma_\delta\nabla \cdot \left(\sigma_\delta^{-1} (\nabla V_j)(\eps x,\eps t) S_{j,\delta}(x)\right)\big\}.
	\end{aligned}
\end{equation}
Note that the last term in $f^{(3)}_{\eps,\delta}$ is understood as an element of $H^{-1}_\delta(\R^2)$.
By linearity, we further write $\eta_{\varepsilon,\delta}$ as the sum of $\eta^{(1)}_{\varepsilon,\delta}$, $\eta^{(2)}_{\varepsilon,\delta}$ and $\eta^{(3)}_{\varepsilon,\delta} $ which accounts for the contributions of $f_{\eps,\delta}^{(1)}$, $f_{\eps,\delta}^{(2)}$ and $f_{\eps,\delta}^{(3)}$, respectively. These terms are defined by
	\begin{equation}
 \label{eq:errorf1}
		\left\{
		\begin{aligned}
		  & (\partial_t^2 + \mathcal{L}_{\delta}) \eta^{(1)}_{\varepsilon,\delta} = 0 && \,(x,t)\in \mathbb{R}^2 \times (0,\infty) , \\
		  & \left(\eta^{(1)}_{\varepsilon,\delta}, \partial_t \eta^{(1)}_{\varepsilon,\delta}\right) (x,0) = \left(0,-f^{(1)}_{\varepsilon,\delta}(x)\right) &&  \, x\in \mathbb{R}^2,
		\end{aligned}\right.
	\end{equation}
	\begin{equation}
 \label{eq:errorf2}
			\left\{
		\begin{aligned}
		  & (\partial_t^2 + \mathcal{L}_{\delta}) \eta^{(2)}_{\varepsilon,\delta} =   -   e^{ \mathrm{i} \omega_{\delta}^* t} f_{\varepsilon,\delta}^{(2)}(x,t) &&(x,t) \in \mathbb{R}^2 \times (0,\infty)
			, \\
			& \left(\eta^{(2)}_{\varepsilon,\delta}, \partial_t \eta^{(2)}_{\varepsilon,\delta}\right)  (x,0) = (0,0) &&x\in \mathbb{R}^2,
		\end{aligned}\right.  
	\end{equation}
and
\begin{equation}
\label{eq:errorf3}
\left\{
\begin{aligned}
  & (\partial_t^2 + \mathcal{L}_{\delta}) \eta^{(3)}_{\varepsilon,\delta} =   -   e^{\mathrm{i}  \omega_{\delta}^* t} f_{\varepsilon,\delta}^{(3)}(x,t)  &&(x,t) \in \mathbb{R}^2 \times (0,\infty) , \\
& \left(\eta^{(3)}_{\varepsilon,\delta},\partial_t \eta^{(3)}_{\varepsilon,\delta}\right)  (x,0) = (0,0) &&x\in \mathbb{R}^2.
\end{aligned}\right.
\end{equation}
For simplicity, we write $\eta^{(j)}, f^{(j)}$ for $\eta_{\varepsilon,\delta}^{(j)}, f_{\varepsilon,\delta}^{(j)}$, respectively, in the following sections.

Those equations can be solved using the standard functional analysis framework and using the Floquet-Bloch states representation. More precisely, for such $\mathcal{L}_\delta$-wave equation with general right hand side $f$ and initial data $(u(0),\partial_t u(0)) = (g,h)$, by the Hille-Yosida theory and Duhamel's principle, the solution is given by
\begin{equation*}
u(x,T) = \cos(\sqrt{\mathcal{L}_\delta}T)g + \frac{\sin(\sqrt{\mathcal{L}_\delta} T)}{\sqrt{\mathcal{L}_\delta}} h + \int_0^T \frac{\sin(\sqrt{\mathcal{L}_\delta} (T-t))}{\sqrt{\mathcal{L}_\delta}} f(t)\,dt.
\end{equation*}
Using the Floquet-Bloch states $\{\omega_{j,\delta}(
\alpha),\Phi_{j,\delta}(\cdot,\alpha)\}_{j,\alpha}$, we further get
\begin{equation*}
\begin{aligned}
u(x,T) = &\int_{Y^*} \cos(\omega_{j,\delta}(\alpha)T)\langle g,\Phi_{j,\delta}(\cdot,\alpha)\rangle_{L^2_\delta(\R^2)} \Phi_{j,\delta}(x,\alpha)\,d\alpha \\
& \; + \int_{Y^*} \frac{\sin(\omega_{j,\delta}(\alpha)T)}{\omega_{j,\delta}(\alpha)} \langle h,\Phi_{j,\delta}(\cdot,\alpha)\rangle_{L^2_\delta(\R^2)}\Phi_{j,\delta}(x,\alpha)\,d\alpha\\
&\; + \int_{Y^*} \left(\int_0^T \frac{\sin(\omega_{j,\delta}(\alpha) (T-t))}{\omega_{j,\delta}(\alpha)} \langle f,\Phi_{j,\delta}(\cdot,\alpha)\rangle_{L^2_\delta(\R^2)}\,dt\right)\Phi_{j,\delta}(x,\alpha)\,d\alpha.
\end{aligned}
\end{equation*}
For $f$ in $L^2_\delta(\R^2)$ or $H^{-1}_\delta(\R^2)$, we have explained the meaning of the pairing $\langle f,\Phi_{j,\delta}(\cdot,\alpha)\rangle_{L^2(\R^2)}$ in Section \ref{sec:FBstates}.

\subsection{Some estimates for wave equations}

In the next two subsections, we derive $L^2(\R^2)$ (to emphasize, this is not $L^2_\delta(\R^2)$) estimates for the terms $\eta^{(1)}$ and $\eta^{(2)}$. We will mainly use the following estimates for the $\mathcal{L}_\delta$-wave equation.

\begin{theorem}\label{thm:L2wave}
	Let $r>0$ be chosen so that \eqref{eq:Y*cut} holds. Then there is a universal constant $C>0$ such that, if $u = u_\delta$ solves
		\begin{equation}\label{eq:exteriorL}
		\left\{
		\begin{aligned}
			& (\partial_t^2 + \mathcal{L}_{\delta}) u =  0 \qquad  &(x,t)\in \mathbb{R}^2\times [0,\infty), \\
			& u(x,0) = 0, \; \partial_t u(x,0) = g(x), \qquad &x\in \R^2,
		\end{aligned}\right.  
	\end{equation}
	then we can find $C>0$ such that uniformly in $T>0$,
	\begin{equation}
 \label{eq:L2Rwave}
		\| u(\cdot, T) \|_{L^2(\mathbb{R}^2)}  \leq  C \|g\|_{L^2_\delta(\R^2)} + CT\sqrt{\delta}\sum_{j=1,2} \max_{|\alpha-\alpha^*|>r} |\langle g(\cdot) , \Phi_{j,\delta}(\cdot,\alpha) \rangle_{L^2_{\delta}(\mathbb{R}^2)} | +CT\sqrt{\delta}\|g\|_{H^1_\delta(\R^2)}.
	\end{equation}
\end{theorem}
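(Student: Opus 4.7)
The plan is to represent $u$ via the Floquet--Bloch states of $\mathcal{L}_\delta$ and then split the resulting sum/integral according to the two regimes identified in \eqref{eq:Y*cut}--\eqref{eq:Y*nocut}. Since the initial data for $u$ are $(0,g)$, Duhamel's principle together with the spectral expansion of the self-adjoint operator $\mathcal{L}_\delta$ on $L^2_\delta(\R^2)$ gives
\begin{equation*}
u(x,T) \;=\; \sum_{j\ge 1}\int_{Y^*}\frac{\sin(\omega_{j,\delta}(\alpha)T)}{\omega_{j,\delta}(\alpha)}\,\langle g,\Phi_{j,\delta}(\cdot,\alpha)\rangle_{L^2_\delta(\R^2)}\,\Phi_{j,\delta}(x,\alpha)\,d\alpha.
\end{equation*}
I would then decompose $u = u_A + u_B + u_C$, where $u_A$ collects the tail $j\ge 3$, $u_B$ the near-Dirac piece $j\in\{1,2\}$, $|\alpha-\alpha^*|<r$, and $u_C$ the far piece $j\in\{1,2\}$, $|\alpha-\alpha^*|\ge r$.

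For $u_A$, \eqref{eq:Y*nocut} provides a uniform bound $1/\omega_{j,\delta}(\alpha)\le 1/C$, so Plancherel \eqref{eq:plancherel} in the weighted space gives $\|u_A\|_{L^2_\delta}\le C\|g\|_{L^2_\delta}$, and since $\sigma_\delta\le 1$ one has $\|\cdot\|_{L^2(\R^2)}\le\|\cdot\|_{L^2_\delta(\R^2)}$, producing the first term of \eqref{eq:L2Rwave}. For $u_B$ and $u_C$ a naive use of Plancherel in $L^2_\delta$ loses a factor $1/\sqrt\delta$ (since $\omega_{j,\delta}(\alpha)\sim\sqrt\delta$ there), so instead the key is to exploit that the target norm is the unweighted $L^2(\R^2)$: the first two Bloch eigenmodes are concentrated in the bubbles, and because $\|S_j^{\alpha}\|_{L^2_\delta(Y)}\sim\sqrt{|D_1|/\delta}$ while $\|S_j^\alpha\|_{L^2(Y)}\sim 1$, the $L^2_\delta$-normalization forces
\begin{equation*}
\|\Phi_{j,\delta}(\cdot,\alpha)\|_{L^2(Y)}\le C\sqrt\delta,\qquad j=1,2,\ \alpha\in Y^*.
\end{equation*}
This estimate, obtained from the layer-potential representation of the subwavelength eigenmodes established in Step 1 of the proof of Theorem \ref{thm:DiracModes}, is the main technical ingredient and the step I expect to be the most delicate -- it must hold uniformly in $\alpha$, even where the near-Dirac expansion \eqref{eq:ModesExpan1} is no longer accurate.

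Granted this $L^2$-smallness, the $L^2(\R^2)$-norms of $u_B$ and $u_C$ are computed cell-by-cell: since $\{e^{\mi\alpha\cdot l}\}_{l\in\Lambda}$ is an orthonormal basis of $L^2(Y^*)$ (with $|Y^*|=1$) and $\Phi_{j,\delta}(x+l,\alpha)=e^{\mi\alpha\cdot l}\Phi_{j,\delta}(x,\alpha)$, Parseval in $\alpha$ yields
\begin{equation*}
\int_{\R^2}|u_\star(x,T)|^2\,dx \;\le\; 2\sum_{j=1,2}\int_{\mathcal{A}_\star}\left|\frac{\sin(\omega_{j,\delta}(\alpha)T)}{\omega_{j,\delta}(\alpha)}\right|^2 \bigl|\langle g,\Phi_{j,\delta}(\cdot,\alpha)\rangle_{L^2_\delta}\bigr|^2\,\|\Phi_{j,\delta}(\cdot,\alpha)\|_{L^2(Y)}^2\,d\alpha,
\end{equation*}
with $\mathcal{A}_\star$ the appropriate subset of $Y^*$. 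Using the crude bound $|\sin(\omega_{j,\delta}T)/\omega_{j,\delta}|\le T$ together with the $\sqrt\delta$-bound on $\|\Phi_{j,\delta}\|_{L^2(Y)}$ yields, for $u_B$, $\|u_B\|_{L^2(\R^2)}^2\le CT^2\delta\|g\|_{L^2_\delta}^2\le CT^2\delta\|g\|_{H^1_\delta}^2$, which is the third term of \eqref{eq:L2Rwave}. The same computation for $u_C$, pulling the $|\langle g,\Phi_{j,\delta}\rangle|$ factor out through its maximum over $|\alpha-\alpha^*|\ge r$, produces the second term. Summing the three contributions completes the proof.
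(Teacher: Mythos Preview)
Your approach is correct and genuinely different from the paper's. The paper proceeds by a \emph{spatial} decomposition $\mathbb{R}^2 = D^\Lambda \cup \Omega^\Lambda$: on $D^\Lambda$ the $\sqrt{\delta}$ gain between $L^2$ and $L^2_\delta$ converts the $\delta^{-1/2}$-loss in the Floquet--Bloch estimate into an $O(1)$ bound, while on $\Omega^\Lambda$ the restriction of $u$ is viewed as the solution of an exterior wave equation with Dirichlet data $h = u|_{\partial D^\Lambda}$, and the paper invokes a separate Floquet--Bloch theory for the exterior Dirichlet Laplacian $-\Delta_{\rm ex}$ (whose spectrum is uniformly bounded away from zero), together with the harmonic-extension machinery of Proposition~\ref{prop: exterior nonzero}. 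The $H^1_\delta$ norm of $g$ enters through the trace estimate $\|\partial_t h\|_{H^{1/2}(\partial D^\Lambda)} \le C\sqrt{\delta}\,\|g\|_{H^1_\delta}$.

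You instead decompose in \emph{frequency}, and your key observation --- that the $L^2_\delta$-normalized subwavelength modes satisfy $\|\Phi_{j,\delta}(\cdot,\alpha)\|_{L^2(Y)} \le C\sqrt{\delta}$ uniformly in $\alpha$ --- is correct and short-circuits the entire exterior analysis. One remark: rather than appealing to the layer-potential representation (which degenerates at $\alpha = 0$ and whose uniformity in $\alpha$ is not established in the paper), the bound follows more robustly from the energy identity $\|\nabla\Phi_{j,\delta}\|_{L^2_\delta(Y)}^2 = \omega_{j,\delta}^2(\alpha) \le C\delta$. This gives $\|\Phi_{j,\delta}\|_{H^1(D)} \le C\sqrt{\delta}$, hence $\|\Phi_{j,\delta}|_{\partial D}\|_{H^{1/2}} \le C\sqrt{\delta}$ by trace; an $\alpha$-quasiperiodic harmonic extension from $\partial D$ to $\Omega$ (bounded uniformly in $\alpha$, since the relevant Poincar\'e constant is $1/\mu_1(\alpha)$ and the paper records that $\mu_1(\alpha)$ is bounded below) plus Poincar\'e on the remainder yields $\|\Phi_{j,\delta}\|_{L^2(\Omega)} \le C\sqrt{\delta}$. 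Your cell-by-cell Parseval computation is then exactly right. Your route is more self-contained for this statement, and in fact you could use the lower bound in \eqref{eq:Y*cut} on $u_B$ to absorb it into the first term of \eqref{eq:L2Rwave} rather than the third. The paper's route has the advantage that Proposition~\ref{prop: exterior nonzero} is reused later in the estimate of $\eta^{(3)}_{{\rm I},0}$.
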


The proof of Theorem \ref{thm:L2wave} is provided in the appendix; see Section \ref{sec:L2wave}. The main idea is to control the $L^2(\R^2)$ norm of the wave equation in the honeycomb structure by exploring the wave operator outside the inclusions array with help from estimates of the energy in the inclusions. Let $\Omega^\Lambda = \Omega+\Lambda$ be this exterior domain, we have the following result.

\begin{proposition}\label{prop: exterior nonzero}
Let $v \in H^1(\Omega^\Lambda)$ be the solution of
	\begin{equation}
 \label{eq:wave_in0}
		\left\{
		\begin{aligned}
			& (\partial_t^2 - \Delta ) v =  f && \mathrm{in} \ \Omega^{\Lambda} \times [0,\infty) , \\
			& v(x,0) =0, \quad  \partial_t v (x,0) =0 && \mathrm{on} \ \Omega^{\Lambda}  , \\
			& v(x,t) = h(x,t) && \mathrm{on} \ \partial \Omega^{\Lambda}  \times [0,\infty).
		\end{aligned}\right.  
	\end{equation}
Here, $h(x,t)$ is a sufficiently regular function on $\partial \Omega^\Lambda$ (i.e., on $\Lambda+\partial D$) and satisfies $h(\cdot,0)=0$ and $\partial_t h(\cdot,0)=0$. Then, there is a universal constant $C>0$ such that for any $T>0$,
	\begin{equation}\label{formula:prop 6.4}
	\| v(\cdot, T) \|_{L^2(\Omega^{\Lambda})}  \leq C \| h(\cdot, T) \|_{H^{\frac{1}{2}}(\partial \Omega^{\Lambda})} +CT\max_{0\leq t\leq T} \| f \|_{L^2(\Omega^{\Lambda})} + CT  \max_{0\leq s \leq T}\| \partial^2_{t} h(\cdot, s) \|_{H^{\frac{1}{2}}(\partial \Omega^{\Lambda})},
\end{equation}
and, also,
	\begin{equation}\label{formula:prop 6.4 II}
	\| v(\cdot, T) \|_{L^2(\Omega^{\Lambda})}  \leq C \| h(\cdot, T) \|_{H^{\frac{1}{2}}(\partial \Omega^{\Lambda})} +CT\max_{0\leq t\leq T} \| f \|_{L^2(\Omega^{\Lambda})} + CT  \max_{0\leq s \leq T}\| \partial_{t} h(\cdot, s) \|_{H^{\frac{1}{2}}(\partial \Omega^{\Lambda})}.
\end{equation}
\end{proposition}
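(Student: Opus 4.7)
The plan is to reduce to a homogeneous Dirichlet wave problem by lifting the boundary data, then exploit a Poincaré inequality that is available because of the perforated structure of $\Omega^\Lambda$. Cell by cell, a standard trace inverse produces a lifting $H(\cdot,t)$ of $h(\cdot,t)$ supported in a disjoint union of collars around each $\partial D_k$, satisfying, uniformly in $t$ and $k$,
\begin{equation*}
\|\partial_t^j H(\cdot,t)\|_{L^2(\Omega^\Lambda)} + \|\nabla\partial_t^j H(\cdot,t)\|_{L^2(\Omega^\Lambda)} \le C\|\partial_t^j h(\cdot,t)\|_{H^{1/2}(\partial \Omega^\Lambda)}, \qquad j=0,1,2.
\end{equation*}
The hypotheses $h(\cdot,0) = \partial_t h(\cdot,0)=0$ transfer to $H(\cdot,0) = \partial_t H(\cdot,0) = 0$, so $w := v - H$ has vanishing Cauchy data, homogeneous Dirichlet condition, and solves $(\partial_t^2 - \Delta)w = f - \partial_t^2 H + \Delta H$ in $\Omega^\Lambda$.

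The key structural input is the global Poincaré inequality $\|u\|_{L^2(\Omega^\Lambda)} \le C\|\nabla u\|_{L^2(\Omega^\Lambda)}$ for every $u \in H^1(\Omega^\Lambda)$ vanishing on $\partial \Omega^\Lambda$. It follows from a cell-local Poincaré inequality in $Y_k \setminus \overline{D_k}$ for functions vanishing on $\partial D_k$, whose constant is independent of $k \in \Lambda$ by translation invariance, and summing the squared inequalities over $k$. Consequently the Dirichlet Laplacian $A = \sqrt{-\Delta_D}$ on $\Omega^\Lambda$ admits a bounded inverse on $L^2(\Omega^\Lambda)$, and thus $\cos(tA)$ and $\sin(tA)/A$ are uniformly bounded operators on $L^2(\Omega^\Lambda)$ for all $t \ge 0$.

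For \eqref{formula:prop 6.4 II} one applies Duhamel's formula
\begin{equation*}
w(t) = \int_0^t \frac{\sin((t-s)A)}{A}\bigl(f(s) - \partial_s^2 H(s) + \Delta H(s)\bigr)\,ds
\end{equation*}
and integrates by parts once in $s$ on the $\partial_s^2 H$ contribution; the boundary terms vanish because $\sin 0 = 0$ at $s=t$ and $\partial_s H(0) = 0$, leaving $-\int_0^t \cos((t-s)A)\partial_s H(s)\,ds$ whose $L^2$ norm is controlled by $CT\max_{s\le T}\|\partial_s h\|_{H^{1/2}}$. The $f$-term is bounded by $CT\max\|f\|_{L^2}$ using the uniform $L^2$-boundedness of $\sin(\cdot A)/A$; the $\Delta H$-term, interpreted in $H^{-1}$ and treated by the analogous pair of integrations by parts (one in space using the vanishing trace of the test functions, one in time), contributes at most $C\|h(t)\|_{H^{1/2}} + CT\max\|\partial_t h\|_{H^{1/2}}$. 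Adding $\|H(T)\|_{L^2} \le C\|h(T)\|_{H^{1/2}}$ to $\|w(T)\|_{L^2}$ produces \eqref{formula:prop 6.4 II}. For \eqref{formula:prop 6.4} one instead tests the weak form of the equation for $w$ against $\partial_t w$; after moving one time derivative off $\partial_t^2 H$ (and off $\Delta H$ via the pairing $\int \nabla H \cdot \nabla \partial_t w$) by integration by parts in $t$, a Grönwall argument combined with Poincaré yields the estimate with $\|\partial_t^2 h\|_{H^{1/2}}$ in place of $\|\partial_t h\|_{H^{1/2}}$.

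The main technical obstacle is the $\Delta H$ source term, which appears because the lifting $H$ cannot in general be taken genuinely harmonic on the unbounded perforated domain with the $L^2$-control demanded here. The remedy is to treat $\Delta H$ as an element of $H^{-1}(\Omega^\Lambda)$ and to trade derivatives via integration by parts in space (exploiting the vanishing trace of $w$) and in time (exploiting $H(\cdot,0) = \partial_t H(\cdot,0) = 0$), in a way that preserves the desired linear-in-$T$ growth in both estimates.
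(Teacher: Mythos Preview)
Your overall plan—lift the boundary data, use the perforated-domain Poincar\'e inequality to get a spectral gap for the Dirichlet Laplacian, and run Duhamel—is exactly the paper's. The substantive divergence is your choice of lifting. You assert that a genuinely harmonic lifting with $L^2$-control ``cannot in general be taken'' on $\Omega^\Lambda$; this is false, and the paper's proof turns precisely on constructing one. The same Poincar\'e inequality you invoke makes $-\Delta_{\mathrm{ex}}$ (Dirichlet Laplacian on $\Omega^\Lambda$) boundedly invertible from $H^{-1}(\Omega^\Lambda)$ to $H^1_0(\Omega^\Lambda)$, so
\[
\mathcal{E}h \;:=\; \mathcal{T}^{-1}h \;+\; (-\Delta_{\mathrm{ex}})^{-1}\,\Delta\,\mathcal{T}^{-1}h
\]
is a bounded map $H^{1/2}(\partial\Omega^\Lambda)\to H^1(\Omega^\Lambda)$ with $\Delta(\mathcal{E}h)=0$. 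With this harmonic lifting the $\Delta H$ term simply does not appear, $w=v-\mathcal{E}h$ satisfies $(\partial_t^2-\Delta)w=f-\partial_t^2\mathcal{E}h$ with zero data, and \eqref{formula:prop 6.4} follows immediately from Duhamel and the uniform $L^2$-bound on $A^{-1}\sin(tA)$; one integration by parts in $t$ (using $\partial_t\mathcal{E}h(\cdot,0)=0$) then converts $\partial_t^2\mathcal{E}h$ to $\partial_t\mathcal{E}h$ and yields \eqref{formula:prop 6.4 II}. That is the whole proof.

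Your route via a non-harmonic collar lifting is salvageable for \eqref{formula:prop 6.4 II}: the one-step time integration by parts on $\int_0^t A^{-1}\sin((t-s)A)\,\Delta H(s)\,ds$ produces $A^{-2}\Delta H(t)$ plus an integral involving $\partial_s H$, and $\|A^{-2}\Delta H\|_{L^2}\le C\|h\|_{H^{1/2}}$ holds. But note that $A^{-2}(-\Delta H)$ is, by definition, the unique $u\in H^1_0$ with $\int\nabla u\cdot\nabla\varphi=\int\nabla H\cdot\nabla\varphi$ for all $\varphi\in H^1_0$, i.e.\ $u=H-\mathcal{E}h$: you are implicitly reconstructing the harmonic extension anyway.

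Your treatment of \eqref{formula:prop 6.4} via an energy/Gr\"onwall argument has a gap. Testing against $\partial_t w$ and integrating the $\Delta H$ contribution by parts once in time leaves the term $\int_0^T\langle\nabla\partial_t H,\nabla w\rangle\,dt$, which is controlled by $T\max_s\|\partial_t h(s)\|_{H^{1/2}}$, not by $T\max_s\|\partial_t^2 h(s)\|_{H^{1/2}}$; there is no further manipulation within the energy framework that removes this. If you insist on a non-harmonic lifting you must stay with Duhamel for \eqref{formula:prop 6.4} as well and integrate by parts \emph{twice} in time on the $\Delta H$ term (the boundary contributions vanish since $H(\cdot,0)=\partial_t H(\cdot,0)=0$), landing on $A^{-2}\Delta H(T)+\int_0^T A^{-3}\sin((T-s)A)\,\Delta\partial_s^2 H\,ds$; both pieces then carry the correct bound. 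Either way, recognising that the harmonic lifting is available is the cleaner path.
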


The proof of this proposition is postponed also to the Appendix; see section \ref{sec:proof:propextwave}. 
\subsection{Estimate of $\eta^{(1)}$}

The item $\eta^{(1)} = \eta^{(1)}_{\varepsilon,\delta}$ from  \eqref{eq:errorf1} solves the $\mathcal{L}_\delta$-wave equation with zero source and initial data
\begin{equation*}
	\eta^{(1)}  (x,0)  = 0 , \quad \quad \partial_t \eta^{(1)}(x,0) = -f^{(1)}(x),
\end{equation*}
with $f^{(1)} = f^{(1)}_{\varepsilon,\delta}$ given in \eqref{eq:f123}. From the formula \eqref{eq:f123} and in view of \eqref{eq:Dirac}, we have
\begin{equation}
\label{eq:f1formula}
	f^{(1)}(x) = -\eps^2 \eta_{\#,\delta}\left((\partial_1+\mi \partial_2)F_2\right)(\eps x) S_{1,\delta}(x) - \eps^2 \overline{\eta_{\#,\delta}}\left((\partial_1-\mi \partial_2)F_1\right)(\eps x) S_{2,\delta}(x). 
\end{equation}
Applying Lemma \ref{lem:SpPL2bound}, we get
\begin{equation*}
\|f^{(1)}\|_{L^2_\delta(\mathbb{R}^2)}\le C\eps|\eta_{\#,\delta}|\Big(\sum_{\ell=1,2}\|S_{\ell,\delta}\|_{L^2_\delta(Y)}\Big)\|(1+|x|^2)\nabla \bm{F}\|_{L^\infty}.
\end{equation*}
Since $\|S_{\ell,\delta}\|_{L^2_\delta(Y)} =\mathcal{O}(\delta^{-1/2})$ and $|\eta_{\#,\delta}| \sim \delta^{\frac12}$, the above is bounded by $C_{\bm{F}} \eps$. 

\medskip

Repeating the method in our analysis of $I_2(\cdot,T)$, we may try to control $\|\eta^{(1)}(\cdot,T)\|_{L^2_\delta(\R^2)}$ using the Floquet-Bloch theory. The representation of $\eta^{(1)}$ is
\begin{equation}
\label{eq:etaI-FB}
\eta^{(1)}(x,T) = -\sum_{j\ge 1}\int_{Y^*} \frac{\sin(\omega_{j,\delta}(\alpha)T)}{\omega_{j,\delta}(\alpha)} \langle f^{(1)}(x),\Phi_{j,\delta}(x,\alpha)\rangle_{L^2_\delta(\R^2)}\Phi_{j,\delta}(x,\alpha)\,d\alpha. 
\end{equation}
In view of \eqref{eq:bandall}, for all $j\ge 3$ we have $\omega_j(\alpha)\ge 1/C$ for some constant $C>0$. Hence, the contribution from $j\ge 3$ can be bounded as follows:
\begin{equation*}
\left\| \sum_{j\ge 3}\int_{Y^*} \frac{\sin(\omega_{j,\delta}(\alpha)T)}{\omega_{j,\delta}(\alpha)} \langle f^{(1)}(x),\Phi_{j,\delta}(x,\alpha)\rangle_{L^2_\delta(\R^2)}\Phi_{j,\delta}(x,\alpha)\,d\alpha \right\|_{L^2_\delta(\R^2)} \le C\|f^{(1)}\|_{L^2_\delta(\R^2)}.
\end{equation*}
It turns out, however, that the same method does not yield good $L^2_\delta$ estimates for the band $j=1,2$. Hence, we appeal to the estimate in Theorem \ref{thm:L2wave}.

\begin{proposition}\label{prop:eta1bound}
	Under the assumption of Theorem \ref{thm:main}, we can find a constant $C>0$ so that
	\begin{equation*}
		\|\eta^{(1)}(\cdot,T)\|_{L^2(\R^2)} \le C(\eps+T\eps\sqrt{\delta} )\left(\|\langle x\rangle^2 (|\nabla^2\bm{F}|+|\nabla\bm{F}|)\|_{L^\infty} + \|\langle \xi\rangle^4\widehat{\bm{F}}\|_{L^\infty}\right). 
	\end{equation*}
\end{proposition}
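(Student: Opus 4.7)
The plan is to apply Theorem \ref{thm:L2wave} to the $\mathcal{L}_\delta$-wave equation \eqref{eq:errorf1} satisfied by $\eta^{(1)}$, which has zero source and Cauchy data $(\eta^{(1)},\partial_t\eta^{(1)})\rvert_{t=0}=(0,-f^{(1)})$. This reduces the problem to bounding three quantities separately:
\begin{equation*}
(\mathrm{A})\ \|f^{(1)}\|_{L^2_\delta(\R^2)}, \qquad (\mathrm{B})\ \|f^{(1)}\|_{H^1_\delta(\R^2)}, \qquad (\mathrm{C})\ \max_{|\alpha-\alpha^*|>r}\,|\langle f^{(1)},\Phi_{j,\delta}(\cdot,\alpha)\rangle_{L^2_\delta(\R^2)}|, \ \ j=1,2.
\end{equation*}
Each will be handled using the explicit form \eqref{eq:f1formula} of $f^{(1)}$, together with the fact that $|\eta_{\#,\delta}|=\mathcal{O}(\sqrt\delta)$ and $\|S_{j,\delta}\|_{L^2_\delta(Y)}=\mathcal{O}(\delta^{-1/2})$, so that the two $\delta$-factors cancel whenever $f^{(1)}$ is tested in $L^2_\delta$.

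For (A), each summand of $f^{(1)}$ has the form $\eps\cdot\Gamma(\eps x)\Psi(x)$ with $\Gamma:=\eps\eta_{\#,\delta}(\partial_1\pm\mi\partial_2)F_k$ and $\Psi=S_{j,\delta}$. Lemma \ref{lem:SpPL2bound} (with $s=2$) then gives $\|f^{(1)}\|_{L^2_\delta(\R^2)}\le C\eps\,\|\langle x\rangle^2\nabla\bm F\|_{L^\infty}$. For (B), write $\nabla f^{(1)}$ using the chain rule: the piece where $\nabla$ falls on the envelope $F(\eps x)$ gains a factor $\eps$ and contributes $\mathcal{O}(\eps^3\|\langle x\rangle^2\nabla^2\bm F\|_\infty)$ after invoking Lemma \ref{lem:SpPL2bound}; the piece where $\nabla$ falls on $S_{j,\delta}$ contributes $\mathcal{O}(\eps\sqrt\delta\|\langle x\rangle^2\nabla\bm F\|_\infty)$, crucially using the uniform bound $\|\nabla S_{j,\delta}\|_{L^2_\delta(Y)}\le C$ recorded in Remark \ref{rem:SHdel}. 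Altogether $\|f^{(1)}\|_{H^1_\delta(\R^2)}\le C\eps\,\|\langle x\rangle^2(|\nabla^2\bm F|+|\nabla\bm F|)\|_{L^\infty}$, so that $T\sqrt\delta\cdot\|f^{(1)}\|_{H^1_\delta}$ produces the desired $T\eps\sqrt\delta$-factor.

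For (C), apply Lemma \ref{f.w.lemma}(i) with $\Psi=S_{j,\delta}$ (which is $\alpha^*$-quasi-periodic) and $\Gamma$ as above. Bounding $|\widehat\Gamma(\xi)|\le C\eps\sqrt\delta\,\|\langle\xi\rangle^4\widehat{\bm F}\|_{L^\infty}/(1+|\xi|)^3$ and using Lemma \ref{f.w.lemma}(ii) for the lower bound $|\mathbf m\bm\alpha+\alpha-\alpha^*|\ge C(1+|\mathbf m|)$ when $|\alpha-\alpha^*|>r$, one obtains
\begin{equation*}
\sum_{\mathbf m\in\Z^2}\Big|\widehat\Gamma\Big(\tfrac{\mathbf m\bm\alpha+\alpha-\alpha^*}{\eps}\Big)\Big|\le C\eps^4\sqrt\delta\,\|\langle\xi\rangle^4\widehat{\bm F}\|_{L^\infty},
\end{equation*}
since $\sum_{\mathbf m\in\Z^2}(1+|\mathbf m|)^{-3}<\infty$. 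Combining with the prefactor $\eps^{-1}\|S_{j,\delta}\|_{L^2_\delta(Y)}=\mathcal{O}(\eps^{-1}\delta^{-1/2})$ yields the bound $|\langle f^{(1)},\Phi_{j,\delta}(\cdot,\alpha)\rangle_{L^2_\delta(\R^2)}|\le C\eps^3\|\langle\xi\rangle^4\widehat{\bm F}\|_{L^\infty}$.

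Summing the three contributions through \eqref{eq:L2Rwave} gives exactly the claimed bound $\|\eta^{(1)}(\cdot,T)\|_{L^2}\le C(\eps+T\eps\sqrt\delta)(\|\langle x\rangle^2(|\nabla^2\bm F|+|\nabla\bm F|)\|_{L^\infty}+\|\langle\xi\rangle^4\widehat{\bm F}\|_{L^\infty})$. The delicate point — and the only step that is not routine — is the $H^1_\delta$-estimate of $f^{(1)}$: a naive bound would pick up a factor of $\delta^{-1/2}$ from $\|S_{j,\delta}\|_{L^2_\delta}$ that is \emph{not} cancelled by $|\eta_{\#,\delta}|$ when one differentiates the envelope, and would be fatal for the final time scale. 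The rescue is that $S_{j,\delta}$ is (approximately) constant inside each bubble, so $\|\nabla S_{j,\delta}\|_{L^2_\delta(Y)}$ is bounded rather than of order $\delta^{-1/2}$, which is precisely Remark \ref{rem:SHdel}.
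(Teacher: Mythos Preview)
Your proof is correct and follows essentially the same route as the paper: apply Theorem \ref{thm:L2wave} with $g=-f^{(1)}$, then bound $\|f^{(1)}\|_{L^2_\delta}$ and $\|\nabla f^{(1)}\|_{L^2_\delta}$ via Lemma \ref{lem:SpPL2bound} (using Remark \ref{rem:SHdel} for the $\nabla S_{j,\delta}$-piece), and bound the far-from-$\alpha^*$ pairings via Lemma \ref{f.w.lemma}(i)--(ii) with $N=3$. Two cosmetic slips: the envelope-derivative contribution to $\|\nabla f^{(1)}\|_{L^2_\delta}$ is $\mathcal{O}(\eps^2)$, not $\mathcal{O}(\eps^3)$ (the $\sqrt\delta$ from $\eta_{\#,\delta}$ and the $\delta^{-1/2}$ from $\|S_{j,\delta}\|_{L^2_\delta(Y)}$ cancel, leaving $\eps^2$); and in your closing remark the term that genuinely needs Remark \ref{rem:SHdel} is the one where $\nabla$ hits $S_{j,\delta}$, not the envelope.
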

\begin{proof}
	We apply Theorem \ref{thm:L2wave} with $g=-f^{(1)}$. We have checked above that $\|f^{(1)}\|_{L^2_\delta(\mathbb{R}^2)} \le C \eps\|\langle x\rangle^2\nabla \bm{F}\|_{L^\infty}$. For the $L^2_\delta$ norm of $\nabla f^{(1)}$, we have
	\begin{equation*}
	\begin{aligned}
	-\nabla f^{(1)}(x) &= \eps^3 \eta_{\#,\delta}\left((\partial_1+\mi \partial_2)\nabla F_2\right)(\eps x) S_{1,\delta}(x) + \eps^3 \overline{\eta_{\#,\delta}}\left((\partial_1-\mi \partial_2)\nabla F_1\right)(\eps x) S_{2,\delta}(x)\\
	&\quad + \eps^2 \eta_{\#,\delta}\left((\partial_1+\mi \partial_2)F_2\right)(\eps x) (\nabla S_{1,\delta})(x) + \eps^2 \overline{\eta_{\#,\delta}}\left((\partial_1-\mi \partial_2)F_1\right)(\eps x) (\nabla S_{2,\delta})(x). 
	\end{aligned}
\end{equation*}
Applying Lemma \ref{lem:SpPL2bound}, we see that the contribution to $\|\nabla f^{(1)}\|_{L^2_\delta(\mathbb{R}^2)}$ from the $\eps^3$-terms can be bounded by
\begin{equation*}
C\eps^2|\eta_{\#,\delta}|\Big(\sum_{\ell=1,2}\|S_{\ell,\delta}\|_{L^2_\delta(Y)}\Big)\|(1+|x|^2)\nabla^2 \bm{F}\|_{L^\infty(\R^2)},
\end{equation*}
which is of order $\mathcal{O}(\eps^2)$. Similarly, the contribution from the $\eps^2$-terms can be bounded by
\begin{equation*}
C\eps|\eta_{\#,\delta}|\Big(\sum_{\ell=1,2}\|\nabla S_{\ell,\delta}\|_{L^2_\delta(Y)}\Big)\|(1+|x|^2)\nabla \bm{F}\|_{L^\infty(\R^2)},
\end{equation*}
which is of order $\mathcal{O}(\eps \sqrt{\delta})$ because $\|\nabla S_{j,\delta}\|_{L^2_\delta(Y)}$ can be bounded by a constant; see Remark \ref{rem:SHdel}. In summary, we have
\begin{equation}
\label{eq:f1Hdel}
\|\nabla f^{(1)}\|_{L^2_\delta(\R^2)} \le C\eps^2 \|(1+|x|^2)D^2 \bm{F}\|_{L^\infty(\R^2)} + C\eps\sqrt{\delta} \|(1+|x|^2)D \bm{F}\|_{L^\infty(\R^2)} .
\end{equation}

\smallskip

Next, with $r>0$ fixed so that \eqref{eq:Y*cut} holds, we estimate $\langle f^{(1)}, \Phi_{j,\delta}(\cdot,\alpha)\rangle_{L^2_{\delta}(\mathbb{R}^2)}$ for $j=1,2$ and for $\alpha \in Y^*$ satisfying $|\alpha-\alpha^*|>r$.
We only consider the first term in the formula \eqref{eq:f1formula}. 
Using Lemma \ref{f.w.lemma}(i) we get, for $\ell=1,2$ (in fact, the estimates below hold for all $\ell \in \N^*$),
\begin{equation*}
\begin{aligned}
&\langle \eps^2 \eta_{\#,\delta}\left((\partial_1+\mi \partial_2)F_2\right)(\eps x) S_{1,\delta}(x),\Phi_\ell(x,\alpha)\rangle_{L^2_\delta(\R^2)}\\
= \,&\eta_{\#,\delta}\sum_{\bm{m}\in \Z^2} [\mathscr{F}_{x\mapsto \xi}(\partial_1+\mi \partial_2)F_2]\left(\frac{\mathbf{m}\bm{\alpha}+\alpha-\alpha^*}{\eps}\right) \int_Y \sigma_\delta^{-1}(x) e^{\mi(\mathbf{m}\bm{\alpha}+\alpha-\alpha^*)\cdot x} S_{1,\delta}(x)\overline{\Phi_\ell(x)}\,dx.
\end{aligned}
\end{equation*}
Using the fact that $F_2\in \mathscr{S}(\R^2_x)$ and $\widehat{F}_2 \in \mathscr{S}(\R^2_\xi)$ and the fact that $|\eta_{\#,\delta}| = \mathcal{O}(\delta^{\frac12})$, we can bound the right-hand side above by (where $N \in \N_{>2}$ is to be chosen and $C$ depends on $N$)
\begin{equation*}
	C\delta^{\frac12}\||\xi|^{N+1}\widehat{F}_2\|_{L^\infty(\R^2_\xi)}\sum_{\mathbf{m}\in \Z^2} \frac{\eps^N}{|\mathbf{m}\bm{\alpha}+\alpha-\alpha^*|^N} \left|\int_Y \sigma_\delta^{-1}(x) e^{\mi(\mathbf{m}\bm{\alpha}+\alpha-\alpha^*)\cdot x} S_{1,\delta}(x)\overline{\Phi_\ell(x)}\,dx\right|.
\end{equation*}
Using Lemma \ref{f.w.lemma}(ii) and choosing $N>2$, note also that $\|S_{1,\delta}\|_{L^2_\delta(Y)} \le C\delta^{-\frac12}$ and $\|\Phi_\ell(\cdot,\alpha)\|_{L^2_\delta(Y)} = 1$, we verify that the above is bounded by $C_{\bm{F},N,r} \eps^N$. To conclude, we have
\begin{equation*}
\max_{|\alpha-\alpha^*|>r} |\langle f^{(1)}(\cdot) , \Phi_{j,\delta}(\cdot,\alpha) \rangle_{L^2_{\delta}(\mathbb{R}^2)} | \le C\eps^N \||\xi|^{N+1}\widehat{\bm{F}}\|_{L^\infty(\R^2_\xi)} .
\end{equation*}

Finally, we apply Theorem \ref{thm:L2wave} with the above estimates for $g=-f^{(1)}$, and conclude with the desired bound by choosing $N=3$ above.
\end{proof}

\subsection{Estimate of $\eta^{(2)}$}

The item $\eta^{(2)} = \eta^{(2)}_{\eps,\delta}$ solves the non-homogeneous $\mathcal{L}_\delta$-wave equation \eqref{eq:errorf2} 
with zero initial data and source term $-e^{\mi\omega^*_\delta t}f^{(2)} = -e^{\mi\omega^*_\delta t}f^{(2)}_{\eps,\delta}$ given in \eqref{eq:f123}. From \eqref{eq:f123}, we get
\begin{equation*}
	\begin{aligned}
		f^{(2)}(x,t) &= \eps^3 (\partial^2_t V_1)(\eps x,\eps t) S_{1,\delta}(x) + \eps^3 (\partial^2_t  V_2)(\eps x,\eps t) S_{2,\delta}(x),\\
		\partial_t f^{(2)}(x,t) &= \eps^4 (\partial^3_t V_1)(\eps x,\eps t) S_{1,\delta}(x) + \eps^3 (\partial^3_t  V_2)(\eps x,\eps t) S_{2,\delta}(x).
	\end{aligned}
\end{equation*}

Using the inequality in Lemma \ref{lem:SpPL2bound} and the inequality \eqref{eq:decayDirac} in Proposition \ref{prop:DiracFourierEstimate}, we get
\begin{equation*}
\begin{aligned}
\|f^{(2)}(x,t)\|_{L^2_\delta(\R^2_x)} &\le C\eps^2 \sum_{j=1,2}\|S_{j,\delta}\|_{L^2_\delta(Y)} \left\|\langle x\rangle^2 \partial_t^2 V_j(x,\eps t)\right\|_{L^\infty(\R^2_x)}\\
&\le C\eps^2 \sqrt{\delta}\|(I-\Delta_\xi)\langle\xi\rangle^2\widehat{\bm{F}}\|_{L^1(\R^2_\xi)}.
\end{aligned}
\end{equation*}
Similarly, we have
\begin{equation*}
\|\partial_t f^{(2)}(x,t)\|_{L^2_\delta(\R^2_x)} \le C\eps^3 \delta\|(I-\Delta_\xi)\langle\xi\rangle^3\widehat{\bm{F}}\|_{L^1(\R^2_\xi)}.
\end{equation*}
Note that those estimates are uniform in $t$.

\medskip

Using the Floquet-Bloch theory and Duhamel's formula, we have the representation
\begin{equation*}
	\eta^{(2)}(x,T) = -\sum_{j\ge 1}\int_{Y^*} \left(\int_0^T\frac{\sin(\omega_{j,\delta}(\alpha)(T-t))}{\omega_{j,\delta}(\alpha)} e^{\mi\omega^*_\delta t} \langle f^{(2)}(\cdot,t),\Phi_{j,\delta}(\cdot,\alpha)\rangle_{L^2_\delta(\R^2)}\,dt\right)\Phi_{j,\delta}(x,\alpha)\,d\alpha.
\end{equation*}
Using integration by parts in time we rewrite the time integral inside the bracket as
\begin{equation*}
\begin{aligned}
	-&\frac{\sin(\omega_{j,\delta}(\alpha)T)}{\mi \omega_{\delta}^*\omega_{j,\delta}(\alpha)}\langle f^{(2)}(\cdot,0),\Phi_{j,\delta}(\cdot,\alpha)\rangle_{L^2_\delta(\R^2)} \\
 &+ \int_0^T \frac{\cos(\omega_{j,\delta}(\alpha)(T-t))  e^{\mi \omega_{\delta}^* t}}{ \mi \omega_{\delta}^*} \langle f^{(2)}(\cdot,t),\Phi_{j,\delta}(\cdot,\alpha)\rangle_{L^2_\delta(\R^2)}\,dt\\
&-\int_0^T \frac{\sin(\omega_{j,\delta}(\alpha)(T-t)) e^{\mi \omega_{\delta}^*t}}{\mi \omega_{\delta}^*\omega_{j,\delta}(\alpha)} \langle \partial_t f^{(2)}(\cdot,t),\Phi_{j,\delta}(\cdot,\alpha)\rangle_{L^2_\delta(\R^2)}\,dt.
	\end{aligned}
\end{equation*}
We can bound $\sin(\omega_{j,\delta}(\alpha)(T-t))/\omega_{j,\delta}(\alpha)$ by $CT$, and bound $1/\omega_{\delta}^*$ by $C\delta^{-\frac12}$. It then follows that
\begin{equation*}
	\|\eta^{(2)}(\cdot,T)\|_{L^2_\delta(\mathbb{R}^2)} \le CT\delta^{-\frac12} \max_{0\le t\le T}\|f^{(2)}(\cdot,t)\|_{L^2_\delta(\mathbb{R}^2)} + CT^2\delta^{-\frac12} \max_{0\le t\le T}\|\partial_t f^{(2)}(\cdot,t)\|_{L^2_\delta(\mathbb{R}^2)}.
\end{equation*}

In view of the estimates obtained earlier for $f^{(2)}$ and $\partial_t f^{(2)}$, we hence have proved the following result.
\begin{proposition}\label{prop:eta2bound}
	Under the assumption of Theorem \ref{thm:main}, we can find a constant $C>0$ so that
	\begin{equation*}
		\|\eta^{(2)}(\cdot,T)\|_{L^2_{\delta}(\R^2)} \le C(T\eps^2 + T^2\eps^3 \delta^{\frac12})\textstyle\sum\limits_{k=2,3}\|(I-\Delta_\xi)\langle\xi\rangle^k\widehat{\bm{F}}\|_{L^1}. 
	\end{equation*}
\end{proposition}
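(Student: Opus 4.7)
The plan is to combine the Duhamel/Floquet-Bloch representation of $\eta^{(2)}$ with a single integration by parts in time that exploits the rapid phase $e^{\mi\omega^*_\delta t}$ to recover the $\sqrt{\delta}^{-1}$ saving needed to absorb the weight in the $L^2_\delta$ norm.

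First, I would expand the source from \eqref{eq:f123}. Since $f^{(2)}(x,t) = \eps^3 (\partial_t^2 V_j)(\eps x,\eps t) S_{j,\delta}(x)$, Lemma \ref{lem:SpPL2bound} (with $s=2$) combined with the decay estimate \eqref{eq:decayDirac} from Proposition \ref{prop:DiracFourierEstimate} controls each factor: each time derivative of $\bm{V}$ contributes $\sqrt{\delta}$, while $\|S_{j,\delta}\|_{L^2_\delta(Y)}\sim \delta^{-1/2}$. This yields the uniform-in-$t$ bounds $\|f^{(2)}(\cdot,t)\|_{L^2_\delta(\R^2)} \lesssim \eps^2\sqrt{\delta}\,\|(I-\Delta_\xi)\langle\xi\rangle^2\widehat{\bm{F}}\|_{L^1}$ and $\|\partial_t f^{(2)}(\cdot,t)\|_{L^2_\delta(\R^2)} \lesssim \eps^3\delta\,\|(I-\Delta_\xi)\langle\xi\rangle^3\widehat{\bm{F}}\|_{L^1}$.

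Next, applying the Hille-Yosida/Duhamel formula in Floquet-Bloch form (as recorded just before Subsection 6.1) writes $\eta^{(2)}(x,T)$ as $-\sum_j \int_{Y^*} \Phi_{j,\delta}(x,\alpha)\, I_j(T,\alpha)\,d\alpha$ with
\begin{equation*}
I_j(T,\alpha) = \int_0^T \frac{\sin(\omega_{j,\delta}(\alpha)(T-t))}{\omega_{j,\delta}(\alpha)} e^{\mi\omega^*_\delta t}\, \langle f^{(2)}(\cdot,t),\Phi_{j,\delta}(\cdot,\alpha)\rangle_{L^2_\delta(\R^2)}\,dt.
\end{equation*}
A direct bound using $|\sin(\omega_{j,\delta}(T-t))/\omega_{j,\delta}(\alpha)|\le T$ would only give order $T\eps^2\sqrt{\delta}$, losing the $\sqrt{\delta}$ prefactor. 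The key step is to integrate by parts in $t$ against $e^{\mi\omega^*_\delta t}$, producing a factor $(\mi\omega^*_\delta)^{-1}\sim \delta^{-1/2}$ that exactly compensates the $\sqrt{\delta}$ in $f^{(2)}$. The resulting terms are (i) a boundary contribution at $t=T$ (the $t=0$ contribution vanishes since $\sin 0 = 0$) bounded by $T\delta^{-1/2}\|f^{(2)}\|_{L^2_\delta}$, and (ii) a remaining time integral with a cosine factor plus one coming from $\partial_t[\sin/\omega_{j,\delta}] = \cos$, giving $T\delta^{-1/2}\max_t\|f^{(2)}\|_{L^2_\delta}$ and $T^2\delta^{-1/2}\max_t\|\partial_t f^{(2)}\|_{L^2_\delta}$ respectively after invoking $|\sin(\omega_{j,\delta}(T-t))/\omega_{j,\delta}|\le T$ once more. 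Using the Plancherel identity \eqref{eq:plancherel} to pass from the Floquet-Bloch coefficients back to $L^2_\delta(\R^2)$, one obtains
\begin{equation*}
\|\eta^{(2)}(\cdot,T)\|_{L^2_\delta(\R^2)} \le C T \delta^{-1/2}\max_{0\le t\le T}\|f^{(2)}(\cdot,t)\|_{L^2_\delta} + C T^2\delta^{-1/2}\max_{0\le t\le T}\|\partial_t f^{(2)}(\cdot,t)\|_{L^2_\delta},
\end{equation*}
which combined with the source estimates gives the stated $(T\eps^2 + T^2\eps^3\sqrt{\delta})$ bound.

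The main obstacle is the vanishing of $\omega_{j,\delta}(\alpha)$ at the Dirac point for $j=1,2$, which prevents us from pulling out $1/\omega_{j,\delta}(\alpha)$ as a multiplier the way we did for $I_2$ in Section 6.2. The device that saves the argument is that we never separate $\sin(\omega_{j,\delta}(T-t))$ from $\omega_{j,\delta}(\alpha)$: the ratio stays uniformly bounded by $T$ across all bands, so the only small denominator we generate through integration by parts is the harmless, $\alpha$-independent factor $1/\omega^*_\delta$, which is precisely what we want to cancel the weight issue coming from $\|S_{j,\delta}\|_{L^2_\delta(Y)}\sim \delta^{-1/2}$.
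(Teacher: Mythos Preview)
Your proposal is correct and matches the paper's proof exactly: bound $f^{(2)}$ and $\partial_t f^{(2)}$ in $L^2_\delta$ via Lemma~\ref{lem:SpPL2bound} and \eqref{eq:decayDirac}, then integrate by parts in $t$ against $e^{\mi\omega^*_\delta t}$ in the Duhamel/Floquet--Bloch formula to extract the factor $1/\omega^*_\delta\sim\delta^{-1/2}$, and bound the three resulting pieces using $|\sin(\omega_{j,\delta}(T-t))/\omega_{j,\delta}|\le T$ together with Plancherel. One minor slip: since the kernel is $\sin(\omega_{j,\delta}(\alpha)(T-t))$, it is the $t=T$ endpoint that vanishes and the $t=0$ endpoint that produces the surviving boundary term $-\frac{\sin(\omega_{j,\delta}T)}{\mi\omega^*_\delta\,\omega_{j,\delta}}\langle f^{(2)}(\cdot,0),\Phi_{j,\delta}\rangle$, but this does not affect your estimate.
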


\subsection{Estimate of $\eta^{(3)}_{\varepsilon,\delta}$}
\label{sec: estimate E3 final}

As before, we suppress the dependence on $\eps,\delta$ from the notations for $f^{(3)}_{\eps,\delta}$, $\eta^{(3)}_{\eps,\delta}$ and from the quantities derived by them. By Duhamel's principle and Floquet-Bloch representation, we have the formula
\begin{equation}
\begin{aligned}
		\eta^{(3)}(x,T) &= \sum_{j\ge 1} \int_{Y^*}  \widetilde{\eta^{(3)}}(j,\alpha;T) \Phi_{j,\delta}(x,\alpha)\,d\alpha;\\
  -\widetilde{\eta^{(3)}}(j,\alpha;T) &= \int_0^{T} \frac{\sin \big(  \omega_{j,\delta}(\alpha) (T-t) \big)  }{   \omega_{j,\delta}(\alpha)}   e^{\mathrm{i}\omega_{\delta}^* \tau} \langle f^{(3)}(x,t),\Phi_{j,\delta}(x,\alpha)\rangle_{L^2_{\delta}(\mathbb{R}^2)} \,dt.
\end{aligned}
\end{equation}
By the definition of $f^{(3)}$ and its action on $H^1_\delta$ via the Floquet transform, we have
	\begin{equation*}
		\begin{aligned}
		&\big\langle f^{(3)}(\cdot,t) , \Phi_{j,\delta}(\cdot) \big\rangle_{L^2_\delta(\R^2)} = \eps^2 \sum_{k=1,2} \int_{\R^2} \frac{dx}{\sigma_\delta(x)} \Big\{2\mi \omega^*(\partial_t V_k)(\eps x,\eps t)S_{k,\delta}(x) \overline{\Phi_{j}(x,\alpha)} \\
		&\qquad\qquad-(\nabla V_k)(\eps x,\eps t)\cdot \nabla S_{k,\delta}(x)\overline{\Phi_j(x,\alpha)} + (\nabla V_k)(\eps x,\eps t)S_{k,\delta}(x)\overline{\nabla \Phi_j(x,\alpha)} \Big\}.
		\end{aligned}
	\end{equation*}
In view of Lemma \ref{f.w.lemma}, for fixed $\alpha \in Y^*$, the integral over $\R^2_x$ in the last line can be computed in terms of the Fourier transforms of $\partial_t V_k$ and $\nabla V_k$. 

To simply the notation, we introduce the following pair of functions
\begin{equation}
  \label{eq:gammaPsi1}
\begin{aligned}
\gamma_1 &= 2\mi \omega^* \partial_t V_1, \quad &&\Psi_1 = \Psi_5 = S_{1,\delta},\\
\gamma_2 &= 2\mi \omega^* \partial_t V_2, \quad &&\Psi_2 = \Psi_6 = S_{2,\delta},\\
-\gamma_3 &= \gamma_5 = \nabla_x V_1, \quad &&\Psi_3 = \nabla_x S_{1,\delta},\\
-\gamma_4 &= \gamma_6 = \nabla_x V_2, \quad &&\Psi_4 = \nabla_x S_{2,\delta}.
\end{aligned}
\end{equation}
The pairing of $\langle f^{(3)}(\cdot,t),\Phi_j(\cdot,\alpha)\rangle$, for each fixed $j$, is then written as
\begin{equation}
  \label{eq:f3pairing}
\eps^2 \sum_{\ell=1}^4 
\langle \gamma_\ell(\eps\,\cdot,\eps t) \Psi_\ell(\cdot), \Phi_{j}(\cdot,\alpha)\rangle_{L^2_\delta} + \eps^2 \sum_{\ell=5}^6 \langle \gamma_\ell(\eps\,\cdot,\eps t) \Psi_\ell(\cdot), \nabla \Phi_j(\cdot,\alpha)\rangle_{L^2_\delta}.
\end{equation}
To be precise, the above pairings are not $L^2_\delta$ inner products \emph{per se}, since $\Psi_\ell$ and $\Phi_j$ are not elements in $L^2_\delta(\R^2)$. Nevertheless, their meaning in terms of weighted integrals are clear, and they can always be rewritten as $L^2_\delta$ inner product if we split $\gamma_\ell$ to both $\Psi_\ell$ and $\Phi_j$.


Combining the computation above, for any $T>0$, we have
\begin{equation*}
\begin{aligned}
-\widetilde{\eta^{(3)}}(j,\alpha,T) &= \int_0^{T} \frac{\sin(\omega_j(\alpha)(T-\tau))}{\omega_j(\alpha)}e^{\mi \omega^*_\delta \tau} \\
&\qquad \eps^2 \left(\sum_{\ell=1}^4 
\langle \gamma_\ell(\eps x,\eps \tau) \Psi_\ell(x), \Phi_{j}(x,\alpha)\rangle_{L^2_\delta} + \sum_{\ell=5}^6 \langle \gamma_\ell(\eps x,\eps\tau) \Psi_\ell(x), \nabla \Phi_j(x,\alpha)\rangle_{L^2_\delta}\right)\,d\tau.
\end{aligned}
\end{equation*}

We break the right-hand side above into three parts and call their contributions $\eta^{(3)}_{\rm I}, \eta^{(3)}_{\rm II}$ and $\eta^{(3)}_{\rm III}$, which are given by
 \begin{equation*}
 \begin{aligned} 
 \eta^{(3)}_{\rm I}(x,T) &= \sum_{j=1,2} \int_{Y^*} \mathbf{1}_{\{|\alpha-\alpha^*| < \eps^\nu \}}  \widetilde{\eta^{(3)}}(j,\alpha;T)\Phi_{j,\delta}(x,\alpha)\,d\alpha,\\
 \eta^{(3)}_{\rm II}(x,T) &= \sum_{j=1,2} \int_{Y^*} \mathbf{1}_{\{|\alpha-\alpha^*| \geq \eps^\nu  \}}  \widetilde{\eta^{(3)}}(j,\alpha;T)\Phi_{j,\delta}(x,\alpha)\,d\alpha,\\
   \eta^{(3)}_{\rm III}(x,T) &= \sum_{j\ge 3} \int_{Y^*} \widetilde{\eta^{(3)}}(j,\alpha;T)\Phi_{j,\delta}(x,\alpha)\,d\alpha,
	\end{aligned}
\end{equation*}
where $\nu \in (0,1)$ will be determined later. In the spectral domain, we see that the three parts in turn correspond to contributions of the sources from the first two subwavelength bands and close to the Dirac point, those from the first two subwavelength bands but with distance away from the Dirac point $\alpha^*$, and those from higher bands. 

We first focus on the term $\eta^{(3)}_{\rm I}$ and argue that for it to be small, $V_1(\cdot,t)$ and $V_2(\cdot,t)$ must evolve according to the Dirac equation \eqref{eq:Dirac}.  

\subsubsection{Estimate of $\eta^{(3)}_{\rm I}$}

By  formula \eqref{eq:f3pairing}, we have
\begin{equation}
  \label{eq:eta3I}
  \begin{aligned}
  &-\eta^{(3)}_{\rm I}(x,T) \\
  = & \sum_{j=1,2} \int_{\{|\alpha-\alpha^*|\le \eps^\nu \}} \int_0^T 
  \frac{\sin(\omega_j(\alpha)(T-\tau))}{\omega_j(\alpha)} e^{\mi \omega^* \tau} \Big\{\eps^2\sum_{\ell=1}^4 \langle \gamma_\ell(\eps\,\cdot,\eps\tau)\Psi_\ell(\cdot),\Phi_j(\cdot,\alpha)\rangle_{L^2_\delta} \\
  &\qquad \qquad \qquad + \eps^2\sum_{\ell=5,6} \langle \gamma_\ell(\eps\,\cdot,\eps\tau)\Psi_\ell(\cdot),\nabla\Phi_j(\cdot,\alpha)\rangle_{L^2_\delta}\Big\}\,dt\, \Phi_j(x,\alpha)\,d\alpha ,
\end{aligned}
\end{equation}
where $\gamma_\ell, \Psi_\ell$, for $\ell =1,\dots,6$, are given in \eqref{eq:gammaPsi1}. Using  formula \eqref{f.w.lemma}, for $\ell=1,2$ and $j=1,2$, we have 
\begin{equation}
\label{eq:eta3pair1}
  \begin{aligned}
	&\eps^2\langle \gamma_\ell(\eps\cdot,\eps\tau)\Psi_\ell(\cdot),\Phi_j(\cdot,\alpha)\rangle_{L^2_\delta} \\
	= &\sum_{\mathbf{m}\in \Z^2} 2\mi \omega^* \left(\partial_t \widehat{V}_\ell\right)\left(\frac{\mathbf{m}\bm{\alpha}+\alpha-\alpha^*}{\eps},\eps \tau\right)\int_Y \frac{e^{\mi(\mathbf{m}\bm{\alpha}+\alpha-\alpha^*)\cdot x}}{\sigma_\delta(x)} S_{\ell,\delta}(x)\overline{\Phi_j(x,\alpha)} \,dx.
  \end{aligned}
\end{equation}
Similarly, for $\ell=3,4$ and $j=1,2$, we have
\begin{equation}
\label{eq:eta3pair2}
  \begin{aligned}
&\eps^2\langle \gamma_\ell(\eps\cdot,\eps\tau)\Psi_\ell(\cdot),\Phi_j(\cdot,\alpha)\rangle_{L^2_\delta}\\
=&-\sum_{\mathbf{m}\in \Z^2}\widehat{\nabla V}_{\ell-2}\left(\frac{\mathbf{m}\bm{\alpha}+\alpha-\alpha^*}{\eps},\eps \tau\right)\cdot\int_Y \frac{e^{\mi(\mathbf{m}\bm{\alpha}+\alpha-\alpha^*)\cdot x}}{\sigma_\delta(x)}\nabla_x S_{\ell-2,\delta}(x)\overline{\Phi_j(x,\alpha)}\,dx\\
= &\frac{-\mi}{\eps}\sum_{\mathbf{m}\in \Z^2} \widehat{V}_{\ell-2}\left(\frac{\mathbf{m}\bm{\alpha}+\alpha-\alpha^*}{\eps},\eps \tau\right)\int_Y \frac{e^{\mi(\mathbf{m}\bm{\alpha}+\alpha-\alpha^*)\cdot x}}{\sigma_\delta(x)}(\mathbf{m}\bm{\alpha}+\alpha-\alpha^*)\cdot \nabla S_{\ell-2,\delta}(x)\overline{\Phi_j(x,\alpha)}\,dx.
\end{aligned}
\end{equation}
For $\ell=5,6$ and $j=1,2$, we have
\begin{equation}
\label{eq:eta3pair3}
  \begin{aligned}
&\eps^2\langle \gamma_\ell(\eps\cdot,\eps\tau)\Psi_\ell(\cdot),\nabla\Phi_j(\cdot,\alpha)\rangle_{L^2_\delta}\\
=&-\sum_{\mathbf{m}\in \Z^2}\widehat{\nabla V}_{\ell-4}\left(\frac{\mathbf{m}\bm{\alpha}+\alpha-\alpha^*}{\eps},\eps \tau\right)\cdot\int_Y \frac{e^{\mi(\mathbf{m}\bm{\alpha}+\alpha-\alpha^*)\cdot x}}{\sigma_\delta(x)}S_{\ell-4,\delta}(x)\overline{\nabla_x \Phi_j(x,\alpha)}\,dx\\
= &\frac{\mi}{\eps}\sum_{\mathbf{m}\in \Z^2} \widehat{V}_{\ell-4}\left(\frac{\mathbf{m}\bm{\alpha}+\alpha-\alpha^*}{\eps},\eps \tau\right)\int_Y \frac{e^{\mi(\mathbf{m}\bm{\alpha}+\alpha-\alpha^*)\cdot x}}{\sigma_\delta(x)} S_{\ell-4,\delta}(x)\overline{(\mathbf{m}\bm{\alpha}+\alpha-\alpha^*)\cdot \nabla \Phi_j(x,\alpha)}\,dx.
  \end{aligned}
\end{equation}

For the sums over $\mathbf{m}\in \Z^2$, we single out the contributions from $\mathbf{m}=(0,0)$ in \eqref{eq:eta3pair1}\eqref{eq:eta3pair2}\eqref{eq:eta3pair3} to $\eta^{(3)}_{\rm I}$ and call it $\eta^{(3)}_{{\rm I},0}$. Let $M_{\delta} := \| S_{1,\delta} \|_{L^2_{\delta}(Y)}  =  \| S_{2,\delta} \|_{L^2_{\delta}(Y)}$; recall that $M_\delta = \mathcal{O}(\delta^{-\frac{1}{2}})$ and for $j=1,2$, $S_{j,\delta}(x) = M_\delta \Phi_{j,\delta}(x,\alpha^*)$. Recall also that $\Phi_j(\cdot,\alpha)$ is $\alpha$-quasiperiodic, and $p_j(\cdot,\alpha) = \Phi_j(\cdot,\alpha)e^{-\mi \alpha\cdot x}$ is $\Lambda$-periodic. We denote the contribution from $\mathbf{m}=\mathbf{0}$ mentioned above by 
\begin{equation}
\label{eq:eta3I0def}
-\eta^{(3)}_{{\rm I},0}(x,T) := \sum_{j=1,2} M_\delta\int_{\{|\alpha-\alpha^*|\le \eps^\nu  \}} \int_0^T \frac{\sin(\omega_j(\alpha)(T-\tau))}{\omega_j(\alpha)} e^{\mi \omega^*\tau} K(\alpha,j,\tau)\,d\tau \, \Phi_j(x,\alpha) d\alpha,
\end{equation}
where
\begin{equation}
\label{eq:etaI0}
    \begin{aligned}
        &K(\alpha,j,\tau) \,:= \\
        &\;2\mi \omega^* \partial_t \widehat{V}_1(\frac{\beta}{\eps},\eps\tau) \left\langle p_{1,\delta}(\cdot,\alpha^*),p_{j,\delta}(\cdot,\alpha)\right\rangle 
  +2\mi \omega^* \partial_t \widehat{V}_2(\frac{\beta}{\eps},\eps\tau)\left\langle p_{2,\delta}(\cdot,\alpha^*),p_{j,\delta}(\cdot,\alpha)\right\rangle \\
  &\;-\widehat{V}_1(\frac{\beta}{\eps},\eps\tau) \frac{\mi\beta}{\eps}\cdot\Big(\left\langle (\mi\alpha^*+\nabla)p_{1,\delta}(\cdot,\alpha^*),p_{j,\delta}(\cdot,\alpha)\right\rangle - \left\langle p_{1,\delta}(\cdot,\alpha^*),(\mi\alpha+\nabla)p_{j,\delta}(\cdot,\alpha)\right\rangle\Big)\\
    &\;-\widehat{V}_2(\frac{\beta}{\eps},\eps\tau) \frac{\mi\beta}{\eps}\cdot\Big(\left\langle (\mi\alpha^*+\nabla)p_{2,\delta}(\cdot,\alpha^*),p_{j,\delta}(\cdot,\alpha)\right\rangle-\left\langle p_{2,\delta}(\cdot,\alpha^*),(\mi\alpha+\nabla)p_{j,\delta}(\cdot,\alpha)\right\rangle\Big) . 
    \end{aligned}
\end{equation}
Here and below, $\beta := \alpha-\alpha^*$ and it has a small amplitude since it ranges in $\{|\beta|\le \eps^\nu\}$. We use the expansion formula \eqref{eq:p12expansion} and the $L^2_\delta(Y)$ orthogonality of $\{p_{j,\delta}\}_j$ to get
\begin{equation*}
    \begin{aligned}
        &\langle p_{1,\delta}(\cdot,\alpha^*),p_{j,\delta}(\cdot,\alpha^*+\beta)\rangle = \frac{\overline{A(\beta)}}{\sqrt{2}} + \mathcal{O}(|\beta|+\delta), \qquad j=1,2.\\
        &\langle p_{2,\delta}(\cdot,\alpha^*),p_{j,\delta}(\cdot,\alpha^*+\beta)\rangle = \frac{(-1)^j}{\sqrt{2}} + \mathcal{O}(|\beta|+\delta), \qquad j=1,2.
    \end{aligned}
\end{equation*}
Using also \eqref{eq:keypair}, we have for $j=1,2$,
\begin{equation*}
    \begin{aligned}
        &\langle(\mi\alpha^*+\nabla)p_{1,\delta}(\cdot,\alpha^*),p_{j,\delta}(\cdot,\alpha)\rangle - \langle p_{1,\delta}(\cdot,\alpha^*),(\mi\alpha^*+\nabla)p_{j,\delta}(\cdot,\alpha)\rangle\\ 
        = &\frac{\overline{A(\beta)}}{\sqrt{2}}\langle \wtcA(\alpha^*)p_{1,\delta}(\cdot,\alpha^*),p_{1,\delta}(\cdot,\alpha^*)\rangle + \frac{(-1)^j}{\sqrt{2}} \langle \wtcA(\alpha^*)p_{1,\delta}(\cdot,\alpha^*),p_{2,\delta}(\cdot,\alpha^*)\rangle + \mathcal{O}(|\beta|+\delta)\\
        = &\frac{(-1)^j}{\sqrt{2}} \overline{a_\delta} (-1,\mi)^\top + \mathcal{O}(|\beta|+\delta).
    \end{aligned}
\end{equation*}
Similarly, for $j=1,2$, we also have
\begin{equation*}
    \langle(\mi\alpha^*+\nabla)p_2(\cdot,\alpha^*),p_j(\cdot,\alpha)\rangle - \langle p_2(\cdot,\alpha^*),(\mi\alpha^*+\nabla)p_j(\cdot,\alpha)\rangle = \frac{\overline{A(\beta)}}{\sqrt{2}}a_\delta (1,\mi)^\top + \mathcal{O}(|\beta|+\delta).
\end{equation*}
Note that $\mi (1,\mi)\cdot \xi= \mi \xi_1-\xi_2$ is the Fourier symbol of the differential operator $\partial_{x_1} + \mi\partial_{x_2}$. Similarly, $\mi (-1,\mi)\cdot \xi$ equals $-\mi\xi_1-\xi_2$ and is the Fourier symbol of $-\partial_{x_1}+\mi\partial_{x_2}$. Hence, if $(V_1,V_2)$ satisfies the Dirac system \eqref{eq:Dirac}, \emph{i.e.},
\begin{equation*}
\left\{
	\begin{aligned}
		&2\mi \omega^* \partial_t \widehat{V}_1(\xi,t) - a_\delta(\mi\xi_1-\xi_2) \widehat{V}_2(\xi,t) = 0,\\
		&2\mi \omega^* \partial_t \widehat{V}_2(\xi,t) + \overline{a_\delta}(\mi\xi_1+\xi_2) \widehat{V}_1(\xi,t) = 0,
	\end{aligned}
	\right.
\end{equation*}
many cancellations will result in the expression of $\eta^{(3)}_{{\rm I},0}$ and make this term small. More precisely, let $K(\alpha,j,\tau)$ be written as the sum of $K_1$ and $K_2$, where $K_1$ is defined by replacing the $p_{j,\delta}(\cdot,\alpha)$ in \eqref{eq:etaI0} by the error term 
\begin{equation*}
	e(p_{j,\delta}) = p_{j,\delta}(\cdot,\alpha) - \left(\frac{A(\beta)}{\sqrt{2}} p_{1,\delta}(\cdot,\alpha^*) + \frac{(-1)^j}{\sqrt{2}} p_{2,\delta}(\cdot,\alpha^*) \right)
\end{equation*}
of \eqref{eq:p12expansion}. That is,
\begin{equation*}
    \begin{aligned}
        K_1 (\alpha,j,t)&= 
  2\mi \omega^* \partial_t \widehat{V}_1(\frac{\beta}{\eps},\eps t) \left\langle p_{1,\delta}(\cdot,\alpha^*),e(p_{j,\delta}) \right\rangle 
  +2\mi \omega^* \partial_t \widehat{V}_2(\frac{\beta}{\eps},\eps t)\left\langle p_{2,\delta}(\cdot,\alpha^*),e(p_{j,\delta})\right\rangle \\
  &\quad\, -\widehat{V}_1(\frac{\beta}{\eps},\eps t) \frac{\mi\beta}{\eps}\cdot\left(\left\langle (\mi\alpha^*+\nabla)p_{1,\delta}(\cdot,\alpha^*),e(p_{j,\delta})\right\rangle - \left\langle p_{1,\delta}(\cdot,\alpha^*),(\mi\alpha+\nabla)e(p_{j,\delta}) \right\rangle\right)\\
    &\quad\, -\widehat{V}_2(\frac{\beta}{\eps},\eps t) \frac{\mi\beta}{\eps}\cdot\left(\left\langle (\mi\alpha^*+\nabla)p_{2,\delta}(\cdot,\alpha^*),e(p_{j,\delta})\right\rangle-\left\langle p_{2,\delta}(\cdot,\alpha^*),(\mi\alpha+\nabla)e(p_{j,\delta})\right\rangle\right) .
    \end{aligned}
\end{equation*}
The remaining term $K_2=K-K_1$ is then
\begin{equation*}
\begin{aligned}
  K_2(\alpha,j,t) &= \widehat{V}_1(\frac{\beta}{\eps},\eps t) \frac{|\beta|^2}{\eps}\cdot \left\langle p_{1,\delta}(\cdot,\alpha^*), \frac{A(\beta)}{\sqrt{2}}p_{1,\delta}(\cdot,\alpha^*)+\frac{(-1)^j}{\sqrt{2}}p_{2,\delta}(\cdot,\alpha^*) \right\rangle \\
  &\quad\, + \widehat{V}_2(\frac{\beta}{\eps},\eps t) \frac{|\beta|^2}{\eps} \cdot \left\langle p_{2,\delta}(\cdot,\alpha^*), \frac{A(\beta)}{\sqrt{2}}p_{1,\delta}(\cdot,\alpha^*)+\frac{(-1)^j}{\sqrt{2}}p_{2,\delta}(\cdot,\alpha^*) \right\rangle\\
  &=\widehat{V}_1(\frac{\beta}{\eps},\eps t) \frac{|\beta|^2}{\eps} \frac{\overline{A(\beta)}}{\sqrt{2}} + \widehat{V}_2(\frac{\beta}{\eps},\eps t) \frac{|\beta|^2}{\eps}  \frac{(-1)^j}{\sqrt{2}} .
  \end{aligned}
\end{equation*}
Using the bound $\|e(p_{j,\delta})\|_{H^1_\delta(Y)} \le C(\delta + |\beta|)$ in \eqref{eq:p12expansion}, the estimate $\|(\nabla+\mi \alpha^*) p_\ell(\cdot,\alpha^*)\|_{L^2_\delta(Y)} = |\omega^*| \le C\sqrt{\delta}$ and the estimate \eqref{eq:DiracF-2}, we see that the largest contribution to $|K_1|$ above come from the terms with $(\nabla+\mi \alpha)e(p_{j,\delta}(\cdot,\alpha))$ and get
\begin{equation*}
    |K_1(\alpha,j,t)| \le C(\delta+|\beta|)\|\bm{F}\|_{W^{2,1}} .
\end{equation*}
Taking derivatives of $K_1$ in $t$, and using the same argument above, we deduce also
\begin{equation*}
    \begin{aligned}
        | \partial_t^k K_1(\alpha,j,t )| \leq C \varepsilon^k \delta^{k/2} (\delta+|\beta|) \|\bm{F}\|_{W^{1+k,1}}, \qquad \forall\, k\in \N.
        \end{aligned}
\end{equation*}
By the same reasoning and using \eqref{eq:decayDirac}, for $K_2$, we also have
\begin{equation*}
    |\partial_t^k K_2(\alpha,j,\tau)|\leq C \varepsilon^{1+k} \sum_{\ell=1,2} \|\partial_t^k \widehat{V}_\ell(\xi)|\xi|^2\|_{L^\infty_\xi}  \leq C\varepsilon^{1+k} \delta^{k/2} \| \bm{F} \|_{W^{2+k,1}}, \qquad \forall \ k\in \mathbb{N}.
\end{equation*}
In view of $K=K_1+K_2$, combining the above estimates we get
\begin{equation}\label{eq:Kbound}
    |\partial_t^k K(\alpha,j,t)| \leq C\varepsilon^k \delta^{k/2}(\varepsilon + \delta + |\beta|)\| \bm{F} \|_{W^{2+k,1}}, \qquad \forall\,k\in \N, 
\end{equation}
uniformly in $t$ and $\alpha$.


\medskip

Now we control $\|\eta^{(3)}_{{\rm I},0}(x,T)\|_{L^2(\R^2)}$ (note, again, this control is not for the $L^2_\delta(\R^2)$ norm). In view of the definition \eqref{eq:eta3I0def} of $\eta^{(3)}_{{\rm I},0}(x,t)$ and using the Floquet-Bloch representation of $\mathcal{L}_\delta$-wave equation, we see that
\begin{equation}
\label{eq:wave_etaI0}
\left\{
    \begin{aligned}
         & (\partial_t^2 + \mathcal{L}_{\delta}) \eta^{(3)}_{{\rm I},0} = - e^{\mi\omega^* t} \sum_{j=1,2} M_\delta\int_{\{|\alpha-\alpha^*|\le \eps^\nu  \}} K(\alpha,j,t ) \Phi_j(x,\alpha)\,d\alpha, \\
         & \eta^{(3)}_{{\rm I},0}\big|_{t=0}=\partial_t  \eta^{(3)}_{{\rm I},0}\big|_{t=0} =0.
    \end{aligned}\right.
\end{equation}

Using the bound \eqref{eq:Kbound} for $k=0$ in the representation formula \eqref{eq:eta3I0def}, we obtain that
$$
\| \eta^{(3)}_{{\rm I},0}  \|_{H^1_{\delta}(\R^2)} \leq C_{\bm{F}} T \delta^{-1} \varepsilon^{\nu } ( \delta + \varepsilon^{\nu} )
$$
and, thus, 
\begin{equation}\label{eta3 interior}
    \|\eta^{(3)}_{{\rm I},0}\|_{H^1(D^{\Lambda})} \leq C_{\bm{F}} T( \varepsilon^{\nu} \delta^{1/2}+ \varepsilon^{2\nu} \delta^{-1/2} ).
\end{equation}
To control $\|\eta^{(3)}_{{\rm I},0}\|_{L^2(\Omega^{\Lambda})} $, we utilize Proposition \ref{prop: exterior nonzero} for the wave equation \eqref{eq:wave_etaI0} viewed from the exterior domain $\Omega^\Lambda$. For the source term there, we have 
\begin{equation}\label{control f in prop A2}
    T\max_{0\leq t\leq T} \left\|  e^{\mi\omega^* t} \sum_{j=1,2} M_\delta\int_{\{|\beta|\le \eps^\nu  \}} K(\alpha,j,t ) \Phi_j(x,\alpha)\,d\alpha\right\|_{L^2(\Omega^{\Lambda})} \leq C_{\bm{F}}T (\varepsilon^{\nu}\delta^{1/2} + \varepsilon^{2\nu}\delta^{-1/2} ).
\end{equation}
For the boundary term $h = \eta^{(3)}_{{\rm I},0}\rvert_{\partial \Omega^\Lambda}$, we control $\|h(\cdot,t)\|_{H^{\frac12}(\partial \Omega^\Lambda)}$ directly by \eqref{eta3 interior}; we still need to control  $\partial_t^2 h$ according to \eqref{formula:prop 6.4 II}. 
By a direct computation, $\partial_t^2 \eta^{(3)}_{{\rm I},0}$ satisfies problem
\begin{equation*}\left\{
    \begin{aligned}
         & (\partial_t^2 + \mathcal{L}_{\delta}) \partial_t^2  \eta^{(3)}_{{\rm I},0} = - e^{\mi\omega^* t} \sum_{j=1,2} M_\delta\int_{\{|\beta|\le \eps^\nu  \}} \partial_t^2 K(\alpha,j,t ) \Phi_j(x,\alpha)\,d\alpha  \\
        &\qquad \qquad\qquad\qquad \  - 2\mi \omega^* e^{\mi\omega^* t} \sum_{j=1,2} M_\delta\int_{\{|\beta|\le \eps^\nu  \}}  \partial_t K(\alpha,j,t ) \Phi_j(x,\alpha)\,d\alpha \\
        &\qquad \qquad\qquad\qquad \  +  (\omega^* )^2 e^{\mi\omega^* t} \sum_{j=1,2} M_\delta\int_{\{|\beta|\le \eps^\nu  \}}  K(\alpha,j,t) \Phi_j(x,\alpha)\,d\alpha ,\\
        &  \partial_t^2 \eta^{(3)}_{{\rm I},0}\big|_{t=0} =  -\sum_{j=1,2} M_\delta\int_{\{|\beta|\le \eps^\nu  \}}  K(\alpha,j,0 ) \Phi_j(x,\alpha)\,d\alpha,\\ 
        &  \partial_t\partial_t^2 \eta^{(3)}_{{\rm I},0}\big|_{t=0} = -   \sum_{j=1,2} 3M_\delta\int_{\{|\beta|\le \eps^\nu  \}} \big(  \mi\omega^* K(\alpha,j,0)+ \partial_t K(\alpha,j,0 ) \big) \Phi_j(x,\alpha)\,d\alpha.
    \end{aligned}\right.
\end{equation*}
We view this wave equation from the interior of $D^\Lambda$ (on each inclusion inside $D^\Lambda$). By the standard estimates for the wave operator $\partial_t^2+\mathcal{L}_\delta$ in $\R^2$ with respect to the $L^2_\delta$ and $H^1_\delta$ norms, we have
\begin{equation*}
\begin{aligned}
\|\partial_t^2 \eta^{(3)}_{{\rm I},0}(\cdot,T)\|_{H^1_\delta} \le \, &\|\partial_t^2 \eta^{(3)}_{{\rm I},0}(\cdot,0)\|_{H^1_\delta} + \|\partial_t\partial_t^2 \eta^{(3)}_{{\rm I},0}(\cdot,0)\|_{L^2_\delta} + \left\|\frac{\sin(\sqrt{\mathcal{L}_\delta} T)}{\sqrt{\mathcal{L}_\delta}}\partial_t\partial_t^2 \eta^{(3)}_{{\rm I},0}(\cdot,0)\right\|_{L^2_\delta} \\
& \; + T \max_{0\le t \le T} \|f(\cdot,t)\|_{L^2_\delta} + T\max_{0\le t \le T} \left\|\frac{\sin(\sqrt{\mathcal{L}_\delta}(T-t))}{\sqrt{\mathcal{L}_\delta}}f(\cdot,t)\right\|_{L^2_\delta} , 
\end{aligned}
\end{equation*}
where $f$ denotes the right-hand side in the wave equation above for $\partial_t^2 \eta^{(3)}_{{\rm I},0}$. By \eqref{eq:Kbound} and the estimates of $\omega^*$, $M_\delta$ and $|\omega_j(\alpha)|\ge \sqrt{\delta}/C$ for $j=1,2$ and $|\alpha-\alpha^*| \le \eps^\nu$, the above yields
\begin{equation*}
    \begin{aligned}
        \| &\partial_t^2  \eta^{(3)}_{{\rm I},0}(x,T)  \|_{H^1(D^{\Lambda})} \\
        &\leq  C\varepsilon^{\nu} \sum_{j=1,2} \max_{|\beta|\le \varepsilon^{\nu}} |K(\alpha,j,0)| +C \delta^{-1/2} \varepsilon^{\nu}
        \sum_{j=1,2}\big( \delta^{1/2} \max_{|\beta|\le \varepsilon^{\nu} }| K(\alpha,j,0)|+ \max_{|\beta|\le \varepsilon^{\nu} }| \partial_t K(\alpha,j,0)|\big) \\
        & \quad \, +CT\delta^{-1/2} \varepsilon^{\nu}\sum_{j=1,2}\big( \max_{0\leq t\leq T
        \atop |\beta|\le \varepsilon^{\nu}}|\partial_t^2 K(\alpha,j,t)|  + \delta^{1/2} \max_{0\leq t\leq T \atop |\beta|\le \varepsilon^{\nu} }|\partial_t K(\alpha,j,t)|+\delta  \max_{0\leq t\leq T \atop |\beta|\le \varepsilon^{\nu} }| K(\alpha,j,t)|\big) \\
        &\leq  C_{\bm{F}} ( \varepsilon^{\nu} \delta + \varepsilon^{2\nu} ) +C_{\bm{F}} T ( \varepsilon^{\nu} \delta^{3/2} + \varepsilon^{2\nu} \delta^{1/2}  ) .
    \end{aligned}
\end{equation*}
Therefore,
\begin{equation}\label{control boundary in A2}
        T\max_{0\leq t\leq T} \|\partial_t^2 \eta^{(3)}_{{\rm I},0}(\cdot,t)\|_{H^{\frac{1}{2}}(\partial \Omega^{\Lambda})} \leq CT\max_{0\leq t\leq T} \|\partial_t^2 \eta^{(3)}_{{\rm I},0}(\cdot,t)\|_{H^1(D^{\Lambda})} \leq C_{\bm{F}} T\big(1+  T \delta^{1/2}\big) (\varepsilon^{\nu} \delta + \varepsilon^{2\nu}) . 
\end{equation}

    
We combine \eqref{eta3 interior}\eqref{control f in prop A2}\eqref{control boundary in A2} and use Proposition \ref{prop: exterior nonzero} to conclude that
\begin{equation}
    \| \eta^{(3)}_{{\rm I},0}(\cdot,T)\|_{L^2(\mathbb{R}^2)}  \leq C_{\bm{F}} T(\delta^{-1/2}+T\delta^{1/2}) (\varepsilon^{\nu}\delta + \varepsilon^{2\nu}) .
\end{equation}

Next, we control the contributions from $\mathbf{m} \ne 0$ in \eqref{eq:eta3pair1}, \eqref{eq:eta3pair2} and \eqref{eq:eta3pair3}. Let $\eta^{(3)}_{{\rm I},1} = \eta^{(3)}_{\rm I} - \eta^{(3)}_{{\rm I},0}$, which accounts for the total contribution. To control it we use the fact from Lemma \ref{f.w.lemma}(iii), that is, we can find a small number $c=c(\nu)$ so that $|\mathbf{m}\bm{\alpha}+\alpha-\alpha^*|\ge c|\mathbf{m}|$. Then, we can use the decay of $\widehat{V}_\ell$ from \eqref{eq:DiracF-2}. Take the quantity in \eqref{eq:eta3pair1} for instance. The contribution from $\mathbf{m}\ne 0$ can be bounded by
\begin{equation*}
	\frac{C}{c^N}|\omega^*| \|S_{\ell,\delta}\|_{L^2_\delta(Y)} \delta^{\frac12} \|\bm{F}\|_{W^{N+1,1}} \sum_{\mathbf{m} \ne 0} \frac{\eps^N}{|\mathbf{m}|^N} \le C\delta^{\frac12}\eps^N,
\end{equation*}
by choosing $N\ge 3$. 

Similarly, we can control the contribution from $\mathbf{m}\ne 0$ in \eqref{eq:eta3pair2}. We identify the term $\xi \widehat{V}_\ell(\xi,\eps \tau)$ with $\xi = \eps^{-1}(\mathbf{m}\bm{\alpha}+\alpha-\alpha^*)$ there (or equivalently, inspect only the first equality there) and use the estimate \eqref{eq:DiracF-2}. We see then the contribution is bounded by
\begin{equation*}
	\frac{C}{c^N} \|\nabla S_{\ell,\delta}\|_{L^2_\delta(Y)} \|\bm{F}\|_{W^{N+1,1}} \sum_{\mathbf{m} \ne 0} \frac{\eps^N}{|\mathbf{m}|^N} \le C\eps^N
\end{equation*}
for $N\ge 3$. Note that we have also used the fact that $\|\nabla S_{\ell,\delta}\|_{L^2_\delta(Y)}$ can be uniformly bounded in $\delta$; see Remark \ref{rem:SHdel}. The control of \eqref{eq:eta3pair3} is similar, but we use the fact that $\|\nabla \Phi_j(\cdot,\alpha)\|_{L^2_\delta(Y)}$ is of order $\omega_j(\alpha)$ and bounded by $C\sqrt{\delta}$ because $j \in \{1,2\}$. Hence, the contribution from $\mathbf{m}\ne 0$ is bounded by
\begin{equation*}
	\frac{C}{c^N} \|S_{\ell,\delta}\|_{L^2_\delta(Y)} \|\nabla \Phi_j(\cdot,\alpha)\|_{L^2_\delta(Y)}\|\bm{F}\|_{W^{N+1,1}} \sum_{\mathbf{m} \ne 0} \frac{\eps^N}{|\mathbf{m}|^N} \le C\eps^N.
\end{equation*}
Using those estimates, and the bound of $\omega_j(\alpha)$ from below by $c\sqrt{\delta}$, we get
\begin{equation}
\label{eq:etaI1est}
	\begin{aligned}
	\|\eta^{(3)}_{{\rm I},1}(x,T)\|_{L^2_\delta(\R^2)} &\le C_{\bm{F}}M_\delta \delta^{-\frac12} T \left(\int_{Y^*} \mathbf{1}_{|\beta|\le \eps^\nu} \eps^{2N}\,d\beta\right)^{\frac12}
	\le C_{\bm{F}}T\eps^{N+\nu}\delta^{-1}.
	\end{aligned}
\end{equation}

\subsubsection{Estimate of $\eta^{(3)}_{\rm II}$}

By definition of $\eta^{(3)}_{\rm II}$, the formula \eqref{eq:f3pairing} and Minkowski inequality, we have
\begin{equation*}
	\begin{aligned}
		&\|\eta^{(3)}_{\rm II} (\cdot,T) \|^2_{L^2_{\delta}(\mathbb{R}^2)} \\
		= &\sum_{j=1,2} \int_{\{|\alpha-\alpha^*| > \eps^\nu\}} \Big|\int_0^T 
  \frac{\sin(\omega_j(\alpha)(T-\tau))}{\omega_j(\alpha)} e^{\mi \omega^* \tau} \Big\{\eps^2\sum_{\ell=1}^4 \langle \gamma_\ell(\eps\,\cdot,\eps\tau)\Psi_\ell(\cdot),\Phi_j(\cdot,\alpha)\rangle_{L^2_\delta} \\
  &\qquad \qquad \qquad + \eps^2\sum_{\ell=5,6} \langle \gamma_\ell(\eps\,\cdot,\eps\tau)\Psi_\ell(\cdot),\nabla\Phi_j(\cdot,\alpha)\rangle_{L^2_\delta}\Big\}\,dt\Big|^2\,d\alpha .
	\end{aligned}
\end{equation*}

We will control the above quantity by the same method as the one we have used for $\eta^{(3)}_{{\rm I},1}$. By Lemma \ref{f.w.lemma}(iii), we can find a constant $c = c(\nu) >0$ such that $|\mathbf{m}\bm{\alpha}+\alpha-\alpha^*| \ge c\eps^{\nu}(1+|\mathbf{m}|)$, and appeal to the decay of $\widehat{V}_\ell$. We use the crude bound
\begin{equation*}
\left|\frac{\sin(\omega_j(\alpha) (T-\tau))}{\omega_j(\alpha)}\right| \le C(T-\tau), \qquad \forall j, \alpha.
\end{equation*}
Then again we only need to estimate the pairings in \eqref{eq:eta3pair1}, \eqref{eq:eta3pair2}, and \eqref{eq:eta3pair3}. The estimates are then exactly the same as those in the last part of the previous subsection; we only need to replace $|\mathbf{m}|$ there by $\eps^\nu(1+|\mathbf{m}|)$. Hence, for $|\alpha-\alpha^*|>\eps^\nu$, the quantity in \eqref{eq:eta3pair1} is bounded by
\begin{equation*}
C\delta \|S_{\ell,\delta}\|_{L^2_\delta(Y)} \|\bm{F}\|_{W^{N+1,1}} \sum_{\mathbf{m} \in \Z^2} \frac{\eps^N}{\eps^{\nu N}(1+|\mathbf{m}|)^N} \le C\delta^{\frac12}\eps^{(1-\nu)N}.
\end{equation*}
The quantity in \eqref{eq:eta3pair2} is bounded by
\begin{equation*}
C\|\nabla S_{\ell,\delta}\|_{L^2_\delta(Y)} \|\bm{F}\|_{W^{N+1,1}} \sum_{\mathbf{m} \in \Z^2} \frac{\eps^N}{\eps^{\nu N}(1+|\mathbf{m}|)^N} \le C\eps^{(1-\nu)N}.
\end{equation*}
The quantity in \eqref{eq:eta3pair3} is bounded by
\begin{equation*}
C\|S_{\ell,\delta}\|_{L^2_\delta(Y)} \|\nabla \Phi_j(\cdot,\alpha)\|_{L^2_\delta(Y)}\sum_{\mathbf{m} \in \Z^2} \frac{\eps^N}{\eps^{\nu N}(1+|\mathbf{m}|)^N} \le C\eps^{(1-\nu)N}.
\end{equation*}

We hence conclude that for $N\ge 3$ and some $C>0$ depending on $N$,
\begin{equation}
	\label{eq:eta3IIest}
\|\eta^{(3)}_{\rm II}(\cdot,T)\|_{L^2_\delta(\R^2)} \le CT^2 \eps^{(1-\nu)N} \|\bm{F}\|_{W^{N+1,1}}.
\end{equation}

\subsubsection{Estimate of $\eta^{(3)}_{\rm III}$}

The term $\eta^{(3)}_{\rm III}$ accounts for contributions to $\eta^{(3)}$ from the higher ($j\ge 3$) bands. Then, $\omega_j(\alpha)$ is bounded from below by a constant uniformly for $j\ge 3$ and $\alpha \in Y^*$; see \eqref{eq:bandall}. 

By the formula \eqref{eq:f3pairing}, the term $\eta^{(3)}_{\rm III}$ is represented by
\begin{equation}
  \label{eq:eta3III}
  \begin{aligned}
  -\eta^{(3)}_{\rm III}(x,T) 
  = & \sum_{j\ge 3} \int_{Y^*} \int_0^T 
  \frac{\sin(\omega_j(\alpha)(T-\tau))}{\omega_j(\alpha)} e^{\mi \omega^* \tau} \Big\{\eps^2\sum_{\ell=1}^4 \langle \gamma_\ell(\eps\,\cdot,\eps\tau)\Psi_\ell(\cdot),\Phi_j(\cdot,\alpha)\rangle_{L^2_\delta} \\
  &\qquad \qquad \qquad + \eps^2\sum_{\ell=5,6} \langle \gamma_\ell(\eps\,\cdot,\eps\tau)\Psi_\ell(\cdot),\nabla\Phi_j(\cdot,\alpha)\rangle_{L^2_\delta}\Big\}\,d\tau\, \Phi_j(x,\alpha)\,d\alpha .
\end{aligned}
\end{equation}
Here and below, $\{\Gamma_\ell, \Psi_\ell\}_{\ell=1,\dots,6}$ are defined in \eqref{eq:gammaPsi1}. Using Euler's identity for $\sin$ function, we can rewrite the right hand side above as
\begin{equation*}
	\begin{aligned}
  \sum_{j\ge 3} \int_{Y^*} &\int_0^T 
  \frac{e^{-\mi(\omega_j(\alpha)-\omega^*)\tau} e^{\mi \omega_j(\alpha)T} - e^{\mi(\omega_j(\alpha)+\omega^*)\tau }e^{-\mi\omega_j(\alpha)T}}{2\mi \omega_j(\alpha)}\times \\
  &\Big\{\eps^2\sum_{\ell=1}^4 \langle \gamma_\ell(\eps\,\cdot,\eps\tau)\Psi_\ell(\cdot),\Phi_j(\cdot,\alpha)\rangle_{L^2_\delta} +\eps^2\sum_{\ell=5,6} \langle \gamma_\ell(\eps\,\cdot,\eps\tau)\Psi_\ell(\cdot),\nabla\Phi_j(\cdot,\alpha)\rangle_{L^2_\delta}\Big\}\,dt\, \Phi_j(x,\alpha)\,d\alpha.
  \end{aligned}
\end{equation*}
We see that $\|\eta^{(3)}_{\rm III}(x,T)\|_{L^2_\delta(\R^2_x)}^2 $ is bounded by the sum of $J_1$ and $J_2$, where 
\begin{equation*}
	\begin{aligned}
  J_1 := \sum_{j\ge 3} \int_{Y^*} \Big|\int_0^T 
  \frac{e^{-\mi(\omega_j(\alpha)-\omega^*)t}}{2\mi \omega_j(\alpha)} &\Big\{\eps^2\sum_{\ell=1}^4 \langle \gamma_\ell(\eps\,\cdot,\eps t)\Psi_\ell(\cdot),\Phi_j(\cdot,\alpha)\rangle_{L^2_\delta} \\
  &+ \eps^2\sum_{\ell=5,6} \langle \gamma_\ell(\eps\,\cdot,\eps t)\Psi_\ell(\cdot),\nabla\Phi_j(\cdot,\alpha)\rangle_{L^2_\delta}\Big\}\,dt\Big|^2 d\alpha
  \end{aligned}
\end{equation*}
and $J_2$ is defined with the complex exponential replaced by $e^{\mi(\omega_j(\alpha)+\omega^*)t}$. Note that $\omega^* \sim \sqrt{\delta}$ and $\omega_j(\alpha)\ge c$ for some constant $c>0$ uniformly for all $j\ge 3$ and $\alpha\in Y^*$. Hence, the complex exponentials in $J_1$ and $J_2$ are oscillatory in $t$. We bound $J_1$ and $J_2$ using this fact. Because of the treatments of these terms are exactly the same, we only consider $J_1$.

Using integration by parts in time, the time integral in $J_1$ can be written as
\begin{equation*}
	\begin{aligned}
		\frac{e^{-\mi(\omega_j(\alpha)-\omega^*)T}-1}{2(\omega_j(\alpha)-\omega^*)\omega_j(\alpha)} &\Big\{\eps^2\sum_{\ell=1}^4 \langle \gamma_\ell(\eps\,\cdot,\eps T)\Psi_\ell(\cdot),\Phi_j(\cdot,\alpha)\rangle_{L^2_\delta} + \eps^2\sum_{\ell=5,6} \langle \gamma_\ell(\eps\,\cdot,\eps T)\Psi_\ell(\cdot),\nabla\Phi_j(\cdot,\alpha)\rangle_{L^2_\delta}\Big\}\\
&-\int_0^T 
  \frac{(e^{-\mi(\omega_j(\alpha)-\omega^*)t}-1)}{2(\omega_j(\alpha)-\omega^*)\omega_j(\alpha)} \Big\{\eps^3\sum_{\ell=1}^4 \langle (\partial_t\gamma_\ell)(\eps\,\cdot,\eps t)\Psi_\ell(\cdot),\Phi_j(\cdot,\alpha)\rangle_{L^2_\delta} \\
  &\qquad\qquad\qquad\qquad\;+ \eps^3\sum_{\ell=5,6} \langle (\partial_t \gamma_\ell)(\eps\,\cdot,\eps t)\Psi_\ell(\cdot),\nabla\Phi_j(\cdot,\alpha)\rangle_{L^2_\delta}\Big\}\,dt\Big|^2 d\alpha .
	\end{aligned}
\end{equation*}
We first focus on the terms with $\ell=1,\dots,4$. Each of their contributions to $J_1$ can be bounded by the sum of two terms:
\begin{equation}
\label{eq:J1l1-1}
\sum_{j\ge 3} \int_{Y^*}\frac{\eps^4}{\omega_j^2(\alpha)(\omega_j(\alpha)-\omega^*)^2}  \left|\langle \gamma_\ell(\eps\,\cdot,\eps T)\Psi_\ell(\cdot),\Phi_j(\cdot,\alpha)\rangle_{L^2_\delta}\right|^2\,d\alpha,
\end{equation}
which takes care of the value at $T$ of the time integral representation, and 
\begin{equation}
\label{eq:J1l1-2}
\sum_{j\ge 3} \int_{Y^*}\frac{\eps^6}{\omega_j^2(\alpha)(\omega_j(\alpha)-\omega^*)^2}  \left|\int_0^T \langle (\partial_t \gamma_\ell)(\eps\,\cdot,\eps t)\Psi_\ell(\cdot),\Phi_j(\cdot,\alpha)\rangle_{L^2_\delta}\,dt\right|^2\,d\alpha.
\end{equation}
Because $j\ge 3$ and $\omega_j$ can be bounded from below by a positive constant uniformly in $j$ and $\alpha$, the above two terms can be bounded by
\begin{equation*}
C\sum_{j\ge 3} \int_{Y^*}\frac{\eps^4}{(1+\omega_j^2(\alpha))^2}  \left|\langle \gamma_\ell(\eps\,\cdot,\eps T)\Psi_\ell(\cdot),\Phi_j(\cdot,\alpha)\rangle_{L^2_\delta}\right|^2\,d\alpha,
\end{equation*}
and 
\begin{equation*}
C\sum_{j\ge 3} \int_{Y^*}\frac{\eps^6}{(1+\omega_j^2(\alpha))^2}  \left|\int_0^T \langle (\partial_t \gamma_\ell)(\eps\,\cdot,\eps t)\Psi_\ell(\cdot),\Phi_j(\cdot,\alpha)\rangle_{L^2_\delta}\,dt\right|^2\,d\alpha,
\end{equation*}
respectively. In view of the Floquet-Bloch representation of $H^{-1}_\delta(\R^2)$ norm in \eqref{eq:H-1exp}, the first term is bounded by $C\|\gamma_\ell(\eps\,\cdot,\eps T)\Psi_\ell(\cdot)\|_{H^{-1}_\delta(\R^2)}^2$. The second term can be bounded similarly. Using the simple fact that  $L^2_\delta$ is embedded in $H^{-1}_\delta$ continuously, and using also the triangle inequality, we see that the contribution to $J_1$ from the terms involving $\gamma_\ell$, $\ell=1,\dots,4$, is bounded by
\begin{equation}
\label{eq:J1l124}
C\eps^4\|\gamma_\ell(\eps\,\cdot,\eps T)\Psi_\ell(\cdot)\|_{L^2_\delta(\R^2)}^2 + C\eps^6 T\int_0^T  \|(\partial_t \gamma_\ell)(\eps\,\cdot,\eps t)\Psi_\ell(\cdot)\|^2_{L^2_\delta(\R^2)}\,dt.
\end{equation}
For the terms involving $\gamma_\ell$ with $\ell=5,6$, using integration by parts in $x$ (or equivalently, the definition of the $H^{-1}_\delta$ Floquet-Bloch transform and the characterization of $H^{-1}_\delta(\R^2)$ norm), we have
\begin{equation*}
	\begin{aligned}
	\langle \gamma_\ell(\eps\,\cdot,\eps t)\Psi_\ell(\cdot),\nabla\Phi_j(\cdot,\alpha)\rangle_{L^2_\delta} &= -\langle \sigma_\delta(\cdot)\nabla \left(\sigma_\delta^{-1} \gamma_\ell(\eps\,\cdot,\eps t)\Psi_\ell(\cdot)\right),\Phi_j(\cdot)\rangle_{H^{-1}_\delta,H^1_\delta},\\
	\langle \partial_t\gamma_\ell(\eps\,\cdot,\eps t)\Psi_\ell(\cdot),\nabla\Phi_j(\cdot,\alpha)\rangle_{L^2_\delta} &= -\langle \sigma_\delta(\cdot) \nabla\left(\sigma_\delta^{-1}(\partial_t \gamma_\ell)(\eps\,\cdot,\eps t)\Psi_\ell(\cdot)\right), \Phi_j(\cdot)\rangle_{H^{-1}_\delta,H^1_\delta}.
	\end{aligned}
\end{equation*}
It follows that the contribution to $J_1$ from the terms involving $\gamma_\ell$, $\ell=5,6$, is bounded by
\begin{equation*}
C\eps^4\|\sigma_\delta(\cdot)\nabla \left(\sigma_\delta^{-1} \gamma_\ell(\eps\,\cdot,\eps T)\Psi_\ell(\cdot)\right)\|_{H^{-1}_\delta(\R^2)}^2 +C\eps^6 T\int_0^T\|\sigma_\delta(\cdot) \nabla\left(\sigma_\delta^{-1}(\partial_t \gamma_\ell)(\eps\,\cdot,\eps t)\Psi_\ell(\cdot)\right)\|_{H^{-1}_\delta(\R^2)}^2 \,dt.
\end{equation*}
Let $\mathcal{P}$ be the operator $\mathcal{P}(u) = -\sigma_\delta \nabla (\sigma_\delta^{-1} u)$. Then in view of the relation
\begin{equation*}
	\langle \mathcal{P}(u),\psi\rangle_{H^{-1}_\delta,H^1_\delta} = \langle u,\nabla \psi\rangle_{L^2_\delta},
\end{equation*}
we get $\|\mathcal{P}u\|_{H^{-1}_\delta} \le \|u\|_{L^2_\delta}$. Hence, the contribution to $J_1$ from the terms involving $\gamma_\ell$, $\ell=5,6$, is bounded by
\begin{equation}
\label{eq:J1l526}
C\eps^4 \|\gamma_\ell(\eps\,\cdot,\eps T)\Psi_\ell(\cdot)\|_{L^2_\delta(\R^2)}^2 + C\eps^6 T \int_0^T \|(\partial_t \gamma_\ell)(\eps\,\cdot,\eps t)\Psi_\ell(\cdot)\|_{L^2_\delta(\R^2)}^2 \,dt.
\end{equation} 
Note that this has the same form as \eqref{eq:J1l124}.

It remains to control the terms in \eqref{eq:J1l124} and \eqref{eq:J1l526}. We bound the $H^{-1}_\delta$ norms by $L^2_\delta$ norms and use Lemma \ref{lem:SpPL2bound} and Proposition \ref{prop:DiracFourierEstimate}. For $\ell=1,2$, $\gamma_\ell = 2\mi \omega^* \partial_t V_\ell$, $\Psi_\ell = S_{\ell,\delta}$ we get, for all $t\ge 0$, $k=1,2$,
\begin{equation*}
	\begin{aligned}
		\eps\|2\mi \omega^* \partial^k_t V_\ell(\eps x,\eps t) S_{\ell,\delta}\|_{L^2_\delta(\R^2)} \le C\delta^{\frac k2}\|(I-\Delta_\xi)\langle \xi\rangle^k \widehat{\bm{F}}\|_{L^1_\xi},
	\end{aligned}
\end{equation*}
where we have also used that $\|S_{\ell,\delta}\|_{L^2_\delta(Y)}\le C\delta^{-\frac12}$, $|\omega^*| \sim \sqrt{\delta}$ and the inequality \eqref{eq:decayDirac}. The contributions of those terms to \eqref{eq:J1l124} is then bounded by
\begin{equation}
\label{eq:J1contri-1}
	C(\eps^2\delta + \eps^4\delta^2 T^2) \left(\|(I-\Delta_\xi)\langle \xi\rangle \widehat{\bm{F}}\|^2_{L^1_\xi} + \|(I-\Delta_\xi)\langle \xi\rangle^2 \widehat{\bm{F}}\|^2_{L^1_\xi} \right).
\end{equation}

\smallskip

Similarly, for $\ell=3,4$, $\gamma_\ell = -\nabla_x V_{\ell-2}$, $\Psi_\ell = \nabla_x S_{\ell-2,\delta}$, we get that, for all $t\ge 0$, $k=0,1$,
\begin{equation*}
	\begin{aligned}
		\eps\|(\nabla_x \partial^k_t V_{\ell-2})(\eps x,\eps t) \cdot \nabla_x S_{\ell-2,\delta}\|_{L^2_\delta(\R^2)} \le C\delta^{\frac k2}\|(I-\Delta_\xi)\langle \xi\rangle^{k+1} \widehat{\bm{F}}\|_{L^1_\xi},
	\end{aligned}
\end{equation*}
where we have used that $\|\nabla S_{\ell,\delta}\|_{L^2_\delta(Y)}\le C$ and the inequality \eqref{eq:decayDirac}. The contributions of those terms to \eqref{eq:J1l124} is then bounded by \eqref{eq:J1contri-1} as well. 

\smallskip

For $\ell=5,6$, $\gamma_\ell = \nabla_x V_{\ell-4}$, $\Psi_\ell = S_{\ell-4}$, and we get: for all $t\ge 0$, and $k=0,1$,
\begin{equation*}
	\begin{aligned}
		\eps\|(\nabla_x \partial^k_t V_{\ell-4})(\eps x,\eps t) S_{\ell-4,\delta}\|_{L^2_\delta(\R^2)} \le C\delta^{\frac k2-\frac12}\|(I-\Delta_\xi)\langle \xi\rangle^{k+1} \widehat{\bm{F}}\|_{L^1_\xi}. 
	\end{aligned}
\end{equation*}
The contributions of those terms to \eqref{eq:J1l526} is then bounded by
\begin{equation}
\label{eq:J1contri-2}
	C(\eps^2\delta^{-1} + \eps^4 T^2) \left(\|(I-\Delta_\xi)\langle \xi\rangle \widehat{\bm{F}}\|^2_{L^1_\xi} + \|(I-\Delta_\xi)\langle \xi\rangle^2 \widehat{\bm{F}}\|^2_{L^1_\xi} \right).
\end{equation}

\medskip

We conclude that there is a constant $C>0$ such that
\begin{equation}
	\label{eq:eta3IIIest}
\|\eta^{(3)}_{\rm III}(x,T)\|_{L^2_\delta(\R^2)} \le C(\eps\delta^{-\frac12} + T\eps^2)\left(\|(I-\Delta_\xi)\langle \xi\rangle \widehat{\bm{F}}\|_{L^1_\xi} + \|(I-\Delta_\xi)\langle \xi\rangle^2 \widehat{\bm{F}}\|_{L^1_\xi} \right) .
\end{equation}


\begin{proposition}\label{prop:eta3bound}
	Under the assumption of Theorem \ref{thm:main}, for $\nu \in (0,1)$ and any integer $N\ge 2$, we can find a constant $C_{\bm{F}}>0$ depending on $\bm{F}$ and $N$ so that
	\begin{equation*}
 \begin{aligned}
		\|\eta^{(3)}(\cdot,T)\|_{L^2(\R^2)} &\le C_{\bm{F}} \eps\delta^{-\frac12} + C_{\bm{F}} T(\eps^{N+\nu}\delta^{-1} + \varepsilon^{\nu} \delta^{1/2}+ \varepsilon^{2\nu} \delta^{-1/2} )\\
    &\quad\, +C_{\bm{F}} T^2 (
    \varepsilon^{\nu} \delta^{3/2} + \varepsilon^{2\nu} \delta^{1/2} + \eps^{(1-\nu)N}). 
    \end{aligned}
	\end{equation*}
\end{proposition}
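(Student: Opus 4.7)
The plan is to prove the bound by the decomposition $\eta^{(3)} = \eta^{(3)}_{\rm I} + \eta^{(3)}_{\rm II} + \eta^{(3)}_{\rm III}$ used in the preceding subsections, estimate each piece separately, and sum the resulting bounds. Here $\eta^{(3)}_{\rm I}$ collects the contributions in the Floquet--Bloch representation from the two subwavelength bands $j=1,2$ with quasi-momentum close to the Dirac point (namely $|\alpha-\alpha^*|\le \eps^\nu$), $\eta^{(3)}_{\rm II}$ collects the contributions from the same bands but far from $\alpha^*$, and $\eta^{(3)}_{\rm III}$ collects contributions from all higher bands $j\ge 3$. The threshold $\eps^\nu$ is chosen so that Lemma \ref{f.w.lemma}(iii) is available on each side.

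The heart of the argument is the control of $\eta^{(3)}_{\rm I}$, which in turn is split via the Poisson-type identity of Lemma \ref{f.w.lemma}(i) into the $\mathbf{m}=\mathbf{0}$ contribution $\eta^{(3)}_{{\rm I},0}$ and the remainder $\eta^{(3)}_{{\rm I},1}$. For $\eta^{(3)}_{{\rm I},0}$ the key point is that $(V_1,V_2)$ has been chosen to solve the Dirac system \eqref{eq:Dirac}: expanding $\Phi_{j,\delta}(\cdot,\alpha)$ near $\alpha^*$ with \eqref{eq:p12expansion} and using the pairings in Theorem \ref{thm:derivative A}, the integrand $K(\alpha,j,\tau)$ from \eqref{eq:etaI0} decomposes as $K_1+K_2$, where $K_1$ carries only the expansion remainder $e(p_{j,\delta})=O_{H^1_\delta(Y)}(\delta+|\beta|)$ and $K_2$ carries the quadratic-in-$|\beta|$ correction; the Dirac cancellations eliminate the leading order, giving the bound $|\partial_t^k K| \le C\eps^k \delta^{k/2}(\eps+\delta+|\beta|)\|\bm F\|_{W^{2+k,1}}$ stated in \eqref{eq:Kbound}. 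For $\eta^{(3)}_{{\rm I},1}$ the same decay of $\widehat V_\ell$ from Proposition \ref{prop:DiracFourierEstimate}(ii), combined with the lower bound $|\mathbf{m}\bm\alpha+\alpha-\alpha^*|\ge c|\mathbf{m}|$ valid on $|\beta|\le \eps^\nu$, produces the $T\eps^{N+\nu}\delta^{-1}$ bound of \eqref{eq:etaI1est}.

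The main obstacle will be upgrading the estimate of $\eta^{(3)}_{{\rm I},0}$ from its natural weighted $L^2_\delta$ norm (where the Floquet--Bloch representation is isometric) to the unweighted $L^2(\R^2)$ norm required in Theorem \ref{thm:main}, since a blanket conversion would cost a prohibitive $\delta^{-1/2}$. The strategy, which mirrors the treatment of $\eta^{(1)}$ via Theorem \ref{thm:L2wave}, is to estimate $\|\eta^{(3)}_{{\rm I},0}\|_{H^1(D^\Lambda)}$ directly from \eqref{eq:Kbound} (giving the terms $T(\eps^\nu \delta^{1/2}+\eps^{2\nu}\delta^{-1/2})$), and then view $\eta^{(3)}_{{\rm I},0}$ from the exterior $\Omega^\Lambda$, where $-\sigma_\delta\nabla\!\cdot\!(\sigma_\delta^{-1}\nabla)$ reduces to $-\Delta$, and apply Proposition \ref{prop: exterior nonzero}. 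This requires controlling $\|\partial_t^2\eta^{(3)}_{{\rm I},0}\|_{H^{1/2}(\partial\Omega^\Lambda)}$, which is obtained by differentiating the wave equation \eqref{eq:wave_etaI0} twice in time and invoking the $H^1_\delta$ estimates together with the time-derivative bound $|\partial_t^k K|\le C\eps^k\delta^{k/2}(\eps+\delta+|\beta|)\|\bm F\|_{W^{2+k,1}}$; the trace inequality $\|\cdot\|_{H^{1/2}(\partial\Omega^\Lambda)}\le C\|\cdot\|_{H^1(D^\Lambda)}$ closes the loop, yielding the factor $T(\delta^{-1/2}+T\delta^{1/2})(\eps^\nu\delta+\eps^{2\nu})$.

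For $\eta^{(3)}_{\rm II}$, the lower bound $|\mathbf{m}\bm\alpha+\alpha-\alpha^*|\ge c\eps^\nu(1+|\mathbf{m}|)$ from Lemma \ref{f.w.lemma}(iii) combined with the polynomial decay \eqref{eq:DiracF-2} of $\widehat V_\ell$ gives a factor $\eps^{(1-\nu)N}$ per pairing, producing the bound \eqref{eq:eta3IIest}. For $\eta^{(3)}_{\rm III}$, the key observation is that on the higher bands $\omega_j(\alpha)\ge c$ uniformly while $\omega^*_\delta = O(\sqrt\delta)$, so the phases $\omega_j(\alpha)\pm\omega^*_\delta$ are bounded below; integrating by parts once in $t$ and using the $H^{-1}_\delta$ Plancherel formula \eqref{eq:H-1exp} reduces everything to $L^2_\delta$ norms of $\gamma_\ell(\eps\cdot,\eps t)\Psi_\ell$ and their $t$-derivatives, which are handled by Lemma \ref{lem:SpPL2bound} and Proposition \ref{prop:DiracFourierEstimate}; the worst scaling comes from $\ell=5,6$ where $\Psi_\ell=S_{\ell-4,\delta}$ contributes a $\delta^{-1/2}$ through $\|S_{\ell,\delta}\|_{L^2_\delta(Y)}$, yielding the $\eps\delta^{-1/2}$ term of \eqref{eq:eta3IIIest}. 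Summing the four bounds produces the stated estimate, with the $H^{-1}_\delta\hookrightarrow L^2_\delta$ and $L^2_\delta\hookleftarrow L^2$ embeddings used at the very end to pass from the weighted to the unweighted norm wherever a factor of $\delta^{1/2}$ is available to absorb the loss.
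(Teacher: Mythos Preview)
Your plan is correct and follows the paper's approach exactly: the proposition is simply the conclusion of the work carried out in Section \ref{sec: estimate E3 final}, summing the bounds for $\eta^{(3)}_{{\rm I},0}$, $\eta^{(3)}_{{\rm I},1}$, $\eta^{(3)}_{\rm II}$, and $\eta^{(3)}_{\rm III}$ from \eqref{eq:etaI1est}, \eqref{eq:eta3IIest}, \eqref{eq:eta3IIIest}, together with the unweighted $L^2$ estimate for $\eta^{(3)}_{{\rm I},0}$ via Proposition \ref{prop: exterior nonzero}. One small notational slip: your embedding arrows are reversed --- what is used is $L^2_\delta \hookrightarrow H^{-1}_\delta$ (to bound $H^{-1}_\delta$ norms by $L^2_\delta$ norms in the treatment of $\eta^{(3)}_{\rm III}$) and $L^2_\delta \hookrightarrow L^2$ (since $\sigma_\delta^{-1}\ge 1$, so $\|\cdot\|_{L^2}\le \|\cdot\|_{L^2_\delta}$), not the directions you wrote.
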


\section*{Acknowledgments}
This work was partially supported by Swiss National Science Foundation grant number 200021--200307 and by the New Cornerstone Science Foundation grant - NCI202310.  The support of Xin Fu by the Exchanging  Graduate Student Program at 
Tsinghua University is also acknowledged. 

\appendix

\section{Useful estimates via Floquet-Bloch representation formula}
\label{sec:proofL2wave}

In this appendix we prove Theorem \ref{thm:L2wave}
 and Proposition \ref{prop: exterior nonzero}. The basic idea is to consider the wave operators in and outside the inclusions separately. We will use a harmonic extension type operator. 
\subsection{Floquet-Bloch theory for the exterior Laplacian} 

To introduce the harmonic extension operator, we need to analyze the following Poisson problem in perforated domain $\Omega^\Lambda = \R^2\setminus \ol{D}^\Lambda$. Given $f\in L^2(\Omega^\Lambda)$, we seek for $v \in H^1_0(\Omega^\Lambda) = \{v\in H^1(\Omega^\Lambda) \,:\, v = 0\;\text{on}\, \partial \Omega^\Lambda\}$ such that
\begin{equation}
	\label{eq:exPoisson}
-\Delta v = f \qquad  \text{in }\  \Omega^\Lambda,
\end{equation}
The equation is understood in the weak sense: $v$ is a solution if $v\in H^1_0(\Omega^\Lambda)$ such that
\begin{equation*}
	\int_{\Omega^\Lambda} \nabla v\cdot \nabla \varphi = \int_{\Omega^\Lambda} f\varphi\,dx, \qquad \forall \varphi \in H^1_0(\Omega^\Lambda).
\end{equation*}
Using the following version of Poincar\'e's inequality (see \cite{MR1079190,MR4075336} for a proof):
\begin{equation*}
	\|v\|_{L^2(\mathbf{m}\bm{l}+\Omega)} \le C\|\nabla v\|_{L^2(\mathbf{m}\bm{l}+\Omega)}, \quad \forall\,\mathbf{m}\in \Z^2, \, \forall v\in H^1_0(\Omega^\Lambda),
\end{equation*}
we deduce that $\|v\|_{L^2(\Omega^\Lambda)} \le C\|\nabla v\|_{L^2(\Omega^\Lambda)}$. Hence, using the Lax-Milgram theorem, we see that there exists a unique solution $v\in H^1_0(\Omega^\Lambda)$ for \eqref{eq:exPoisson} which satisfies
\begin{equation}
\label{eq:exPestimate}
\|v\|_{L^2(\Omega^\Lambda)} + \|\nabla v\|_{L^2(\Omega^\Lambda)} \le C\|f\|_{L^2(\Omega^\Lambda)}.
\end{equation}
Invoking the standard elliptic regularity theory, we actually have $v\in H^2(\Omega^\Lambda)$ as well.

\medskip

Let $-\Delta_{\rm ex}$ be an unbounded operator on $L^2(\Omega^\Lambda)$ as follows:
\begin{equation*}
\begin{aligned}
	-\Delta_{\rm ex} \; :\;  \mathrm{Dom}\,(\Delta_{\rm ex}) \subset L^2(\Omega^{\Lambda}) \, &\rightarrow \, L^2(\Omega^{\Lambda})\\
	v \, &\mapsto \, -\Delta v
	\end{aligned}
\end{equation*}
where the domain of $-\Delta_{\rm ex}$ is 
\begin{equation*}
	\mathrm{Dom}\,(-\Delta_{\rm ex})  = \{ u \in H^1(\Omega^{\Lambda}) : u|_{\partial \Omega^{\Lambda}}  =0,\, \Delta u \in  L^2(\Omega^{\Lambda}) \}.
\end{equation*}
Similar to the framework outlined in Section \ref{sec:FB_L}, we can develop the Floquet-Bloch theory for $\Delta_{\rm ex}$. For each $\alpha \in Y^*$, we define the space
\begin{equation*}
	V(\alpha) := \{ u \in L^2_{\rm loc}(\Omega^\Lambda): u(x+ l)  = e^{\mi\alpha\cdot l} u(x) \ \mathrm{for}\ \mathrm{all} \ l \in \Lambda   \}
\end{equation*}
equipped with the inner product $\langle u,v \rangle_{V} = \int_{Y\setminus \ol D} u(x) \overline{v(x)}\,dx$.

For each $\alpha \in Y^*$, we define the unbounded operator
\begin{equation*}
\begin{aligned}
	-\Delta_{\Omega^\Lambda} (\alpha) \;:\;  \mathrm{Dom}\,( 	\Delta_{\Omega^\Lambda}(\alpha) )  \subset V(\alpha) \, &\rightarrow \, V(\alpha)\\
	u \, &\mapsto \, -\Delta u
	\end{aligned}
\end{equation*}
where the domain of definition is
\begin{equation*}
	\mathrm{Dom}\,( -\Delta_{\Omega^\Lambda}(\alpha) )  = \{ u \in H^1_{\rm loc}(\Omega^\Lambda) \cap V(\alpha): u|_{\partial D^\Lambda} = 0, \,\Delta u \in V(\alpha)\}.
\end{equation*}
In fact, for $\alpha \in Y^*$, we may view $-\Delta_{\Omega^\Lambda}(\alpha)$ as the Laplacian acting on functions defined on $\Omega=Y\setminus \ol{D}$ with an $\alpha$-quasiperiodic boundary condition at $\partial Y$ and with a homogeneous Dirichlet boundary condition on $\partial D$. 

It follows from standard spectral theory that $-\Delta_{\Omega^\Lambda}(\alpha)$ has compact resolvent and its spectrum consists of a sequence of positive eigenvalues:
\begin{equation*}
	0 < \mu_1^2(\alpha) \leq \mu_2^2(\alpha) \leq \cdots \leq \mu_j^2(\alpha) \leq \cdots,
\end{equation*}
counted with their multiplicities, and tending to $+\infty$. It is also known that $\alpha\mapsto \mu_1(\alpha)$ is Lipschitz, $\Lambda^*$-periodic and can be uniformly bounded from below.

As is standard, we can find a family of eigenfunctions $\Psi_j(\cdot,\alpha) \in H^1_{\rm loc}(\Omega^\Lambda) \cap V(\alpha)$ associated to the eigenvalue $\mu_j^2(\alpha) $ which satisfy
\begin{equation*}\left\{
	\begin{aligned}
		& -\Delta_{\Omega^\Lambda} (\alpha)\Psi_j(\cdot,\alpha) = \mu_j^2(\alpha) \Psi_j(\cdot,\alpha) , \\
  & \Psi_j(\cdot,\alpha) |_{\partial D} =0,\\
		& \langle \Psi_j(\cdot,\alpha) , \Psi_k(\cdot,\alpha) \rangle_{V} = \delta_{jk}, \quad \mathrm{for\ any\ }j,k  \geq 1.
	\end{aligned}\right. 
\end{equation*}
In analogy with Section \ref{sec:FB_L}, the family of eigenfunctions $\{\Psi_j(\cdot,\alpha)\}_{j\in \N^*,\alpha\in Y^*}$ can be used as the Floquet-Bloch basis associated to $-\Delta_{\rm ex}$ in $L^2(\Omega^\Lambda)$. In particular, we have the following decomposition:
\begin{equation*}\left\{
	\begin{aligned}
		& u(x) = \sum_{j=1}^{\infty} \int_{Y^*}\langle u(\cdot) , \Psi_j(\cdot,\alpha) \rangle_{L^2(\R^2)} \Psi_j(x,\alpha) \,d\alpha, \\
		& \|u \|_{L^2(\Omega^{\Lambda})}^2 = \sum_{j=1}^{\infty} \int_{Y^*} \left|\langle u(\cdot) , \Psi_j(\cdot,\alpha) \rangle_{L^2(\R^2)}\right|^2 \,d\alpha ,
	\end{aligned}\right.
	\quad \quad \mathrm{for\ any \ } u \in L^2(\Omega^{\Lambda}).
\end{equation*} 
Here, in order to compute the pairing $\langle \cdot,\cdot \rangle_{L^2(\R^2)}$, the basis functions $\Psi_j(\cdot,\alpha)$ are extended to zero in $D^\Lambda$. As an application of the above Floquet-Bloch theory for $\Delta_{\rm ex}$, we see that 
\begin{equation*}
	-\Delta_{\rm ex} u = \sum_{j=1}^{\infty} \int_{Y^*} \mu_j^2(\alpha)\langle u(\cdot) , \Psi_j(\cdot,\alpha) \rangle_{L^2(\R^2)} \Psi_j(x,\alpha) \,d\alpha , \quad \mathrm{for\ any\ }u \in 	\mathrm{Dom}\,(\Delta_{\rm ex}) .
\end{equation*}

Finally, we define a harmonic extension operator from functions defined on $\partial \Omega^\Lambda$ to functions defined on $\Omega^\Lambda$. Consider the trace operator
 \begin{equation*}
 	\bTtrace : H^1(\Omega^{\Lambda}) \rightarrow H^{\frac{1}{2}} (\partial \Omega^{\Lambda}). 
 \end{equation*}
Note that $\partial \Omega^\Lambda = \partial D^\Lambda$, and that the domain $D^\Lambda$ is a disjoint union of periodic copies of $D$. Hence, the above trace operator can be defined piece by piece from $\mathbf{m}\bm{l}+\Omega \to \mathbf{m}\bm{l} + \partial D$ for each $\mathbf{m}\in \Z^2$ using the standard trace operator, and hence $\bTtrace$ is bounded and surjective. Similarly, we may define a right inverse $\bTtrace$ that is bounded from $H^{\frac12}(\partial D^\Lambda)$ into $H^1(\Omega^\Lambda)$, and $\bTtrace \bTtrace^{-1}$ is the identity operator on $H^{\frac12}(\partial D^\Lambda)$. We define the bounded harmonic extension $\bEext$ by
\begin{equation*}
	\begin{aligned}
		\bEext \;:\; H^{\frac{1}{2}}(\partial \Omega^{\Lambda}) \, &\rightarrow\,  H^1(\Omega^{\Lambda}), \\
		f \, &\mapsto\,  \bTtrace^{-1} f + (-\Delta_{\rm ex})^{-1} \Delta \bTtrace^{-1} f   .
	\end{aligned}
\end{equation*}
We say $\bEext$ is a harmonic extension because
\begin{equation*}
\bTtrace (\bEext f) = f \; \text{on } \partial \Omega^\Lambda, \qquad \Delta (\bEext f) = 0\; \text{in }\; \Omega^\Lambda.
\end{equation*}

\subsection{Estimates for the wave equation in a perforated domain}

In this subsection, we analyze the initial boundary value problem for the wave equation in the perforated domain $\Omega^\Lambda$. Using linearity,  we treat the non-zero initial data and non-zero boundary data separately.

\begin{lemma}\label{lem:LapExterior}
	There is a universal constant $C>0$ such that if $w$ solves the exterior problem
	\begin{equation}
 \label{eq:wave_bd0}
		\left\{
		\begin{aligned}
			& (\partial_t^2 - \Delta ) v =  0 && \mathrm{in} \ \Omega^{\Lambda} \times [0,\infty) , \\
			& v(x,0) =0, \quad  \partial_t v (x,0) =g(x) && \mathrm{on} \ \Omega^{\Lambda} ,\\
			& v(x,t) = 0 && \mathrm{on} \ \partial \Omega^{\Lambda}   \times [0,\infty),
		\end{aligned}\right.  
	\end{equation}
then we have
\begin{equation*}
	\| v(\cdot,T) \|_{L^2(\Omega^{\Lambda})} \leq  C\| g\|_{L^2(\Omega^{\Lambda})}, \qquad \forall \, T\in \R.
\end{equation*}
\end{lemma}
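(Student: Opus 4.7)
The plan is to use the Floquet--Bloch framework for $-\Delta_{\rm ex}$ developed in the preceding subsection together with a uniform-in-$\alpha$ spectral gap for the fiber operators $-\Delta_{\Omega^\Lambda}(\alpha)$. First, I would verify that there is a constant $c_0 > 0$ such that
\begin{equation*}
\mu_j(\alpha) \ge \mu_1(\alpha) \ge c_0 > 0, \qquad \forall\, j\ge 1,\; \forall\, \alpha \in Y^*.
\end{equation*}
This follows from the Poincar\'e inequality on each $\mathbf{m}\bm{l}+\Omega$ quoted just before the definition of $-\Delta_{\rm ex}$: any $\alpha$-quasiperiodic $H^1_{\rm loc}$ function vanishing on $\partial D^\Lambda$ satisfies $\|u\|_{L^2(Y\setminus \ol D)} \le C\|\nabla u\|_{L^2(Y\setminus \ol D)}$ with $C$ independent of $\alpha$, and applying this to the eigenfunction $\Psi_j(\cdot,\alpha)$ yields $\mu_j^2(\alpha) = \|\nabla \Psi_j(\cdot,\alpha)\|^2_{L^2(Y\setminus \ol D)} \ge C^{-2}\|\Psi_j(\cdot,\alpha)\|^2_{L^2(Y\setminus \ol D)} = C^{-2}$.

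Next, using the orthonormal Floquet--Bloch basis $\{\Psi_j(\cdot,\alpha)\}_{j,\alpha}$, the solution $v$ of \eqref{eq:wave_bd0} admits the explicit representation
\begin{equation*}
v(x,T) = \sum_{j=1}^{\infty} \int_{Y^*} \frac{\sin(\mu_j(\alpha)T)}{\mu_j(\alpha)}\,\langle g,\Psi_j(\cdot,\alpha)\rangle_{L^2(\R^2)}\,\Psi_j(x,\alpha)\,d\alpha,
\end{equation*}
which is justified because $g\in L^2(\Omega^\Lambda)$ and the spectral functional calculus for the self-adjoint operator $-\Delta_{\rm ex}$ applies. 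The Plancherel identity reviewed in the preceding subsection then gives
\begin{equation*}
\|v(\cdot,T)\|^2_{L^2(\Omega^\Lambda)} = \sum_{j=1}^{\infty} \int_{Y^*} \left|\frac{\sin(\mu_j(\alpha)T)}{\mu_j(\alpha)}\right|^2 \left|\langle g,\Psi_j(\cdot,\alpha)\rangle_{L^2(\R^2)}\right|^2 d\alpha.
\end{equation*}

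Finally, I would estimate the oscillatory multiplier by the trivial bound $|\sin(\mu_j(\alpha)T)/\mu_j(\alpha)| \le 1/\mu_j(\alpha) \le 1/c_0$, uniformly in $T$, $j$, and $\alpha$, and then apply Plancherel once more (this time to $g$) to obtain
\begin{equation*}
\|v(\cdot,T)\|^2_{L^2(\Omega^\Lambda)} \le \frac{1}{c_0^2}\sum_{j=1}^{\infty}\int_{Y^*} \left|\langle g,\Psi_j(\cdot,\alpha)\rangle_{L^2(\R^2)}\right|^2 d\alpha = \frac{1}{c_0^2}\|g\|^2_{L^2(\Omega^\Lambda)},
\end{equation*}
which is the claimed estimate with $C = 1/c_0$. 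The only nontrivial ingredient is the spectral gap $c_0 > 0$, and that is a standard consequence of the uniform Poincar\'e inequality on the perforated cell; all other steps are the abstract spectral calculus applied to the Floquet--Bloch decomposition already set up in the appendix.
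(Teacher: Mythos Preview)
Your proposal is correct and follows essentially the same approach as the paper: both use the Floquet--Bloch representation $v(\cdot,T) = \frac{\sin(\sqrt{-\Delta_{\rm ex}}T)}{\sqrt{-\Delta_{\rm ex}}}g$, then invoke the uniform lower bound on $\mu_j(\alpha)$ to control the multiplier and conclude via Plancherel. You actually supply more detail than the paper does, since you justify the spectral gap $\mu_1(\alpha)\ge c_0>0$ via the uniform Poincar\'e inequality, whereas the paper simply cites this fact from the setup of the appendix.
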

\begin{proof}
	Using the Floquet-Bloch theory for $\Delta_{\rm ex}$, we represent $w$ as
	\begin{equation*}
			v(x,T) = \frac{\sin(\sqrt{-\Delta_{\rm ex}}T)}{\sqrt{-\Delta_{\rm ex}}} g(x) = \sum_{j\geq 1} \int_{Y^*} \frac{\sin(\mu_j(\alpha)T) }{\mu_j(\alpha)}\langle g(\cdot) , \Psi_j(\cdot,\alpha) \rangle_{L^2(\Omega^{\Lambda})} \Psi_j(x,\alpha)\,d\alpha.
	\end{equation*}
Because $\mu_j(\alpha)$ can be uniformly bounded from below with respect to $j \in \N^*$ and $\alpha \in Y^*$, we obtain that
\begin{equation*}
	\| v(\cdot,t) \|_{L^2(\Omega^{\Lambda})} \leq  C\| g\|_{L^2(\Omega^{\Lambda})}  .
\end{equation*}
This completes the proof.
\end{proof}

\subsection{Proof of Proposition \ref{prop: exterior nonzero}}
\label{sec:proof:propextwave}

This proposition yields estimates for the wave equation in the exterior domain $\Omega^\Lambda$ equipped with zero initial data but non-homogeneous boundary condition on $\partial \Omega^\Lambda$. To derive the estimates we need the harmonic extension operator introduced before.

Take $h(x,t)$ satisfying $h(\cdot,0)=\partial_t h(\cdot,0)=0$. For each $t>0$, let $\bEext h(\cdot, t)$ be the harmonic extension of $h(\cdot,t)$. Then, $\bEext h$ satisfies
	\begin{equation*}\left\{
		\begin{aligned}
			& 	\Delta (\bEext h)(\cdot, t) = 0  && \mathrm{in} \ \Omega^{\Lambda} , \\
			& \bEext h(\cdot, t)= h(\cdot,t) && \mathrm{on} \ \partial \Omega^{\Lambda} .
		\end{aligned}\right.
	\end{equation*}
In view of the initial condition, we have 
	\begin{equation*}
		\bEext h(x,0) = 0, \quad \partial_t \bEext h(x,0) = 0, \quad \mathrm{on} \ \Omega^{\Lambda} ,
	\end{equation*}
	
	Let $w (\cdot,t)=v (\cdot,t)- \bEext h(\cdot, t )$. Then, $w (x,t)$ satisfies 
	\begin{equation*}
		\left\{
		\begin{aligned}
			& (\partial_t^2 - \Delta ) w(x,t) = f - \partial_t^2 \bEext h(x,t)&& \mathrm{in} \ \Omega^{\Lambda} \times [0,\infty) , \\
			& w(x,0) =0, \quad  \partial_t w (x,0) =0 && \mathrm{on} \ \Omega^{\Lambda} , \\
			& w(x,t) = 0  && \mathrm{on} \ \partial \Omega^{\Lambda}  \times [0,\infty) .
		\end{aligned}\right.  
	\end{equation*}
By Duhamel's principle, we get
	\begin{equation}\label{formula A.2 w}
			w(x,T) =  \int_0^T \frac{\sin(\sqrt{-\Delta_{\rm ex}}(T-t))}{\sqrt{-\Delta_{\rm ex}}}  (f - \partial_{t}^2 \bEext h)(x,t)\,dt.
	\end{equation}
By Floquet-Bloch theory associated with the operator $-\Delta_{\rm ex}$, we obtain that
	\begin{equation*}
		v (x, T) = \bEext h(x,T) + \sum_{j\geq 1} \int_{Y^*} \int_0^T    \frac{\sin (\mu_j(\alpha) (T-t))}{\mu_j(\alpha)} \langle f- \partial^2_t \bEext h (\cdot, t), \Psi_j(\cdot,\alpha) \rangle_{L^2(\Omega^{\Lambda})} \Psi_j(x,\alpha)\,dt d\alpha.
	\end{equation*}
Because $\mu_j(\alpha)$ can be uniformly bounded from below with respect to $j \in \N^*$ and $\alpha \in Y^*$, we get
	\begin{equation*}
		\begin{aligned}
				\| v(\cdot, T) \|^2_{L^2(\Omega^{\Lambda})} &\leq \| \bEext h(\cdot, T) \|^2_{L^2(\Omega^{\Lambda})} + C T \int_0^T \| f- \partial^2_{t}\bEext h (\cdot, t) \|^2_{L^2(\Omega^{\Lambda})} \,dt \\
				& \leq C \| h(\cdot, T) \|^2_{H^{\frac{1}{2}}(\partial \Omega^{\Lambda})} + CT \int_0^T \|f \|_{L^2(\Omega^{\Lambda})}^2\,dt +CT \int_0^T \| \partial^2_{t}h(\cdot, t) \|^2_{H^{\frac{1}{2}}(\partial \Omega^{\Lambda})} \,dt.
		\end{aligned}
	\end{equation*}
Therefore, we conclude that, uniformly for $T>0$,
	\begin{equation*}
		\| v(\cdot, T) \|_{L^2(\Omega^{\Lambda})}  \leq C \| h(\cdot, T) \|_{H^{\frac{1}{2}}(\partial \Omega^{\Lambda})} +CT\max_{0\leq t\leq T} \| f \|_{L^2(\Omega^{\Lambda})} + CT \max_{0\leq t \leq T}\| \partial^2_{t}h(\cdot, t) \|_{H^{\frac{1}{2}}(\partial \Omega^{\Lambda})}  .
	\end{equation*}
 This proves \eqref{formula:prop 6.4}.

Moreover, we note that $\partial_t \mathcal{E} h (x,0)=0$. So, using integration by parts in \eqref{formula A.2 w}, we obtain that
	\begin{equation*}
			w(x,T) = \int_0^T \frac{\sin(\sqrt{-\Delta_{\rm ex}}(T-t))}{\sqrt{-\Delta_{\rm ex}}}  f (x,t)\,dt - \int_0^T \cos(\sqrt{-\Delta_{\rm ex}}(T-t))\partial_{t} \bEext h(x,t)\,dt.
	\end{equation*}
Then, we can prove \eqref{formula:prop 6.4 II} using the same argument above.

\subsection{Proof of Theorem \ref{thm:L2wave}}
\label{sec:L2wave}

Using the Floquet-Bloch theory for $\mathcal{L}_{\delta}$ (cf. Section \ref{sec:FB_L}), we have, for all $T>0$,
\begin{equation*}
	u(x,T) = \frac{\sin (\sqrt{\mathcal{L}_{\delta}} T)}{\sqrt{\mathcal{L}_{\delta}}} g = \sum_{j\geq 1} \int_{Y^*} \frac{\sin( \omega_{j,\delta}(\alpha)T) }{\omega_{j,\delta}(\alpha)}\langle g(\cdot) , \Phi_{j,\delta}(\cdot,\alpha) \rangle_{L^2_{\delta}(\mathbb{R}^2)} \Phi_{j,\delta}(x,\alpha)\,d\alpha.
\end{equation*}
In view of the characterization \eqref{eq:H1FB} of $H^1_{\delta}(\mathbb{R}^2)$ norm and taking derivative in $t$ above, we can find a universal constant $C>0$ such that
\begin{equation*}
 	\|\partial_t u(\cdot,T) \|_{H^1_{\delta}(\R^2)} \leq  C\|g\|_{H^1_{\delta}(\mathbb{R}^2)}, \qquad \forall\,T\in \R. 
\end{equation*} 
From the energy conservation we also have $\|\nabla u(\cdot,T) \|_{L^2_{\delta}(\R^2)} \le \|g\|_{L^2_\delta(\R^2)}$. 

For the $L^2_\delta$ norm of $u(\cdot, T)$, the contribution of $j\ge 3$ can be controlled also by $C\|g\|_{L^2_\delta(\R^2)}$, and the contribution of $j\in \{1,2\}$ from the region $\{|\alpha-\alpha^*|\le r\}$ can be bounded by $C\delta^{-1/2} \|g\|_{L^2_\delta(\R^2)}$ thanks to \eqref{eq:Y*cut}. For $|\alpha-\alpha^*| \ge r$, where $\omega_{j,\delta}(\alpha)$ can approach zero, we bound $\sin(\omega_{j,\delta}(\alpha)T)/\omega_{j,\delta}(\alpha)$ by $CT$, and hence get
\begin{equation*}
\|u(\cdot,T) \|_{L^2_{\delta}(\mathbb{R}^2)} \leq  CT\sum_{j=1,2} \max_{|\alpha-\alpha^*|>r} |\langle g(\cdot) , \Phi_{j,\delta}(\cdot,\alpha) \rangle_{L^2_{\delta}(\mathbb{R}^2)} | + C\delta^{-\frac{1}{2}} \|g \|_{L^2_{\delta}(\mathbb{R}^2)}.
\end{equation*}
We conclude the above analysis by
\begin{equation}\label{eq:uOexest}
	\begin{aligned}
		& \|u(\cdot,T) \|_{L^2(D^\Lambda)} \leq  CT\sqrt{\delta}\sum_{j=1,2} \max_{|\alpha-\alpha^*|>r} |\langle g(\cdot) , \Phi_{j,\delta}(\cdot,\alpha) \rangle_{L^2_{\delta}(\mathbb{R}^2)} | + C \|g \|_{L^2_{\delta}(\mathbb{R}^2)} , \\
        &\|\nabla u(\cdot,T)\|_{L^2(D^\Lambda)} \le \sqrt{\delta}\|g\|_{L^2_\delta(\R^2)} , \\
		& 	\| \partial_t u(\cdot,T) \|_{H^1(D^{\Lambda})} \leq  C\sqrt{\delta}\|g \|_{H^1_\delta(\R^2)} .
	\end{aligned}
\end{equation}

In summary, we see that $u$ restricted to $\Omega^\Lambda$ satisfies the initial boundary value problem 
	\begin{equation}\label{prob:exterior}
	\left\{
	\begin{aligned}
		& (\partial_t^2 -\Delta ) v =  0 && \mathrm{in} \ \Omega^{\Lambda} \times [0,\infty) , \\
		& v(x,0) =0, \quad  \partial_t v (x,0) =g(x) && \mathrm{on} \ \Omega^{\Lambda} ,\\
		& v(x,t) =h(x,t) && \mathrm{on} \ \partial \Omega^{\Lambda}   \times [0,\infty),
	\end{aligned}\right.  
\end{equation}
with $h(x,t) = u|_{\partial D^{\Lambda}} (x,t)$. In view of \eqref{eq:uOexest}, the boundary datum $h$ satisfies for all $T>0$,
\begin{equation}\label{trace estimate} \left\{
	\begin{aligned}
		& 	\| h   (\cdot,T) \|_{H^{\frac{1}{2}}(\partial D^{\Lambda})} \leq CT\sqrt{\delta}\sum_{j=1,2} \max_{|\alpha-\alpha^*|>r} |\langle g(\cdot) , \Phi_{j,\delta}(\cdot,\alpha) \rangle_{L^2_{\delta}(\mathbb{R}^2)} | + C \|g \|_{L^2_{\delta}(\mathbb{R}^2)}, \\
		& 	\| \partial_t h   (\cdot,T) \|_{H^{\frac{1}{2}}(\partial D^{\Lambda})} \leq C \sqrt{\delta} \|g \|_{H^1_\delta(\R^2)} .
	\end{aligned}\right.
\end{equation}
By linearity of the wave equation \eqref{prob:exterior}, the solution $u$ is the sum of the solution to the problem \eqref{eq:wave_bd0} and the solution to the problem \eqref{eq:wave_in0} with $f=0$. Applying Lemma \ref{lem:LapExterior} and  Proposition \ref{prop: exterior nonzero} accordingly, we obtain that uniformly for all $T>0$,
\begin{equation*}
	\| u(\cdot, T) \|_{L^2(\Omega^{\Lambda})}  \leq C \|g\|_{L^2_\delta(\R^2)} + CT\sqrt{\delta}\sum_{j=1,2} \max_{|\alpha-\alpha^*|>r} |\langle g(\cdot) , \Phi_{j,\delta}(\cdot,\alpha) \rangle_{L^2_{\delta}(\mathbb{R}^2)} | +CT\sqrt{\delta}\|g\|_{H^1_\delta(\R^2)}.
\end{equation*}
Combining the above with the $L^2$ estimate \eqref{eq:uOexest} of $u$ in $D^\Lambda$, we see that the above estimate holds for $\|u\|_{L^2(\R^2)}$. 
The proof of Theorem \ref{thm:L2wave} is now complete.


\section*{Data availability statement}

Data sharing is not applicable to this article as no datasets were generated or analyzed during the current study.

\section*{Conflict of interest}

The authors declare that they have no conflict of interest.

\bibliographystyle{siam}
\bibliography{mybib}

\end{document}